\documentclass[11pt]{amsart}
\usepackage[margin=1in]{geometry}
\usepackage{amscd,amsthm,amsfonts,amssymb,amsmath,esint}
\usepackage{tikz-cd}
\usepackage[all]{xy}
\usepackage[hidelinks]{hyperref}
\usepackage{float}
\newcommand{\BC}{{\mathbb {C}}} 
 
\newcommand{\BG}{{\mathbb {G}}} 
 
\newcommand{\BK}{{\mathbb {K}}} 
 
 \newcommand{\BP}{{\mathbb {P}}}
\newcommand{\BQ}{{\mathbb {Q}}} 
\newcommand{\BS}{{\mathbb {S}}}

 \newcommand{\BZ}{{\mathbb {Z}}}
\newcommand{\CA}{{\mathcal {A}}} 
\newcommand{\CC}{{\mathcal {C}}} \renewcommand{\CD}{{\mathcal {D}}}
 \newcommand{\CF}{{\mathcal {F}}}
 \newcommand{\CH}{{\mathcal {H}}}
\newcommand{\CI}{{\mathcal {I}}} 
\newcommand{\CK}{{\mathcal {K}}} 
 \newcommand{\CN}{{\mathcal {N}}}
\newcommand{\CO}{{\mathcal {O}}} \newcommand{\CP}{{\mathcal {P}}}

\newcommand{\CU}{{\mathcal {U}}}

\newcommand{\fa}{{\mathfrak{a}}} 
 
\newcommand{\fe}{{\mathfrak{e}}} 
\newcommand{\fg}{{\mathfrak{g}}} 
 
 \newcommand{\fl}{{\mathfrak{l}}}
 \newcommand{\fn}{{\mathfrak{n}}}
\newcommand{\fo}{{\mathfrak{o}}} \newcommand{\fp}{{\mathfrak{p}}}
 
\newcommand{\fs}{{\mathfrak{s}}} \newcommand{\ft}{{\mathfrak{t}}}
\newcommand{\fu}{{\mathfrak{u}}}

\newcommand{\aut}{\mathfrak{aut}}

\newcommand{\ad}{{\mathrm{ad}}}

\newcommand{\Aut}{{\mathrm{Aut}}}

\newcommand{\Coker}{{\mathrm{Coker}}}

\newcommand{\End}{{\mathrm{End}}}

\newcommand{\Hom}{{\mathrm{Hom}}}

\newcommand{\id}{{\mathrm{id}}}
\renewcommand{\Im}{{\mathrm{Im}}}

\newcommand{\Ker}{{\mathrm{Ker}}}

\newcommand{\Lie}{{\mathrm{Lie}}}

\renewcommand{\mod}{\ \mathrm{mod}\ }

\newcommand{\Sym}{{\mathrm{Sym}}}

  \newcommand{\Supp}{{\mathrm{Supp}}}

\theoremstyle{plain}
\newtheorem{thm}{Theorem}[section]
\newtheorem{cor}[thm]{Corollary}
\newtheorem{lem}[thm]{Lemma}

\newtheorem{ques}[thm]{Question}
\newtheorem*{ques*}{Question}
\newtheorem{prop}[thm]{Proposition}

\newtheorem{thmA}{Theorem}

\newtheorem{corD}{Corollary}

\newtheorem{thmB}{Theorem}

\newtheorem{thmC}{Theorem}

\newtheorem{defn}[thm]{Definition}

\newtheorem*{HLcriterion}{Spencer condition}
\newtheorem{lem-defn}[thm]{Lemma-Definition}
\newtheorem*{Notation}{Notation}
\theoremstyle{remark}

\newtheorem{example}[thm]{Example}

\newcommand{\hfg}{\hat{\mathfrak{g}}}
\numberwithin{equation}{section}

\everymath{\displaystyle}
\title{On isotrivial cone structures of highest weight type}
\author[Y. Liu]{Yingqi Liu}
\address{Center for Complex Geometry, Institute for Basic Science (IBS), Daejeon 34126, Republic of Korea}
\email{yqliu@ibs.re.kr}
\begin{document}
	\date{July 2026}
    \begin{abstract}
	We study the projective-geometric criterion of Hwang–Li for isotrivial cone structures admitting characteristic conic connections. We give a complete classification of flag varieties satisfying this criterion, and in the process characterize flag varieties with an injective Gaussian map. As an application, we classify germs of minimal rational curves whose VMRT at a general point is isomorphic to that of a non-homogeneous smooth projective symmetric variety of Picard number one, apart from two exceptional cases.
	\end{abstract}
	\maketitle	
	%\tableofcontents
\section{Introduction}
\subsection{Main results}
\subsubsection*{Gaussian maps for flag varieties}\vspace{-0.5em}
Let $Z\subset \BP U$ be a smooth complex projective variety. Wahl introduced the Gaussian map
$$
\psi:\wedge^2U^\vee \longrightarrow H^0(Z,\Omega_Z(2)), \qquad l\wedge l' \longmapsto dl\otimes l'-l\otimes dl',
$$
which has been extensively studied since his seminal works~\cite{Wah90,Wah97}. In particular, the surjectivity of the Gaussian map for flag varieties (Definition~\ref{highwvar}) was conjectured in~\cite{Wah91} and later proved by Kumar~\cite{Kum92}.
 On the other hand, as observed in~\cite[Proposition~1.8]{Wah97}, the Gaussian map has a nontrivial kernel whenever the embedding line bundle is sufficiently positive. An effective version was recently obtained in~\cite[Theorem~5.5]{Cho25}. It is therefore natural to ask which projective varieties have an \emph{injective} Gaussian map \cite[Question~5.8.1]{Wah92}, a condition that we call \emph{tangential non-degeneracy} (Definition~\ref{tang_nond_defn}); see \cite[Theorem~1.3]{Gav05} for results on curves. Our first result gives a complete classification for flag varieties.

\begin{thmA}(Theorem~\ref{class_tang})\label{thm_A}
	Let
	$
	Z\subset\BP U
	$
	be a flag variety. Then the Gaussian map is injective if and only if
	$
	Z
	$
	is one of the following:
\begin{itemize}
	\item[(1)]
	the VMRT of an irreducible Hermitian symmetric space, namely the Segre variety
	$
	\BP^k\times\BP^l,
	$
	the Veronese variety
	$
	v_2(\BP^n)\subset \BP(\Sym^{2}\BC^{n+1}),
	$
	or one of the following irreducible Hermitian symmetric spaces of rank at most
	two, each in its minimal embedding:
	\[
	\BP^n\ (n\ge 1),\qquad
	Gr(2,n)\ (n\ge 5),\qquad
	\BQ^n\ (n\ge 3),\qquad
	\BS_5,\qquad
	E_6/P_1,
	\]
	where \(\BS_5=OG_+(5,10)\) is the \(10\)-dimensional spinor variety;
	
	\item[(2)]
	the adjoint variety
	$
	X_{\mathrm{ad}}(\fg)\subset \BP\fg
	$
	of a simple Lie algebra \(\fg\) (Definition~\ref{ad_coad});
	
	\item[(3)]
	the symplectic Grassmannian \(Gr_\omega(2,2n)\, (n \ge 3)\)  or  \(F_4/P_4\), each in its minimal embedding.
\end{itemize}
\end{thmA}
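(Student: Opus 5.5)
The plan is to convert injectivity of $\psi$ into a representation-theoretic statement about $\wedge^2$ of the highest weight module, and then classify. Write $Z=G/P\subset\BP V_\lambda$, $U=V_\lambda$, with $G$ semisimple and $\lambda$ dominant, and let $\hat Z\subset U$ be the affine cone. The map $\psi$ is $G$-equivariant, so its kernel is a $G$-submodule of $\wedge^2U^\vee$. Identify $\wedge^2U^\vee$ with skew forms $\omega$ on $U$. Unwinding $l\wedge l'\mapsto dl\otimes l'-l\otimes dl'$ at a point $[v]\in Z$ shows that $\omega\in\Ker\psi$ if and only if $\omega(v,w)=0$ for all $v\in\hat Z$ and all $w$ in the affine tangent space $\hat T_v=\fg\cdot v$.

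Dually, $\psi$ is injective if and only if the vectors $v\wedge w$ with $v\in\hat Z$ and $w\in\hat T_v$ span $\wedge^2U$. By homogeneity this span is the $G$-module $M_\lambda$ generated by $v_\lambda\wedge(\fg\cdot v_\lambda)=v_\lambda\wedge\fn^-v_\lambda$. This subspace is stable under the Borel subalgebra $\mathfrak b$: $e_\alpha$ sends $v_\lambda\wedge f_\gamma v_\lambda$ to $v_\lambda\wedge[e_\alpha,f_\gamma]v_\lambda$. So the criterion becomes
$$\psi\ \text{injective}\iff M_\lambda=U(\fg)\cdot\bigl(v_\lambda\wedge \fn^-v_\lambda\bigr)=\wedge^2V_\lambda .$$
An equivalent route uses Kumar's surjectivity theorem: injectivity is then the dimension equality $\dim\wedge^2V_\lambda=\dim H^0(Z,\Omega_Z(2))$. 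The right-hand side can be computed from Bott--Borel--Weil on the $P$-filtration of $\Omega_Z\otimes L^2$. I would use whichever of the two formulations is more convenient in each case.

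\textbf{Sufficiency.} For the listed cases I would verify $M_\lambda=\wedge^2V_\lambda$ directly. The decompositions of $\wedge^2V_\lambda$ are classical, and in each case one exhibits every highest weight vector inside $M_\lambda$:
\begin{itemize}
\item the Segre and Veronese cases give $\wedge^2$ of tensor products and symmetric powers;
\item $Gr(2,n)$, $\BQ^n$, $\BS_5$ and $E_6/P_1$ have small $\wedge^2$;
\item for adjoint varieties, $\wedge^2\fg=\fg\oplus\ker[\,,]$, where $v_\theta\wedge h$ generates the $\fg$ component and $v_\theta\wedge f_\alpha v_\theta$ generates the remaining pieces;
\item $Gr_\omega(2,2n)$ and $F_4/P_4$ are handled by explicit decompositions.
\end{itemize}

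\textbf{Necessity.} Here I first reduce by weights. Every weight of $M_\lambda$ lies in $2\lambda-\sum(\text{positive roots})$ and is reached using only $\fn^-$ applied to $v_\lambda\wedge f_\gamma v_\lambda$, with a single $f_\gamma$ in the second slot. Compare this with weight multiplicities of $\wedge^2V_\lambda$, for instance at weights $2\lambda-\beta$ for small $\beta$, or at the weight of $v_\lambda\wedge v_{w_0\lambda}$-type vectors. This should give obstructions:
\begin{enumerate}
\item If $\lambda$ is not a multiple of a fundamental weight (apart from the Segre product case on different simple factors), or if $\lambda$ has a coefficient $\ge2$ (apart from $v_2(\BP^n)$), an explicit extra highest weight of $\wedge^2V_\lambda$, such as $2\lambda-\alpha-\beta$ with a multiplicity excess, falls outside $M_\lambda$.
\item A crude dimension bound also helps: $\dim M_\lambda\le\dim H^0(\Omega_Z(2))$, which grows like $\dim V_\lambda\cdot\dim Z$, while $\dim\wedge^2V_\lambda\sim(\dim V_\lambda)^2/2$. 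This forces $\dim V_\lambda$ to be small relative to $\dim Z$ and rules out all but finitely many fundamental weights in each type.
\end{enumerate}
The remaining step is a case-by-case check over fundamental weights $\omega_i$ in all types, including exceptional groups with low-dimensional representations. This check should leave exactly the list.

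The main obstacle is this last finite but sizable check. For each surviving fundamental representation one must decide whether some irreducible summand of $\wedge^2V_{\omega_i}$ is missed by $M_\lambda$. For example, one must show $Gr(3,n)$, $LG(n,2n)$ and the other symplectic Grassmannians $Gr_\omega(k,2n)$ with $k\ge3$ fail, while $Gr_\omega(2,2n)$ and $F_4/P_4$ succeed. I would first try to locate in $\wedge^2V$ a summand whose highest weight is not of the form $2\lambda-\gamma$ with $\gamma$ a positive root reachable from the generators, for instance a summand $V_{2\lambda-\alpha-\beta}$ with $\alpha\neq\beta$ orthogonal simple roots, or a summand of the form $\wedge^2$ of the isotropy representation. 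Where that fails, I would fall back on comparing $\dim\wedge^2V_\lambda$ with the Bott--Borel--Weil computation of $h^0(\Omega_Z(2))$.
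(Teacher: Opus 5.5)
\textbf{There is a genuine gap, in the necessity direction.}

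Your reformulation is correct: $\psi$ is injective iff $\wedge^2V_\lambda$ is generated by the $\mathfrak b$-stable space $v_\lambda\wedge\fn^-v_\lambda$. Hence every summand of the image has highest weight $2\lambda-\gamma$ with $\gamma\in\Phi^+$, which is a weak form of the Kumar decomposition the paper uses. Your sufficiency checks are also plausible; for adjoint varieties, the $\wedge^2\fg$ argument is a valid substitute for the paper's conic argument. None of the mechanisms you propose for necessity works, for three reasons.

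\emph{The dimension bound is false.} By Kumar's surjectivity, $h^0(Z,\Omega_Z(2))=\dim\wedge^2V_\lambda-\dim\wedge^2_Z$. This is not of size $\dim V_\lambda\cdot\dim Z$. For the $E_8$ adjoint variety, which is on the list, it equals $\dim\wedge^2\mathfrak{e}_8=30628>248\cdot 57$. For $Gr(3,n)$ one has $\wedge^2(\wedge^3\BC^n)=V_{\lambda_2+\lambda_4}\oplus\wedge^6\BC^n$, so the image is asymptotically nine tenths of $\wedge^2V_\lambda$. In fact the list itself contains infinite families with fundamental highest weight ($\BQ^n$, $Gr(2,n)$, $Gr_\omega(2,2n)$, adjoint varieties of types $B$ and $D$). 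So no argument that leaves finitely many weights per type can be right, and you are left with infinitely many $\lambda_k$ and no uniform tool.

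\emph{The proposed obstructions at weight $2\lambda-\alpha-\beta$ never occur in $\wedge^2V_\lambda$.} Let $\alpha_i,\alpha_j$ be orthogonal simple roots and $m_i=\langle\lambda,\alpha_i^\vee\rangle>0$, $m_j>0$. The vector $x=a\,v_\lambda\wedge f_if_jv_\lambda+b\,f_iv_\lambda\wedge f_jv_\lambda$ satisfies $e_ix=(a+b)m_i\,v_\lambda\wedge f_jv_\lambda$ and $e_jx=(a-b)m_j\,v_\lambda\wedge f_iv_\lambda$. So there is no highest weight vector of that weight; the corresponding vector lives in $\Sym^2V_\lambda$ and accounts for quadrics. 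For adjacent simple roots the multiplicity in $\wedge^2V_\lambda$ is at most one, and that copy is already in the image, so there is no excess. This is Wahl's parity (Proposition~\ref{12_ev}): a skew form vanishing to first order along the diagonal vanishes to second order. Kernel elements are therefore first visible at order three, for example the trivial summand of weight $2\lambda-\alpha_1-\alpha_1'-\alpha_1''$ for $(\BP^1)^3$.

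\emph{Your step (1) excludes a listed case.} As written, it would rule out $Fl(1,n;n+1)$, the type-$A_n$ adjoint variety with $\lambda=\lambda_1+\lambda_n$, which is on the list.

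\textbf{What is missing are the uniform arguments the paper supplies.}

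\emph{Non-fundamental weights.} Let $\lambda=\lambda'+\lambda''$ with $Z'\neq\BP U_{\lambda'}$. Choose $\sigma'\in I_2(Z')$ with $X\mapsto\sigma'(u_{\lambda'},X^2u_{\lambda'})$ nonzero, and $\sigma''\in\wedge^2U_{\lambda''}^\vee$ with $X\mapsto\sigma''(u_{\lambda''},Xu_{\lambda''})$ nonzero. The tensor map sends $\sigma'\otimes\sigma''$ into $\wedge^2_Z$, and its third-order evaluation is $3f_{\sigma'}f_{\sigma''}\neq 0$. This forces every splitting of $\lambda$ to consist of projective-space pieces, leaving exactly $v_2(\BP^n)$, $\BP^k\times\BP^l$ and $Fl(1,n;n+1)$.

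\emph{Fundamental weights.} Compare $H^0(Z,\Omega_Z(2))=\bigoplus U^\vee_{2\lambda-\beta}$ with summands of $\wedge^2U$ that are present uniformly.
\begin{itemize}
\item If $U$ is orthogonal, then $\wedge^2U\cong\mathfrak{so}(U)\supset\fg$, which forces $2\lambda-\beta_l\in\Phi^+$.
\item If $U$ is symplectic, the invariant form forces $2\lambda\in\Phi^+$.
\item In the (co)minuscule case, $H^0(\Omega_Z(2))$ is irreducible by Bott--Borel--Weil, so $\wedge^2U$ must be irreducible, and Dynkin's classification leaves rank $\le 2$.
\item Only $(E_6,\lambda_3)$ remains, settled by computing $\wedge^2U$.
\end{itemize}
Your $2\lambda-\gamma$ observation is enough to run the first two tests. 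Without these tests, or an equivalent uniform criterion, your necessity direction is a case search that is never shown to terminate.
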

 The proof splits according to whether the highest weight $\lambda$ is fundamental. In the non-fundamental case, we adapt the method of tensors of linear forms developed in \cite{Cho25}. In the fundamental case, we use the description of the image of the Gaussian map in \cite{Kum92} to reduce the problem to a root-theoretic criterion. We note that these varieties are known to have vanishing third fundamental forms, which is also sufficient for tangential non-degeneracy by \cite[Proposition~1.3.1]{HM99}.

\subsubsection*{Isotrivial cone structures}
Our interest in Gaussian maps is motivated by the study of isotrivial cone structures with characteristic conic connections. Let $M$ be a complex manifold and $Z\subset\BP U$ a smooth projective variety with $\dim U=\dim M$. A cone structure on $M$ is a closed submanifold $\CC\subset\BP TM$
submersive over $M$, and it is called $Z$-isotrivial if every fiber
$\CC_x\subset\BP T_xM$ is projectively isomorphic to $Z\subset\BP U$.

An important source is provided by VMRT-structures associated with minimal rational curves on uniruled projective manifolds; see Definition~\ref{vmrt_defn}. Under mild assumptions, their local equivalence classes strongly constrain the global geometry of the underlying variety~\cite{HM01}, and every VMRT-structure naturally carries a characteristic conic connection~\cite{HM04}. Isotrivial VMRT-structures occur, for instance, on quasi-homogeneous
projective manifolds covered by lines and on certain
linear sections of flag varieties;
see~\cite{Mok08,HH08,LH21,HN22} for studies in the homogeneous
setting, and~\cite{Hwa26} for a comprehensive survey. The following problem combines~\cite[Question~2.8]{Hwa26} and~\cite[Question~1.4]{FH18a}; see Definition~\ref{def_symflat} for the notions of local flatness and local symmetry.

\begin{ques}\label{gene_ques}
Fix a positive-dimensional smooth projective subvariety $Z\subset\BP U.$
\begin{enumerate}
\item Let $\CC\subset\BP TM$
be a $Z$-isotrivial cone structure equipped with a characteristic conic connection. Is $\CC$ locally flat \textup{(}resp.\ locally symmetric\textup{)}? Can one classify its local equivalence class?

\item Suppose moreover that $\CC$ is the restriction to an open subset
$M\subset X$ of a $Z$-isotrivial VMRT-structure on a uniruled projective manifold $X$. Can one classify such VMRT-structures and the germ-equivalence classes of the corresponding smooth minimal rational curves?
\end{enumerate}
\end{ques}
Following the approach of~\cite{FH18a,LH24}, we study this problem via the associated $G$-structure, namely the principal bundle with structure group
$\Aut(\hat Z).$ A key result of~\cite[Theorem~1.3]{LH24} provides a projective-geometric criterion ensuring that, for any $Z$-isotrivial cone structure equipped with a characteristic conic connection, the associated $G$-structure admits a \emph{torsion-free} principal connection (see Theorem~\ref{tors_prin}). The criterion consists of three conditions. While the first two can often be verified directly, the main difficulty lies in the third, which is formulated in terms of the Spencer homomorphism recalled below.

\begin{HLcriterion}\label{Spencer_condtition}
	Let $Z \subset \BP U$ be a smooth projective subvariety, $\hat{Z} \subset U$ its affine cone, and $\aut(\hat{Z})$ the Lie algebra of infinitesimal automorphisms. Denote by
	\[
	\Xi_Z
	=
	\left\{
	\sigma\in \Hom(\wedge^2U,U)
	\ \middle|\
	\sigma(\hat z,\widehat T_zZ)\subset \widehat T_zZ
	\text{ for all }z\in Z
	\right\}.
	\]
	Let the Spencer homomorphism be defined by
	\begin{equation*}
	\partial:
	\Hom(U,\fa \fu \ft(\widehat Z))
	\longrightarrow
	\Hom(\wedge^2U,U),
	\qquad
	\partial h(u_1,u_2)
	=
	h(u_1)(u_2)-h(u_2)(u_1).
	\end{equation*}
	The condition (iii) in \cite[Theorem 1.3]{LH24} asks whether
	$
	\Xi_Z\subset \Im(\partial).
	$
\end{HLcriterion}
Hwang--Li verified the criterion when $Z$ is an adjoint variety of a simple Lie algebra~\cite[Theorem~1.5]{LH24}. The main goal of this paper is to classify all flag varieties satisfying the criterion. The first two conditions hold for arbitrary flag varieties, see Proposition \ref{surj_Psi_}. For the Spencer condition, we reduce it to a problem on the Gaussian map.
\begin{thmB}(Theorem \ref{reduction})\label{thm_B}
		Let \(Z\subset \BP U\) be a flag variety and let $\psi$ be the Gaussian map. If the Spencer condition holds, then one of the following occurs:
		\begin{itemize}
			\item[(i)] \(Z\subset \BP U\) is tangentially non-degenerate; equivalently the Gaussian map $\psi$ is injective;
			\item[(ii)] \(Z\) is isomorphic to a Hermitian symmetric space and $Ker(\psi) \cong \BC$ is one-dimensional.
		\end{itemize}
\end{thmB}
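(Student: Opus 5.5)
The plan is to produce, from the kernel of the Gaussian map, elements of $\Xi_Z$ and then to show that elements of $\Xi_Z$ arising this way cannot lie in $\Im(\partial)$, except in the Hermitian symmetric case where one kernel direction survives.

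\emph{Step 1: from $\Ker\psi$ to $\Xi_Z$.} Let $\omega\in\wedge^2U^\vee$ with $\psi(\omega)=0$. Vanishing of $\psi(\omega)$ at $z$ means $\omega(\hat z,\cdot)$ vanishes on $\widehat T_zZ$. Fix any $u_0\in U$ and define $\sigma_\omega(u_1,u_2)=\omega(u_1,u_2)\,u_0$. This is not yet in $\Xi_Z$. Instead, for each $A\in \End(U)$ I would consider
\[
\sigma_{\omega}(u_1,u_2)=\omega(u_1,u_2)\,e,
\]
with $e$ ranging over vectors fixed along the relevant tangent spaces. A cleaner choice is
\[
\sigma_\omega(u_1,u_2)=\omega(u_1,\cdot)\otimes u_2-\omega(u_2,\cdot)\otimes u_1,
\]
made into a map $\wedge^2U\to U$ by contracting against a fixed invariant structure. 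The simplest version is to take $\sigma_\omega(u_1,u_2)=\omega(u_1,u_2)v$. For $u_1=\hat z$ and $u_2\in\widehat T_zZ$ this gives $0$, so $\sigma_\omega\in\Xi_Z$ for every $v\in U$. This yields an injection $\Ker\psi\otimes U\hookrightarrow \Xi_Z$.

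\emph{Step 2: the image of $\partial$ is small in this direction.} Decompose $\Hom(\wedge^2U,U)=\wedge^2U^\vee\otimes U$ as a $G$-module, where $G=\Aut(\hat Z)^\circ$ is reductive with $\aut(\hat Z)=\fg\oplus\BC$, so that $U$ is irreducible. The image of $\partial$ is the image of $U^\vee\otimes(\fg\oplus\BC)\to \wedge^2U^\vee\otimes U$. The $G$-submodule $\Ker\psi\otimes U$ must lie in this image by the Spencer condition. I would project both sides to the isotypic components. The scalar part $U^\vee\otimes\BC$ maps to $\partial(\mathrm{id}\text{-part})$, which is isomorphic to $U^\vee$ and sits in $\wedge^2U^\vee\otimes U$ via $\alpha\mapsto \alpha\wedge\cdot$. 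The $\fg$-part is a quotient of $U^\vee\otimes\fg$. Thus one needs $\Ker\psi\otimes U$ to be a $G$-submodule of a quotient of $U^\vee\otimes(\fg\oplus\BC)$.

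Next compare with a contraction. Apply the trace/contraction $\wedge^2U^\vee\otimes U\to U^\vee$; on $\partial h$ it gives a linear functional computed from $h$. I expect the key maneuver to be this. Write $h=h_{\fg}+h_{\BC}$ and use that $\partial h_{\fg}$ preserves the cone infinitesimally. For $\omega\otimes v=\partial h$, evaluate on $(\hat z,u)$ with $u\in\widehat T_zZ$. This gives $h(\hat z)u\in\BC\hat z+\widehat T_zZ$-type constraints, forcing $h(\hat z)$ to lie in the stabilizer-type subalgebra for generic $v$. A dimension count then shows that $\dim(\Ker\psi)\cdot\dim U\le \dim\Hom_{G}$-multiplicities, which is only possible if $\Ker\psi$ is tiny.

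\emph{Step 3: the Hermitian case.} The main obstacle is to pin down exactly when a nonzero kernel is compatible. I would argue via highest weights. Take a highest weight vector $\omega_0$ of an irreducible $G$-submodule $K\subset\Ker\psi$. The module $K\otimes U$ contains the Cartan component of weight $\mu+\lambda$, where $\mu$ is the weight of $\omega_0$. This component must appear in $U^\vee\otimes\fg$, which has highest weights at most $\lambda^*+\theta$. Comparing gives $\mu+\lambda\le\lambda^*+\theta$. Since $\mu$ is a weight of $\wedge^2U^\vee$ killed by $\psi$, it is not $2\lambda^*-\alpha$ for the extremal one. This forces $\mu$ to be very low, essentially $\mu=0$ with $\lambda^*+\theta-\lambda$ dominant-compatible. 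I would then show that $\mu=0$ means $K$ is a trivial module, i.e.\ a $G$-invariant symplectic form $\omega$ on $U$ lying in $\Ker\psi$, and that $\dim K\le1$ by irreducibility of $U$ and Schur's lemma. Finally, a trivial $K\otimes U\cong U$ must match the copy of $U$ inside $U^\vee\otimes\fg$, through $\omega\colon U\cong U^\vee$ composed with the grading element. This matching is possible precisely when there is a $\BZ$-grading $\fg=\fg_{-1}\oplus\fg_0\oplus\fg_1$ with $U$ as a piece. Equivalently, I would identify $Z$ as the Hermitian symmetric situation, recognised through a cominuscule-type condition $\langle\theta,\lambda^\vee\rangle$. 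Verifying this last equivalence is where I expect to spend the most effort, likely by case-checking against the flag varieties whose Gaussian map has a one-dimensional invariant kernel.
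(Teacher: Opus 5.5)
\textbf{There is a genuine gap.} Your Step 1 is correct: $\Xi^0_Z=\Ker\psi\otimes U\subset\Xi_Z$, so the Spencer condition forces $\Ker\psi\otimes U\subset\Im(\partial)$. The problem is how you try to exploit this. Comparing $\Ker\psi\otimes U$ with the image of all of $U^\vee\otimes\hfg$ cannot bound $\Ker\psi$. For every flag variety that is not the VMRT of an IHSS one has $\hfg^{(1)}=0$ (Cartan, Kobayashi--Nagano). So $\partial$ is injective there, and $\Im(\partial)\cong U^\vee\otimes(\fg\oplus\BC)$ is very large.

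Your Step 3 inequality $\mu+\lambda\preceq\lambda^*+\beta_l$ (with $\beta_l$ the highest root) reads $\mu\preceq\beta_l$ when $U$ is self-dual, and this allows $\mu=\beta_l$. Moreover, if an irreducible component $K\subset\Ker\psi$ is isomorphic to $\fg$, then $K\otimes U\cong U^\vee\otimes\fg$ is abstractly the $\fg$-part of the source of $\partial$. No comparison of $G$-modules or highest weights can exclude this.

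It does happen. Take $\fs\fo_{2n+1}$ acting on $U=\wedge^3\BC^{2n+1}$ with $n\ge4$. Then $U$ is orthogonal and $\wedge^2U^\vee$ contains $\fg$ exactly once (Proposition~\ref{small}). Also $2\lambda_3-\beta_l=\alpha_1+2\alpha_2+4\alpha_3+\dots+4\alpha_n$ is not a root, so by \eqref{irr_decom} that copy of $\fg$ lies in $\Ker\psi$. Hence ``this forces $\mu$ to be essentially $0$'' is false. Your later ``matching with a grading element'' step is also not an argument.

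\textbf{The missing idea} is to work with $\partial^{-1}(\Xi^0_Z)$ rather than all of $\Hom(U,\hfg)$, and to show that it has dimension at most $\dim U$. The argument runs as follows.
\begin{enumerate}
\item Your remark that $h(\hat z)$ preserves $\widehat T_zZ$ is the right start. Via birationality of the Gauss map it gives $h(u)(u)=\theta(h)(u)\,u$ on $\hat Z$ for a linear form $\theta(h)\in U^\vee$ (Proposition~\ref{pre_partial}).
\item Differentiating along $\hat Z$ gives $h(u,u')+h(u',u)=\theta(h)(u)u'+\theta(h)(u')u$ for $u'\in\widehat T_{[u]}Z$. Since $\partial h\in\Xi^0_Z$ means $h(u,u')=h(u',u)$, if $\theta(h)=0$ then every $h(u)$ annihilates $\widehat T_{[u]}Z$.
\item For a flag variety, no nonzero element of $\fg\oplus\BC$ annihilates $\widehat T_{[u]}Z$ (componentwise for non-simple $\fg$). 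Such an element would lie in $\ker\phi_{\fp}=\fg_{d}$ (Lemma~\ref{ker_isotropy}), and $\fg_d$ acts faithfully on the affine tangent space (Lemma~\ref{faithful_isotropy}).
\item Hence $\theta$ embeds $\partial^{-1}(\Xi^0_Z)$ into $U^\vee$. Surjectivity of $\partial$ onto $\Ker\psi\otimes U$ then gives $\dim\Ker\psi\cdot\dim U\le\dim U$.
\item If $\Ker\psi\neq0$, then $\theta\neq0$. So some $h(u)$ fixes $[u]$ and acts on $T_{[u]}Z$ by a nonzero scalar. Its semisimple part gives an Euler-type $\BC^*$-action, so $Z$ is Euler-symmetric, and Euler-symmetric flag varieties are Hermitian symmetric.
\end{enumerate}
This dimension count, together with the Euler-symmetry argument, replaces both your vague Step 2 count and the case-checking you anticipate in Step 3.
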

We refer to Corollary~\ref{cor_classify} for a more general reduction beyond
the setting of flag varieties. Combined with Theorem~\ref{thm_A}, this yields
the main result of the paper. The subadjoint case was recently classified in~\cite{CN26}.
\begin{thmC}\label{main_thm}
	Let
	$
	Z\subset \BP U
	$
	be a flag variety. The Spencer condition $\Xi_{Z} \subset \operatorname{Im}(\partial)$ holds if and only if $Z$ is one of the following:
	\begin{enumerate}
		\item the VMRT of an irreducible Hermitian symmetric space under its minimal embedding;
			\item
			a subadjoint variety except for
			$
			\BP^1\times v_2(\BP^1)\subset
			\BP(\BC^2\otimes \Sym^{2}\BC^2),
			$
			namely
			\[
			\BP^1\times\BQ^m\ (m\ge2),\quad
			v_3(\BP^1),\quad
			E_7/P_7,\quad
			\BS_6,\quad
			Gr(3,6),\quad
			Lag(3,6),
			\]
			under their minimal embeddings, where $\BS_{6}=OG_{+}(6,12)$ is the 15-dimensional spinor variety;
		\item
an adjoint variety
$
X_{\mathrm{ad}}(\fg)\subset \BP\fg,
$
where \(\fg\) is not of type \(A_n\) for \(n \neq 2\);
		
		\item
		$Gr_{\omega}(2,6)$ or
		$F_4/P_4$
		under their minimal embeddings.
	\end{enumerate}
\end{thmC}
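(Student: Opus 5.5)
The proof has two directions. Necessity comes from Theorem~\ref{thm_B} and Theorem~\ref{thm_A}. Sufficiency is a case-by-case verification of $\Xi_Z\subset\Im(\partial)$ on the surviving list.

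\emph{Necessity.} Suppose the Spencer condition holds. By Theorem~\ref{thm_B}, either the Gaussian map $\psi$ is injective, or $Z$ is a Hermitian symmetric space with $\dim\Ker\psi=1$.

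In the injective case, Theorem~\ref{thm_A} restricts $Z$ to three classes: VMRTs of irreducible Hermitian symmetric spaces, adjoint varieties, and $Gr_\omega(2,2n)$ or $F_4/P_4$. Within these we still have to discard some members: adjoint varieties of type $A_n$ with $n\ne 2$, and $Gr_\omega(2,2n)$ for $n\ge4$. For these I would produce an explicit $\sigma\in\Xi_Z$ outside $\Im(\partial)$. Both $\Xi_Z$ and $\Im(\partial)$ are $\Aut(\hat Z)$-modules, so it suffices to compare isotypic components of $\Hom(\wedge^2U,U)$. For type $A_n$ the adjoint variety is $\BP^{n}\times\BP^{n*}$ restricted to a hyperplane, so the extra symmetry of $\mathfrak{sl}_{n+1}$ (the outer automorphism and the reducible pieces of $\wedge^2\fg$) should supply an irreducible summand lying in $\Xi_Z$ but missed by $\partial$. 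For $Gr_\omega(2,2n)$ the candidate is the $\Sp_{2n}$-component built from the symplectic form, using $U=\wedge^2_0\BC^{2n}$.

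In case (ii) of Theorem~\ref{thm_B}, $Z$ is a Hermitian symmetric space whose Gaussian kernel is one-dimensional. I would first classify these by the same tensor and root-theoretic methods used for Theorem~\ref{thm_A}. I expect the answer to be the subadjoint varieties $\BP^1\times\BQ^m$, $v_3(\BP^1)$, $Lag(3,6)$, $Gr(3,6)$, $\BS_6$, $E_7/P_7$ (plus $\BP^1\times v_2(\BP^1)$), where the kernel is the invariant symplectic form. For these I would invoke the classification in \cite{CN26}, which should show that exactly the case $\BP^1\times v_2(\BP^1)$ fails.

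\emph{Sufficiency.} Adjoint varieties are covered by \cite[Theorem~1.5]{LH24}, and subadjoint ones by \cite{CN26}. The remaining cases are the Hermitian VMRTs, $Gr_\omega(2,6)$ and $F_4/P_4$. For these I would use the grading $\fg=\fg_{-1}\oplus\fg_0\oplus\fg_1$ (or a contact-type grading), with $\fa\fu\ft(\hat Z)=\fg_0$ and $U=\fg_{-1}$. By a Kostant/Bott–Borel–Weil argument, $\Coker\partial\cong H^{2}(\fg_{-},\fg)$ in degree one. The condition then reduces to showing that no irreducible summand of this cohomology satisfies the constraint $\sigma(\hat z,\widehat T_zZ)\subset\widehat T_zZ$. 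Concretely, I would check that each lowest-weight vector of $H^{2}$, given by Kostant's theorem as $w$ with $\ell(w)=2$, violates this constraint at the base point $z=[v_{\lambda}]$.

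I expect the main obstacle to be the exclusion step: proving the Spencer condition fails for $A_n$ adjoint varieties ($n\neq2$) and for $Gr_\omega(2,2n)$ with $n\ge4$. This needs an explicit obstruction $\sigma$ and a proof that it is not a Spencer coboundary. I would first try a weight-vector computation at the highest weight point, combined with Kostant's description of $H^{2}$.
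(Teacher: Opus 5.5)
There is a genuine gap. Your necessity skeleton (Theorem~B, then Theorem~A) matches the paper. Citing \cite[Theorem~1.5]{LH24} for adjoint varieties and \cite{CN26} for subadjoint ones is acceptable; for type $A_{n\neq2}$ you could likewise cite \cite[Example~8.8]{LH24}, as the paper does.

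\textbf{Family~(4), sufficiency.} This family is the new content of the theorem, and your plan gives no argument for it. Your Kostant/BBW step needs a graded semisimple $\fg=\fg_{-1}\oplus\fg_0\oplus\fg_1$ with $\fg_0=\hfg$ and $\fg_{-1}=U$, so that $\Coker(\partial)$ is the homogeneity-one part of $H^2(\fg_{-1},\fg)$. Any such grading gives a nonzero injection $\fg_1\hookrightarrow\hfg^{(1)}$, $X\mapsto(u\mapsto[X,u])$. But $\hfg^{(1)}=0$ for $Gr_\omega(2,6)$ and $F_4/P_4$ by Cartan--Kobayashi--Nagano. These varieties come from the $\BZ_2$-gradings of the symmetric pairs $(\fs\fl_6,\fs\fp_6)$ and $(\fe_6,\mathfrak{f}_4)$ of Table~3, not from $\BZ$-gradings. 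A contact grading does not help: its $\fg_{-1}$ produces the subadjoint varieties, and its Kostant cohomology is that of the Heisenberg algebra $\fg_-$, not the Spencer cohomology of $(U,\hfg)$. (For family~(1) your Kostant route is a legitimate alternative, since there $\hfg=\fg_0$ and Kostant's multiplicity-one statement makes the component-by-component test valid.)

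The missing idea is the paper's filtration $\Xi_Z\supset\Xi'_Z\supset\Xi^0_Z$, treated piece by piece:
\begin{itemize}
\item $\partial^0$ is handled by tangential non-degeneracy.
\item $\partial'$ is handled by restricting sections to lines on $Z$; here the VMRT of $Z$ is non-degenerate, so $H^0(Z,\Omega_Z(1))=0$.
\item The top piece uses $H^1(Z,\widehat{\CP'}(1))=0$ together with the equality $\widetilde{\phi_\fp}(\fp\oplus\BC)=\fl_z=\fa\fu\ft(\widehat{\CC}_zZ)$, where $\CC_zZ$ is the VMRT of $Z$ itself. This equality comes from \cite{LH21} (it holds because $\fs\fp(Q)=\fs\fl(Q)$ when $\dim Q=2$) and, for $F_4/P_4$, from \cite{Pas09} by a dimension count.
\end{itemize}

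\textbf{The exclusion step.} This is only heuristic, and the heuristics fail a sanity check. The outer automorphism and the reducibility of $\wedge^2\fg$ are equally present for $A_2$, which satisfies the condition. A component built from the symplectic form on $\BC^{2n}$ gives no reason for the dichotomy between $n=3$ and $n\ge4$. Comparing isotypic components of $\Hom(\wedge^2U,U)$ also cannot decide the question, since there are multiplicities and you would need $\Xi_Z$ as a specific submodule.

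In the paper the obstruction sits in $\Xi_Z/\Xi'_Z$:
\begin{itemize}
\item $H^1(Z,\Omega_Z(1))=0$ makes $\zeta$ surjective, so it suffices to show that $\zeta\circ\partial$ is not surjective.
\item The image of $\widehat{\zeta\circ\partial}$ is $E(\Im\widetilde{\phi_\fp}\otimes\langle u_\lambda\rangle^\vee)$, while, by restriction to lines, every section of $\End(T_Z)(1)$ takes values in $\fl_z$.
\item For $Gr_\omega(2,2n)$ with $n\ge4$, the quotient $\fl_z/\Im\widetilde{\phi_\fp}\cong\fs\fl(Q)/\fs\fp(Q)\cong\wedge^2_0Q$, with $\dim Q=2n-4$, yields a homogeneous bundle with nonzero $H^0$. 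Meanwhile $H^1(Z,E(\Im\widetilde{\phi_\fp}\otimes\langle u_\lambda\rangle^\vee))=0$.
\item For $\BP^1\times v_2(\BP^1)$ the extra piece is $T_{\BP^1}(1)\boxtimes\CO$.
\end{itemize}

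\textbf{Case~(ii) of Theorem~B.} The claim that this case gives exactly the subadjoint list still needs proof. The paper uses Lemma~\ref{lem_prod_wedge_Z} for products. For simple $G$ it uses the irreducibility of $H^0(Z,\Omega_Z(2))$ together with \cite[Theorem~1(b)]{KR09}.
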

\subsection{Application to Question \ref{gene_ques}}
The rigidity of torsion-free $G$-structures is particularly strong when
$Z\subset\BP U$ is a flag variety. Building on the classification of~\cite{ML99}, Hwang--Li proved in~\cite[Theorem~4.6]{LH24} that every flag variety satisfying the criterion of~\cite[Theorem~1.3]{LH24} gives rise to \emph{locally symmetric} $Z$-isotrivial cone structures, apart from two exceptional families; see also Theorem~\ref{appp_homoge}. These two exceptional families coincide with families~\textup{(1)} and~\textup{(2)} of Theorem~\ref{main_thm}, except for the type-$B_3$ subadjoint variety, which is excluded by our classification.
In the more restrictive setting of isotrivial VMRT-structures, local flatness was proved by Mok~\cite{Mok08} for family~\textup{(1)}, and by Hwang–Li~\cite{HL22} for family~\textup{(2)}, with the exception of the twisted cubic $v_3(\BP^1)$.

Family~\textup{(3)} was completely treated in~\cite[Theorem~1.7]{LH24}, which classifies germs of minimal rational curves whose VMRT is an adjoint variety for $\fg$ not of type $A_{n\neq2}$. The remaining adjoint varieties of type $A_{n\ge3}$ are excluded by ~\cite[Example~8.8]{LH24}. In family~\textup{(4)}, although
$
\operatorname{Gr}_{\omega}(2,2n)
$
with $n\ge4$ occur in Theorem~\ref{thm_A}\textup{(3)}, they are excluded by Example~\ref{nonhomospin}. Both constructions arise from the general hyperplane sections of flag varieties studied in~\cite{BFM20}, whose isotrivial VMRT-structures are not locally homogeneous. 

The only genuinely new cases are the two remaining varieties in family~\textup{(4)}, both arising as VMRTs of smooth projective symmetric varieties of Picard number one.

\begin{defn}[{\cite[Section 26]{Ti11}}]
	Let $\widetilde G$ be a simply connected reductive group with an involution $\nu$, and let
	\[
	\widetilde G^\nu\subset G\subset
	N_{\widetilde G}(\widetilde G^\nu).
	\]
	A normal $\widetilde G$-variety containing an open orbit isomorphic to $\widetilde G/G$ is called a \emph{symmetric variety}.
\end{defn}

By the classification of~\cite{Ruz11}, there are six non-homogeneous smooth projective symmetric varieties of Picard number one. In each case, the lines in the minimal embedding form a distinguished family of minimal rational curves whose VMRT at a general point is a flag variety; see Table~\ref{Table3}. Together with the two adjoint cases in family~(3), this gives the following corollary.

\begin{corD}(Corollary~\ref{cor_vmrt})\label{cor_D}
	Let $X$ be a non-homogeneous smooth projective symmetric variety of Picard number one, not isomorphic to $X_4$ or $X_5$ in Table~\ref{Table3}, and let $Z\subset\BP U$ be the flag variety isomorphic to the VMRT of $X$ at a general point. Then every $Z$-isotrivial cone structure admitting a characteristic conic connection is locally symmetric, and hence locally homogeneous.
	
	Moreover, let $X'$ be a smooth projective variety of dimension $\dim(X)$ equipped with a family $\CK$ of minimal rational curves such that, over some open subset $M\subset X'$, the restricted VMRT-structure $\CC|_M$ is $Z$-isotrivial. Then:
	\begin{itemize}
		\item[(1)]
		A neighborhood of a general member of $\CK$ is biholomorphic either to a neighborhood of a general minimal rational curve on $X$, or to the locally flat model in Example~\ref{flatmodel}.
		
		\item[(2)]
		If $b_2(X')=1$, then $X'$ is quasi-homogeneous, i.e.  $\Aut(X')$ has an open orbit.
	\end{itemize}
\end{corD}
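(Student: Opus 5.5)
The plan is to assemble Corollary~\ref{cor_D} from Theorem~\ref{main_thm} together with the Hwang--Li machinery. \textbf{Step 1.} Using Table~\ref{Table3}, I identify the VMRT $Z$ of each of the six non-homogeneous smooth projective symmetric varieties of Picard number one from~\cite{Ruz11}. Excluding $X_4$ and $X_5$, I expect the remaining $Z$ to be the following:
\begin{itemize}
\item $Gr_\omega(2,6)$;
\item $F_4/P_4$;
\item the two adjoint varieties from family~(3), presumably of types $A_2$ and $G_2$ or similar.
\end{itemize}
Each of these appears in family~(3) or~(4) of Theorem~\ref{main_thm}, so the Spencer condition holds for them. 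By Proposition~\ref{surj_Psi_}, the first two conditions of \cite[Theorem~1.3]{LH24} hold for every flag variety. Hence every $Z$-isotrivial cone structure with a characteristic conic connection has a torsion-free associated $G$-structure (Theorem~\ref{tors_prin}).

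\textbf{Step 2.} I apply \cite[Theorem~4.6]{LH24} (Theorem~\ref{appp_homoge}). A torsion-free $\Aut(\hat Z)$-structure for a flag variety $Z$ outside the two exceptional families is locally symmetric. Our $Z$ lie outside families~(1) and~(2) of Theorem~\ref{main_thm}, so the cone structure is locally symmetric. Local symmetry then gives local homogeneity, because the curvature is parallel and the structure is locally modelled on a symmetric space $L/L_0$. For the adjoint cases, I instead quote \cite[Theorem~1.7]{LH24} directly.

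\textbf{Step 3.} Next I prove (1). Since the structure is locally symmetric, its local model is determined by the curvature, which lies in a finite-dimensional $\Aut(\hat Z)$-invariant space. For flag $Z$, the Merkulov--Schwachh\"ofer classification~\cite{ML99} lists the possible symmetric models with this holonomy. I expect exactly two to survive: the flat model of Example~\ref{flatmodel}, and the model whose curvature is nonzero, realized by $\widetilde G/G$, i.e.\ the open orbit of $X$. A neighborhood of a general member of $\CK$ is determined by the VMRT-structure along it, by Cartan--Fubini type extension~\cite{HM01} applied to germs of minimal rational curves. It is therefore biholomorphic to the corresponding neighborhood in one of the two models. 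The hard part is checking that no other symmetric model exists. This includes ruling out the compact and non-compact dual forms, or showing they give the same germs; I would do this via the irreducibility of the $\Aut(\hat Z)$-module of admissible curvature tensors.

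\textbf{Step 4.} Finally I prove (2). Assume $b_2(X')=1$. Local homogeneity gives a Lie algebra of vector fields on $M$ acting transitively, preserving the VMRT-structure. By Cartan--Fubini extension for Picard number one~\cite{HM01}, these vector fields extend to global holomorphic vector fields on $X'$. Their flows then generate a subgroup of $\Aut(X')$ with an open orbit, so $X'$ is quasi-homogeneous.
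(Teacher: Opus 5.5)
\textbf{Verdict: genuine gap in Step~3.} Your Steps 1, 2 and 4 agree with the paper. Conditions (i)--(iii) of Theorem~\ref{tors_prin} hold by Proposition~\ref{surj_Psi_} and Theorem~\ref{main_thm}. The four VMRTs are not among the exceptional varieties of \cite[Theorem~4.6]{LH24}, so Theorem~\ref{appp_homoge}(1) gives local symmetry. Quasi-homogeneity for $b_2(X')=1$ then follows from local homogeneity and the Picard-number-one part of Theorem~\ref{HM01}.

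The gap is in Step~3, which carries the real content of part (1); for the new cases $Gr_\omega(2,6)$ and $F_4/P_4$ you only state an expectation. The tool you propose, irreducibility of the $\Aut(\hat Z)$-module of curvature tensors, is not the right criterion. A locally symmetric torsion-free structure is determined locally by the $\hat G$-orbit of its curvature in $\BK(\hfg)$. Irreducibility of a module of dimension $>1$ does not make its nonzero admissible elements a single orbit, so it cannot exclude several non-flat models. The input the paper uses is $\dim\BK(\hfg)=1$ (Proposition~\ref{form_symm_one}). Since $c\,\id_U\in\hat G$ acts on $\BK(\hfg)$ by $c^{-2}$, all nonzero formal curvatures are then $\hat G$-equivalent, which is exactly Theorem~\ref{appp_homoge}(2). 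This also disposes of your worry about compact and noncompact dual forms, which do not arise holomorphically. Establishing $\dim\BK(\hfg)=1$ needs a computation you do not mention. The upper bound $\dim\BK(\hfg)\le 1$ for $Gr_\omega(2,6)$ and $F_4/P_4$ is taken from \cite{ML99}, and the full equality in the adjoint cases from \cite[Theorem~1.6]{LH24}. The lower bound comes from the symmetric pair: $R(X,Y)=[X,Y]$ on the $(-1)$-eigenspace of the involution, identified with $U$, is nonzero because that eigenspace is not abelian.

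You also assume without justification that the model with nonzero curvature is realized by the open orbit $\widetilde G/G\subset X$, i.e.\ that $\CC|_{\widetilde G/G}$ is not locally flat. Without this, the unique non-flat model need not come from $X$, and (1) would not follow. The paper proves it in Proposition~6.4(2) by a dimension count. Local flatness would force $\aut(X)\cong\BC^n\rtimes\aut(\hat Z)\oplus\aut(\hat Z)^{(1)}$ with $\aut(\hat Z)^{(1)}=0$. This contradicts $\aut(X)=\Lie(\widetilde G)$, $\aut(\hat Z)=\BC\,\id_U\oplus\Lie(G)$ and $n=\dim\widetilde G-\dim G$. With these two inputs in place, your passage from local equivalence of cone structures to neighborhoods of general members of $\CK$ works as in the paper, via Theorem~\ref{HM01} and Examples~\ref{flatmodel} and~\ref{sym_example}.
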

Recently, several general studies on VMRT-structures of smooth projective
symmetric varieties have appeared; see, in particular,
\cite{BKP26,HL26}. It would be interesting to extend the computations of the
present paper toward a more complete answer to Question~\ref{gene_ques} for
isotrivial cone structures arising from symmetric varieties.

\subsection{Idea of the proof}
We briefly outline the proof of Theorem~\ref{main_thm}. The Spencer condition $\Xi_Z\subset \Im(\partial)$
is first reformulated as the surjectivity of the restricted map $\partial:\partial^{-1}(\Xi_Z)\longrightarrow \Xi_Z.$
We then use the natural filtration
\[
\Xi_Z\supset \Xi'_Z\supset \Xi^0_Z
\]
introduced in~\cite{FH18a}, and pull it back via the Spencer homomorphism. This reduces the problem to three successive surjectivity questions, corresponding to the maps
$\partial^0,\partial',$ and $\partial$ in \eqref{diag_Spencer}.

The key observation is that for flag varieties the lowest piece of the filtration is controlled by tangential non-degeneracy. Consequently the surjectivity of \(\partial^0\) is shown
to be equivalent to one of the cases in Theorem~\ref{thm_B};
see Theorem~\ref{refor_reduction}.  The first case is classified in Theorem~\ref{thm_A}, while the second gives precisely the subadjoint varieties; see Proposition~\ref{partial0_surj_classify}.
The main ingredient is the more general Corollary~\ref{cor_classify}, which gives a similar reduction whenever the isotropy Lie algebra acts faithfully on the affine tangent space at a general point. For flag varieties, this faithfulness is verified in Lemmas~2.11 and~2.12.

It remains to pass from $\partial^0$
to the full Spencer condition. This is done by evaluating $\Xi_Z$
and $\Xi'_Z$ along lines on $Z,$ following~\cite{FH18a,LH24} and applying the results in \cite{LM03}; see
Lemmas \ref{esti_parital'} and \ref{esti_partial}.
 We first prove that, once $\partial^0$
is surjective, the next map $\partial'$
is also surjective. The final step is to analyze the remaining quotient
$\Xi_Z/\Xi'_Z.$ This is reduced to a comparison between the image of the isotropy representation and the infinitesimal automorphism algebra of the VMRT of
$Z$ at a general point; see Proposition \ref{main_432} and Corollary \ref{suff_condition_surj}. This comparison singles out exactly the additional cases
$Gr_{\omega}(2,6)$
and
$
F_4/P_4,
$
and excludes
$
Gr_{\omega}(2,2n)
$
for
$
n\geq 4
$
as well as the type-$B_3$ subadjoint variety. This completes the proof of Theorem~\ref{main_thm}.

\subsection{Organization of the paper}
Section~2 collects the necessary preliminaries. In Section~3, we classify tangentially non-degenerate flag varieties, and in Section~4, we study the filtration associated with the Spencer homomorphism and prove Theorem~B. In Section~5, we verify the Hwang--Li criterion for flag varieties and prove Theorem~C. Finally, Section~6 develops the applications to isotrivial VMRT-structures and proves Corollary~D.

\begin{Notation}
	We use the following notation throughout the paper.
	
	\begin{enumerate}
		\item
		Let \(G\) be a semisimple algebraic group with Lie algebra
		\(\fg=\Lie(G)\). Fix a Borel subgroup \(B\subset G\) and a maximal torus
		\(T\subset B\), with \(\ft=\Lie(T)\), and write
		$
		\fg=\fn\oplus\ft\oplus\fn^-
		$
		for the corresponding triangular decomposition. Let \(\Phi\) be the root
		system, with positive roots \(\Phi^+\) and simple roots
		\(\Delta=\{\alpha_1,\ldots,\alpha_n\}\). We denote by \(\CD\) the Dynkin
		diagram of \(G\), by \(W\) its Weyl group, and by \(w_0\in W\) the longest
		element. Let \(\Lambda^+\) be the set of dominant weights, let
		\(\lambda_1,\ldots,\lambda_n\) be the fundamental weights, and set
		$
		\rho=\sum_{i=1}^n\lambda_i.
		$
		For each \(1\le i\le n\), fix root vectors
		\(e_i\in\fg_{\alpha_i}\) and \(f_i\in\fg_{-\alpha_i}\) such that
		\[
		\langle e_i,f_i\rangle
		=
		\frac{2}{\langle\alpha_i,\alpha_i\rangle},
		\]
		where \(\langle\, ,\,\rangle\) denotes the Killing form on \(\fg\).
		
		For a root \(\alpha\in\Phi\), let \(m_{\alpha_i}(\alpha)\) denote the
		coefficient of \(\alpha_i\) in \(\alpha\), and set
		\[
		\Supp(\alpha)
		=
		\{\alpha_i\in\Delta\mid m_{\alpha_i}(\alpha)\neq 0\}.
		\]
		We denote by \(\CD_\alpha\subset\CD\) the subdiagram generated by
		\(\Supp(\alpha)\).
		
		\item
		Let \(Z\subset\BP U\) be a smooth projective subvariety and
		\(\hat Z\subset U\) its affine cone. For \(z\in Z\), let
		\(\BC\hat z\subset U\) be the corresponding line and
		\(\widehat T_zZ\subset U\) the affine tangent space. Then
		\[
		T_zZ
		\simeq
		\Hom(\BC\hat z,\widehat T_zZ/\BC\hat z).
		\]
		We denote by \(\fa\fu\ft(\hat Z)\) the Lie algebra of infinitesimal
		automorphisms of \(\hat Z\):
		\begin{equation}\label{fautdef}
			\fa\fu\ft(\hat Z)
			=
			\left\{
			A\in\End(U)
			\ \middle|\
			A(\hat z)\in\widehat T_zZ
			\text{ for all }z\in Z
			\right\}.
		\end{equation}
		
		Let \(\widehat T\subset U\otimes\CO_Z\) be the affine tangent bundle of
		\(Z\). It fits into the exact sequence
		\begin{equation}\label{eulerseq}
			0
			\longrightarrow
			\CO_Z(-1)
			\longrightarrow
			\widehat T
			\longrightarrow
			T_Z(-1)
			\longrightarrow
			0.
		\end{equation}
		After twisting by \(\CO_Z(1)\) and taking global sections, we obtain
		\begin{equation}\label{eulerseq_02}
			0
			\longrightarrow
			\BC\,\id_U
			\longrightarrow
			\fa\fu\ft(\widehat Z)
			\longrightarrow
			H^0(Z,T_Z),
		\end{equation}
		whose rightmost map is surjective whenever \(H^1(Z,\CO_Z)=0\).
	\end{enumerate}
\end{Notation}
\section*{Acknowledgements}
The author is grateful to Jun-Muk Hwang for suggesting the problem and for many valuable discussions throughout the development of this work. He also thanks Qifeng Li for fruitful discussions and for explaining the results of~\cite{HL22,LH24}, Katharina Neusser for helpful conversations that led to the correction of an earlier calculation, Junho Choe and Baohua Fu for their comments. This work is supported by the Institute for Basic Science (IBS-R032-D1).
\section{Preliminaries}

\subsection{Isotrivial cone structures and the Spencer criterion}

We briefly recall the ingredients from \cite[Section~4]{LH24} that will be used in this paper.

Let $Z \subset \BP U$ be a smooth projective subvariety, and let $\CC \subset \BP TM$ be a $Z$-isotrivial cone structure. Set the structure group $\hat{G} \subset GL(U)$ and its Lie algebra $\hfg \subset \fg \fl(U)$ by
\[
\hat{G}=\operatorname{Aut}(\widehat Z)
=\{g\in \operatorname{GL}(U)\mid g(\widehat Z)=\hat Z\},
\qquad
\hfg=\mathfrak{aut}(\hat Z)=\operatorname{Lie}(\hat{G}).
\]
The associated \(G\)-structure is the principal \(G\)-subbundle
$
\mathcal P\subset \mathbf F M
$
of the frame bundle defined by
\[
\mathcal P_x
=
\{\epsilon\in \operatorname{Isom}(U,T_xM)
\mid
\epsilon(\widehat Z)=\widehat{\mathcal C}_x\}.
\]
On the principal bundle $\CP$ one can define a principal connection $\CH \subset T_{\CP}$ which induces a conic connection $\CF \subset T_{\CC}$ on \(\mathcal C\). Conversely, \cite{LH24} studies when a conic connection on \(\mathcal C\), especially a \emph{characteristic conic connection} on $\CC$, induces a \emph{torsion-free principal connection} on \(\mathcal P\).

The following is the criterion of \cite[Section~4]{LH24} in the form needed in this paper. 
\begin{thm}\label{tors_prin}
	Let \(\mathcal C\subset \mathbb P TM\) be a \(Z\)-isotrivial cone structure, and let \(\mathcal P\subset \mathbf F M\) be the associated \(G\)-structure. Assume that:
	\begin{itemize}
		\item[(i)] \(H^0(Z,\mathcal O(1))=U^\vee\);
		\item[(ii)] the natural map
		\[
		\Psi:
	 \hfg \otimes U^\vee
		\longrightarrow
		H^0(Z,TZ\otimes\mathcal O(1))
		\]
		induced by the infinitesimal action of $\mathfrak{aut}(\widehat Z)$ on \(Z\) is surjective.
	\end{itemize}
	Then every conic connection on \(\mathcal C\) is locally induced by a principal connection on \(\mathcal P\).
	
	If, in addition,
	\[
	\tag{iii}
	\Xi_Z\subset \operatorname{Im}(\partial),
	\]
	then every characteristic conic connection on \(\mathcal C\) induces a torsion-free principal connection on \(\mathcal P\).
\end{thm}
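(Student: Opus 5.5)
Theorem~\ref{tors_prin} is the criterion of \cite[Theorem~1.3]{LH24}, and my plan is to follow that argument in the language of $G$-structures. For the first assertion, fix a point $x\in M$, a local section $\epsilon$ of $\CP$ near $x$, and a conic connection $\CF\subset T_\CC$. Through $\epsilon$, a principal connection on $\CP$ is the same as an $\hfg$-valued $1$-form, i.e.\ a local section $\theta$ of $\Hom(U,\hfg)$ via the trivialization. Such a form induces a conic connection, since its horizontal lift of $v\in\widehat{\CC}_x$ is tangent to $\CC$ at $[v]$. The first task is to compute the difference between $\CF$ and the conic connection induced by $\epsilon$ with $\theta=0$. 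At each $z=[v]\in\CC_x\cong Z$ this difference is a vector of $T_zZ$ depending linearly on $\hat z$ (homogeneity of degree one), so it defines a section of $TZ\otimes\CO(1)$. Condition (i) identifies the linear forms on $U$ with $H^0(Z,\CO(1))$. Condition (ii) then lets us write this section as $\Psi(\theta_x)$ for some $\theta_x\in\hfg\otimes U^\vee$. Choosing a holomorphic splitting of $\Psi$ (a linear map between finite-dimensional spaces), $\theta_x$ varies holomorphically in $x$, and the resulting principal connection induces $\CF$.

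For the second assertion, let $\CF$ be characteristic and $\theta$ as above. The torsion of the induced principal connection is a local section $\tau$ of $\Hom(\wedge^2U,U)$. Changing $\theta$ by $h\in\Hom(U,\hfg)$ with $\Psi(h)=0$ does not change the induced conic connection, and it changes the torsion by $\partial h$; the sign and normalization follow the convention for $\partial$. The whole proof therefore reduces to the pointwise claim $\tau\in\Xi_Z$. Granting this, (iii) gives $\tau=\partial h$ for some $h$. To make the correction legitimate we need $h$ with $\Psi(h)=0$, that is $h(\hat z)\hat z\in\BC\hat z$ for all $z$. So I would verify that $\partial h\in\Xi_Z$ forces this after adjusting $h$ by elements of $\Ker\partial$; this can be handled by using the exact definition of $\Xi_Z$ and tracking the ambiguity. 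Choosing $h$ holomorphically (again by a linear splitting) and replacing $\theta$ by $\theta-h$ then gives a torsion-free principal connection inducing $\CF$.

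The main obstacle is showing $\tau(\hat z,\widehat T_zZ)\subset\widehat T_zZ$, which is where "characteristic" enters. A characteristic conic connection has as leaves the tangent lifts of curves whose tangent directions stay in $\CC$, and whose parallel transport of $\widehat{\CC}$ along them is compatible with the affine tangent spaces. Concretely, take the horizontal curve through $[v]$ determined by $\CF$ and differentiate the condition that the affine tangent spaces $\widehat T\CC$ are preserved along it. Comparing the induced connection's covariant derivative with the Lie bracket of coordinate vector fields in the directions $\hat z$ and $w\in\widehat T_zZ$, the discrepancy is $\tau(\hat z,w)$. The characteristic property says the leaf is an integral curve of the Cauchy characteristic of the distribution $\widehat T$. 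That makes this discrepancy lie in $\widehat T_zZ$ modulo terms in $\hfg\cdot\hat z\subset\widehat T_zZ$, which gives $\tau\in\Xi_Z$. I expect this local computation, with careful bookkeeping of the frame-dependence, to be the delicate step.
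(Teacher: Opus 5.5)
The paper does not prove this statement; it quotes it from \cite[Section~4]{LH24}. Your three-step plan has the right architecture: realize $\mathcal F$ by some $\theta$ using (ii), show the torsion lies in $\Xi_Z$, then remove it using (iii). Two steps, however, are not actually established.

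\textbf{The correction step.} You need $h$ with $\partial h=\tau$ and $\Psi(h)=0$, and you propose to get it by modifying $h$ by elements of $\Ker\partial$. For $Z\neq\BP U$ this cannot work: $\Ker\partial\subset\partial^{-1}(\Xi_Z)=\Ker\Psi$, so such modifications never change $\Psi(h)$.

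What is needed is precisely $\partial^{-1}(\Xi_Z)=\Ker\Psi$ (Proposition~\ref{pre_partial}). This rests on a global property of $Z$, not on the pointwise definition of $\Xi_Z$. Since $h(w)u\in\widehat T_{[u]}Z$ automatically, $\partial h\in\Xi_Z$ only says that $h(u)$ preserves $\widehat T_{[u]}Z$. Hence $\exp(\BC h(u))$ fixes the point $g_1([u])$ of the Gauss image. Zak's theorem (the Gauss map of a smooth nonlinear $Z$ is birational onto its image, \cite{Zak93}) is what then forces $h(u)u\in\BC u$ for general, hence all, $u$. The analogous statement fails for cones, which shows that smoothness is essential.

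For $Z=\BP U$ your idea does work: replace $h$ by its skew part. Also, if one only wants \emph{some} torsion-free connection, rather than one inducing $\mathcal F$, this step is unnecessary.

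\textbf{The key claim $\tau\in\Xi_Z$.} This is only described, and the phrase ``modulo terms in $\hfg\cdot\hat z$'' hides the one nontrivial identity. Here is how to make it precise.
\begin{itemize}
\item Trivialize $\mathcal C|_O\cong O\times Z$ by $\epsilon$. Set $h=\theta\circ\epsilon$ and $c(u,v)=\epsilon^{-1}[\epsilon u,\epsilon v]$ for constant $u,v$. Then $\tau=\partial h-c$.
\item $\mathcal F$ is spanned by $(\epsilon u,-h(u)^\#)$, where $A^\#$ is the vector field of $A\in\hfg$ on $Z$.
\item Let $\mathcal T_\alpha=(d\pi_\alpha)^{-1}(\widehat T_\alpha\mathcal C_{\pi(\alpha)})$. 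Characteristic means $[\mathcal F,\mathcal T]\subset\mathcal T$.
\item For a local section $w$ of $\widehat T$, the $TO$-component of $[(\epsilon u,-h(u)^\#),(\epsilon w,0)]$ is $\epsilon\bigl(c(u,w)-dw(h(u)^\#)\bigr)$ modulo $\epsilon\widehat T_{[u]}Z$.
\end{itemize}
The missing identity is $dw(A^\#)\equiv Aw$ modulo $\widehat T$ for $A\in\hfg$. It follows by differentiating at $t=0$ the section $\exp(tA)\,w(\exp(-tA)\,\cdot\,)$ of $\widehat T$. The characteristic condition then gives $h(u)w-c(u,w)\in\widehat T_{[u]}Z$. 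Adding $-h(w)u\in\widehat T_{[u]}Z$ yields $\tau(u,w)\in\widehat T_{[u]}Z$, with no frame-dependence issues.
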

The existence of a torsion-free principal connection on the associated \(G\)-structure imposes strong restrictions on the corresponding cone structure. 
We recall the following terminology.

\begin{defn}\label{def_symflat}
	Let \(\mathcal C\subset \mathbb P TM\) be a \(Z\)-isotrivial cone structure.
	
	\begin{itemize}
		\item[(1)] We say the cone structure is \emph{locally symmetric} if the associated $G$-structure $\CP$ is locally symmetric (cf.  \cite[Definition 2.14]{LH24}); 
		\item[(2)]
		We say that \(\mathcal C\) is \emph{locally flat} if the 
		associated $G$-structure $\CP$ is locally flat. Equivalently, for every \(x\in M\), there exists a neighborhood \(x\in O\subset M\) with suitable local holomorphic coordinates such that the induced trivialization
		\[
		\mathbb P TO\simeq \mathbb P U\times O
		\]
		identifies \(\mathcal C|_O\) with the trivial cone structure \(Z\times O\);
		\item[(3)]
		We say that \(\mathcal C\) is \emph{locally homogeneous} if, for any two points \(x,x'\in M\), there exist neighborhoods \(x\in O\subset M\), \(x'\in O'\subset M\), and a biholomorphism
		\(f:O\to O'\)
		such that the induced map
		\[
		df:\mathbb P TO\longrightarrow \mathbb P TO'
		\]
		sends \(\mathcal C|_O\) to \(\mathcal C|_{O'}\).
	\end{itemize}
\end{defn}
 Here for (2) we also use the definition as in \cite[Definition 1.1]{FH18a}. Next we recall the locally symmetric consequence from the preceding criterion. Denote the space of formal curvature tensors of \(\hat{\mathfrak g}\) by
\begin{equation}\label{form_curv}
\BK(\hat{\mathfrak g})
=
\left\{
R\in \operatorname{Hom}(\wedge^2U,\hat{\mathfrak g})
\ \middle|\
R(u,v)w+R(v,w)u+R(w,u)v=0
\text{ for all }u,v,w\in U
\right\}.
\end{equation}

\begin{thm}[{\cite[Theorem~4.6 and Corollary~4.11]{LH24}}]\label{appp_homoge}
	Let \(Z\subset \mathbb P U\) be a flag variety (cf. Definition \ref{highwvar}).
	\begin{itemize}
		\item[(1)]
		Assume that conditions \textup{(i)--(iii)} of Theorem~\ref{tors_prin} hold, and that \(Z\) is not one of the varieties excluded in \cite[Theorem~4.6]{LH24}. Then every \(Z\)-isotrivial cone structure admitting a characteristic conic connection is locally symmetric, and hence locally homogeneous.
		
		\item[(2)]
		Assume, in addition, that \(\dim \BK(\hat{\mathfrak g})=1\). Then any two non-locally-flat \(Z\)-isotrivial cone structures equipped with characteristic conic connections are locally equivalent. More precisely, if \(\mathcal C\subset \mathbb P TM\) and \(\widetilde{\mathcal C}\subset \mathbb P T\widetilde M\) are such cone structures, then for any \(x\in M\) and \(\widetilde x\in\widetilde M\), there exist neighborhoods \(x\in O\subset M\), \(\widetilde x\in\widetilde O\subset\widetilde M\), and a biholomorphism \(\varphi:O\to \widetilde O\) such that the induced map
		\[
		d\varphi:\mathbb P TO\longrightarrow \mathbb P T\widetilde O
		\]
		sends \(\mathcal C|_O\) to \(\widetilde{\mathcal C}|_{\widetilde O}\).
	\end{itemize}
\end{thm}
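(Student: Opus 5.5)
The statement is Theorem~\ref{appp_homoge}, which the paper quotes from \cite[Theorem~4.6 and Corollary~4.11]{LH24}. I would not re-prove it from scratch but reconstruct the argument along the lines of \cite{LH24}, in four steps.

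\textbf{Step 1: torsion-free connection.} Under (i)--(iii), Theorem~\ref{tors_prin} gives that any characteristic conic connection on a $Z$-isotrivial cone structure $\CC$ induces a torsion-free principal connection on $\CP$. The structure group is $\hat G=\Aut(\hat Z)$, a reductive group acting irreducibly on $U$, with $\hfg=\fg\oplus\BC\,\id_U$ for $Z=G/P$.

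\textbf{Step 2: the first prolongation vanishes.} For any $G$-structure with a torsion-free connection, the curvature $R$ takes values in $\BK(\hfg)$, by the first Bianchi identity. The space $\BK(\hfg)$ is described by the Berger-type classification of irreducible holonomy algebras with nonzero curvature space \cite{ML99}. The key input is that $\hfg^{(1)}=0$ for every flag variety $Z$ except the cases excluded in \cite[Theorem~4.6]{LH24}, namely the Hermitian symmetric and subadjoint families, whose cone automorphism algebras have nonzero prolongation. When $\hfg^{(1)}=0$:
\begin{itemize}
\item the torsion-free connection is unique;
\item it is therefore invariant under all local automorphisms of the cone structure;
\item the covariant derivative $\nabla R$ is a section of the second-order Bianchi space $\BK^1(\hfg)$.
\end{itemize}

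\textbf{Step 3: local symmetry.} This is the main obstacle. One must show $\BK^1(\hfg)=0$ for the relevant $\hfg$ (the "Berger second criterion"), which forces $\nabla R=0$, i.e.\ $\CP$ is locally symmetric. I would check this representation-theoretically: use the $\hat G$-module decomposition of $\BK(\hfg)\subset \wedge^2U^\vee\otimes\hfg$ obtained in \cite{ML99}, and show that $\Hom(U,\BK(\hfg))$ contains no copy of the Bianchi-type kernel. In practice this is a finite case check over the list in \cite{ML99}. From $\nabla R=0$ and torsion-freeness, the standard Cartan--Ambrose--Hicks argument gives local affine symmetries preserving $\CP$, hence local homogeneity.

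\textbf{Step 4: uniqueness when $\dim\BK(\hfg)=1$.} For (2), $\BK(\hfg)=\BC R_0$, and $R_0$ is $\hat G$-invariant up to scaling. Since $\nabla R=0$, at each point $R_x=c\,R_0$ in an adapted frame with $c$ a constant. The scaling $\id_U\in\hfg$ acts on $\BK(\hfg)$ with nonzero weight, so after rescaling the frames any nonzero $c$ can be normalized to $1$. Two locally symmetric torsion-free $G$-structures with the same normalized curvature are locally equivalent by the Cartan local uniqueness of symmetric spaces, i.e.\ both are locally isomorphic to the symmetric space model $(\hat{\fg}\ltimes U)$ with bracket given by $R_0$. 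Passing to the induced $d\varphi$ on $\BP TO$ then yields the statement.
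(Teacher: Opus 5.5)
Your outline follows the route behind the cited result, which the paper does not prove but quotes from \cite{LH24}: Theorem~\ref{tors_prin} gives a torsion-free connection on $\CP$; its curvature lies in $\BK(\hfg)$ and $\nabla R$ in the second Bianchi space $\BK^1(\hfg)$; $\BK^1(\hfg)=0$ for the non-excluded $Z$ comes from the Merkulov--Schwachh\"ofer classification \cite{ML99}; and (2) follows from Cartan's local uniqueness. For the non-excluded $Z$ the logic holds, but your explanation of the exclusions in Step~2 is wrong.

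\emph{The subadjoint varieties have $\hfg^{(1)}=0$.} By Cartan and Kobayashi--Nagano, $\hfg^{(1)}\neq0$ only for the VMRTs of irreducible Hermitian symmetric spaces in Theorem~\ref{class_tang}(1). The subadjoint family appears in this paper exactly as case (iii) of Corollary~\ref{cor_classify}, where $\hfg^{(1)}=0$. For the same reason, ``Hermitian symmetric'' is the wrong description of the first family: $Gr(3,6)$, $Lag(3,6)$, $\BS_6$ and $E_7/P_7$ are Hermitian symmetric with $\hfg^{(1)}=0$.

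\emph{Why they are excluded.} Subadjoint varieties are excluded because $\BK^1(\hfg)\neq0$. They are the exotic symplectic holonomies of Bryant and Chi--Merkulov--Schwachh\"ofer, which carry torsion-free connections that are not locally symmetric. So they drop out in your Step~3, not in Step~2.

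\emph{Step~2 is mostly unnecessary.} Only the Bianchi statement is needed. $\nabla R\in\BK^1(\hfg)$ holds for every torsion-free connection, and $\BK^1(\hfg)=0$ alone forces $\nabla R=0$. The local geodesic symmetries preserve $\CP$ because their differential at the center is $-\id_U\in\hat G$. Uniqueness of the connection is never used.

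\emph{The model in Step~4 is not a Lie algebra.} $\hfg\ltimes U$ with bracket given by $R_0$ fails Jacobi: the identity on $(X,u,v)$ requires $X\cdot R_0=0$, but $\id_U$ scales $R_0$ with weight $-2$, which is exactly what your normalization uses. Since $\BK(\hfg)$ is a line and $\fg$ is semisimple, the stabilizer of $R_0$ in $\hfg$ is $\fg$. The values of $R_0$ lie in the holonomy algebra, which annihilates $R_0$. So the correct model is $\fg\oplus U$, i.e.\ $\widetilde{\fg}=\fg\oplus\fg_{-1}$ as in the proof of Proposition~\ref{form_symm_one}.

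\emph{The conclusion of (2) still stands.} The local uniqueness theorem only needs frames $\epsilon\in\CP_x$ and $\widetilde{\epsilon}\in\widetilde{\CP}_{\widetilde{x}}$ with $\epsilon^{*}R_x=\widetilde{\epsilon}^{\,*}\widetilde{R}_{\widetilde{x}}=R_0$. Such frames exist after your rescaling, once you note two facts you left implicit: non-flatness forces $R\neq0$, and $\nabla R=0$ keeps $R$ nowhere zero. The resulting local affine map sends parallel frames of $\CP$ to those of $\widetilde{\CP}$, hence sends $\CC|_O$ to $\widetilde{\CC}|_{\widetilde{O}}$.
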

\subsection{VMRT-structures}
In this section we briefly recall the relevant terminology and results needed to explain the implication of Theorem~\ref{appp_homoge}. These notions will be also used in Section~5.3 to study the Spencer condition.

\begin{defn}\label{vmrt_defn}
	Let $RatCurves(X)$ be the space of rational curves on a smooth projective variety $X.$ An irreducible component
	$\CK \subset RatCurves(X)$ is called a \emph{family of minimal rational curves} on $X$ if, for a general point
	$x\in X,$ the subscheme $\CK_{x}\subset \CK$ consisting of members of $\CK$ passing through $x$ is nonempty and projective. Fix such a family $\CK.$ Let $\CC^{+}\subset \BP TX$ be the closure of the union of tangent directions to members of $\CK.$
	Then $\CC^{+}$ has a unique irreducible component
	$\CC\subset \BP TX$ dominating $X.$ We call $\CC$
	the VMRT-structure associated with
	$\CK.$ For a point $x\in X,$ the fiber $\CC_{x}\subset \BP T_{x}X$ is called the VMRT at $x$ associated with the family $\CK.$
\end{defn}
The following propositions collect important properties of VMRT-structures. We refer to \cite{HM99} for more details.
\begin{prop}
	Keep the notation as above. Let $x\in X$
	be a general point, and let $l\in \CK_{x}$
	be a general minimal rational curve through $x,$
	given by $f:\BP^{1}\to X,$ with $f(o)=x$.
	Then
	\begin{equation}\label{unbendable}
		f^{*}T_{X}
		\cong
		\CO_{\BP^{1}}(2)
		\oplus
		\CO_{\BP^{1}}(1)^{\oplus s}
		\oplus
		\CO_{\BP^{1}}^{\oplus n-s-1},
	\end{equation}
	where
	$s=\dim(\CC_{x})$ and $n=\dim(X).$
	Moreover, if $s\ge 1,$ then $l$ is smooth at $x,$ where the tangent direction of $l$ is identified with $T_{x}l=\CO(2)_{o},$ and the affine tangent space of the VMRT $\CC_{x}$ at $[T_{x}l]$ is
	\begin{equation}\label{unbendable_}
		\widehat{T}_{[T_{x}l]}\CC_{x}X
		=
		(\CO(2)\oplus \CO(1)^{\oplus s})_{o}.
	\end{equation}
\end{prop}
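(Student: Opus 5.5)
The plan is to deduce both statements from standard deformation theory of the morphism $f:\BP^1\to X$, following \cite{HM99}.

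\textbf{Splitting type.} First, since $x$ is general, $f$ is a free rational curve. Hence $f^*T_X\cong\bigoplus_i\CO(a_i)$ with all $a_i\ge 0$. The differential $df:T_{\BP^1}=\CO(2)\to f^*T_X$ is nonzero, so at least one $a_i\ge 2$. Minimality of $\CK$ means $\CK_x$ is projective, so no member through $x$ can degenerate into a reducible curve through $x$. By Mori's bend-and-break this forces $\deg f^*(-K_X)\le n+1$. On the other hand, $\dim\CK_x=\deg f^*(-K_X)-2$, because the space of morphisms sending $o\mapsto x$ has dimension $h^0(f^*T_X(-1))=\sum_i a_i$, and we subtract $\dim\Aut(\BP^1,o)=2$. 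Combining these with $a_i\ge0$ and one $a_i\ge2$, a short count shows that exactly one $a_i$ equals $2$ and the others are $0$ or $1$.

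Next, the tangent map $\tau_x:\CK_x\dashrightarrow \CC_x$, $[l]\mapsto[T_xl]$, is generically finite; this is the Kebekus/Hwang–Mok result, which I assume from \cite{HM99}. Hence $s=\dim\CC_x=\dim\CK_x=\sum_i a_i-2$. This equals the number of $\CO(1)$ summands, which gives \eqref{unbendable}.

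\textbf{Affine tangent space.} Because the $\CO(2)$ summand is $\Im(df)$, the curve is an immersion near $o$. When $s\ge1$, $x$ is general and $l$ is a general member, so $l$ is smooth at $x$: members singular at $x$ form a proper closed subset, and for general $x$ the generic member is unramified at $x$ with $f^{-1}(x)=\{o\}$. Then $T_xl=\Im(df_o)=\CO(2)_o$.

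To compute the differential of $\tau_x$ at $[l]$, take a first-order deformation of $f$ fixing $o$. It is a section $v\in H^0(f^*T_X\otimes\mathfrak m_o)=H^0(f^*T_X(-1))\otimes\CO(-1)_o$ up to reparametrization. The induced variation of $T_xl$ is the derivative of $v$ at $o$, i.e.\ the value at $o$ of $v/t$ where $t$ is a local coordinate at $o$. The sections of $\CO(a)(-1)$ evaluated at $o$ span $\CO(a)_o$ exactly when $a\ge1$. Therefore the image of $d\tau_x$, lifted to the cone, is $(\CO(2)\oplus\CO(1)^{\oplus s})_o$ modulo the line $\CO(2)_o$. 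Generic finiteness makes this image the whole tangent space of $\CC_x$ at a general point, and this yields \eqref{unbendable_}.

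\textbf{Main obstacle.} The hardest step is the generic finiteness and immersivity of $\tau_x$, i.e.\ that the derivative computation genuinely describes $\widehat T\CC_x$ rather than a possibly degenerate image. I would either cite \cite{HM99} or Kebekus's theorem that, for general $x$, curves in $\CK_x$ are smooth at $x$ and determined by their tangent direction, via bend-and-break applied to two curves sharing a tangent direction.
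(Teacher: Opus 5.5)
Your derivation of the splitting type has a genuine gap. The constraints you collect are: all $a_i\ge 0$, some $a_i\ge 2$, and $\sum_i a_i=\deg f^*(-K_X)\le n+1$. These do not force the standard form. For $n=3$ the types $(2,2,0)$, $(3,1,0)$ and $(4,0,0)$ satisfy all of them, and $(3,0,\dots,0)$ does for every $n\ge 2$. Your identity $\dim\CK_x=\sum_i a_i-2$ adds nothing new, and the later statement that this ``equals the number of $\CO(1)$ summands'' presupposes the conclusion.

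The global degree bound is too coarse a consequence of bend-and-break. What properness of $\CK_x$ really gives is the two-point version, which is the standard argument behind the paper's reference to \cite{HM99}; the paper itself gives no proof. Only finitely many members of $\CK_x$ pass through a general point $y$ of $l$. Otherwise a complete one-parameter family of curves through $x$ and $y$ would break into a reducible or non-reduced cycle, which is impossible inside the proper family $\CK_x$. So the locus swept out by $\CK_x$ has dimension $\dim\CK_x+1=\sum_i a_i-1$.

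On the other hand, consider the evaluation map $\BP^1\times\Hom(\BP^1,X;o\mapsto x)\to X$ at $(t,[f])$ with $t\ne o$. Its differential has image $\bigoplus_{a_i\ge 1}\CO(a_i)_t$, because a section of $\CO(a_i)$ vanishing at $o$ can be nonzero at $t$ exactly when $a_i\ge 1$. By generic smoothness, this image is the tangent space of the locus at a general point. Hence $\#\{i: a_i\ge 1\}=\sum_i a_i-1$, i.e.\ $\sum_{a_i\ge 1}(a_i-1)=1$. Together with some $a_i\ge 2$, this is precisely $\CO(2)\oplus\CO(1)^{\oplus p}\oplus\CO^{\oplus q}$, with $p$ the number of $a_i=1$.

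Alternatively, if you take generic finiteness of $\tau_x$ as an independent input, your own derivative computation repairs the argument, provided you run it without presupposing the splitting. It shows that the image of $d\tau_x$ at $[l]$ is $\bigl(\bigoplus_{a_i\ge 1}\CO(a_i)_o\bigr)/df(T_o\BP^1)$, of dimension $\#\{i:a_i\ge 1\}-1$. Generic smoothness forces this to equal $\dim\CK_x=\sum_i a_i-2$, which is the same identity.

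In the usual logical order, though, the splitting comes first, and then the step you call the main obstacle is automatic. The same computation shows that $d\tau_x$ has rank $p=\dim\CK_x$ at a general $[l]$, so $\tau_x$ is immersive there and hence generically finite. No finiteness theorem of Kebekus is needed, and certainly not birationality of $\tau_x$, which is what ``determined by the tangent direction'' means.

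The remaining steps are standard, except that $f^{-1}(x)=\{o\}$ is asserted rather than shown. A dimension count supplies it. If a general member of $\CK_x$ were singular at $x$ for general $x$, the pairs (member, singular point) would form a family of dimension $\dim\CK_x+n$. But this family is generically finite over $\CK$, which has dimension $\dim\CK_x+n-1$, a contradiction.
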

An important source of minimal rational curves is given by covering families of lines.
\begin{prop}
	Let $X\subset \BP^{N}$
	be a nonsingular projective variety covered by lines. Then any irreducible component of the family of lines covering
	$X$ is a minimal rational component. Moreover, the VMRT
	$\CC_{x}\subset \BP T_{x}X$ at a general point $x\in X$
	is nonsingular.
\end{prop}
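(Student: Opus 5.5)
The plan is to treat the two assertions separately. Minimality comes from the fact that lines have degree one. Smoothness of the VMRT comes from the splitting of $T_X$ along a line through a general point.

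\textbf{Minimality.} Let $\CK$ be an irreducible component of the family of lines covering $X$, and let $x\in X$ be general. I want to show that $\CK_x$ is projective. Take any one-parameter family of lines through $x$ and look at its limit in the Chow variety of $X$.
\begin{itemize}
\item The limit is an effective $1$-cycle of $\CO(1)$-degree $1$ passing through $x$.
\item A degree-one effective cycle is necessarily a reduced irreducible line, so no breaking into several components can occur.
\item Hence the limit lies in $RatCurves(X)$, and in fact in $\CK$, since $\CK$ is a closed component of the (closed) Hilbert scheme of lines.
\end{itemize}
Therefore $\CK_x$ is closed in a projective variety, hence projective and nonempty. This is exactly Definition~\ref{vmrt_defn}, so $\CK$ is a family of minimal rational curves.

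\textbf{Lines through a general point are standard.} There are countably many components of the space of lines. By generic smoothness applied to the universal family of each component, every line through a general point $x$ is free. Let $l\ni x$ be such a line, given by $f:\BP^1\to X$. We have inclusions
\[
T_{\BP^1}=\CO(2)\subset f^*T_X\subset f^*T_{\BP^N}=\CO(2)\oplus\CO(1)^{\oplus N-1}.
\]
It follows that $f^*T_X/\CO(2)$ embeds into $\CO(1)^{\oplus N-1}$, so all of its summands have degree at most $1$. By freeness they have degree at least $0$. This yields the splitting \eqref{unbendable}, so $l$ is standard.

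\textbf{Nonsingularity of the VMRT.} This follows from the standard properties of the tangent map $\tau_x:\CK_x\to\BP T_xX$.
\begin{itemize}
\item Since $H^1(\BP^1,N_{l}(-1))=0$, the space $\CK_x$ is smooth of dimension $s$ at each of its points.
\item The differential of $\tau_x$ at $[l]$ is identified with the inclusion $H^0(N_l(-1))=H^0(\CO(1)^{s}(-1))\hookrightarrow T_{[T_xl]}\BP T_xX$. This is injective, because sections of the $\CO(1)$-part are detected at $o$. So $\tau_x$ is an immersion.
\item A line is determined by one of its points together with its tangent direction, so $\tau_x$ is injective on points.
\end{itemize}
An injective immersion from the proper variety $\CK_x$ is a closed embedding. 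Its image is $\CC_x$, which is therefore nonsingular.

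The only delicate step is the genericity argument: we need \emph{every} line through $x$ to be free, not just a general one. This is handled by choosing $x$ outside the countably many proper closed subsets swept out by non-free lines. That is legitimate, since over $\BC$ a countable union of proper closed subsets cannot cover $X$.
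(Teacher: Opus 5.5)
Your argument is correct and is the standard one. The paper gives no proof of its own and defers to \cite{HM99}, where essentially your reasoning appears: minimal degree prevents lines from degenerating, so $\CK_x$ is projective; lines through a general point are free and hence standard via $N_{l/X}\subset N_{l/\BP^N}\cong\CO(1)^{\oplus N-1}$; and the tangent map is an injective immersion of the proper scheme $\CK_x$. The one step you leave implicit, that $0\to\CO(2)\to f^*T_X\to N_{l/X}\to 0$ splits, follows at once from $H^1(\BP^1,\CO(2-a))=0$ for $a\le 1$.
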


For VMRT-structures that are smooth at general points, one obtains a natural characteristic conic connection as follows.

\begin{prop}[{\cite[Proposition~8]{HM04}}]\label{vmrtischar}
	Keep the notation of Definition~\ref{vmrt_defn}. Assume that the VMRT $\CC_{x}X\subset \BP T_{x}X$
	is smooth for a general point $x\in X.$
	Equivalently, there exists a Zariski-open subset $M\subset X$ such that $\CC_{M}:=\CC|_{M}\subset \BP TM$ is a cone structure. Then the tangent directions to members of $\CK$
	determine a characteristic conic connection on $\CC_{M}.$
\end{prop}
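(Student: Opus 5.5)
The plan is to build the line subbundle $\CF\subset T_{\CC_M}$ directly from the lifts of minimal rational curves, and then check the two defining properties of a characteristic conic connection. First, $\CF$ projects isomorphically onto the tautological line. Second, $\CF$ lies in the Cauchy characteristic of the tautological distribution $\CI\subset T_{\CC_M}$, where
\[
\CI_\alpha=d\pi^{-1}\bigl(\widehat T_\alpha\CC_x\bigr),\qquad \alpha\in\CC_x,\ x\in M,
\]
and $\pi:\CC_M\to M$ is the projection.

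\textbf{Step 1 (the foliation).} Choose $M$ so small (still Zariski open) that three conditions hold:
\begin{itemize}
\item[(a)] every member $l$ of $\CK$ meeting $M$ is standard, with the splitting (\ref{unbendable});
\item[(b)] $\CC_M$ is smooth and submersive over $M$;
\item[(c)] the tangent map $\CK_x\to\CC_x$, $[l]\mapsto[T_xl]$, is an isomorphism onto $\CC_x$ for $x\in M$.
\end{itemize}
Condition (c) uses the birationality of the tangent map, by Kebekus and Hwang--Mok, together with the smoothness of $\CC_x$; we shrink $M$ if needed. Then through each $\alpha\in\CC_M$ with $\pi(\alpha)=x$ there is a unique member $l$ with $[T_xl]=\alpha$. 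Its tangent lift $\tilde l=\{[T_yl]\}$ is a smooth curve in $\CC_M$ through $\alpha$. Unbendability of $l$ makes these lifts vary holomorphically, since the relevant deformation space is smooth of the expected dimension. They therefore form a holomorphic foliation by curves, and we set $\CF_\alpha=T_\alpha\tilde l$. By construction $d\pi(\CF_\alpha)=T_xl=\alpha$, so $\CF$ is a conic connection.

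\textbf{Step 2 (the characteristic property).} By (\ref{unbendable_}), along $l$ we have $\widehat T_{[T_yl]}\CC_y=P_y$, where
\[
P=\CO(2)\oplus\CO(1)^{\oplus s}\subset f^*T_X
\]
is the positive part. Hence $\CI|_{\tilde l}=d\pi^{-1}(P)$. The key point is that $P$ is spanned at each point by values of deformation vector fields, namely the sections of $f^*T_X$ that come from one-parameter families of members of $\CK$. Take a local vector field $v$ on $\CC_M$ tangent to $\CI$. Its projection can be realised, modulo vertical vectors, by infinitesimal deformations of the leaves. Deforming leaves through leaves and using that the flow of $\CF$ preserves the leaf structure, we show $[\CF,\CI]\subset\CI$. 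This is the Cauchy characteristic condition.

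\textbf{Main obstacle.} The most delicate step is this last one: turning the pointwise identification $\CI|_{\tilde l}=d\pi^{-1}(P)$ into the Lie-bracket statement. I would first write everything in local coordinates adapted to a family of curves $l_t$ sweeping out a neighbourhood. In those coordinates $P$ is the span of $\partial_t f$ for the families fixing one point of $l$ together with $f'$. Since sections of $\CO(1)$ vanishing at $x$ generate $\CO(1)$ elsewhere, this span is constant in the appropriate sense along $l$. That constancy gives the bracket condition directly.
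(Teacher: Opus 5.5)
Your Step~1 is acceptable modulo the cited results. In fact (a) follows from (c): at a free but non-standard member, the differential of the tangent map $\CK_x\to\CC_x$ has a kernel, so that map cannot be an isomorphism onto a smooth $\CC_x$. The paper quotes this proposition from Hwang--Mok without proof, so the comparison is with the standard deformation-theoretic argument. Your overall framework matches it, but Step~2 has a genuine gap exactly where the content lies.

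Write $p\colon\CU\to\CK$ and $\mu\colon\CU\to X$ for the universal family. Via the tangent map, $\CC_M$ is identified with an open subset of $\CU$, with $\pi=\mu$, $\CF=\ker dp$ and $T_{[l]}\CK=H^0(l,N_l)$, where $N_l=f^*T_X/T_{\BP^1}$. What must be proved is that $\CI$ is pulled back from the local leaf space:
\[
\CI_{[T_yl]}=dp^{-1}\bigl(H^0(l,N_l^+)\bigr)\quad\text{for every } y\in l\cap M,\qquad N_l^+=P/T_{\BP^1}\cong\CO(1)^{\oplus s},
\]
where the right-hand side is independent of $y$. The class of $d\mu(v)$ in $N_{l,y}$ is $dp(v)(y)$, so this amounts to
\[
\{\sigma\in H^0(l,N_l):\sigma(y)\in N^+_{l,y}\}=H^0(l,N_l^+)\quad\text{for every } y .
\]
This holds because $N_l/N_l^+\cong\CO^{\oplus n-s-1}$ is trivial, so a section of it vanishing at one point vanishes identically. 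Alternatively, the inclusion $\supseteq$ is clear and both sides of the first display have rank $2s+1$. Granting this, the bracket condition is formal. In local coordinates $(k,t)$ in which $p$ is the projection, $\CI$ is spanned by $\partial_t$ together with $t$-independent fields spanning the distribution $H^0(N^+)$ on the base. All of these commute with $\partial_t$, so $[\CF,\CI]\subset\CI$.

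Your proposal never states or proves this saturation, and the fact you invoke points the wrong way. Global generation of $\CO(1)$ by sections vanishing at $x$ only shows that some fixed space of deformations has values spanning $P_y$, i.e.\ that its preimage is contained in $\CI$. It says nothing about $\CI$ containing further directions that vary along the leaf, which is precisely what the bracket condition must exclude. Taken literally, the deformations fixing one point $x$ form only the $s$-dimensional space $H^0(l,N_l(-x))$. Its preimage has rank $s+1<2s+1$ and its values vanish at $y=x$, so it is not the constant subspace you need; the correct one is $H^0(l,N_l^+)$, of dimension $2s$.

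Likewise, ``sections of $f^*T_X$ that come from one-parameter families of members of $\CK$'' does not single out $P$. At a standard curve every section of $N_l$ is realized by such a family, so their values span $T_yX$. And ``the flow of $\CF$ preserves the leaf structure'' holds for any foliation and says nothing about $\CI$. The decisive input is the trivial summand $\CO^{\oplus n-s-1}$ in (\ref{unbendable}), or the rank count, and your argument uses neither.
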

This provides a powerful approach to the study of uniruled projective manifolds via characteristic conic connections, as developed in~\cite{HN22,LH24}. One of the key ingredients is the following Cartan--Fubini type extension theorem.

\begin{thm}[{\cite[Theorem~1.2]{HM01}}]\label{HM01}
	Let \(\CK\) and \(\widetilde{\CK}\) be families of minimal rational curves on smooth projective varieties \(X\) and \(\widetilde X\), respectively. Assume that the VMRT
	$
	\CC_x\subset \BP T_xX
	$
	is smooth, irreducible, and linearly non-degenerate at a general point \(x\in X\). Suppose that there exists a biholomorphic map
	$
	f:U\longrightarrow \widetilde U
	$
	between connected nonempty open subsets \(U\subset X\) and \(\widetilde U\subset \widetilde X\), such that the induced map
	$
	df:\BP TU\longrightarrow \BP T\widetilde U
	$
	sends \(\CC|_U\) biholomorphically onto \(\widetilde{\CC}|_{\widetilde U}\).
	
	Then there exist a general member \(C\subset X\) of \(\CK\), a general member \(\widetilde C\subset \widetilde X\) of \(\widetilde{\CK}\), neighborhoods \(C\subset O\subset X\) and \(\widetilde C\subset \widetilde O\subset \widetilde X\), and a biholomorphic map
	$
	f:O\longrightarrow \widetilde O
	$
	such that \(f(C)=\widetilde C\).
	
	If furthermore \(X\) and \(\widetilde X\) have Picard number one, then f extends to a biregular isomorphism
	$
	F:X\longrightarrow \widetilde X
	$
	with \(F|_U=f\).
\end{thm}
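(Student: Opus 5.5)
Since this is \cite[Theorem~1.2]{HM01}, cited rather than proved here, I only sketch how I would reconstruct the argument of Hwang--Mok. It has three stages. First, show that $f$ sends germs of members of $\CK$ to germs of members of $\widetilde{\CK}$. Second, analytically continue $f$ along a general member $C$ to obtain the biholomorphism of neighborhoods. Third, when the Picard number is one, propagate the extension along chains of minimal rational curves to reach all of $X$.

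\textbf{Step 1 (main obstacle).} The key point is that members of $\CK$ are intrinsically determined by the cone structure $\CC|_U$. By Proposition~\ref{vmrtischar}, their tangent directions give a characteristic conic connection $\CF\subset T_{\CC}$. I would show that, because $\CC_x$ is smooth, irreducible and linearly non-degenerate, $\CF$ is the \emph{unique} characteristic conic connection on $\CC|_U$. For this I would use the splitting type \eqref{unbendable} and \eqref{unbendable_}: along a lift of a standard curve, the affine tangent spaces $\widehat T_{[T_xl]}\CC_x$ are determined by the $\CO(2)\oplus\CO(1)^{\oplus s}$ part. Linear non-degeneracy means these spaces span $T_xX$, which I expect to force any two characteristic line fields to coincide, via a Cauchy characteristic computation on the distribution $\CP$ on $\CC$ given by the pullback of affine tangent spaces. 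Granting uniqueness, $df$ sends $\CF$ to $\widetilde{\CF}$. Hence $f$ maps local arcs of general members of $\CK$ in $U$ onto local arcs of members of $\widetilde{\CK}$. I expect this uniqueness step to be the hardest; I would try first to prove it by showing that $\CF$ is exactly the Cauchy characteristic of $\CP$.

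\textbf{Step 2 (extension along a curve).} Fix a general member $C$ meeting $U$ and let $\widetilde C$ be the member of $\widetilde{\CK}$ containing $f(C\cap U)$. The normal bundle of $C$ is $\CO(1)^{\oplus s}\oplus\CO^{\oplus n-s-1}$. So the members of $\CK$ meeting the small open set $U\cap C'$, for $C'$ near $C$, sweep out a neighborhood $O$ of $C$, and a point of $O$ is recovered from such curves through it. I would define $F$ on $O$ by sending the member through a point $y\in U$ with tangent $[v]\in\CC_y$ to the member of $\widetilde{\CK}$ through $f(y)$ with tangent $df[v]$, and then intersecting images. Well-definedness follows from the identity theorem on the irreducible parameter spaces. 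Holomorphy and bijectivity onto a neighborhood $\widetilde O$ of $\widetilde C$ follow by applying the same construction to $f^{-1}$.

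\textbf{Step 3 (Picard number one).} Since $\rho(X)=1$, $X$ is rationally chain connected by members of $\CK$. Iterating Step 2 along chains of general members gives an analytic continuation of $f$ to a dense open set. The resulting graph is an irreducible analytic subset of $X\times\widetilde X$ whose closure is algebraic, by a Chow-type or Hartogs extension argument. I would then show it is single-valued by monodromy-freeness: the continuation preserves the VMRT-structure, so two continuations agree on an open set and hence everywhere. This yields a birational map $F:X\dashrightarrow\widetilde X$. Both varieties have Picard number one and $F$ is biholomorphic in codimension one, since it is defined near general curves which cover $X$. Therefore $F$ is an isomorphism in codimension one, pulls back ample classes to ample classes, and so is biregular with $F|_U=f$.
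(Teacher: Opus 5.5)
The paper does not prove this statement; it is quoted from \cite{HM01}. Your three-stage outline follows the architecture of the Hwang--Mok argument: Cauchy characteristics show that germs of members are preserved, $f$ is then extended to a neighborhood of a general member, and finally chains of members are used under Picard number one. Two of your steps, however, fail as written.

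\textbf{Step 1 uses the wrong hypothesis.} Let $P_\alpha=(d\pi_\alpha)^{-1}(\widehat T_\alpha\CC_x)$, and let $W\subset P$ be the line field tangent to lifted members. One always has $W\subset\mathrm{Ch}(P)$. For a vertical vector $v\in T_\alpha\CC_x$ and any $w\in P_\alpha$, the bracket $[v,w] \bmod P$ is given by the second fundamental form $\mathrm{II}_\alpha(v,d\pi(w))$ of $\CC_x\subset\BP T_xX$. Consequently $\mathrm{Ch}(P)=W$ holds exactly when $\mathrm{II}$ has trivial kernel at a general point, i.e.\ when the Gauss map of $\CC_x$ is generically finite. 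The fact that the affine tangent spaces span $T_xX$ plays no role in this.

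Smoothness, irreducibility and linear non-degeneracy are not enough. Take $X=\BP^n$ with its lines, so $\CC_x=\BP T_xX$ has all three properties. Then $P=T_{\CC}$, every conic connection is characteristic, and every non-projective local biholomorphism preserves $\CC$. Such a map does not send lines to lines and does not extend to an automorphism. So the statement as quoted tacitly needs $\CC_x\neq\BP T_xX$; Hwang--Mok exclude projective space. Under that hypothesis a smooth $\CC_x$ is not a linear subspace, so Zak's theorem \cite{Zak93} makes its Gauss map finite, and hence $\mathrm{Ch}(P)=W$. Since $df$ preserves $\CC$, it preserves $P$ and $\mathrm{Ch}(P)$, and your conclusion of Step 1 follows.

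\textbf{The single-valuedness argument in Step 3 is circular.} Two continuations of $f$ to a point $y$ along different chains are both VMRT-preserving germs at $y$. Nothing you have said forces them to agree on an open set, and that agreement is exactly the monodromy question. Preserving $\CC$ alone cannot pin down the germ: every homogeneous example has a large local symmetry algebra, so distinct VMRT-preserving germs abound.

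For the same reason, the closure of a graph obtained by continuation over a dense open set need not be analytic without further control. A vague appeal to a ``Chow-type or Hartogs'' argument does not yet apply. You need a genuinely global input. One possible route is the following.
\begin{enumerate}
\item Show that the continuation can be carried along every path in $X\setminus\Sigma$ with $\operatorname{codim}\Sigma\ge 2$. Picard number one enters here: every prime divisor is ample, so it meets every member.
\item Use that $X$ is Fano, hence simply connected, so $\pi_1(X\setminus\Sigma)=1$. The monodromy theorem then gives a single-valued holomorphic map $X\setminus\Sigma\to\widetilde X$.
\item This map is rational, because the pulled-back ample line bundle and its sections extend across codimension two. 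The same applies to the inverse.
\end{enumerate}
Only after this does your closing argument apply: an isomorphism in codimension one between varieties of Picard number one is biregular.

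Step 2 is acceptable in outline, but ``intersecting images'' needs two preliminary facts. First, the images of all members through $y$ must share a common point; this follows from the identity theorem on the local incidence variety of pairs of members through a point. Second, two general members through $y$ must meet only at $y$.
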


\subsection{Flag varieties}
In this paper we study isotrivial cone structures associated with irreducible $G$-structures, namely those for which
$
Z \subset \BP U
$
is a flag variety, also called a highest weight variety in~\cite{LH24}. We recall the basic definitions and facts concerning flag varieties.

\begin{defn}\label{highwvar}
	A projective subvariety
	$Z \subset \BP U$
	is called a \emph{flag variety} if
	$
	\Aut(\widehat{Z})
	$
	is a reductive group acting irreducibly on
	$
	U,
	$
	and
	$
	Z
	$
	is the unique closed orbit of the induced action of
	$
	\Aut(\widehat{Z})
	$
	on
	$
	\BP U.
	$
\end{defn}

In the sequel, we shall always describe a flag variety  $Z \subset \BP U$ as follows. The identity component of the automorphism group $G=\Aut^{0}(Z)$ is a semisimple group with Lie algebra
$\fg=\Lie(G),$ and $U=U_{\lambda}$ is the irreducible
$\fg$-module of highest weight $\lambda\in \Lambda^{+}.$
Then $Z$ can be written as
$Z=G/P,$ where $P\subset G$
is the stabilizer of $z=[u_{\lambda}]$, and $
u_{\lambda}\in \widehat{Z}
$ is a highest weight vector. 

\subsubsection{Parabolic grading and the isotropy action}

Let \(Z=G/P\subset\BP U_\lambda\) be a flag variety. Let \(L\subset P\) be the standard Levi subgroup and
\(R_u(P)\) the unipotent radical of \(P\). We denote by $\fp=\Lie(P)\subset \fg$ the parabolic subalgebra.
The parabolic grading of $\fg$ determined by $\fp$ is defined as follows. 
\begin{defn}
Write $\lambda=\sum_{i=1}^{n} a_{i}\lambda_{i}$ and set
$
\Delta_Z = \{\alpha_i \in \Delta \mid a_i \neq 0\},
$ and for each $k \in \BZ$, define
\[
\Phi_Z^{k} = \Big\{ \alpha = \sum_i m_i(\alpha)\alpha_i \in \Phi \ \Big|\
\sum_{\alpha_i \in \Delta_Z} m_i(\alpha) = k \Big\}.
\]
Then the grading is defined by
\[\label{para_grad}
\fg = \bigoplus_{k=-d}^{d} \fg_k, \qquad
\fg_{k} = \bigoplus_{\alpha \in \Phi_Z^{k}} \fg_{\alpha} \quad (k\not =0), \qquad
\fg_0 = \ft \oplus \bigoplus_{\alpha \in \Phi_Z^{0}} \fg_{\alpha} ,
\]
where we call $d$ the length of the grading.
\end{defn}
In particular $\fg_{0}$ is a reductive subalgebra and acts on each $\fg_{k}$. Moreover we have the identifications of sub-algebras:
$$
 \fp=\bigoplus_{k \ge 0} \fg_k, \quad   \Lie(L)=\fg_{0},  \quad 
\fn_{\fp}=\bigoplus_{k > 0} \fg_{k}, \quad  \fn_{\fp}^{-} = \bigoplus_{k < 0} \fg_{k},
$$
where $\fn_{\fp}=\Lie(R_{u}(P))$ is the nilpotent radical of $\fp$, and $\fn_{\fp}^{-}$ is a complement of $\fp$ in $\fg$. 

That $P$ fixes the point $[u_{\lambda}]$ gives its isotropy action on the (affine) tangent space.  The (affine) tangent space of $Z$ at the point $[u_{\lambda}]$ is identified as:

	\begin{equation}\label{iden}
		\hat{T}_{[u_\lambda]} Z=\BC u_\lambda\oplus \fn_{\fp}^{-}. u_\lambda,\qquad
		T_{[u_\lambda]}Z=Hom(\BC u_{\lambda}, \hat{T}_{[u_\lambda]}Z/\BC u_{\lambda})\cong \fg/\fp,
	\end{equation}
	where the last equality follows by evaluating the action of $\fg$ on $\BC u_{\lambda}$.  By taking the differential, the induced isotropy action of $\fp$ on the tangent space is known to be given by:
	\begin{equation}\label{phip_defn}
	\phi_{\fp}:\fp \longrightarrow End(\fg/\fp),
	\qquad
	\phi_{\fp}(x)(\overline{y})=\overline{[x,y]}.
	\end{equation}
The following results describe its kernel, which is very useful to our reduction step in Section 4.
\begin{lem}\label{ker_isotropy}
If $\fg$ is simple, then the kernel of $\phi_{\fp}$ is equal to $\fg_{d}$, which is an irreducible $P$-module.
\end{lem}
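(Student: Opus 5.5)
The plan is to identify $\Ker(\phi_\fp)$ with the largest ideal of $\fg$ contained in $\fp$, and then use the grading to read that ideal off explicitly.

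\textbf{Inclusion $\fg_d\subset\Ker(\phi_\fp)$.} This is immediate from the grading. Let $x\in\fg_d$ and $y\in\fg_k$ with $k<0$. Then $[x,y]\in\fg_{d+k}$ and $d+k\ge 0$, since $k\ge -d$. Hence $[x,y]\in\fp$, and since $\fn_\fp^-$ spans $\fg/\fp$, we get $\phi_\fp(x)=0$.

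\textbf{Reverse inclusion.} Set $\fk=\Ker(\phi_\fp)=\{x\in\fp\mid [x,\fg]\subset\fp\}$. It is an ideal of $\fp$ by the Jacobi identity, and it is stable under $\ft$. Because it is $\ft$-stable, it decomposes into $\fk\cap\ft$ together with root spaces $\fg_\alpha$ for roots $\alpha\in\bigcup_{k\ge 0}\Phi^k_Z$.

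First I rule out toral and degree-zero pieces.
\begin{itemize}
\item If $h\in\ft\cap\fk$, then $[h,\fg_{-\beta}]\subset\fp$ forces $\beta(h)=0$ for every $\beta$ with $\fg_{-\beta}\subset\fn_\fp^-$. These $\beta$ are exactly the positive roots of positive degree, which include every $\alpha_i\in\Delta_Z$.
\item The degree-zero part of $\fk$ is an ideal of the reductive algebra $\fg_0$ acting trivially on $\fg/\fp\cong\fn_\fp^-$, as a $\fg_0$-module. I would show that $\fg_0$ acts faithfully on $\fg_{-1}$. The reason is that $\fg_{-1}$ generates $\fn_\fp^-$, and any element of $\fg_0$ killing $\fg_{-1}$ would then centralize $\fn^-_\fp$ and (by the symmetric argument or Killing-form duality) $\fn_\fp$. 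It would therefore be central in $\fg$, which is impossible since $\fg$ is simple.
\end{itemize}
This disposes of $\fk\cap\fg_0$. For $\ft$ alone the same conclusion also follows directly: the $\alpha_i\in\Delta_Z$ together with the roots of $\fg_0$ span $\ft^*$.

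Next take a root space $\fg_\alpha\subset\fk$ with $\alpha$ of degree $k$, where $0<k<d$. Let $\theta$ be the highest root; it has degree $d$. I would use the fact that $\fg_d$ is an irreducible $\fg_0$-module, or more elementarily that $\fn_\fp$ is generated by $\fg_1$ and that the $\fg_0$-module structure connects $\alpha$ to $\theta$. The goal is to find $\beta$ of degree $-(k+1)$ or lower with $\alpha-\beta$... more directly, to find a negative-degree root $-\gamma$ such that $\alpha-\gamma$ is a root of negative degree.

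The cleanest route is via the Killing form. The form pairs $\fg_k$ with $\fg_{-k}$, so $[\fg_\alpha,\fn^-_\fp]\subset\fp$ means $\langle\fg_\alpha,[\fn^-_\fp,\fn^-_\fp]\ \text{components}\rangle$... Instead I will argue through the ideal $\fk'$ generated. Since $[\fk,\fg]\subset\fp$, the subspace $\fk+[\fk,\fg]+[[\fk,\fg],\fg]+\cdots$ is the ideal of $\fg$ generated by $\fk$. I need to show this ideal lies in $\fp$ whenever $\fk\not\subset\fg_d$, which will be the main obstacle.

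To handle it, I would instead show directly that $\alpha$ of degree $k<d$ admits a simple root $\alpha_j$ and a chain reaching a negative-degree root. Concretely, choose $\gamma$ of degree $d-k>0$ with $\alpha+\gamma=\theta$. Such a $\gamma$ exists by moving up from $\alpha$ to $\theta$ through root strings, using irreducibility of $\fg$. Then $\theta-\gamma'$ reasoning, via the Weyl reflection or the symmetry $\alpha\mapsto\theta-\alpha$ pattern, yields a root $\delta$ with $\delta$ positive of degree $>k$ and $\alpha-\delta$ a root, so that $[\fg_\alpha,\fg_{-\delta}]\ne 0$ lies in negative degree. This contradicts $\fg_\alpha\subset\fk$. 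I expect producing such a $\delta$ uniformly to be the delicate point. I would try $\delta=\alpha+\gamma_1$ where $\gamma_1$ is the first step of a string from $\alpha$ toward $\theta$ of positive degree, and check that $\alpha-\delta=-\gamma_1$ is a root of negative degree. This works whenever some step raises the degree, and since $k<d$ such a step must exist.

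\textbf{Irreducibility of $\fg_d$.} $\fg_d$ is a $P$-module on which $\fn_\fp$ acts trivially, because $\fg_{d+j}=0$ for $j>0$. So it suffices that $\fg_d$ is irreducible under $\fg_0$. This holds because $\fg_d$ is spanned by the roots of maximal degree, and these form a single $W_{\fg_0}$-orbit pattern with unique highest weight $\theta$ (a standard fact: it is generated from $\fg_\theta$ by $\fg_0\cap\fn^-$). I would prove this by showing that any root of degree $d$ can be raised by a simple root of $\fg_0$ unless it equals $\theta$.
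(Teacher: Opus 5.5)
Your proofs of $\fg_d\subset\Ker(\phi_\fp)$, of the degree-zero part via faithfulness of $\fg_0$ on $\fg_{-1}$, and of the irreducibility of $\fg_d$ are fine. For the degree-zero part you should add that the centralizer of $x$ contains $\fg_{\pm1}$, and these generate $\fg$. Your opening framing is off, however. The largest ideal of $\fg$ contained in $\fp$ is $0$ when $\fg$ is simple. Your $\mathfrak{k}=\{x\in\fp\mid[x,\fg]\subset\fp\}$ is the right object. The detour through the ideal of $\fg$ generated by $\mathfrak{k}$ leads nowhere, since $\fg_d$ alone already generates all of $\fg$ as an ideal.

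\textbf{The gap.} The genuine gap is the step for a root $\alpha$ of degree $0<k<d$, which is the heart of the lemma. Your final recipe takes a chain $\alpha=\beta_0,\ \beta_1=\beta_0+\alpha_{i_1},\ \dots,\ \beta_m=\theta$ and sets $\delta=\alpha+\gamma_1$, where $\gamma_1$ is the first degree-raising simple root. But $\delta$ is a root only if that degree-raising step is the \emph{first} step of the chain. The condition you propose to check, that $-\gamma_1$ is a root, is automatic and irrelevant. Knowing that some step raises the degree does not give a chain whose first step does.

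This already fails for $A_3$ with $\Delta_Z=\{\alpha_1,\alpha_3\}$ (the adjoint variety, $d=2$) and $\alpha=\alpha_1$. No $\alpha_i\in\Delta_Z$ satisfies $\alpha_1+\alpha_i\in\Phi$. The only chain is $\alpha_1\to\alpha_1+\alpha_2\to\theta$, whose first degree-raising step is $\alpha_3$, yet $\alpha_1+\alpha_3\notin\Phi$. The actual witness is $\delta=\theta$, with $\alpha-\delta=-(\alpha_2+\alpha_3)$ a non-simple root, and your construction never produces it.

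\textbf{The fix.} The missing idea uses a fact you already proved: $\mathfrak{k}$ is an ideal of $\fp$, so it is stable under $\ad(e_j)$ for every $\alpha_j\notin\Delta_Z$. Suppose $\alpha_{i_1},\dots,\alpha_{i_{j-1}}\notin\Delta_Z$ and $\alpha_{i_j}\in\Delta_Z$ is the first degree-raising step. Then $\fg_{\beta_{j-1}}=\ad(e_{i_{j-1}})\cdots\ad(e_{i_1})\fg_\alpha$ is nonzero by \cite[Proposition~8.4(d)]{Hum72} and lies in $\mathfrak{k}$. Now $\fg_{-\beta_j}\subset\fn_\fp^-$, and $[\fg_{\beta_{j-1}},\fg_{-\beta_j}]=\fg_{-\alpha_{i_j}}\not\subset\fp$, a contradiction.

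This is exactly the paper's argument, phrased via $\fg_0$-highest weight vectors. Take a $\fg_0$-highest weight vector $X_\alpha\in\Ker(\phi_\fp)$. If $\alpha\neq\theta$, some simple root $\alpha_i$ raises $\alpha$, and $\fg_0$-highestness forces $\alpha_i\in\Delta_Z$. Bracketing with $\fg_{-\alpha-\alpha_i}$ then lands nontrivially in $\fg_{-\alpha_i}$ modulo $\fp$.
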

	\begin{proof}
That $\fg_{d} \subset \ker(\phi_{\fp})$ follows from
$
[\fg_{d}, \fg] \subset \fg_{\ge 0} = \fp,
$
since $d$ is the length of the grading. Moreover, $\fg_d$ is an irreducible $P$-module. Indeed, the unipotent radical of $P$ acts trivially on $\fg_d$, so the $P$-module structure factors through the Levi subgroup. A highest weight vector of $\fg_d$ is also a highest root vector of $\fg$, and hence is unique up to scalar. 

The map $\phi_{\fp}$ is $P$-equivariant. Hence its kernel decomposes into irreducible $\fg_{0}$-modules. Let $X$ be a $\fg_{0}$-highest weight vector in $\ker(\phi_{\fp})$. Then $X$ is either a root vector or a semisimple element in the Cartan subalgebra $\ft$. It suffices to show that $X$ must be a highest root vector which lies in $\fg_{d}$.

If $X$ is an element in $\ft$, then it implies that $X$ lies in the center of $\fg_{0}$, that is $\alpha_j(X)=0$ for all $\alpha_j \notin \Delta_Z$. Now take any $Y_{\alpha_i} \in \fg_{-\alpha_i}$ with $\alpha_i \in \Delta_Z$. Then
\[
\phi_{\fp}(X)(\overline{Y_{\alpha_i}}) = \alpha_i(X)\,\overline{Y_{\alpha_i}} = 0,
\]
which implies that $\alpha_i(X)=0$ and hence $X=0$.

Now suppose that \(X=X_{\alpha}\) is a root vector. Since \(X\) is a highest weight vector of \(\fp\) with respect to the \(\fg_0\)-action, the root \(\alpha\) is \(\fg_0\)-dominant. In particular, \(\alpha\) is a positive root belonging to \(\Phi_Z^k\) for some \(k\geq 0\). Assume that $\alpha$ is not the highest positive root. Since $\fg$ is simple, there exists a simple root $\alpha_i$ such that
$
\alpha' = \alpha + \alpha_i \in \Phi.
$
Moreover, since $X$ is $\fg_{0}$-highest, we may assume that $\alpha_i \in \Delta_Z$. In particular, $-\alpha' \in \Phi_{Z}^{-k-1}$ and
\[
-\alpha' + \alpha = -\alpha_i.
\]
Choose any nonzero $Y_{\alpha'} \in \fg_{-\alpha'}$. Then $[X, Y_{\alpha'}] \in \fg_{-\alpha_i}$ and is nonzero by~\cite[Proposition~8.4(d)]{Hum72}. Hence
\[
\phi_{\fp}(X)(\overline{Y_{\alpha'}}) = \overline{[X, Y_{\alpha'}]} \neq 0,
\]
a contradiction. Therefore $\alpha$ must be the highest root.
	\end{proof}
\begin{lem}\label{faithful_isotropy}
If $\fg$ is simple, then the action of $\fg_{d}$ on the affine tangent space $\hat{T}_{[u_{\lambda}]}Z$ is faithful.	
\end{lem}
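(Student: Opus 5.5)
The plan is to reduce faithfulness to showing that one well-chosen element of $\fg_d$ acts nontrivially, using the irreducibility of $\fg_d$ from Lemma~\ref{ker_isotropy}.

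\emph{Step 1: the kernel is $0$ or all of $\fg_d$.} Since $P$ fixes $[u_\lambda]$, it preserves the affine tangent space $\widehat T_{[u_\lambda]}Z$. Consider the restriction map
\[
\rho:\fg_d\longrightarrow \End(\widehat T_{[u_\lambda]}Z),\qquad x\longmapsto x|_{\widehat T_{[u_\lambda]}Z}.
\]
This makes sense because $\fg_d\subset\fp$ and $\fp$ preserves $\widehat T_{[u_\lambda]}Z$. The map $\rho$ is $P$-equivariant for the adjoint action on $\fg_d$ and conjugation on $\End$. Indeed, $\mathrm{Ad}(p)x$ acts on $U$ as $p\,x\,p^{-1}$, and $p$ preserves $\widehat T_{[u_\lambda]}Z$. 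Hence $\ker\rho$ is a $P$-submodule of $\fg_d$. By Lemma~\ref{ker_isotropy}, $\fg_d$ is irreducible, so $\ker\rho$ is either $0$ or $\fg_d$. It therefore suffices to exhibit one element of $\fg_d$ acting nontrivially.

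\emph{Step 2: the highest root vector acts nontrivially.} Let $\theta$ be the highest root. As noted in the proof of Lemma~\ref{ker_isotropy}, $\theta\in\Phi_Z^d$, and $-\theta\in\Phi_Z^{-d}$. Take nonzero $X_\theta\in\fg_\theta$ and $Y_{\theta}\in\fg_{-\theta}$, normalized so that $h_\theta=[X_\theta,Y_\theta]$ is the coroot. Then $Y_\theta\in\fn_\fp^-$, so $Y_\theta.u_\lambda\in\widehat T_{[u_\lambda]}Z$ by \eqref{iden}. Since $X_\theta.u_\lambda=0$ (highest weight vector), we get
\[
X_\theta.(Y_\theta.u_\lambda)=[X_\theta,Y_\theta].u_\lambda=\langle\lambda,\theta^\vee\rangle\,u_\lambda .
\]
Because $\fg$ is simple, $\theta^\vee$ is a combination of the simple coroots with all coefficients positive. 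As $\lambda\neq0$ is dominant, it follows that $\langle\lambda,\theta^\vee\rangle>0$. So $X_\theta$ acts nontrivially on $\widehat T_{[u_\lambda]}Z$, hence $\ker\rho\neq\fg_d$, and therefore $\ker\rho=0$.

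The only point needing care is the positivity $\langle\lambda,\theta^\vee\rangle>0$. This follows from the fact that the highest coroot, i.e.\ the coroot of the highest root, has full support on the simple coroots, which holds because $\fg$ is simple. The remaining steps are formal.
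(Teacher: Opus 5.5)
Your proof is correct and follows essentially the same route as the paper: irreducibility of $\fg_d$ (Lemma~\ref{ker_isotropy}) reduces faithfulness to nontriviality, and nontriviality is detected by $X_\theta(Y_\theta u_\lambda)=\langle\lambda,\theta^\vee\rangle u_\lambda\neq 0$. Your argument for $\langle\lambda,\theta^\vee\rangle>0$, namely that $\theta^\vee$ has full support on the simple coroots, just spells out the paper's brief remark that $\lambda(H_\alpha)\neq 0$ for $\alpha\in\Phi_Z^d$.
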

\begin{proof}
Assume otherwise. Since the action is $\fp$-equivariant and
$\fg_{d}$
is an irreducible
$
\fp
$
-module, the action of
$
\fg_{d}
$
on
$
\hat{T}_{[u_{\lambda}]}Z
$
must be trivial. Let
$
X_{\alpha}\in \fg_{\alpha}
$
be a highest root vector, and choose
$
0\neq X_{-\alpha}\in \fg_{-\alpha}.
$
Then
$
H_{\alpha}:=[X_{\alpha},X_{-\alpha}]\in \ft
$
satisfies
$H_{\alpha}(u_{\lambda})=\lambda(H_{\alpha})u_{\lambda}
\neq 0,$
since
$\lambda(H_{\alpha})\neq 0$
for $\alpha\in \Phi^{d}_{Z}$ with
$d>0.$
Hence, for the nonzero vector
$
X_{-\alpha}(u_{\lambda})\in \hat{T}_{[u_{\lambda}]}Z,
$
we obtain
\[
X_{\alpha}(X_{-\alpha}(u_{\lambda}))
=
[X_{\alpha},X_{-\alpha}](u_{\lambda})
=
H_{\alpha}(u_{\lambda})
\neq 0,
\]
a contradiction.
\end{proof}
\subsubsection{Borel--Weil--Bott theorem}
Most vector bundles and bundle morphisms considered on $Z=G/P$ are homogeneous, so their study reduces to the corresponding $P$-modules at the base point $[u_{\lambda}]$. We recall the Borel--Weil--Bott theorem and the PRV formula, which provide the main tools for this reduction.
Let $M$ be a $P$-module. Denote by 
\[
E(M) = G \times^{P} M \longrightarrow G/P = Z
\]
the associated $G$-homogeneous vector bundle on $Z$. Recall $\rho$ is the sum of fundamental weights.

\begin{thm}\label{PRV0}
\textup{(Borel--Weil--Bott)}
		Let 
		$
		M
		$
		be an irreducible $P$-module with lowest weight
		$
		\mu_{\mathrm{low}}.
		$
		Then:
		\begin{itemize}
			\item[(a)]
			If
			$
			\rho-\mu_{\mathrm{low}}
			$
			is singular, then
			$
			H^{p}(Z,E(M))=0,
			\quad
			\text{for all }
			p\geqslant 0.
			$
			
			\item[(b)]
			If
			$
			\rho-\mu_{\mathrm{low}}
			$
			is regular, then there exists a unique nonvanishing cohomology group
			$
			H^{i}(Z,E(M)),
			$
			which is an irreducible $\fg$-module, and
			$
			i
			$
			is the length of the unique Weyl group element
			$
			w
			$
			such that
			$
			w(\rho-\mu_{\mathrm{low}})
			$
			is dominant.
		\end{itemize}

\end{thm}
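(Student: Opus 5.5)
This is the classical Borel--Weil--Bott theorem, so the plan is to reduce it to the line-bundle case on the full flag variety and then run Demazure's rank-one argument, rather than to prove anything new.

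\textbf{Reduction to $L$-modules.} First I will show that the unipotent radical $R_u(P)$ acts trivially on $M$. Indeed, $M^{R_u(P)}$ is a nonzero $P$-submodule, since $R_u(P)$ is unipotent and normal in $P$, so by irreducibility it equals $M$. Hence $M$ is an irreducible module for the Levi subgroup $L$, and it is determined by its extremal weight $\mu_{\mathrm{low}}$.

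\textbf{Passage to $G/B$.} Let $\pi:G/B\to G/P$ be the projection. Its fibre is $P/B\cong L/(B\cap L)$, the full flag variety of $L$. Let $\mathcal L$ be the $G$-line bundle on $G/B$ attached to the character of $B$ determined by $\mu_{\mathrm{low}}$, with the sign normalised to match the paper's convention in terms of $\rho-\mu_{\mathrm{low}}$. By Borel--Weil for the reductive group $L$ on the fibre, $\pi_*\mathcal L\cong E(M)$, since the sections over $P/B$ recover the irreducible $L$-module with the prescribed extremal weight. By Kempf vanishing on the fibres, $R^q\pi_*\mathcal L=0$ for $q>0$, because the restricted character is $L$-dominant in the appropriate sense. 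The Leray spectral sequence therefore degenerates and gives $H^p(Z,E(M))\cong H^p(G/B,\mathcal L)$ for all $p$. The statement is thus reduced to Bott's theorem for line bundles on $G/B$.

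\textbf{Line bundles on $G/B$ (Demazure's argument).} For a simple root $\alpha_i$, let $P_i\supset B$ be the minimal parabolic. Then $G/B\to G/P_i$ is a $\BP^1$-bundle, and computing on the fibres gives the following.
\begin{itemize}
\item If the weight $\nu$ satisfies $\langle\nu+\rho',\alpha_i^\vee\rangle=0$, where $\rho'$ is the appropriate shifted $\rho$, then all cohomology of $\mathcal L_\nu$ vanishes. This is because the fibre degree is $-1$.
\item Otherwise $H^p(\mathcal L_\nu)\cong H^{p\pm1}(\mathcal L_{s_i\cdot\nu})$ for the dot action, via Serre duality on the $\BP^1$ fibres and the Leray spectral sequence.
\end{itemize}
Inducting on the length of the Weyl element moving $\rho-\mu_{\mathrm{low}}$ into the dominant chamber reduces everything to the dominant case. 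In that case Kempf vanishing, combined with Borel--Weil (where $H^0$ is the irreducible module of the corresponding highest weight), gives the answer. This yields (a) when $\rho-\mu_{\mathrm{low}}$ is singular, since some reflection then hits a wall. It yields (b) with $i=\ell(w)$ when $\rho-\mu_{\mathrm{low}}$ is regular.

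\textbf{Main obstacle.} The real content is the $\BP^1$-fibration step together with Kempf vanishing in the dominant case, for which I would simply cite Demazure's short proof. The only point I would expect to need care is bookkeeping. The paper phrases the criterion via $\rho-\mu_{\mathrm{low}}$ rather than $\mu+\rho$, so I must fix the dualisation convention for $E(M)$, namely whether sections correspond to $M$ or to $M^\vee$. With that convention the shifted dot action becomes $w(\rho-\mu_{\mathrm{low}})$, and regularity, singularity and the degree $\ell(w)$ must match exactly as stated. I would check this against the test case $H^0(Z,\CO(1))=U_\lambda^\vee$.
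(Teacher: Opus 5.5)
The paper gives no proof of this statement. It quotes the classical Borel--Weil--Bott theorem and uses it only as a black box to decide vanishing and nonvanishing of $H^i$ of homogeneous bundles, so there is no argument of the paper's to compare yours against.

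Your outline is the standard proof and it is correct. Its steps are:
\begin{itemize}
\item $R_u(P)$ acts trivially on $M$;
\item Borel--Weil and Kempf vanishing on the fibres $P/B\cong L/(B\cap L)$ give $H^p(Z,E(M))\cong H^p(G/B,\mathcal L)$ via Leray;
\item Demazure's $\BP^1$-fibration induction then handles line bundles on $G/B$.
\end{itemize}

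On the bookkeeping you flagged, no sign change is needed under the paper's conventions, where $P\supset B$ contains the positive root groups and $\CO(1)=E(\BC_{-\lambda})$. The $B$-equivariant projection $M\to\BC_{\mu_{\mathrm{low}}}$ onto the lowest weight space (killing $\fn\cdot M$) gives $E(M)\cong \pi_*\bigl(G\times^B\BC_{\mu_{\mathrm{low}}}\bigr)$. The surviving group is then $H^{\ell(w)}\cong U^\vee_{w(\rho-\mu_{\mathrm{low}})-\rho}$. This recovers $H^0(Z,\CO(1))=U_\lambda^\vee$ when $\mu_{\mathrm{low}}=-\lambda$, as your test case requires.
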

The following formula gives an explicit description of $H^0(Z,E(M))$; see \cite{PRV67,PY08} for further details.
Let $U_{\mu}$ be any irreducible $\fg$-module of highest weight $\mu$. Consider the evaluation map at a lowest weight vector 
$u_{-\mu} \in U_{\mu}^{\vee}$:
\[
\pi : \Hom(U_{\mu}^{\vee}, M) \longrightarrow M, 
\qquad h \longmapsto h(u_{-\mu}).
\]
For a weight $\eta$, denote by $M_{\eta}$ the $\eta$-weight space of $M$ and set
\[
M^{\eta} = \{\, m \in M \mid e_i^{\langle \eta,\alpha_i^{\vee}\rangle+1}(m) = 0 \ \text{for all } i \,\}.
\]
\begin{thm}\label{PRV}
	Let $M$ be a $P$-module. Then the following hold:
	\begin{itemize}
		\item[(1)]
		There is a canonical isomorphism of $\fg$-modules:
		\begin{equation}\label{peter_wey}
			H^{0}(Z,E(M))
			\cong
			\bigoplus_{\mu\in \Lambda^{+}}
			U_{\mu}^{\vee}
			\otimes
			\Hom_{P}(U_{\mu}^{\vee},M),
		\end{equation}
		where $\fg$ acts trivially on $\Hom_{P}(U_{\mu}^{\vee},M).$	
		\item[(2)]
		(\emph{PRV formula})
		 The projection
		$
		\pi
		$
		induces an isomorphism of vector spaces
		\[
		\pi:
		\Hom_{\fn}(U_{\mu}^{\vee},M)
		\xrightarrow{\ \sim\ }
		M^{\mu},
		\]
        where $\fn$ is the nilpotent radical of the Borel subalgebra. In particular, it restricts to an isomorphism
		\[
		\pi:
		\Hom_{P}(U_{\mu}^{\vee},M)
		\xrightarrow{\ \sim\ }
		\left\{
		m\in M_{-\mu}\cap M^{\mu}
		\ \middle|\
		f_{i}(m)=0
		\text{ for all }
		\alpha_{i}\notin \Delta_{Z}
		\right\}.
		\]
	\end{itemize}
\end{thm}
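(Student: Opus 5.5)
The plan is to prove the two parts separately. Part (1) is an algebraic Peter--Weyl argument. Part (2) uses the standard presentation of an irreducible finite-dimensional module as a cyclic $U(\fn)$-module generated by its lowest weight vector.

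For (1), I identify sections of $E(M)=G\times^P M$ with regular maps $s:G\to M$ satisfying $s(gp)=p^{-1}s(g)$. This gives
\[
H^0(Z,E(M))\cong (\BC[G]\otimes M)^P ,
\]
where $P$ acts on $\BC[G]$ by right translation and $G$ acts by left translation. The algebraic Peter--Weyl theorem gives a $G\times G$-decomposition
\[
\BC[G]\cong\bigoplus_{\mu\in\Lambda^+}U_\mu^\vee\otimes U_\mu ,
\]
with matrix coefficients $\xi\otimes v\mapsto (g\mapsto \xi(gv))$. Taking $P$-invariants only affects the right-hand factor, so
\[
(\BC[G]\otimes M)^P\cong\bigoplus_\mu U_\mu^\vee\otimes (U_\mu\otimes M)^P ,
\qquad
(U_\mu\otimes M)^P=\Hom_P(U_\mu^\vee,M).
\]
This is \eqref{peter_wey}, with $\fg$ acting only on the first factor.

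For (2), note that $u_{-\mu}$ is a lowest weight vector of $U_\mu^\vee$, so $U_\mu^\vee=U(\fn)\,u_{-\mu}$.
\begin{itemize}
\item[(a)] \emph{Injectivity.} An $\fn$-equivariant map $h$ is determined by $h(u_{-\mu})$, so $\pi$ is injective on $\Hom_\fn(U_\mu^\vee,M)$.
\item[(b)] \emph{The image lies in $M^\mu$.} Since $e_i^{\langle\mu,\alpha_i^\vee\rangle+1}u_{-\mu}=0$ in the finite-dimensional module, equivariance forces $m=h(u_{-\mu})$ to lie in $M^\mu$.
\item[(c)] \emph{Surjectivity.} Here I would use the key structural input: the annihilator of $u_{-\mu}$ in $U(\fn)$ is exactly the left ideal $I_\mu$ generated by the elements $e_i^{\langle\mu,\alpha_i^\vee\rangle+1}$. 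This is the Harish-Chandra/Serre presentation of $U_\mu^\vee$ as the quotient of the lowest weight Verma module by the submodule generated by these vectors, restricted to $U(\fn)$. Granting it, $U_\mu^\vee\cong U(\fn)/I_\mu$ as $\fn$-modules. For $m\in M^\mu$ the map $x u_{-\mu}\mapsto x m$ is then well defined and $\fn$-equivariant. Hence $\pi$ is an isomorphism onto $M^\mu$.
\end{itemize}
I expect the annihilator description in (c) to be the main obstacle. I would either cite it from the standard theory of finite-dimensional modules (Verma module quotients), or prove it by comparing characters via the Weyl character formula and the BGG resolution.

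For the $P$-equivariant refinement I use that $P$ is connected, so $P$-equivariance is equivalent to $\fp$-equivariance. Since $\fp=\fn\oplus\ft\oplus\bigoplus_{\alpha_i\notin\Delta_Z}\BC f_i$ as a Lie algebra generated by these pieces, it suffices to impose commutation with $\ft$ and with these $f_i$.
\begin{itemize}
\item[(a)] \emph{$\ft$-equivariance.} This is equivalent to $m\in M_{-\mu}$.
\item[(b)] \emph{Necessity of $f_i(m)=0$.} For $\alpha_i\notin\Delta_Z$ we have $\langle\mu,\alpha_i^\vee\rangle$ arbitrary, but $f_iu_{-\mu}=0$, which forces $f_i(m)=0$.
\item[(c)] \emph{Sufficiency.} Given $m$ of weight $-\mu$ with $f_i(m)=0$, I verify $f_i h(xu_{-\mu})=h(f_ixu_{-\mu})$ by induction on the degree of $x\in U(\fn)$. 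The commutation relation $f_ix=xf_i+[f_i,x]$ with $[f_i,x]\in U(\fn)\ft+U(\fn)f_i$ reduces the claim to the action of $\ft$ and $f_i$ on $u_{-\mu}$ and on $m$, where both sides agree by weight matching and $f_im=0=f_iu_{-\mu}$.
\end{itemize}
This yields the stated description of $\Hom_P(U_\mu^\vee,M)$.
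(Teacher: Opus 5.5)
The paper does not prove this theorem. It states it as a known result with a pointer to \cite{PRV67,PY08}, so there is no internal argument to compare with. Your proposal is a correct, self-contained proof along the standard lines.

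In (1), the argument is right: you combine $H^0(Z,E(M))\cong(\BC[G]\otimes M)^P$ with the algebraic Peter--Weyl theorem and $(U_\mu\otimes M)^P=\Hom_P(U_\mu^\vee,M)$. Your conventions correctly place the left $G$-action on $U_\mu^\vee$ and the right $P$-action on $U_\mu$.

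In (2), the only nontrivial input is the one you isolate: the annihilator of $u_{-\mu}$ in $U(\fn)$ is $\sum_i U(\fn)e_i^{\langle\mu,\alpha_i^\vee\rangle+1}$. This is the lowest-weight form of the classical presentation of a finite-dimensional irreducible module as a Verma module modulo the submodule generated by the vectors $f_i^{\langle\mu,\alpha_i^\vee\rangle+1}v_\mu$. Since these are singular vectors, that submodule equals $\sum_i U(\fn^-)f_i^{\langle\mu,\alpha_i^\vee\rangle+1}v_\mu$, which is exactly the annihilator statement. You can therefore cite it rather than going through characters and the BGG resolution.

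What your route adds over the bare citation is a clean separation of two levels. The $\fn$-level statement needs no weight condition. The constraints $m\in M_{-\mu}$ and $f_i(m)=0$ enter only when passing to $\fp$.

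A few small corrections.
\begin{itemize}
\item The inclusion $[f_i,x]\in U(\fn)\ft+U(\fn)f_i$ is not literally true. For example, $[f_i,e_i^2]=-2e_ih_i-2e_i$ with $h_i=[e_i,f_i]$. Since $[f_i,e_j]=-\delta_{ij}h_i$, what actually holds is $[f_i,x]\in U(\fn)U(\ft)=U(\mathfrak b)$ for $x\in U(\fn)$. Then $f_i\,h(xu_{-\mu})=xf_im+[f_i,x]m=[f_i,x]m=h([f_i,x]u_{-\mu})=h(f_ixu_{-\mu})$ follows at once from $\mathfrak b$-equivariance, with no induction needed.
\item $\fp$ is generated, not spanned, by $\mathfrak b$ and the $f_i$ with $\alpha_i\notin\Delta_Z$. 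This suffices, because the elements of $\fp$ commuting with $h$ form a Lie subalgebra.
\item A Peter--Weyl decomposition indexed by all of $\Lambda^+$ requires $G$ to be simply connected. In general $G=\Aut^0(Z)$ is not, and already $U_\lambda$ need not be a $G$-module. Replace $G$ and $P$ by the simply connected cover and the preimage of $P$; this changes neither $Z$ nor your argument.
\end{itemize}
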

\subsubsection{(Co)minuscule, quasi-minuscule and adjoint varieties}
We recall the definition of several special classes of flag varieties. 
The first class consists of Hermitian symmetric spaces.

\begin{defn}\label{cominuscule}
	Let
	$
	Z=G/P\subset \BP U
	$
	be a flag variety. 
	It is called an \emph{(irreducible) Hermitian symmetric space} if $\fg/\fp$
	is a completely reducible (resp.\ irreducible) $P$-module.
\end{defn}

A Hermitian symmetric space is a product of irreducible Hermitian symmetric spaces (IHSS for short). The following well-known fact characterizes IHSSs in terms of highest weights.

\begin{lem}
	Let
	$
	Z\subset \BP U_{m \lambda_{i}}
	$
	be a flag variety for some fundamental weight $\lambda_{i}$. Then $Z$ is isomorphic to an IHSS  if and only if 
	$
	\lambda_{i}
	$
	is minuscule or cominuscule.
\end{lem}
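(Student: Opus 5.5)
Since $Z\subset\BP U_{m\lambda_i}$ has $\Delta_Z=\{\alpha_i\}$, the parabolic grading is the $\BZ$-grading of $\fg$ by the coefficient $m_{\alpha_i}(\alpha)$. The length $d$ is therefore the coefficient of $\alpha_i$ in the highest root $\theta$ of the simple factor containing $\alpha_i$. By the definition of cominuscule, $\lambda_i$ is cominuscule exactly when $m_{\alpha_i}(\theta)=1$, i.e.\ $d=1$. The plan is to show that $Z$ is an IHSS precisely when $d=1$. Then we check that minuscule weights also give $d=1$, or at least that they define an IHSS, possibly after identifying $Z$ with a different flag variety.

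\emph{Step 1 ($d=1\Rightarrow$ IHSS).} By \eqref{iden}, $\fg/\fp\cong\fn_\fp^-=\fg_{-1}$ as a $\fg_0$-module. Since $[\fn_\fp,\fg_{-1}]\subset\fg_{\ge0}=\fp$, the nilradical acts trivially on $\fg/\fp$, so the $P$-action factors through the Levi factor $L$. It remains to see that $\fg_{-1}$ is irreducible under $\fg_0$. Every root in $\Phi_Z^{-1}$ is obtained from the lowest one, $-\alpha_i$, by adding simple roots $\alpha_j$ with $j\neq i$. This follows from the standard fact that any root with $m_{\alpha_i}=-1$ can be lowered by subtracting some simple root $\alpha_j$ with $j\neq i$ while staying in $\Phi_Z^{-1}$, until one reaches $-\alpha_i$. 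Hence $\fg_{-1}$ has a unique lowest weight vector, so it is irreducible and $Z$ is an IHSS.

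\emph{Step 2 (IHSS $\Rightarrow d=1$).} Suppose $d\ge2$. Then $\fg_{-1}\subset\fn_\fp^-$ is a proper $P$-submodule of $\fg/\fp$. Indeed, $\fg_{\ge-1}/\fp$ is $\fp$-stable because $[\fg_{\ge0},\fg_{-1}]\subset\fg_{\ge-1}$. For complete reducibility we would need a $P$-stable complement, which must contain $\fg_{-2}$ modulo $\fg_{\ge-1}$. Pick a root vector $Y\in\fg_{-2}$ and a root vector $X\in\fg_1$ with $[X,Y]\neq0$; such a pair exists because $\fg_{-2}=[\fg_{-1},\fg_{-1}]$ and by the argument in the proof of Lemma~\ref{ker_isotropy}. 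Then $\phi_\fp(X)$ moves the complement into $\fg_{-1}$, so $\fg/\fp$ is not completely reducible. To make this rigorous, one uses that the only $P$-stable subspaces of $\fg/\fp$ are the filtration pieces $\fg_{\ge-k}/\fp$ intersected with $\fg_0$-submodules; these are nested rather than complementary, because $\fn_\fp$ acts by raising degree and acts nontrivially between consecutive degrees.

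\emph{Step 3 (minuscule case).} If $\lambda_i$ is minuscule, the flag variety $G/P_i$ is the same abstract variety as for the dual weight $-w_0\lambda_i$. By the classification of minuscule weights, $\lambda_i$ is then also cominuscule in the simply-laced types and in types $A$, $C_n$ ($\lambda_1$), and $B_n$ ($\lambda_n$). In the latter cases $G/P$ is $\BP^{2n-1}$, respectively a spinor variety, and both are IHSS with respect to a larger automorphism group. I expect the main obstacle to be this non-simply-laced mismatch between minuscule and cominuscule. One must interpret ``isomorphic to an IHSS'' abstractly and invoke the identifications $Sp_{2n}/P_1\cong\BP^{2n-1}$ and $B_n/P_n\cong D_{n+1}/P_{n+1}$. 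I would simply quote these standard isomorphisms rather than derive them.
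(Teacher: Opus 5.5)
The paper states this lemma without proof, as a well-known fact. Your Steps 1 and 2 are the standard argument: $\lambda_i$ is cominuscule $\iff d=m_{\alpha_i}(\theta)=1 \iff \fg/\fp$ is an irreducible $P$-module. These steps are sound only if $G=\Aut^0(Z)$. That is the convention the paper fixes at the start of Section~2.3, and it invokes it in the proof of Lemma~5.5 to discard $G_2/P_1$.

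\textbf{The gap.} In Step~3 you abandon this convention: you say ``isomorphic to an IHSS'' must be read abstractly, allowing $G\subsetneq\Aut^0(Z)$. Under that reading Step~2 breaks, because irreducibility of $\fg/\fp$ depends on the chosen group. For example, take $G=Sp_{2n}$ and $\lambda=\lambda_1$. Then $d=2$ and $\fg_{\ge-1}/\fp$ (the contact distribution) is a proper $P$-submodule. Your Step~2 would then conclude that $\BP^{2n-1}$ is not an IHSS, which contradicts your own Step~3.

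Worse, under that reading the lemma is false, so no argument can rescue it. The quadric $G_2/P_1\cong\BQ^5\subset\BP^6$ is an IHSS. Yet $\lambda_1$ of $G_2$ is neither minuscule nor cominuscule ($m_{\alpha_1}(\theta)=3$), and your ``minuscule'' escape clause does not cover it.

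\textbf{The fix.} Work with $\fg=\Lie\Aut^0(Z)$ throughout. Then the $P$-module $\fg/\fp$ is intrinsic to $Z$, and Step~2 correctly gives IHSS $\Rightarrow d=1\Rightarrow$ cominuscule.
\begin{itemize}
\item For irreducibility, the proper submodule $\fg_{\ge-1}/\fp$ already suffices, so the complete-reducibility digression is unnecessary. If you want it anyway: the unipotent radical acts trivially on any completely reducible $P$-module, so $\fn_\fp\subset\ker\phi_\fp=\fg_d$ by Lemma~\ref{ker_isotropy}, which forces $d=1$.
\item Step~3 then reduces to a remark: the minuscule but non-cominuscule pairs $(C_n,\lambda_1)$ and $(B_n,\lambda_n)$ never occur with $\fg=\Lie\Aut^0(Z)$. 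The identifications you quote show that $\Aut^0(Z)$ is then of type $A_{2n-1}$, respectively $D_{n+1}$. So the ``minuscule'' clause of the lemma is redundant, and the statement really says IHSS $\iff$ cominuscule.
\end{itemize}

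\textbf{Two slips to correct.}
\begin{itemize}
\item Step~3 as written asserts that $\lambda_1$ of $C_n$ and $\lambda_n$ of $B_n$ are cominuscule. This is false, since their coefficient in the highest root is $2$, and that is exactly why these cases need separate treatment.
\item In Step~1, $-\alpha_i$ is the $\fg_0$-\emph{highest} weight of $\fg_{-1}$, not the lowest. The roots of $\Phi_Z^{-1}$ are joined to it by \emph{adding} simple roots $\alpha_j$ with $j\neq i$, not by subtracting them. The conclusion is then a unique highest weight vector, hence irreducibility.
\end{itemize}
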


We refer to~\cite{Ses78} for the definitions and classification of (co)minuscule highest weights.

Next we introduce the class of flag varieties associated with highest roots.

\begin{defn}\label{ad_coad}
Let $\fg$ be a simple Lie algebra, and let $\beta_l$ and $\beta_s$ denote the highest long and short roots in $\Phi^+$, respectively. A highest weight module $U_\lambda$ is called \emph{adjoint} if $\lambda=\beta_l$, and \emph{quasi-minuscule} if $\lambda=\beta_s$. The corresponding flag varieties are called adjoint and quasi-minuscule varieties, respectively.
\end{defn}

Our convention for quasi-minuscule weights is equivalent to the original definition in terms of Weyl group actions on weights; this follows from the classification of quasi-minuscule highest weights in~\cite{Ses78}.
For convenience, we list the three classes of flag varieties in Table~1. Here we follow the notation as in \cite{Bou02}.
\begin{table}[H]
	\centering
	\caption{}
	\label{Table1}
	\begin{tabular}{|c|c|c|c|c|c|c|c|c|c|}
		\hline
		Type of $\fg$ & $A_{n}$ & $B_{n}$ & $C_{n}$ & $D_{n}$ & $E_{6}$ & $E_{7}$ & $E_{8}$ & $F_{4}$ & $G_{2}$ \\
		\hline
		(co)minuscule
		& $\lambda_{k}$
		& $\lambda_{1},\lambda_{n}$
		& $\lambda_{1},\lambda_{n}$
		& $\lambda_{1},\lambda_{n-1},\lambda_{n}$
		& $\lambda_{1},\lambda_{6}$
		& $\lambda_{7}$
		& $\emptyset$
		& $\emptyset$
		& $\emptyset$
		\\
		\hline
		quasi-minuscule
		& $\lambda_{1}+\lambda_{n}$
		& $\lambda_{1}$
		& $\lambda_{2}$
		& $\lambda_{2}$
		& $\lambda_{2}$
		& $\lambda_{1}$
		& $\lambda_{8}$
		& $\lambda_{4}$
		& $\lambda_{1}$
		\\
		\hline
		adjoint
		& $\lambda_{1}+\lambda_{n}$
		& $\lambda_{2}$
		& $2\lambda_{1}$
		& $\lambda_{2}$
		& $\lambda_{2}$
		& $\lambda_{1}$
		& $\lambda_{8}$
		& $\lambda_{1}$
		& $\lambda_{2}$
		\\
		\hline
	\end{tabular}
\end{table}
\section{Tangentially non-degenerate varieties}
In this section, we give the classification of tangentially non-degenerate flag varieties. 
\begin{defn}\label{tang_nond_defn}
	Let \( Z \subset \mathbb{P}U \) be a non-degenerate projective subvariety. We say that \( Z \) is tangentially non-degenerate if
	\[
	\wedge^{2}_{Z}
	:=
	\{ w \in \wedge^{2}U^{\vee} \mid w(\hat{z}, \widehat{T}_{z}Z)=0,\ \forall z \in Z_{\mathrm{sm}} \}
	= 0.
	\]
\end{defn}
\begin{thm}\label{class_tang}
	Let
	$
	Z\subset \BP U
	$
	be a flag variety. Then
	$
	Z
	$
	is tangentially non-degenerate if and only if it is isomorphic to one of the following:
	\begin{itemize}
		\item[(1)] the VMRT of an irreducible Hermitian symmetric space in its minimal embedding, namely:
		the Segre product
		$
		\BP^{k}\times \BP^{l},
		$
		the Veronese variety
		$
		v_{2}(\BP^{n})\subset \BP(S^{2}\BC^{n+1}),
		$
		or an irreducible Hermitian symmetric space of rank at most two,
		\[
		\BP^{n},
		\quad
		Gr(2,n),
		\quad
		\BQ^{n},
		\quad
		\BS_{5},
		\quad
		E_{6}/P_{1},
		\]
		under their minimal embeddings;
		
		\item[(2)]
		the adjoint variety
		$
		X_{\mathrm{ad}}(\fg)\subset \BP\fg
		$
		(Definition~\ref{ad_coad}) for a simple Lie algebra
		$
		\fg;
		$
		
		\item[(3)]
		a quasi-minuscule variety (Definition~\ref{ad_coad}) not appearing in {\rm(1)} or {\rm(2)}, namely
		$
		Gr_{\omega}(2,2n) \, (n \ge 3)
		$ or $
		F_{4}/P_{4},
		$
		under their minimal embeddings.
	\end{itemize}
\end{thm}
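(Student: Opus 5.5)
We will classify flag varieties $Z=G/P\subset\BP U_\lambda$ whose Gaussian map $\psi:\wedge^2U^\vee\to H^0(Z,\Omega_Z(2))$ is injective, i.e.\ for which $\wedge^2_Z=\Ker\psi=0$. We proceed $G$-equivariantly throughout. By Kumar's surjectivity theorem \cite{Kum92}, $\psi$ is onto, so injectivity is equivalent to the equality of $\fg$-modules $\wedge^2U_\lambda^\vee\cong H^0(Z,\Omega_Z(2))$. We split into two cases according to whether $\lambda$ is a multiple of a fundamental weight, as announced in the introduction.

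\emph{Case 1: $\lambda$ not of the form $m\lambda_i$.} Here the plan is to exhibit an explicit nonzero element of $\Ker\psi$. If $\lambda=\mu+\nu$ with $\mu,\nu$ nonzero dominant, then $Z$ embeds in the Segre-type product of the flag varieties of $U_\mu$ and $U_\nu$. The tensors-of-linear-forms method of \cite{Cho25} then produces skew forms $l_1l_2\wedge l_1'l_2'$-type combinations that vanish on $\hat z\wedge\widehat T_zZ$. The obstruction to this construction is exactly the case where both factors are linear and the Segre product has no room, i.e.\ $\lambda=\lambda_i+\lambda_j$ for the Segre $\BP^k\times\BP^l$ (when $G$ is not simple), or $\lambda$ is the adjoint weight $\lambda_1+\lambda_n$ of $A_n$. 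The genuinely multiple cases $m\lambda_i$ with $m\ge2$ are handled the same way: the Veronese $v_m$ has a nonzero kernel unless $m=2$ and $\lambda_i$ comes from $\BP^n$, which leaves $v_2(\BP^n)$ and the adjoint $C_n$ weight $2\lambda_1$. When $G$ is not simple, $Z$ is a product, and we check directly that only $\BP^k\times\BP^l$ survives.

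\emph{Case 2: $\lambda=\lambda_i$ fundamental.} We decompose $\wedge^2U_\lambda$ into irreducibles. By Theorem~\ref{PRV}, $H^0(Z,\Omega_Z(2))$ is computed from $P$-highest weights of $(\fg/\fp)^\vee\otimes\CO(2)$, and by \cite{Kum92} its image is the sum of the components $U_{2\lambda-\alpha}$ with $\alpha$ ranging over roots for which $2\lambda-\alpha$ is dominant. Thus injectivity becomes a root-theoretic criterion: every irreducible component of $\wedge^2U_{\lambda_i}$ must have highest weight $2\lambda_i-\alpha$ with $\alpha$ a positive root, occurring with multiplicity at most that allowed by Kumar's description. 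The natural necessary condition is that $\wedge^2U$ contain no component $U_{2\lambda-\gamma}$ with $\gamma$ a sum of two or more roots in the grading degree $\ge2$ part. This is closely tied to $\fg/\fp$ having grading length $d\le2$ and to $\lambda_i$ being minuscule, cominuscule, quasi-minuscule, or adjoint. We will run through Table~\ref{Table1} and the Dynkin types. For cominuscule $\lambda_i$, the condition amounts to rank at most two (the third fundamental form vanishing), which gives $Gr(2,n)$, $\BQ^n$, $\BS_5$ and $E_6/P_1$; this can be checked by the known decomposition of $\wedge^2$. For the remaining fundamental weights we show that some component $U_{2\lambda-\alpha-\beta}$ appears in $\wedge^2U$, producing kernel, except in the adjoint and quasi-minuscule cases. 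For those, the conversely sufficient argument is that the third fundamental form vanishes, \cite[Proposition~1.3.1]{HM99}.

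The main obstacle is Case 2 outside the (co)minuscule list, e.g.\ $\lambda_3$ for $C_n$ or the exceptional fundamental weights. For these we need an efficient uniform witness of a component of $\wedge^2U_\lambda$ not of the form $2\lambda-\alpha$. The first attempt is to take two roots $\alpha\ne\beta$ in $\Phi^1_Z$ with $\alpha+\beta$ not a root, such that $\lambda-\alpha$ and $\lambda-\beta$ are weights and $2\lambda-\alpha-\beta$ is dominant. The wedge of the corresponding weight vectors is then a highest weight vector in $\wedge^2U$, and we check via Theorem~\ref{PRV} that it is not hit by $H^0(\Omega_Z(2))^\vee$. The remaining finitely many exceptional cases can be checked by computer or by dimension counts.
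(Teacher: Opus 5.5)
\textbf{The gap is in your fundamental-weight case.} The problem is the step meant to produce kernel for the weights outside the (co)minuscule, adjoint and quasi-minuscule lists. A component of $\wedge^2U$ of highest weight $2\lambda-\alpha-\beta$ with $\alpha,\beta\in\Phi^1_Z$ typically does not exist. This fails on infinite families, not just finitely many exceptions.

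Take $(C_n,\lambda_3)$ with $n\ge 6$, the case you single out. There $\wedge^2U$ has components of highest weights $2\lambda_3-\alpha_3$, $2\epsilon_1+2\epsilon_2$, $\epsilon_1+\cdots+\epsilon_6$, $\epsilon_1+\cdots+\epsilon_4$, $\epsilon_1+\epsilon_2$ and $0$. Kumar's description accounts only for the first two. Writing the other four as $2\lambda_3-\gamma$, the $\gamma$'s have $\alpha_3$-coefficient $3,3,4,6$. Any $\alpha+\beta$ with $\alpha,\beta\in\Phi^1_Z$ has coefficient $2$, so none of these is of your form.

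The same happens elsewhere:
\begin{itemize}
\item For $(B_n,\lambda_3)$ with $n\ge 6$, the kernel consists of the adjoint summand (highest weight $\epsilon_1+\epsilon_2$) and the summand of highest weight $\epsilon_1+\cdots+\epsilon_6$, with coefficients $4$ and $3$.
\item For $E_6/P_3$ the coefficients are $3$ and $4$.
\end{itemize}

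There are two further problems with the witness step:
\begin{itemize}
\item $v_{\lambda-\alpha}\wedge v_{\lambda-\beta}$ is not a highest weight vector. Raising one of the two factors by a suitable $e_j$ gives a nonzero term that nothing cancels, so even the existence of a component of weight $2\lambda-\alpha-\beta$ would require a multiplicity computation.
\item Your version of Kumar's image drops the condition $S_\lambda\subset F_\beta$.
\end{itemize}

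\textbf{The missing idea is to use a component of $\wedge^2U$ known in advance.} By Lemma~\ref{three_cases}, $U_{\lambda_i}$ is self-dual, (co)minuscule, or $(E_6,\lambda_3)$ up to duality.
\begin{itemize}
\item If $U$ is symplectic, $\wedge^2U^\vee$ contains the trivial module. If $U$ is orthogonal, it contains $\fg$, via $\fg\subset\mathfrak{so}(U)\cong\wedge^2U$. Injectivity and Kumar's decomposition then force $2\lambda_i$, resp.\ $2\lambda_i-\beta_l$, to be a positive root satisfying the $F_\beta$ condition (Corollary~\ref{case1_cor}). This is a finite table check that disposes of all self-dual cases at once, including $(B_n,\lambda_3)$ and $(C_n,\lambda_3)$.
\item In the (co)minuscule case, $\Omega_Z(2)$ is irreducible homogeneous, so $H^0(Z,\Omega_Z(2))$ is irreducible (Lemma~\ref{irr_Gaussian}). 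Injectivity then means $\wedge^2U$ is irreducible, which Dynkin's classification settles.
\item Only $(E_6,\lambda_3)$ is left to a machine computation.
\end{itemize}

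\textbf{Your non-fundamental case reaches the right list of survivors but skips its only nontrivial step.} Theorem~\ref{Cho25} requires three summands with $h^0\ge 2$, so it does not apply to two-term decompositions such as $\lambda_1+\lambda_2$ or $2\lambda_i$. Moreover, a form in $\wedge^2_{Z'\times Z''}$ can restrict to zero on $U_\lambda\subset U_{\lambda'}\otimes U_{\lambda''}$, so you must prove nonvanishing after restriction.

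The paper (Proposition~\ref{non_vanish}) does this as follows. It takes $\sigma'\otimes\sigma''$ with $\sigma'\in I_2(Z')$ and $\sigma''\in\wedge^2U^\vee_{\lambda''}$; this lies in $\wedge^2_{Z'\times Z''}$ by Lemma~\ref{lem_prod_wedge_Z}. It then shows the restriction is nonzero: by Lemma~\ref{inv_form}, $ev_3$ of it at the base point is $X\mapsto 3f_{\sigma'}(X)f_{\sigma''}(X)$, a product of two nonzero polynomials on $\fg$. The nonvanishing of $f_{\sigma'}$ is where $Z'\neq\BP U_{\lambda'}$ and quadric generation enter (Corollary~\ref{quadrics}).

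Your sufficiency direction, via vanishing third fundamental forms \cite[Proposition~1.3.1]{HM99}, is fine and is a valid substitute for the paper's conic argument.
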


We refer to Table~\ref{Table1} for the complete list of varieties in {\rm(2)}.

Our approach is to study skew-symmetric forms \( \wedge^2 U^{\vee} \) via (higher) Gauss maps, namely through the fundamental forms of \( Z \). It starts with the Gaussian map, which evaluates skew forms on tangent space:
\begin{defn}[{\cite{Wah90}}]\label{tan_nond_denf2}
	Let	$Z \subset \BP U$ be a smooth projective variety. Define the Gaussian map
	\begin{align*}
		\psi: \wedge^{2}U^{\vee} &\rightarrow H^{0}(Z, \Omega_Z(2)),\\
		l \wedge l' &\longmapsto dl \otimes l' - l \otimes dl'. 
	\end{align*}
	Then \( \ker(\psi)=\wedge^{2}_{Z} \), and hence \( Z \) is tangentially non-degenerate if and only if \( \psi \) is injective. 
\end{defn}
For flag varieties, the surjectivity of $\psi$ was proved in \cite{Kum92}, see Corollary \ref{im_Gaussian}. Therefore we have an exact sequence:
\begin{equation}\label{Gaussian_sequence}
	0 \rightarrow \wedge_{Z}^2	\rightarrow   \wedge^{2}U^{\vee} \xrightarrow{\psi} H^{0}(Z,\Omega_Z(2)) \rightarrow 0
\end{equation}

Our argument splits into two cases according to whether the highest weight is fundamental. In the non-fundamental case (Proposition~\ref{tensor}), we follow the approach of~\cite[Section~5.2]{Cho25} to construct nonzero elements in $\wedge_{Z}^{2}$ using tensors of linear forms (Proposition~\ref{non_vanish}). In the fundamental case (Proposition~\ref{prim_class}), we apply Kumar’s results~\cite{Kum92} on the image of the Gaussian map for flag varieties. This reduces the classification to the isolated case
$E_{6}/P_{3},$
which is excluded by a direct computation.
\subsection{Gauss maps and evaluation of bilinear forms}
In this section, we recall basic facts needed for the classification. Most of them can be found in \cite{Wah90,Wah91,EN23}. We first recall the filtration on spaces of bilinear forms introduced in \cite[(1.4)]{Wah90}, and then specialize them to flag varieties. Then we record two basic families of tangentially non-degenerate varieties and verify the varieties listed in Theorem~\ref{class_tang}.
\subsubsection{Gauss maps and fundamental forms}
Let \(Z\subset\BP U\) be a non-degenerate projective subvariety, and let
\(z\in Z\) be a smooth point. For each \(k\ge0\), consider the
\(k\)-jet restriction map
\[
j_{k,z}:
U^\vee
\longrightarrow
\CO_Z(1)_z\otimes\CO_z/\mathfrak m_z^{k+1}.
\]
It induces an injective homomorphism
\[
\ker(j_{k-1,z})/\ker(j_{k,z})
\hookrightarrow
\CO_Z(1)_z\otimes\Sym^kT_z^*Z,
\]
for $k \ge 1$,
whose image
\[
\Pi_{k,z}
\subset
\CO_Z(1)_z\otimes\Sym^kT_z^*Z
\]
is called the \(k\)-th projective fundamental form of \(Z\) at \(z\).

The \(k\)-th osculating space of \(Z\) at \(z\) is defined by
\[
T_z^{(k)}Z
:=
\{\,v\in U\mid
\ell(v)=0
\text{ for all }
\ell\in\ker(j_{k})\,\}.
\]
Setting
$
T_z^{(-1)}Z=0,
$
we have
$
T_z^{(0)}Z=\langle\hat z\rangle
$
and
$
T_z^{(1)}Z=\widehat T_zZ,
$
giving the osculating flag
\[
\langle\hat z\rangle
\subset
\widehat T_zZ
\subset
T_z^{(2)}Z
\subset
\cdots
\subset
T_z^{(r)}Z
=
U.
\]
Equivalently, the $k$-th fundamental form is the surjective homomorphism
\[
\Pi_{k,z}:
\Sym^kT_zZ
\longrightarrow
\CO_Z(1)_z
\otimes
\bigl(T_z^{(k)}Z/T_z^{(k-1)}Z\bigr).
\]

For each \(k\ge0\), there is a maximal nonempty open subset
\(V_k\subset Z\) on which
\(\dim T_z^{(k)}Z\) is constant, say \(t_k\)
(cf. \cite[Definition~2.2]{EN23}). We obtain the \(k\)-th Gauss map
\[
g_k:
V_k
\longrightarrow
Gr(t_k,U),
\qquad
z
\longmapsto
T_z^{(k)}Z.
\]

We recall the following description of fundamental forms in terms of Gauss maps.

\begin{prop}[{\cite[Proposition 3.8 and Remark 3.9]{EN23}}]\label{gau_fund}
	Let $z \in V_{k-1}$, and denote by $f_{k,z} = dg_{k-1,z}$ the differential of $g_{k-1}$ at $z$. Then $f_{k,z}$ can be written as
	\[
	f_{k,z} : T_z Z \times T_z^{(k-1)}Z \longrightarrow U / T_z^{(k-1)}Z,
	\]
	where we identify 
	\[
	T_{g_{k-1}(z)} Gr(t_{k-1},U) = \Hom(T_z^{(k-1)}Z, U/T_z^{(k-1)}Z).
	\]
	
	\begin{itemize}
		\item[(1)] If $z \in V_k$, then the image of $f_{k,z}$ is $T_z^{(k)}Z / T_z^{(k-1)}Z$.
		
		\item[(2)] If $v \in T_z^{(k-2)}Z$, then $f_{k,z}(u,v)=0$ for all $u \in T_z Z$. Hence $f_{k,z}$ factors as
		\[
		\overline{f}_{k,z} : T_z Z \times \big(T_z^{(k-1)}Z / T_z^{(k-2)}Z\big)
		\longrightarrow U / T_z^{(k-1)}Z.
		\]
		
		\item[(3)] Iterating the maps $\overline{f}_{k,z}, \overline{f}_{k-1,z}, \dots, \overline{f}_{1,z}$, one obtains a multilinear map
		\[
		\gamma'_{k,z} : T_z Z^{\otimes k} \longrightarrow \Hom(\hat{z}, T_z^{(k)}Z / T_z^{(k-1)}Z).
		\]
		This map is symmetric and coincides with the $k$-th fundamental form, i.e.
		$
		\gamma_{k,z} ^{'}= \Pi_{k,z},
		$
		under the natural identification
		\[
		\Hom(\hat{z}, T_z^{(k)}Z / T_z^{(k-1)}Z)
		\simeq
		\mathcal{O}_Z(1)_z \otimes \big( T_z^{(k)}Z / T_z^{(k-1)}Z \big).
		\]
	\end{itemize}
\end{prop}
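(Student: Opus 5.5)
The plan is to prove all three statements by a single local computation with a holomorphic lift of $Z$ to the affine cone, reducing everything to derivatives of one vector-valued function. Fix a smooth point $z\in Z$, local coordinates $t=(t_1,\dots,t_m)$ on $Z$ centered at $z$, and a local holomorphic lift $\hat z(t)\in\widehat Z\setminus\{0\}$ with $\hat z(0)=\hat z$.

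\textbf{Step 1: osculating spaces as spans of derivatives.} I would first establish the identity
\[
T^{(k)}_{z'}Z=\operatorname{span}\{\partial^\alpha\hat z(t')\mid |\alpha|\le k\}
\]
for every $z'=z(t')$ near $z$. For $\ell\in U^\vee$, the $k$-jet $j_{k,z'}(\ell)$ is, after trivializing $\CO_Z(1)$ by the lift, the Taylor polynomial of $\ell(\hat z(t))$ at $t'$ up to order $k$. Hence $\ell\in\ker j_{k,z'}$ if and only if $\ell(\partial^\alpha\hat z(t'))=0$ for all $|\alpha|\le k$, and the identity follows by taking annihilators. I would also check that replacing $\hat z$ by $h\hat z$ with $h$ nowhere zero changes $\partial^\alpha\hat z$ only by lower-order derivatives, modulo $h(t')$. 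This gives independence of the lift and produces the twist by $\CO_Z(1)_z$.

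\textbf{Step 2: the differential of the Gauss map.} The differential of $g_{k-1}$ at $z\in V_{k-1}$ is computed in the standard way for a map to a Grassmannian. For $u\in T_zZ$ and $v\in T^{(k-1)}_zZ$, extend $v$ to a local section $\tilde v$ of the rank-$t_{k-1}$ bundle $T^{(k-1)}$ over $V_{k-1}$; then
\[
f_{k,z}(u,v)=\partial_u\tilde v(0)\mod T^{(k-1)}_zZ.
\]
This is independent of the extension, because two extensions differ by a section vanishing at $z$, whose derivative lies in $T^{(k-1)}_z$.

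For (2): any $v\in T^{(k-2)}_zZ$ is, by Step 1, the value at $0$ of a constant-coefficient combination $\tilde v=\sum_{|\alpha|\le k-2}c_\alpha\partial^\alpha\hat z$. This section lies pointwise in $T^{(k-1)}$, and its derivative $\partial_u\tilde v$ involves only derivatives of order at most $k-1$, so $f_{k,z}(u,v)=0$.

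For (1): taking $\tilde v=\partial^\beta\hat z$ with $|\beta|=k-1$ (again a section of $T^{(k-1)}$) gives $f_{k,z}(\partial_i,\partial^\beta\hat z)=\partial_i\partial^\beta\hat z$ modulo $T^{(k-1)}_z$. Together with the vectors coming from $T^{(k-2)}_z$, which map to zero, these span $T^{(k)}_z/T^{(k-1)}_z$ by Step 1. Here $z\in V_k$ is used so that $T^{(k)}$ is a genuine subbundle near $z$.

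\textbf{Step 3: iterating to recover the fundamental form.} Starting from $\hat z$ and iterating, the map $\gamma'_{k,z}(\partial_{i_1},\dots,\partial_{i_k})$ equals $\partial_{i_1}\cdots\partial_{i_k}\hat z(0)$ modulo $T^{(k-1)}_z$. It is symmetric because partial derivatives commute. On the other hand, for $\ell\in\ker j_{k-1,z}$ the degree-$k$ part of the jet of $\ell(\hat z(t))$ is $\frac1{k!}\sum\ell(\partial_{i_1}\cdots\partial_{i_k}\hat z)\,dt_{i_1}\cdots dt_{i_k}$. This exhibits $\Pi_{k,z}$ as the transpose of $\gamma'_{k,z}$ under the pairing between $\ker j_{k-1}/\ker j_k$ and $T^{(k)}/T^{(k-1)}$, which gives $\gamma'_{k,z}=\Pi_{k,z}$ up to the harmless normalization of symmetric tensors.

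\textbf{Main obstacle.} I expect the delicate point to be bookkeeping rather than any real difficulty. One must ensure the iterated $\overline f$'s are evaluated at points of $V_{k-1}\cap\dots\cap V_1$. One must also confirm that the identification of $\Hom(\hat z,-)$ with $\CO_Z(1)_z\otimes(-)$ is exactly the lift-rescaling behavior from Step 1, so that no extra factor or sign appears.
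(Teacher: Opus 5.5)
Your proposal is correct. The paper does not prove this proposition itself; it quotes it from \cite{EN23}. Your argument is the standard one behind that result:
\begin{itemize}
\item you use a local lift $\hat z(t)$ and identify each osculating space with the span of derivatives of order at most $k$;
\item you obtain the differential of $g_{k-1}$ by differentiating the sections $\partial^\beta\hat z$ of $T^{(k-1)}$;
\item you show that $\gamma'_{k,z}(\partial_{i_1},\dots,\partial_{i_k})(\hat z)=\partial_{i_1}\cdots\partial_{i_k}\hat z(0)$ modulo $T^{(k-1)}_zZ$, and transpose this against the Taylor coefficients of $\ell(\hat z(t))$.
\end{itemize}

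One bookkeeping point. Your proof of (1) never actually uses $z\in V_k$, because Step 1 holds at every smooth point and you only differentiate sections of $T^{(k-1)}$. What the argument genuinely needs is $z\in V_0\cap\cdots\cap V_{k-1}$ for the iteration in (3), which you correctly flag. It also needs the fact that the values of each $f_{j,z}$ lie in $T^{(j)}_zZ/T^{(j-1)}_zZ$, so that the maps compose; your derivative computation supplies this.
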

Next we evaluate the bilinear forms in terms of fundamental forms. 
\begin{defn}\label{ev}
	Let \( Z \subset \mathbb{P}U \) be a smooth non-degenerate projective variety. For any \( k \geq 0 \), define
	\[
	(U^{\vee} \otimes U^{\vee})_k
	:=
	\{ \sigma \in U^{\vee} \otimes U^{\vee} \mid
	\sigma(\hat{z}, T_z^{(k-1)}Z) = 0,\ \forall z \in Z \}.
	\]
	This defines a filtration of \( U^{\vee} \otimes U^{\vee} \), and there is a natural evaluation map
	\[
	0 \longrightarrow (U^{\vee} \otimes U^{\vee})_{k+1}
	\longrightarrow (U^{\vee} \otimes U^{\vee})_k
	\xrightarrow{ev_k}
	H^0\big(Z, \mathrm{Sym}^k(\Omega_Z)(2)\big),
	\]
	where for each \( z \in Z \), we identify
	\[
	\mathrm{Sym}^k(\Omega_Z)(2)_z
	\simeq
	(\langle \hat{z} \rangle^\vee)^{\otimes 2}
	\otimes \mathrm{Sym}^k T_z^*Z,
	\]
	and for \( \sigma \in (U^{\vee} \otimes U^{\vee})_k \), \( \alpha \in \mathrm{Sym}^k T_z Z \), define
	\[
	ev_k(\sigma)_z(\hat{z}, \hat{z}, \alpha)
	=
	\sigma\big(\hat{z}, \Pi_{k,z}(\alpha)(\hat{z})\big).
	\]
	In particular,
	$
	ev_k(\sigma)_z(\hat{z}) \in \Pi_{k,z}
	\subset \mathcal{O}_Z(1)_z \otimes \mathrm{Sym}^k T_z^*Z.
	$
\end{defn}
When \(Z\subset \mathbb P U\) is embedded by the complete linear system of a
very ample line bundle, this filtration agrees with Wahl's construction
in~\cite[(1.4)]{Wah90}. Indeed, on \(Z\times Z\) there is an exact sequence
\begin{equation}\label{filt_03}
	0\to
	p_1^*\CO_Z(1)\otimes p_2^*\CO_Z(1)\otimes\mathcal I_{\Delta_Z}^{k+1}
	\to
	p_1^*\CO_Z(1)\otimes p_2^*\CO_Z(1)\otimes\mathcal I_{\Delta_Z}^{k}
	\to
	\Sym^{k}\Omega_Z(2)
	\to 0,
\end{equation}
where \(\Delta_Z\subset Z\times Z\) is the diagonal and $p_{1},p_{2}$ are the two projections to each factor.

\begin{lem}
	For every \(k\ge 0\), we have
	\[
	H^0\!\left(
	Z\times Z,\,
	p_1^*\CO_Z(1)\otimes p_2^*\CO_Z(1)\otimes\mathcal I_{\Delta_Z}^{k}
	\right)
	=
	(U^\vee\otimes U^\vee)_k
	\subset U^\vee\otimes U^\vee.
	\]
	Moreover, \(ev_k\) is the map on global sections induced by the rightmost
	morphism in~\eqref{filt_03}.
\end{lem}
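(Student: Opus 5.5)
The lemma has two parts: identify the global sections of the twisted ideal sheaf with $(U^\vee\otimes U^\vee)_k$, and identify the induced map with $ev_k$. I would prove both by working inside the Künneth decomposition and reading off vanishing orders along the diagonal.

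\emph{Step 1: embedding in $U^\vee\otimes U^\vee$.} Since $Z\subset\BP U$ is embedded by the complete linear system, $H^0(Z,\CO_Z(1))=U^\vee$. By Künneth,
\[
H^0\bigl(Z\times Z,\,p_1^*\CO_Z(1)\otimes p_2^*\CO_Z(1)\bigr)=U^\vee\otimes U^\vee .
\]
For each $k$, the sheaf $p_1^*\CO_Z(1)\otimes p_2^*\CO_Z(1)\otimes\mathcal I^k_{\Delta_Z}$ is a subsheaf of the $k=0$ sheaf, so its global sections form a subspace of $U^\vee\otimes U^\vee$. Concretely, this subspace consists of those $\sigma$ which, viewed as a section $(z,w)\mapsto\sigma(\hat z,\hat w)$, vanish to order at least $k$ along $\Delta_Z$.

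\emph{Step 2: vanishing order equals the osculating condition.} Fix a point $z$ in the dense open set where the osculating spaces have constant rank. Restricting $\sigma$ to $\{z\}\times Z$ gives the linear form $\ell_z=\sigma(\hat z,\cdot)\in U^\vee$.
\begin{itemize}
\item Vanishing of $\sigma$ to order $\ge k$ at $(z,z)$, in the directions normal to the diagonal, is the statement that $\ell_z$ lies in $\ker(j_{k-1,z})$. Here one uses that the normal bundle of $\Delta_Z$ is identified with $T_Z$ via the second factor.
\item By the definition of $T_z^{(k-1)}Z$, the condition $\ell_z\in\ker(j_{k-1,z})$ is exactly $\sigma(\hat z,T^{(k-1)}_zZ)=0$.
\end{itemize}
I expect this to be the main subtlety. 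Vanishing to order $k$ along $\Delta_Z$ is a priori a condition on all mixed derivatives, not only on derivatives in the second factor. To handle this, use local coordinates $(x,y)$ and change to $(x,\,y-x)$, so that the ideal of the diagonal becomes $(y-x)$. Then order $\ge k$ along the diagonal is equivalent to the Taylor coefficients of degree $<k$ in $y-x$ vanishing for every fixed $x$, which is precisely the pointwise-in-$z$ condition above. The condition is closed in $z$, so imposing it for all $z\in Z$ (as in the definition) is the same as imposing it on a dense open set. This gives the equality of Step 1 with $(U^\vee\otimes U^\vee)_k$.

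\emph{Step 3: the induced map is $ev_k$.} The rightmost morphism in \eqref{filt_03} takes the leading Taylor coefficient in $y-x$, that is, the degree-$k$ normal jet. At $z$, this is the class of $\ell_z$ in
\[
\ker(j_{k-1,z})/\ker(j_{k,z})\subset \CO_Z(1)_z\otimes\Sym^kT_z^*Z,
\]
tensored with the remaining factor $\CO_Z(1)_z$ coming from the first coordinate. By Proposition~\ref{gau_fund}(3), this class evaluated on $\alpha\in\Sym^kT_zZ$ is $\ell_z\bigl(\Pi_{k,z}(\alpha)(\hat z)\bigr)=\sigma\bigl(\hat z,\Pi_{k,z}(\alpha)(\hat z)\bigr)$, which is $ev_k(\sigma)_z$. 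The two maps therefore agree on a dense open set, and since both are sections of a vector bundle they agree everywhere.
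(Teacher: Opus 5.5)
Your proof is correct, and it uses the same mechanism as the paper. You restrict $\sigma$ to the fibers $\{z\}\times Z$ of $p_1$, compare vanishing along $\Delta_Z$ with the vanishing of $\ell_z=\sigma(\hat z,\cdot)$ at $z$, and read off the leading jet as the class of $\ell_z$ in $\ker(j_{k-1,z})/\ker(j_{k,z})$.

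The difference is in how you get the reverse inclusion. Write $\mathcal R_k:=p_1^*\CO_Z(1)\otimes p_2^*\CO_Z(1)\otimes\mathcal I_{\Delta_Z}^k$. The paper proves only the easy direction: a section of $\mathcal R_k$ restricts on $\{z\}\times Z$ to an element of $H^0(Z,\CO_Z(1)\otimes\mathfrak m_z^k)=\ker(j_{k-1,z})$. It adds the compatibility of $ev_k$ with the map $ev_k'$ induced by \eqref{filt_03}, and then gets equality by induction on $k$. The step works because $H^0(Z\times Z,\mathcal R_{k+1})=\ker ev_k'$ by left exactness of $H^0$, and $(U^\vee\otimes U^\vee)_{k+1}=\ker ev_k$ by the exact sequence in Definition~\ref{ev}.

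Your computation in the coordinates $(x,y-x)$ instead shows directly that a section lies in $\mathcal I^k_{\Delta_Z}$ if and only if, for every $x$, its $y$-derivatives of order $<k$ vanish at $y=x$. This gives both inclusions at once, and a pointwise description of $H^0(Z\times Z,\mathcal R_k)$ that does not depend on the evaluation maps. The paper's route avoids any local analysis of powers of the diagonal ideal, but it relies on the kernel description of both filtrations.

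One simplification: you need neither the open set where the osculating spaces have constant rank nor Proposition~\ref{gau_fund}(3). By definition, $\ker(j_{k-1,z})$ is the annihilator of $T^{(k-1)}_zZ$, and $\Pi_{k,z}$ is the image of $\ker(j_{k-1,z})/\ker(j_{k,z})$. So both of your identifications hold at every $z\in Z$. This also removes your last step, ``agree on a dense open set, hence everywhere''. That step otherwise tacitly uses that the pointwise-defined $ev_k(\sigma)$ is already known to be a continuous section.
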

\begin{proof}
	Taking global sections of~\eqref{filt_03}, set
	$
	\mathcal R_k
	=
	p_1^*\CO_Z(1)\otimes p_2^*\CO_Z(1)\otimes
	\mathcal I_{\Delta_Z}^k,
	$
	and denote the induced rightmost map by
	\[
	ev_k':
	H^0(Z\times Z,\mathcal R_k)
	\longrightarrow
	H^0\!\left(Z,\Sym^{k}\Omega_Z(2)\right).
	\]
	Since the two filtrations are defined inductively as the kernels of
	\(ev_k\) and \(ev_k'\), respectively, and coincide for \(k=0\), it
	suffices to prove that, for every \(k\ge1\),
	\[
	H^0(Z\times Z,\mathcal R_k)
	\subset
	(U^\vee\otimes U^\vee)_k,
	\]
	and that the restriction of \(ev_k\) to
	\(H^0(Z\times Z,\mathcal R_k)\) coincides with \(ev_k'\).
	By the projection formula,
	\[
	p_{1,*}\mathcal R_k
	=
	\CO_Z(1)\otimes
	p_{1,*}\!\left(
	p_2^*\CO_Z(1)\otimes\mathcal I_{\Delta_Z}^k
	\right).
	\]
	For each \(z\in Z\), restriction to the fiber of \(p_1\) gives
	\begin{equation}\label{rest_filt}
		\left.
		p_{1,*}\!\left(
		p_2^*\CO_Z(1)\otimes\mathcal I_{\Delta_Z}^k
		\right)
		\right|_z
		\longrightarrow
		H^0\!\left(
		Z,\CO_Z(1)\otimes\mathfrak m_z^k
		\right)
		=
		\ker(j_{k-1,z}).
	\end{equation}
	Thus, if
	$
	\sigma\in
	H^0(Z\times Z,\mathcal R_k)
	\subset
	H^0(Z\times Z,\mathcal R_0)
	=
	U^\vee\otimes U^\vee,
	$
	then, for every \(z\in Z\), its restriction to
	\(\{z\}\times Z\) vanishes to order at least \(k\) at \(z\).
	By the definition of the osculating spaces, this is equivalent to $\sigma
\in 
	(U^\vee\otimes U^\vee)_k.
	$
	Moreover, after restricting to the fiber of \(p_1\) over \(z\), the
	rightmost morphism in~\eqref{filt_03} is, after tensoring by
	\(\CO_Z(1)_z\), induced by the quotient map
	\[
	\ker(j_{k-1,z})
	\longrightarrow
	\ker(j_{k-1,z})/\ker(j_{k,z})
	=
	\Pi_{k,z}.
	\]
	It follows from Definition~\ref{ev} that the restriction of \(ev_k\) to
	\(H^0(Z\times Z,\mathcal R_k)\) coincides with \(ev_k'\).
\end{proof}
The involution exchanging the two factors of $Z\times Z$ induces the respective filtrations on the symmetric and alternating tensors.

\begin{prop}[{\cite[Proposition~1.5]{Wah90}}]\label{12_ev}
	Let
	$
	Z\subset\BP U
	$
	be a smooth projective variety embedded by a very ample line bundle. Then 
	\[
	\Sym^{2}U^\vee\cap (U^\vee\otimes U^\vee)_{2k-1}
	\subset
	(U^\vee\otimes U^\vee)_{2k},
	\qquad
	\wedge^2U^\vee\cap (U^\vee\otimes U^\vee)_{2k}
	\subset
	(U^\vee\otimes U^\vee)_{2k+1},
	\]
	for any $k \ge 1$.
\end{prop}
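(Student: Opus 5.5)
The plan is to prove both inclusions by a local computation along the diagonal, fiberwise over $p_1$, using the symmetry of the fundamental forms (Proposition~\ref{gau_fund}(3)) and the swap involution $\tau$ of $Z\times Z$. We take the global description of the filtration from the preceding lemma: $\sigma\in (U^\vee\otimes U^\vee)_m$ if and only if, viewed as a section of $p_1^*\CO_Z(1)\otimes p_2^*\CO_Z(1)$, it vanishes to order at least $m$ along $\Delta_Z$. Its leading term along $\Delta_Z$ is then $ev_m(\sigma)\in H^0(Z,\Sym^m\Omega_Z(2))$, obtained by restricting to the normal directions of $\Delta_Z$, which are identified with $T_Z$ via $(0,v)$ (or equivalently via $(v,-v)$).

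The key step is to determine how $\tau$ acts on the leading term. A section $\sigma$ that vanishes to order $m$ along $\Delta_Z$ has leading term a degree-$m$ form on the normal bundle $N_{\Delta_Z}$. The involution $\tau$ fixes $\Delta_Z$ pointwise and acts on $N_{\Delta_Z}$ by $-1$: in local coordinates $(x,y)$, the normal direction is $y-x$, which $\tau$ sends to $x-y$. The linearization $p_1^*\CO(1)\otimes p_2^*\CO(1)$ is $\tau$-equivariant, and the natural lift of $\tau$ acts trivially on the restriction to $\Delta_Z$, where the bundle becomes $\CO(2)$. Hence
\[
ev_m(\tau^*\sigma)=(-1)^m\,ev_m(\sigma).
\]
Now $\sigma\in\Sym^2U^\vee$ means $\tau^*\sigma=\sigma$, and $\sigma\in\wedge^2U^\vee$ means $\tau^*\sigma=-\sigma$. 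Taking $m=2k-1$ in the symmetric case gives $ev_m(\sigma)=-ev_m(\sigma)$, so $ev_{2k-1}(\sigma)=0$. Taking $m=2k$ in the alternating case gives $ev_{2k}(\sigma)=-ev_{2k}(\sigma)$, so $ev_{2k}(\sigma)=0$. By the exact sequence in Definition~\ref{ev}, $\sigma$ then lies in the next filtration step, which proves both inclusions.

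The main obstacle is making the identification of the leading term rigorous. The order-$m$ leading term of a section of $\mathcal R_m$ must agree with $ev_m$ as defined through $\Pi_{m,z}$, i.e.\ through restriction to $\{z\}\times Z$, and we need the sign $(-1)^m$. I would handle this by choosing local coordinates $x$ centered at $z$ and a local trivialization $e$ of $\CO(1)$, and writing
\[
\sigma=\sum a_{IJ}(x)\,x^I y^J\, e(x)\otimes e(y),
\]
or more invariantly $\sigma=F(x,y)$ with $F$ vanishing to order $m$ on $x=y$. The leading part is a homogeneous polynomial of degree $m$ in $y-x$ with coefficients depending on $x$. Restricting to $x=z$ recovers $ev_m$, and swapping $x,y$ visibly multiplies the degree-$m$ part in $(y-x)$ by $(-1)^m$, modulo terms of higher order along the diagonal.

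One also needs $\tau^*$ to preserve each filtration step. This holds because the ideal $\mathcal I_{\Delta_Z}$ is $\tau$-invariant, so the characterization via vanishing order shows both $\Sym^2U^\vee\cap(U^\vee\otimes U^\vee)_m$ and $\wedge^2U^\vee\cap(U^\vee\otimes U^\vee)_m$ are well behaved. The very ampleness hypothesis is used only to invoke the preceding lemma, which identifies the filtration with the order of vanishing along $\Delta_Z$.
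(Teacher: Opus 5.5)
Your argument is correct and is essentially the standard involution proof of Wahl's Proposition~1.5, which the paper cites without reproducing: the swap $\tau$ acts on $\mathcal R_m/\mathcal R_{m+1}\cong \Sym^m\Omega_Z(2)$ by $(-1)^m$, trivially on $\CO(1)\boxtimes\CO(1)|_{\Delta_Z}$ and by $-1$ on the conormal bundle, so the parity of $\sigma$ kills its leading term. The step you flag as the main obstacle is already supplied by the lemma preceding the proposition, which identifies $ev_m$ with the map induced on $\mathcal R_m/\mathcal R_{m+1}$; in fact you only need that the image of $\sigma$ in $\mathcal R_m/\mathcal R_{m+1}$ vanishes, together with $H^0(Z\times Z,\mathcal R_{m+1})=(U^\vee\otimes U^\vee)_{m+1}$.
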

For $k=1,2$, set $\psi=ev_1|{\wedge^2U^\vee}$ and $\varphi=ev_2|{I_2(z)}$, respectively. We then obtain the following two sequences.

\begin{align}
	0 \rightarrow \wedge_{Z}^{2} 
	&\rightarrow \wedge^{2}U^{\vee} 
	\xrightarrow{\psi} H^{0}(Z,\Omega_Z(2)), \label{Gaussian} \\
	0 \rightarrow I_{2}(Z) \cap (U^{\vee} \otimes U^{\vee})_{3} 
	&\rightarrow I_{2}(Z) 
	\xrightarrow{\varphi} H^{0}(Z, S^{2}\Omega_Z(2)), \notag
\end{align}
where $I_{2}(Z)=H^{0}(\mathbb{P}U,\mathcal{I}_{Z}(2)) \subset \mathrm{Sym}^{2}U^{\vee}$.
Here $\psi$ recovers the Gaussian map in \eqref{Gaussian_sequence};
for $\varphi$, it follows from Definition \ref{ev} that it factors through the following sequence, obtained by taking global sections:
\begin{align}
	\mathcal{I}_{Z}(2) \longrightarrow \mathcal{I}_{Z}/\mathcal{I}_{Z}^{2}(2) \longrightarrow (\CN_{Z}^{2})^{\vee}(2) \xrightarrow{\Pi_{2}^{\vee}(1)} \Sym^{2}\Omega_Z(2),
\end{align}
where $\CN_{Z}^{2}$ denotes the subsheaf of the normal bundle whose fiber at each point $z \in Z$ is
\[
\Hom(\hat{z},\, T_z^{(2)}Z / T_z^{(1)}Z),
\]
and $\Pi_{2}^{\vee}(1)$ is the dual of second fundamental form, twisted by $\CO(1)$.
\begin{cor}\label{quadrics}

Let $Z\subset\BP U$
be a smooth projective variety of dimension $n\ge1$, embedded by a very ample line bundle. Then the Gaussian map $\psi$ is nonzero. If moreover the ideal of $Z$ is generated by quadrics and $V_2=Z$, then $\varphi$ in \eqref{Gaussian} is nonzero.
\end{cor}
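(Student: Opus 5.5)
For the first assertion, I show that $\psi$ is nonzero by exhibiting a decomposable form $l\wedge l'$ with $\psi(l\wedge l')\neq 0$. Fix a smooth point $z\in Z$ and choose $l\in U^\vee$ with $l(\hat z)\neq 0$. Since $\dim Z\ge 1$, the affine tangent space $\widehat T_zZ$ strictly contains $\langle \hat z\rangle$. Hence there is $l'\in U^\vee$ with $l'(\hat z)=0$ whose restriction to $\widehat T_zZ$ is nonzero; this uses only that $Z$ is non-degenerate, so that linear forms separate vectors of $U$. By Definition~\ref{ev}, $ev_1(l\wedge l')_z$ evaluates $l\wedge l'$ on $(\hat z, \widehat T_zZ)$. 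On $(\hat z,v)$ this gives $l(\hat z)l'(v)-l'(\hat z)l(v)=l(\hat z)l'(v)$, which is nonzero for a suitable $v\in\widehat T_zZ$. Thus $l\wedge l'\notin \wedge^2_Z=\ker\psi$. Equivalently, in the local trivialization $\psi(l\wedge l')=l^2\,d(l'/l)$, which is nonzero because $l'/l$ is a nonconstant function near $z$.

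For the second assertion, I argue analogously at the next level of the osculating flag. Assume $V_2=Z$, so $T^{(2)}_zZ$ has constant dimension, and assume the ideal is generated by quadrics. I first claim $T^{(2)}_zZ\neq \widehat T_zZ$ at a general point $z$. If instead $\Pi_{2}\equiv 0$ on an open set, then the Gauss map $g_1$ has zero differential by Proposition~\ref{gau_fund}(1). So $g_1$ is constant, and $Z$ is contained in a fixed linear space $\widehat T_zZ$, which is therefore all of $U$. Then $Z=\BP U$, and the ideal is zero, so there is nothing to prove; I treat this degenerate case by convention or exclude it, since $I_2(Z)=0$ then.

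Otherwise, choose a quadric $q\in I_2(Z)$ and regard it as a symmetric bilinear form. Since $q$ vanishes on $\hat Z$, differentiating along curves gives $q(\hat z,\widehat T_zZ)=0$, so $q\in (U^\vee\otimes U^\vee)_2$. Then $\varphi(q)_z$ is the quadratic form $\alpha\mapsto q(\hat z,\Pi_{2,z}(\alpha)\hat z)$, which by the factorization through $(\CN^2_Z)^\vee(2)$ is the composition of the conormal class of $q$ with $\Pi_2$. It therefore suffices to find $q$ whose differential at $z$, as an element of the conormal space, is nonzero on the image $T^{(2)}_zZ/\widehat T_zZ$. Here I use that $\Pi_{2,z}$ is surjective onto this quotient, by Proposition~\ref{gau_fund}(1) with $z\in V_2$.

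The key step, and the main obstacle, is to produce such a $q$. Since the ideal is generated by quadrics, the differentials $dq_z$ for $q\in I_2(Z)$ span the full conormal space $N^\vee_{z}=(U/\widehat T_zZ)^\vee\otimes\CO(-1)_z$ at every smooth point. This holds because $Z$ is cut out scheme-theoretically by quadrics, so $\mathcal I_Z/\mathcal I_Z^2$ is generated by their images. Hence some $dq_z$ is nonzero on the nonzero subspace $T^{(2)}_zZ/\widehat T_zZ$. Composing with the surjective $\Pi_{2,z}$ gives $\varphi(q)_z\neq 0$, so $\varphi\neq 0$.
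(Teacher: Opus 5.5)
Your proof is correct and follows essentially the same route as the paper. For $\psi$ you evaluate an explicit form on $(\hat z,\widehat T_zZ)$. For $\varphi$ you combine three facts: global generation of $\CI_Z(2)$ by quadrics (so the $dq_z$ span the conormal fiber), surjectivity of $\Pi_{2,z}$ (i.e.\ injectivity of its dual), and $T^{(2)}_zZ\neq\widehat T_zZ$, since otherwise $Z$ would be linear.

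One small correction: when $Z=\BP U$ the claim about $\varphi$ is not vacuous but false, because $I_2(Z)=0$. That case must simply be excluded, as the paper's proof does implicitly by assuming $Z\neq\BP U$.
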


\begin{proof}

Fix $z=[u]\in Z$. Since $\dim Z\ge1$, there exists
$
v\in\widehat T_zZ\setminus\BC u.
$
Choose $w\in\wedge^2U^\vee$ such that $w(u,v)\neq0$. Then $\psi(w)_z\neq0$ by definition, and hence $\psi\neq0$.

For the second claim, since the ideal of $Z$ is generated by quadrics, $\CI_Z(2)$ is globally generated. Moreover, since $V_2=Z$ and $Z\neq\BP U$, the bundle $\CN_Z^2$ in~(3.5) is nonzero otherwise the second fundamental form would vanish identically, forcing $Z$ to be linear. In~(3.5), the first two morphisms are surjective by construction, while the last one is injective, being dual to the second fundamental form $\Pi_2$. Hence their composition
is nonzero. Since $\CI_Z(2)$ is globally generated, the induced map on global sections is nonzero, and coincides with $\varphi$.
\end{proof}

\subsubsection{Flag varieties}
We illustrate the above discussion when
$Z=G/P\subset \BP U$ is a flag variety, using the notation of Section~2. The fundamental forms and osculating spaces of $Z$ are $G$-invariant. Hence, by homogeneity, $V_{k}=Z$ for every $k\geqslant 0,$
and it suffices to describe them at the base point $z=[u_{\lambda}].$ There they are given by the \emph{PBW filtration}. More precisely, let $U(\fg)$ be the universal enveloping algebra of $\fg,$ and let $U_{k}(\fg)$
denote its $k$-th filtered piece.
\begin{prop}[{\cite[Proposition 2.3]{LM03}}]
	The $k$-th osculating space of $Z=G/P \subset  \BP U_{\lambda}$ at $z=[u_{\lambda}]$ is $$T_{z}^{(k)}Z=U_{k}(\fg).u_{\lambda}.$$ 
	Moreover its $k$-th fundamental form is given by the following diagram:
	\begin{center}
		\begin{tikzcd}
			\Sym^{k}\fg \otimes  \langle u_{\lambda} \rangle \arrow[r, "\simeq"] \arrow[d]&
			U_{k}(\fg)/U_{k-1}(\fg) \otimes  \langle u_{\lambda} \rangle \arrow[d] \\ 
			\Sym^{k}(\fg/\fp) \otimes \langle u_{\lambda} \rangle  \arrow[r,"\Pi_{k,z}"]& U_{k}(\fg).u_{\lambda}/U_{k-1}(\fg).u_{\lambda}
		\end{tikzcd}
	\end{center}
	where we identify $T_{[u_{\lambda}]}Z=\fg/\fp$ as before.
\end{prop}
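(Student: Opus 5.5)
By homogeneity all the objects involved are $G$-equivariant, so it suffices to work at $z=[u_\lambda]$. I would use the big cell chart
\[
\fn_\fp^-\longrightarrow \widehat Z,\qquad \xi\longmapsto \exp(\xi).u_\lambda .
\]
It is a section of the tautological line over an open neighbourhood of $z$. It trivialises $\CO_Z(-1)$ there and identifies a neighbourhood of $z$ with a neighbourhood of $0\in\fn_\fp^-\cong\fg/\fp$, with differential the identification \eqref{iden}. Under this trivialisation a linear form $\ell\in U^\vee$ becomes the holomorphic function
\[
F_\ell(\xi)=\ell(\exp(\xi).u_\lambda)=\sum_{j\ge0}\frac{1}{j!}\,\ell(\xi^j.u_\lambda).
\]
Its homogeneous part of degree $j$ is $\xi\mapsto \ell(\xi^j.u_\lambda)/j!$. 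Thus $\ell\in\ker(j_{k,z})$ if and only if $\ell(\xi^j.u_\lambda)=0$ for all $\xi\in\fn_\fp^-$ and all $j\le k$.

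By polarization, the span of $\{\xi^j : \xi\in\fn_\fp^-\}$ in $U(\fg)$ is the image of $\Sym^j\fn_\fp^-$ under the symmetrization map. By PBW, symmetrization is a filtered isomorphism $\Sym(\fn_\fp^-)\cong U(\fn_\fp^-)$, so the images for $j\le k$ span $U_k(\fn_\fp^-)$. Hence $\ker(j_{k,z})$ is the annihilator of $U_k(\fn_\fp^-).u_\lambda$, and therefore
\[
T^{(k)}_zZ=U_k(\fn_\fp^-).u_\lambda .
\]

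Next I would show that $U_k(\fn_\fp^-).u_\lambda=U_k(\fg).u_\lambda$. By PBW for $\fg=\fn_\fp^-\oplus\fp$ we have $U_k(\fg)=\sum_{i\le k}U_i(\fn_\fp^-)\,U_{k-i}(\fp)$. Since $\fp$ stabilizes the line $\BC u_\lambda$, we get $U_{k-i}(\fp).u_\lambda\subset\BC u_\lambda$, and the equality follows. This proves the first assertion.

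For the fundamental form, the description of $\Pi_{k,z}$ as the leading term of $F_\ell$ together with Proposition \ref{gau_fund} gives
\[
\Pi_{k,z}(\xi^k)=\text{class of }\xi^k.u_\lambda \text{ modulo } U_{k-1}(\fg).u_\lambda \qquad(\xi\in\fn_\fp^-),
\]
up to the fixed normalization $1/k!$ and the identification with $\CO_Z(1)_z$. By polarization, this is exactly the composite of the top row of the diagram (via $\Sym^k\fg\cong \mathrm{gr}_kU(\fg)$), evaluation on $u_\lambda$, and the quotient.

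It remains to check that this composite, defined on $\Sym^k\fg$, factors through $\Sym^k(\fg/\fp)$. Here I expect the main (though mild) obstacle: one must commute a factor $y\in\fp$ to the right, at the cost of commutator terms. Concretely, take a monomial $x_1\cdots x_{k-1}y$ in $U_k(\fg)$. Its image in $\mathrm{gr}_k$ is independent of ordering, and $x_1\cdots x_{k-1}y.u_\lambda\in U_{k-1}(\fg).u_\lambda$ because $y.u_\lambda\in\BC u_\lambda$. So any symmetric tensor with a factor in $\fp$ maps to zero in $U_k(\fg).u_\lambda/U_{k-1}(\fg).u_\lambda$. This gives the vertical left arrow and the commutativity of the diagram.

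One should also double-check that the chart-dependent identification of jets with $\CO_Z(1)_z\otimes\Sym^kT^*_zZ$ agrees with the intrinsic one. This holds because changing the trivialisation of $\CO_Z(-1)$ only alters lower order terms of the leading part.
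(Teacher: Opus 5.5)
The paper gives no proof of its own here and simply quotes \cite[Proposition~2.3]{LM03}. Your argument is correct and self-contained, and it follows the standard route. The ingredients are the exponential chart $\xi\mapsto\exp(\xi).u_\lambda$ on the big cell, the Taylor expansion of $\ell(\exp(\xi).u_\lambda)$, polarization together with PBW for $\fn_\fp^-$ and for $\fg=\fn_\fp^-\oplus\fp$, and the observation that any factor from $\fp$ dies in $U_k(\fg).u_\lambda/U_{k-1}(\fg).u_\lambda$.

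The only slip is the wording of your last paragraph. For $\ell\in\ker(j_{k-1,z})$, changing the trivialisation by a function $g$ with $g(z)\neq 0$ rescales the leading term by $g(z)$ and adds only \emph{higher}-order terms, not lower-order ones. The check is in any case automatic, because $\mathfrak m_z^k/\mathfrak m_z^{k+1}\cong\Sym^kT_z^*Z$ intrinsically.
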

As a corollary we can describe the bilinear forms of lowest weights:
\begin{lem}\label{inv_form}
	Let $\sigma\in U^{\vee}\otimes U^{\vee}$ be an $\fn^{-}$-invariant form. Then
	$\sigma\in (U^{\vee}\otimes U^{\vee})_{k}$ if and only if
	$\sigma\bigl(u_{\lambda},U_{k-1}(\fg).u_{\lambda}\bigr)=0$.
	Moreover, $ev_k(\sigma)\neq 0$ if and only if
	$ev_k(\sigma)([u_{\lambda}])\neq 0$, namely the polynomial
	\[
	ev_k(\sigma)([u_{\lambda}]):\fg\longrightarrow\BC,
	\qquad
	X\longmapsto \sigma(u_{\lambda},X^k u_{\lambda})
	\]
	is nonzero.
\end{lem}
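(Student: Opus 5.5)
The argument is a weight argument combined with the $\fg$-equivariance of the filtration. Fix an $\fn^-$-invariant form $\sigma\in U^\vee\otimes U^\vee$. Since $U=U_\lambda$ is irreducible, every vector of $U$ is obtained from $u_\lambda$ by applying elements of $U(\fg)$; likewise every point of $Z$ is $g.[u_\lambda]$ with $g\in G$. The first task is to show that the vanishing condition defining $(U^\vee\otimes U^\vee)_k$, which is imposed at every $z\in Z$, can be tested at the single base point $z=[u_\lambda]$.

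\emph{Step 1 (the filtration is $G$-stable).} For $g\in G$ and $z\in Z$, the osculating spaces satisfy $T^{(k-1)}_{gz}Z=g\,T^{(k-1)}_zZ$. This holds because they are defined by jets of linear forms, and these are carried into one another by the linear action of $G$. Consequently $\sigma\in(U^\vee\otimes U^\vee)_k$ if and only if $(g^{-1}\cdot\sigma)(u_\lambda,T^{(k-1)}_{[u_\lambda]}Z)=0$ for all $g\in G$. We therefore need $\sigma'(u_\lambda,U_{k-1}(\fg).u_\lambda)=0$ for every $\sigma'$ in the $G$-span of $\sigma$, using the description $T^{(k-1)}_{[u_\lambda]}Z=U_{k-1}(\fg).u_\lambda$ from \cite[Proposition~2.3]{LM03}.

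\emph{Step 2 (reduction to the big cell).} The big cell $N^-B\cdot[u_\lambda]=N^-\cdot[u_\lambda]$ is Zariski dense in $Z$, where $N^-=\exp\fn^-$. The condition ``$\sigma(\hat z,T^{(k-1)}_zZ)=0$'' is a closed condition in $z$, since it says that a section of a vector bundle on $V_{k-1}=Z$ vanishes. It therefore suffices to check it for $z=n.[u_\lambda]$ with $n\in N^-$. Because $\sigma$ is $\fn^-$-invariant, it is $N^-$-invariant, so
\[
\sigma(n u_\lambda, nT)=\sigma(u_\lambda,T),\qquad T=T^{(k-1)}_{[u_\lambda]}Z.
\]
This gives the first claim. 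The same computation identifies $ev_k(\sigma)_{n[u_\lambda]}$ with the pullback of $ev_k(\sigma)_{[u_\lambda]}$ under $dn$. Since $ev_k(\sigma)$ is a section on the irreducible variety $Z$, it is nonzero if and only if it is nonzero on the dense big cell, which happens if and only if it is nonzero at $[u_\lambda]$.

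\emph{Step 3 (the explicit polynomial).} By the LM03 diagram, $\Pi_{k,z}$ at the base point is induced by $\Sym^k\fg\to U_k(\fg)/U_{k-1}(\fg)$, applied to $u_\lambda$. So for $X\in\fg$ with image $\bar X\in\fg/\fp$ we have $\Pi_{k,z}(\bar X^k)(u_\lambda)\equiv X^ku_\lambda$ modulo $U_{k-1}(\fg).u_\lambda$. The correction term is killed by $\sigma(u_\lambda,\cdot)$ by the first claim. Hence $ev_k(\sigma)([u_\lambda])(\bar X^k)=\sigma(u_\lambda,X^ku_\lambda)$. A symmetric $k$-form on $\fg/\fp$ vanishes if and only if its values on the $k$-th powers vanish, by polarization. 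The pullback of this form to $\fg$ is exactly the polynomial $X\mapsto\sigma(u_\lambda,X^ku_\lambda)$, and that pullback is nonzero if and only if the form itself is nonzero.

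The main point to handle carefully is Step 2, namely the density and closedness argument. Everything else is a formal consequence of the PBW description of the osculating spaces.
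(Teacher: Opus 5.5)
Your proposal is correct and is essentially the paper's argument. $N^-$-invariance of $\sigma$ together with $T^{(k-1)}_{nz}Z=n\,T^{(k-1)}_zZ$ transports the vanishing at $[u_\lambda]$ to the dense orbit $N^-\cdot[u_\lambda]$; vanishing of a bundle section on that dense orbit extends it to all of $Z$; and the PBW description of $\Pi_k$ at the base point gives the polynomial $X\mapsto\sigma(u_\lambda,X^ku_\lambda)$. The differences are cosmetic: your Step~1 detour through the $G$-span of $\sigma$ is unnecessary, and you express closedness via the section $\hat z\otimes v\mapsto\sigma(\hat z,v)$ of $\CO(1)\otimes(T^{(k-1)}Z)^\vee$ rather than via $ev_{k-1}(\sigma)$, which avoids tacitly needing $\sigma\in(U^\vee\otimes U^\vee)_{k-1}$ first.
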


\begin{proof}
	Let $R=\exp(\fn^{-})$ be the unipotent subgroup generated by $\fn^{-}$. Since $\sigma$ is $\fn^{-}$-invariant, it is $R$-invariant. If
	$\sigma(u_{\lambda},U_{k-1}(\fg).u_{\lambda})=0,$
	then for any $u'\in R.u_{\lambda}$, $\sigma(u',U_{k-1}(\fg).u')=0$ by $R$-invariance. Since $R.[u_{\lambda}]$ is open in $Z$, it follows from the above Proposition that the section $ev_{k-1}(\sigma)$ vanishes identically, hence
	$\sigma\in (U^{\vee}\otimes U^{\vee})_{k}.$
	The converse is immediate from the definition. Since $ev_{k}$ is $G$-equivariant, the second statement follows similarly. The last claim follows directly from Definition~\ref{ev} and the above Proposition.
\end{proof}
\subsubsection{Examples}
In this subsection we verify that all varieties appearing in Theorem 3.2 are tangentially non-degenerate. The converse classification will be proved in Sections 3.2 and 3.3. 

\begin{lem}[{\cite[Proposition 14]{HM98}}]
	Let
	$Z\subset \BP U$
	be a Hermitian symmetric space which is one of the varieties listed in Theorem~\ref{class_tang}(1). Then $Z$ is tangentially non-degenerate.
\end{lem}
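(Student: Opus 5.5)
The plan is to prove that $\wedge^2_Z=0$ directly, using highest weight theory and Lemma~\ref{inv_form}. The space $\wedge^2_Z\subset\wedge^2U^\vee$ is $G$-invariant, since the conditions $w(\hat z,\widehat T_zZ)=0$ are permuted by $G$. So if it were nonzero, it would contain an $\fn^-$-invariant vector $w$, namely a lowest weight vector of some irreducible summand of $\wedge^2U^\vee$. By Lemma~\ref{inv_form}, such a $w$ lies in $\wedge^2_Z=(U^\vee\otimes U^\vee)_2\cap\wedge^2U^\vee$ if and only if
\[
w(u_\lambda,u_\lambda)=0 \quad\text{and}\quad w(u_\lambda,\fn_\fp^-.u_\lambda)=0 .
\]
The first equation is automatic by skew-symmetry. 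It therefore suffices to show that no lowest weight vector of $\wedge^2U^\vee$ pairs trivially with $u_\lambda\wedge\fn_\fp^-.u_\lambda$. Dually, every irreducible summand of $\wedge^2U$ must have a highest weight vector meeting $u_\lambda\wedge\widehat T_{[u_\lambda]}Z$ nontrivially under the pairing.

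For each variety in Theorem~\ref{class_tang}(1) I will use the explicit decomposition of $\wedge^2U$ and check it case by case.
\begin{itemize}
\item For $\BP^n$, $\wedge^2U$ is irreducible. Its highest weight vector $u_\lambda\wedge f_1u_\lambda$ lies in $u_\lambda\wedge\widehat T Z$, so the condition holds.
\item For the Segre variety, $U=A\otimes B$ and $\wedge^2U=\wedge^2A\otimes\Sym^2B\oplus\Sym^2A\otimes\wedge^2B$. Both highest weight vectors have the form $a_1b_1\wedge a_2b_1$ or $a_1b_1\wedge a_1b_2$, and both lie in $\hat z\wedge\widehat T_zZ$ with $\hat z=a_1b_1$.
\item For $v_2(\BP^n)$, $\wedge^2\Sym^2V\cong S_{(3,1)}V$ is irreducible, with highest weight vector $e_1^2\wedge e_1e_2\in\hat z\wedge\widehat T$.
\item For the remaining rank $\le 2$ spaces $Gr(2,n)$, $\BQ^n$, $\BS_5$ and $E_6/P_1$, I will decompose $\wedge^2U$ with a known plethysm or the LiE tables. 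For each summand I will exhibit its highest weight vector, of weight $2\lambda-\alpha$ or $2\lambda-(\text{sum of roots})$, inside $u_\lambda\wedge\fn^-_\fp u_\lambda$. Concretely:
\begin{itemize}
\item For $\BQ^n$, $\wedge^2U=\fg$ is irreducible, with highest weight $2\lambda_1-\alpha_1$. This is realized by $u_\lambda\wedge f_1u_\lambda$.
\item For $Gr(2,n)$, $\wedge^2(\wedge^2V)=S_{(2,1,1)}V$ is irreducible, with highest weight vector $(e_1\wedge e_2)\wedge(e_1\wedge e_3)$, which is tangent.
\item For $\BS_5$, $\wedge^2$ of the $16$-dimensional half-spin module is $\wedge^3\BC^{10}$, which is irreducible.
\item For $E_6/P_1$, $\wedge^2$ of the $27$-dimensional module is the irreducible $351$-dimensional module.
\end{itemize}
In each of these cases $\wedge^2U$ is irreducible. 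Its highest weight is $2\lambda-\alpha_i$, where $\alpha_i$ is the simple root with $\langle\lambda,\alpha_i^\vee\rangle\neq0$, and the vector $u_\lambda\wedge f_iu_\lambda\neq0$ realizes it, so the condition holds.
\end{itemize}

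The general mechanism behind this check is as follows. $\wedge^2U$ contains the Cartan-type summand $U_{2\lambda-\alpha_i}$ for every simple root $\alpha_i\in\Delta_Z$, and its highest weight vector is $u_\lambda\wedge f_iu_\lambda$, up to correction. Every other summand has highest weight $2\lambda-\beta$ with $\beta$ a sum of at least two simple roots. The key step is to show that for these Hermitian symmetric cases no such additional summand occurs. Alternatively, if one does occur, its highest weight vector must still involve $u_\lambda$ paired with a vector of $\fn^-_\fp.u_\lambda$. The obstacle I expect is exactly this: the representation-theoretic verification that $\wedge^2U$ has only the summands listed, with no hidden component whose highest weight vector avoids $u_\lambda\wedge\widehat T Z$. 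I would handle it by dimension counting: compare $\dim\wedge^2U$ with $\sum_i\dim U_{2\lambda-\alpha_i}$ using the Weyl dimension formula.

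As a conceptual backup, the cited result \cite[Proposition~14]{HM98} and the vanishing of the third fundamental form would also give the conclusion, via \cite[Proposition~1.3.1]{HM99}. The argument runs as follows. Let $w\in\wedge^2_Z$. Then $ev_2(w)=0$ by Proposition~\ref{12_ev}, so $w$ vanishes on $\hat z\wedge T^{(2)}_z$. For these varieties $T^{(2)}_z=U$, so $w(\hat z,\cdot)=0$ for all $z\in Z$. By non-degeneracy of $Z$ this forces $w=0$.
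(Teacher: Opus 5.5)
Your proposal is correct, and its main line differs from the paper's. The paper gives no proof of its own: it cites \cite[Proposition~14]{HM98}, and in the introduction it points to the vanishing of the third fundamental form together with \cite[Proposition~1.3.1]{HM99}. That is exactly your ``conceptual backup'': Proposition~\ref{12_ev} upgrades $w\in\wedge^2_Z$ to $w(\hat z,T^{(2)}_zZ)=0$, and $T^{(2)}_zZ=U$ for all the listed varieties then forces $w=0$. You only assert $T^{(2)}_zZ=U$; it is standard, e.g.\ from $T^{(k)}_zZ=U_k(\fg).u_\lambda$ and the fact that $U$ is reached from $u_\lambda$ in degree $\le 2$ by the abelian $\fn^-_\fp$.

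Your primary route is representation-theoretic. $\wedge^2_Z$ is a $G$-submodule, and in every listed case $\wedge^2U$ is multiplicity-free: it is irreducible, or, for the Segre variety, a sum of two non-isomorphic summands. Each summand's highest weight vector lies in $u_\lambda\wedge\widehat T_{[u_\lambda]}Z$, so Lemma~\ref{inv_form} rules out every lowest weight vector of $\wedge^2U^\vee$. The plethysms and highest weights you quote are all correct.

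Two remarks on this route.
\begin{itemize}
\item The condition you actually verify, that the highest weight vector of each summand lies in $u_\lambda\wedge\widehat T$, suffices only because the decompositions are multiplicity-free. With repeated summands you would have to test every $\fn^-$-invariant vector of the given weight. So your ``general mechanism'' paragraph should not be read as a general criterion.
\item In the irreducible cases the explicit vector $u_\lambda\wedge f_iu_\lambda$ is superfluous. $\wedge^2_Z$ is then either $0$ or all of $\wedge^2U^\vee$, and the latter is excluded because $\psi\neq 0$ (Corollary~\ref{quadrics}). This is precisely the converse of the paper's own argument via Dynkin's table in the (co)minuscule case.
\end{itemize}

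The trade-off between the two routes is as follows. Your representation-theoretic proof stays inside the paper's toolkit, but it is case-by-case and takes the plethysms as input. The fundamental-form proof is uniform and explains why exactly these varieties (osculating filtration of length $\le 2$) appear, at the cost of importing the vanishing of the third fundamental form.
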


\begin{lem}
	Let $Z \subset \BP U_{\lambda}$ be the adjoint or the quasi-minuscule variety of a simple Lie algebra $\fg$. Then $Z$ is tangentially non-degenerate.
\end{lem}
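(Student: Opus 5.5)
We plan to argue by highest-weight theory, reducing tangential non-degeneracy to a weight computation at the base point $z=[u_\lambda]$. The subspace $\wedge^2_Z\subset\wedge^2U^\vee$ is $G$-stable, because the condition $w(\hat z,\widehat T_zZ)=0$ for all $z\in Z$ is $G$-invariant. So if $\wedge^2_Z\neq 0$, it contains a nonzero lowest weight vector $\sigma$. This $\sigma$ is $\fn^-$-invariant of weight $-\mu$, where $\mu=\lambda'+\nu$ is a dominant weight of $\wedge^2U$; here $\lambda',\nu$ are weights of $U$. We aim to derive a contradiction.

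The first step is to push $\sigma$ deeper into the filtration. Since $\sigma$ is skew, Proposition~\ref{12_ev} with $k=1$ gives $\sigma\in(U^\vee\otimes U^\vee)_3$. Then Lemma~\ref{inv_form} gives
\[
\sigma\bigl(u_\lambda,U_2(\fg).u_\lambda\bigr)=0,
\]
so the functional $\ell:=\sigma(u_\lambda,\cdot)\in U^\vee$ annihilates the second osculating space $T^{(2)}_zZ$. On the other hand, $\ell\neq0$. Indeed, if $\ell=0$, then by $\fn^-$-invariance $\sigma(g u_\lambda,\cdot)=0$ for all $g\in R=\exp(\fn^-)$. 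Since $R.u_\lambda$ spans $U$ by irreducibility, this would force $\sigma=0$. Because $\ell$ has weight $\lambda-\mu$, it is nonzero only on the weight space $U_\nu$ with $\nu=\mu-\lambda$. Thus $\nu$ is a weight of $U$ with $\nu\neq\lambda$ (as $\wedge^2$ kills $u_\lambda\wedge u_\lambda$), $\lambda+\nu$ is dominant, and $U_\nu\not\subset T^{(2)}_zZ$.

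The second step is to rule this out case by case. In both cases the nonzero weights of $U$ are roots: all roots in the adjoint case, and the short roots in the quasi-minuscule case, with $0$ possibly occurring as a weight. Using the grading of Section~2 determined by $\Delta_Z$, we have $u_\lambda\in$ top degree, and
\[
T^{(1)}_zZ=\BC u_\lambda\oplus\fn_\fp^-.u_\lambda ,\qquad T^{(2)}_zZ=U_2(\fg).u_\lambda .
\]
We will show the following two claims.
\begin{itemize}
\item[(a)] $T^{(2)}_zZ$ contains every weight space $U_\nu$ whose grading degree is at least that of the zero weight, including $U_0$ itself. For instance, in the adjoint case $[\fg_{-2},X_\theta]=\BC h_\theta$ and $[\fg_{-1},\fg_1]$ together fill $\fg_0$. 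In general this comes from $[\fg_{-1},\fg_{-1}.u_\lambda]$ and $\fg_{-2}.u_\lambda$.
\item[(b)] If $\nu\neq\lambda$ is a weight with $\lambda+\nu$ dominant, then $\nu$ has nonnegative degree, i.e. $\langle\nu,\varpi\rangle\ge0$ for the grading element. This holds because $\lambda+\nu$ is dominant and $\lambda$ is the highest root, so pairing $\nu$ with $\lambda^\vee$ gives $\langle\nu,\lambda^\vee\rangle\ge-\langle\lambda,\lambda^\vee\rangle+$(positivity), which excludes the negative roots $-\lambda$ and those of degree $<0$.
\end{itemize}
Together, (a) and (b) give $U_\nu\subset T^{(2)}_zZ$, contradicting the first step. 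Hence $\wedge^2_Z=0$.

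The main obstacle is the weight bookkeeping in (a) and (b). In the quasi-minuscule, non-adjoint case ($B_n$ with $\lambda_1$, $C_n$ with $\lambda_2$, $F_4$, $G_2$), the zero weight space has multiplicity greater than one, and the grading may have more than two levels. There one must check carefully that $U_2(\fg).u_\lambda$ exhausts the zero weight space and all weights $\nu$ with $\lambda+\nu$ dominant. I would first enumerate the dominant weights of $\wedge^2U_\lambda$, which are known: for the adjoint case they are $\lambda+\nu$ with $\nu$ the highest root of $\fg_0$-type, and $2\lambda-\alpha$-type weights. Then I would verify membership in $T^{(2)}$ directly via single root vectors, falling back on the vanishing of the third fundamental form and \cite[Proposition~1.3.1]{HM99} if the direct check becomes unwieldy.
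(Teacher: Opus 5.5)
Your Step 1 is sound. Proposition~\ref{12_ev} with $k=1$ puts $\wedge^2_Z$ inside $(U^\vee\otimes U^\vee)_3$. For an $\fn^-$-invariant $\sigma\neq0$, the functional $\ell=\sigma(u_\lambda,\cdot)$ is nonzero, supported on $U_\nu$ with $\nu=\mu-\lambda$, and kills $T^{(2)}_zZ$.

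\emph{The gap is claim (b): it is false as stated, so (a) and (b) together do not exclude all possible $\nu$.}

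First, $\nu=-\lambda$ always satisfies the hypotheses. Indeed $\lambda+\nu=0$ is dominant, while $\nu$ has negative degree. Your pairing gives only $\langle 0,\lambda^\vee\rangle=0$, which is no contradiction; ruling out $\mu=0$ needs the extra fact that $U$ is orthogonal, so $\wedge^2U$ has no invariants.

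Second, and more seriously, take $\fg$ of type $B_n$ with $n\ge3$, $\lambda=\theta=\lambda_2$ and $\Delta_Z=\{\alpha_2\}$. The root $\nu=\theta_s-\theta=-(\alpha_2+\cdots+\alpha_n)$ has degree $-1$, yet $\lambda+\nu=\theta_s=\lambda_1$ is dominant. In general, for the adjoint (contact) grading the degree of $\nu$ is exactly $\langle\nu,\theta^\vee\rangle$. So dominance of $\lambda+\nu\neq0$ only yields $\deg\nu\ge-1$, and weights of degree $-1$ are covered by neither (a) nor (b).

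To rescue the (a)+(b) scheme you would need to know which dominant weights actually occur as highest weights of $\wedge^2U$. For example, $\lambda_1$ does not occur in $\wedge^2\mathfrak{so}_{2n+1}$, which is why the example above does no harm. That amounts to the full decomposition of $\wedge^2U$ in every case, which the proposal only gestures at.

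The repair is to prove the much stronger form of (a), namely $T^{(2)}_zZ=U$.
\begin{itemize}
\item Since $\lambda$ is a root with $\langle\lambda,\lambda^\vee\rangle=2$, the $\mathfrak{sl}_2$-triple $(e_\lambda,h_\lambda,f_\lambda)$ makes $u_\lambda$ generate a $3$-dimensional module.
\item Hence $f_\lambda^2u_\lambda\in U_2(\fg).u_\lambda$ is a nonzero vector of weight $-\lambda=w_0\lambda$, so it spans the lowest weight line.
\item Since $U_2(\fg).u_\lambda$ is $\fp$-stable, it contains $U(\fn).u_{-\lambda}=U$.
\end{itemize}
After this, no weight bookkeeping is needed at all. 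Every $w\in\wedge^2_Z$ satisfies $w(\hat z,U)=0$ for all $z$, hence $w=0$ because $Z$ spans $U$. This is exactly the vanishing-third-fundamental-form route via \cite[Proposition~1.3.1]{HM99} mentioned in the introduction, now made self-contained. It replaces your fallback.

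Compared with the paper: the paper uses the same conic $SL_{2,\lambda}\cdot[u_\lambda]$, but globally. Since $G\cdot([u_\lambda],[u_{-\lambda}])$ is open in $Z\times Z$, a general pair of points lies on a conic in $Z$. A skew form killing $\hat z\otimes\widehat T_zZ$ must then vanish on the linear span of each such conic. Your repaired argument uses the conic only infinitesimally at one point, trading the open-orbit statement for Wahl's parity result.
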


\begin{proof}
	In this case $\lambda\in\Phi^+$ is the highest long root
	\textup{(}resp.\ highest short root\textup{)}, hence the lowest weight of $U$
	is $w_0(\lambda)=-\lambda$. Let $u_\lambda$ and $u_{-\lambda}$ be
	highest and lowest weight vectors, and let
	$SL_{2,\lambda}\subset G$ be the subgroup associated with the root
	$\lambda$. Then $SL_{2,\lambda}$ acts on
	$
	\left\langle SL_{2,\lambda}\cdot u_{-\lambda}\right\rangle
	$
	via its adjoint representation. Hence
	$
	SL_{2,\lambda}\cdot [u_\lambda]\cong\BP^1
	$
	is a conic in $Z$ passing through $[u_\lambda]$ and $[u_{-\lambda}]$.
	
	By the same argument as in~\cite[Lemma~2.12]{FL24}, the diagonal
	$G$-orbit
	$
	G\cdot\bigl([u_\lambda],[u_{-\lambda}]\bigr)
	\subset Z\times Z
	$
	is open. Therefore a general pair of points can be joined by a conic contained in \( Z \), and hence $Z$ is tangentially
	non-degenerate as in~loc.~cit.
\end{proof}

\subsection{Non-fundamental case}
In this subsection, we treat the case where the highest weight is not fundamental. Let $Z\subset \BP U$ be a flag variety under the action of $G$, with $U=U_\lambda$  and $\lambda=\lambda'+\lambda'',$  where $\lambda'$ and $\lambda''$ are nonzero dominant weights. Denote the corresponding flag varieties by $Z'=G/P_{\lambda'}\subset \BP U_{\lambda'},$ and $Z''=G/P_{\lambda''}\subset \BP U_{\lambda''}.$
The following result  motivates our approach.
\begin{thm}[{\cite[Theorem~5.5]{Cho25}}]\label{Cho25}
	Let \( Z \subset \mathbb{P}U \) be a non-degenerate projective variety embedded by the complete linear system of a very ample line bundle \( L \). Assume that there is a decomposition \( L = L_{1}+L_{2}+L_{3} \) such that \( h^{0}(L_{i}) \geqslant 2 \) for each \( i \). Then \( \wedge_{Z}^{2} \neq 0 \).
\end{thm}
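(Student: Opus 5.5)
The plan is to build an explicit nonzero element of $\ker(\psi)=\wedge^2_Z$ out of products of sections of the $L_i$, by reducing the question to a dimension count for a Gaussian-type map on multilinear polynomials. For each $i$, fix two linearly independent sections $s_i,t_i\in H^0(L_i)$. Then $r_i=t_i/s_i$ is a nonconstant rational function on $Z$. Set $S=s_1s_2s_3\in H^0(L)$. For $\varepsilon\in\{0,1\}^3$, the monomial section $m_\varepsilon=\prod_i s_i^{1-\varepsilon_i}t_i^{\varepsilon_i}\in H^0(L)=U^\vee$ equals $S\,r^\varepsilon$, where $r^\varepsilon=\prod_i r_i^{\varepsilon_i}$. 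On the open set $\{S\ne0\}$, the definition of $\psi$ gives
\[
\psi(m_\varepsilon\wedge m_\delta)=S^2\bigl(r^\varepsilon\,dr^\delta-r^\delta\,dr^\varepsilon\bigr).
\]
So, writing $V$ for the $8$-dimensional space of multilinear polynomials in three variables $x_1,x_2,x_3$, the Gaussian map on the span of the $m_\varepsilon\wedge m_\delta$ factors as
\[
\wedge^2V\xrightarrow{\ \gamma\ }\Omega_{\mathrm{formal}}\xrightarrow{\ x_i\mapsto r_i\ }H^0(Z,\Omega_Z(2)),
\]
where $\gamma(a\wedge b)=a\,db-b\,da$ is computed formally in the polynomial ring.

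The key step is a dimension count showing $\ker\gamma\neq0$. Write $a=a_0+x_1a_1$ and $b=b_0+x_1b_1$ with $a_j,b_j$ independent of $x_1$. The $dx_1$-coefficient of $\gamma(a\wedge b)$ is $a\,\partial_1b-b\,\partial_1a=a_0b_1-a_1b_0$. This is independent of $x_1$ and has degree at most $2$ in each of $x_2,x_3$, so it lies in a $9$-dimensional space; the same holds for the other two coefficients. Hence $\dim\Im\gamma\le 27<28=\dim\wedge^2V$, and there is a nonzero $\omega\in\wedge^2V$ with $\gamma(\omega)=0$. I would also try to write $\omega$ down explicitly, for instance by symmetrizing over the $(\BZ/2)^3$ action $x_i\mapsto$ ``swap $s_i,t_i$'' and over permutations of the factors, since an explicit formula makes the last step easier. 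The image $\tilde\omega\in\wedge^2U^\vee$ of $\omega$ under $m:V\to U^\vee$, $x^\varepsilon\mapsto m_\varepsilon$, then satisfies $\psi(\tilde\omega)=0$ on the dense open set $\{S\neq0\}$, hence everywhere.

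The main obstacle is to ensure $\tilde\omega\neq0$, that is, that $\omega$ does not die under $\wedge^2 m$. If $r_1,r_2,r_3$ are algebraically independent on $Z$ (equivalently $Z\dashrightarrow(\BP^1)^3$ is dominant), then $m$ is injective and we are done. In general I would use the following refinements:

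\begin{itemize}
\item Replace $V$ by $V'=m(V)\subset U^\vee$. Redo the count for the induced map $\wedge^2V'\to H^0(Z,\Omega_Z(2))$, using that its image is spanned by the images of the $27$-dimensional bound above. This reduces matters to showing $\dim\wedge^2V'>\dim(\text{image})$.
\item Exploit the freedom in choosing $s_i,t_i$, generically within $H^0(L_i)$, together with the fact that each $r_i$ is nonconstant. These should show that relations among the $m_\varepsilon$ force correspondingly many relations among the $dx_i$-coefficients, so the strict inequality survives.
\item Since $Z$ is non-degenerate, at least the products $s_1s_2s_3,\ t_1s_2s_3,\dots$ that are linearly independent give enough room.
\end{itemize}

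The most delicate subcase is when the image of $Z$ in $(\BP^1)^3$ is a curve or a surface. There I would directly exhibit a lower-dimensional analogue of the construction: for a curve, use two factors and the classical kernel element coming from $h^0\ge2$ twice together with the extra factor.
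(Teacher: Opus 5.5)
\textbf{Gap: non-vanishing of $\tilde\omega$.} Your proof is incomplete at exactly the step you flag as the main obstacle. Nothing in the proposal shows that $\tilde\omega\neq0$ in $\wedge^2U^\vee$ once the $m_\varepsilon$ satisfy linear relations, i.e.\ once $Z\dashrightarrow(\BP^1)^3$ is not dominant. The algebraically independent case is fine.

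The bullets you list are heuristics, not an argument. The first one cannot be run with the bound you have. $\dim\wedge^2V'$ drops quadratically with $\dim V'$: a single linear relation already gives $\dim\wedge^2V'=21<27$. Your bound $27$ on the image does not drop at all. You would therefore need a separate, unproved estimate showing that, for every possible configuration of relations, the image is cut down by at least as much. The curve and surface subcases are left open.

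\textbf{Missing idea: factor $\tilde\omega$ on $Z\times Z$.} No dimension count is needed. Take the specific element built from the three skew forms $s_i\wedge t_i$:
\[
\tilde\omega=m_{000}\wedge m_{111}-m_{001}\wedge m_{110}-m_{010}\wedge m_{101}+m_{011}\wedge m_{100}.
\]
This is what your symmetrization would produce, and one checks $\gamma(\omega)=0$ directly. Under $U^\vee\otimes U^\vee=H^0(Z\times Z,L\boxtimes L)$ it factors as
\[
\tilde\omega=\prod_{i=1}^{3}\bigl(s_i\boxtimes t_i-t_i\boxtimes s_i\bigr).
\]
Each factor is a section of $L_i\boxtimes L_i$, nonzero because $s_i,t_i$ are linearly independent. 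Since $Z\times Z$ is integral, the product is nonzero, whatever linear relations hold among the $m_\varepsilon$.

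Moreover, each factor vanishes on the diagonal $\Delta_Z$. So $\tilde\omega$ vanishes to order at least three along $\Delta_Z$, hence $\tilde\omega(\hat z,\widehat T_zZ)=0$ at every smooth point, i.e.\ $\tilde\omega\in\wedge^2_Z$. This also removes the need for your passage through formal differentials.

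This is the tensor-of-linear-forms construction that the paper attributes to \cite{Cho25} and adapts in Proposition~\ref{non_vanish}. There too, non-vanishing comes from a product of nonzero functions on an irreducible space, not from linear independence of the building blocks.
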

 We will prove a refined version:
\begin{prop}\label{tensor}
	Let \( Z \subset \mathbb{P}U \) be a flag variety under the action of a group \( G \). Keep the notation as above. 
	If $Z' \not= \BP U_{\lambda'}$ or $ Z'' \not=\BP U_{\lambda''} $, then \( \wedge_{Z}^2 \neq 0 \).
\end{prop}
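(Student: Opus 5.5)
We construct a nonzero element of $\wedge^2_Z$ from tensors of linear forms, following \cite[Section~5.2]{Cho25}. The key input is the Cartan product: $U_\lambda\subset U_{\lambda'}\otimes U_{\lambda''}$ is spanned by the decomposable vectors $\hat z'\otimes\hat z''$, and $Z\subset Z'\times Z''\subset\BP(U_{\lambda'}\otimes U_{\lambda''})$ sits diagonally via the Segre embedding. Dually, $U_\lambda^\vee$ is a quotient of $U_{\lambda'}^\vee\otimes U_{\lambda''}^\vee$, and restricting forms to $\hat Z$ gives $\wedge^2U_\lambda^\vee\ni w$ from any skew form on the tensor product. By symmetry we assume $Z'\neq\BP U_{\lambda'}$.

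\textbf{Step 1 (candidate form).} Take a nonzero quadric $q'\in I_2(Z')\subset\Sym^2U_{\lambda'}^\vee$; it exists because $Z'$ is a flag variety, hence cut out by quadrics (Kostant), and $Z'$ is not the whole space. Take a nonzero skew form $\omega''\in\wedge^2U_{\lambda''}^\vee$; this requires $\dim U_{\lambda''}\ge2$, which holds since $\lambda''\neq0$. Define
\[
\sigma(u'_1\otimes u''_1,\,u'_2\otimes u''_2)=q'(u'_1,u'_2)\,\omega''(u''_1,u''_2),
\]
which is skew-symmetric on $U_{\lambda'}\otimes U_{\lambda''}$, and restrict it to $U_\lambda$. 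At $\hat z=\hat z'\otimes\hat z''$, the affine tangent space is $\widehat T_zZ=\widehat T_{z'}Z'\otimes\hat z''+\hat z'\otimes\widehat T_{z''}Z''$. We then have $\sigma(\hat z,v'\otimes\hat z'')=q'(\hat z',v')\,\omega''(\hat z'',\hat z'')=0$, and $\sigma(\hat z,\hat z'\otimes v'')=q'(\hat z',\hat z')\,\omega''(\hat z'',v'')=0$ since $q'$ vanishes on $\hat Z'$. Hence $\sigma|_{U_\lambda}\in\wedge^2_Z$. This mirrors Choe's use of three factors, with the quadric replacing one factor.

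\textbf{Step 2 (nonvanishing, main obstacle).} We must show that $\sigma$ does not vanish identically on $U_\lambda\times U_\lambda$. Since $U_\lambda$ is spanned by $\hat Z$, it suffices to find points $z_1,z_2\in Z$ with
\[
q'(\hat z'_1,\hat z'_2)\,\omega''(\hat z''_1,\hat z''_2)\neq0.
\]
Both factors are nonzero on nonempty Zariski-open subsets of $Z\times Z$, provided each is not identically zero there. The second factor is fine: $Z\to Z''$ is surjective and $Z''$ spans $U_{\lambda''}$, so $\omega''\neq0$ on $\hat Z''\times\hat Z''$. For the first, we need $q'$ as a bilinear form to be nonzero on $\hat Z'\times\hat Z'$; this holds because $Z'$ spans $U_{\lambda'}$ and the polarization of $q'$ is nonzero. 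Irreducibility of $Z\times Z$ then makes the intersection of the two open sets nonempty. The delicate point is that $Z\to Z'\times Z''$ is not surjective, so we only have pairs $(z'_i,z''_i)$ lying over the same point of $Z$. However, each factor depends only on its own projection, and $Z\times Z\to Z'\times Z'$ and $Z\times Z\to Z''\times Z''$ are both surjective, so the argument goes through.

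\textbf{Alternative.} If a cleaner check is needed, we can instead use Lemma~\ref{inv_form}: choose $q'$ and $\omega''$ to be lowest weight vectors and verify $\sigma(u_\lambda,X u_\lambda)$ or $\sigma(u_\lambda,u_{w_0\lambda})$ directly using weight vectors.
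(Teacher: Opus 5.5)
Your proof is correct, and it shows the nonvanishing by a different route from the paper's.

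\textbf{What is shared.} You use the same element as the paper: the image under $\wedge^2\rho$ of $q'\otimes\omega''\in I_2(Z')\otimes\wedge^2U_{\lambda''}^\vee$. Your check that it lies in $\wedge^2_Z$ is the computation of Lemma~\ref{lem_prod_wedge_Z}.

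\textbf{A small slip in Step~1.} The equality $\widehat T_zZ=\widehat T_{z'}Z'\otimes\hat z''+\hat z'\otimes\widehat T_{z''}Z''$ is false in general. The right-hand side is the affine tangent space of $Z'\times Z''$, and it only contains $\widehat T_zZ$. The inclusion is strict, for example, for $v_2(\BP^n)$ or $Fl(1,n;n+1)$. You only use the inclusion, so the argument is unaffected.

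\textbf{How the nonvanishing step differs.} The paper argues infinitesimally at the base point:
\begin{itemize}
\item it picks $\sigma'\in I_2(Z')$ with $ev_2(\sigma')\neq0$, which needs Kostant's theorem and the nonvanishing of the second fundamental form (Corollary~\ref{quadrics});
\item it picks $\sigma''$ with nonzero Gaussian image;
\item it expands $\sigma(u_\lambda,X^3u_\lambda)=3f_{\sigma'}(X)f_{\sigma''}(X)$ and concludes $ev_3\neq0$ by Lemma~\ref{inv_form}.
\end{itemize}
You instead evaluate globally on pairs of points of $\hat Z$. This needs only $I_2(Z')\neq0$, linear non-degeneracy of $Z'$ and $Z''$, surjectivity of $Z\times Z\to Z'\times Z'$ and $Z\times Z\to Z''\times Z''$, and irreducibility of $Z\times Z$.

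\textbf{What each route buys.} Yours is more elementary. It also proves more in one direction: $\wedge^2\rho(q'\otimes\omega'')\neq0$ for every nonzero $q'$ and every nonzero $\omega''$, not only for specially chosen ones. The paper's route gives finer information in another direction. It places the resulting form exactly at level three of Wahl's filtration, which is the ``more precisely'' clause of Proposition~\ref{non_vanish}. Point evaluation does not detect this, but it is not needed for the statement. Your ``Alternative'' paragraph is unnecessary.
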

Together with examples given in Section 3.1.3, this immediately implies the following:
\begin{cor}\label{nprim}
	Let \( Z \subset \mathbb{P}U_{\lambda} \) be a flag variety such that $\lambda$ is not a fundamental weight. Then \( Z \) is tangentially non-degenerate if and only if it is isomorphic to one of the following:
	$$
	v_{2}(\BP^{n}),\mathbb{P}^{k} \times \mathbb{P}^{l}, Fl(1,n;n+1)\, (n \ge 2).
	$$
\end{cor}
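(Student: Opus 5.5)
By Proposition~\ref{tensor}, it is enough to find the non-fundamental dominant weights $\lambda$ for which \emph{every} splitting $\lambda=\lambda'+\lambda''$ into nonzero dominant weights has $Z'=\BP U_{\lambda'}$ and $Z''=\BP U_{\lambda''}$. The converse direction then comes from the examples of Section~3.1.3.

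\textbf{Step 1: weights whose flag variety is the whole projective space.} The condition $G/P_{\mu}=\BP U_\mu$ says that the semisimple group $G$ acts transitively on $\BP U_\mu$. If $G=G_1\times\cdots\times G_r$ and $\mu$ is nontrivial on two factors, then $U_\mu$ is an outer tensor product. Its closed orbit is then a Segre product of positive-dimensional varieties, which is a proper subvariety of $\BP U_\mu$. So $\mu$ lives on a single simple factor $G_j$, and $G_j$ acts transitively on $\BP U_\mu$. By the classical classification of semisimple groups acting transitively on projective space (Onishchik), $(G_j,U_\mu)$ is either $SL_{m+1}$ on $\BC^{m+1}$ or its dual, or $Sp_{2m}$ on $\BC^{2m}$. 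Hence $\mu$ is $\lambda_1$ or $\lambda_m$ of a type $A$ factor, or $\lambda_1$ of a type $C$ factor. One could also argue by dimension, comparing $\dim G/P_\mu$ with $\dim U_\mu-1$ via Weyl's formula, but quoting transitivity is cleaner.

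\textbf{Step 2: applying this to every splitting.} Since $\lambda$ is not fundamental, choose a fundamental weight $\lambda_i$ occurring in $\lambda$ and split $\lambda=\lambda_i+(\lambda-\lambda_i)$, where $\lambda-\lambda_i\neq0$. Step~1 applies to both summands. So $\lambda-\lambda_i$ is itself a single fundamental weight of the type just listed, and $\lambda=\mu_1+\mu_2$ with each $\mu_k$ in the list. (Any other splitting of such a $\lambda$ is the same one, so no further conditions arise.) The cases are as follows.
\begin{itemize}
\item[(a)] $\mu_1,\mu_2$ lie on different simple factors. Then $Z=\BP^k\times\BP^l$ in its Segre embedding, where a $C$-factor still contributes a projective space $\BP^{2m-1}$.
\item[(b)] $\mu_1=\mu_2$ on one factor ($2\lambda_1$ or $2\lambda_m$ in type $A$, or $2\lambda_1$ in type $C$). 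Then $Z=v_2(\BP^n)$, since $\Sym^2$ of the standard representation has the Veronese variety as closed orbit.
\item[(c)] $\lambda_1+\lambda_m$ in type $A_m$ with $m\ge2$. Then $Z=Fl(1,m;m+1)$, the adjoint variety. The case $m=1$ is already covered by (b).
\end{itemize}
Type $C$ allows no mixed case, because $\lambda_1$ is the only admissible weight there. Since the statement classifies $Z$ only up to isomorphism, the $Sp$ realizations give nothing new.

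\textbf{Step 3: the converse.} The Segre and Veronese varieties are VMRTs of irreducible Hermitian symmetric spaces, so they are tangentially non-degenerate by the lemma citing \cite[Proposition~14]{HM98}. $Fl(1,n;n+1)$ is the adjoint variety of $A_n$, so it is tangentially non-degenerate by the adjoint/quasi-minuscule lemma.

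\textbf{Main obstacle.} The only non-formal input is Step~1, namely that transitivity on $\BP U_\mu$ forces the standard representation of $SL$ or $Sp$. I would cite the classification directly rather than reprove it, and check the non-simple reduction via the Segre argument given above.
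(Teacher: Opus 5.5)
Your proposal is correct and follows the paper's own argument. You apply Proposition~\ref{tensor} to force both summands of a splitting of $\lambda$ to have closed orbit all of projective space, identify those weights, enumerate the possible sums, and take the converse from the two lemmas of Section~3.1.3. You are also more careful than the paper's one-line deduction: you justify the reduction to a single simple factor by the Segre argument, and you account for the $Sp_{2m}$ realization of $\BP^{2m-1}$, which the paper sidesteps through its convention $G=\Aut^0(Z)$, checking that it gives no new varieties.
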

  \begin{proof}
Assume that $Z$ is tangentially non-degenerate. By Proposition~3.14, for any decomposition
$\lambda=\lambda'+\lambda''$
into nonzero dominant weights, we have
$
Z'=\BP U_{\lambda'}
$
and
$
Z''=\BP U_{\lambda''}.
$
Hence $\lambda'$ and $\lambda''$ are fundamental weights corresponding to the standard representation or its dual of a simple factor of type $A$, which gives exactly the three varieties listed above. Conversely, these varieties are tangentially non-degenerate by Lemmas~3.11 and~3.12.
\end{proof}
To prove Proposition \ref{tensor} we present the general approach using tensors of linear forms to construct nonzero elements in $\wedge_{Z}^{2}$.
\begin{defn}\label{tensor_map}
Keep the notation above. Choose highest weight vectors
$u_{\lambda'}\in U_{\lambda'}$ and
$u_{\lambda''}\in U_{\lambda''}$, and identify the embedding
$$
U_{\lambda'+\lambda''}
=
\left\langle
\fg\cdot(u_{\lambda'}\otimes u_{\lambda''})
\right\rangle
\subset
U_{\lambda'}\otimes U_{\lambda''}.
$$
This induces a $G$-equivariant inclusion
$$
Z\longrightarrow Z'\times Z'',
\qquad
[g\cdot(u_{\lambda'}\otimes u_{\lambda''})]
\longmapsto
\bigl([g\cdot u_{\lambda'}],[g\cdot u_{\lambda''}]\bigr),
$$
and, by duality, a surjection
$$
\rho:
U_{\lambda'}^\vee\otimes U_{\lambda''}^\vee
\longrightarrow
U_{\lambda'+\lambda''}^\vee.
$$
Via the Segre embedding
$Z'\times Z''\subset\BP(U_{\lambda'}\otimes U_{\lambda''})$,
the restriction
$$
\wedge^2\rho:
\wedge^2_{Z'\times Z''}
\longrightarrow
\wedge^2_Z
$$
is called the \emph{tensor map} associated with
$\lambda=\lambda'+\lambda''$.
\end{defn}

\begin{lem}\label{lem_prod_wedge_Z}
Keep the notation above. Under the decomposition
$$\wedge^2\bigl(
U_{\lambda'}^\vee\otimes U_{\lambda''}^\vee
\bigr)
\cong
\wedge^2U_{\lambda'}^\vee\otimes S^2U_{\lambda''}^\vee
\oplus
S^2U_{\lambda'}^\vee\otimes\wedge^2U_{\lambda''}^\vee,
$$
we have the following equality of subspaces:
\begin{equation}\label{prod_wedge_Z}
\wedge^2_{Z'\times Z''}
=
\left(
\wedge^2_{Z'}\otimes S^2U_{\lambda''}^\vee
+
\wedge^2U_{\lambda'}^\vee\otimes I_2(Z'')
\right)
\oplus
\left(
I_2(Z')\otimes\wedge^2U_{\lambda''}^\vee
+
S^2U_{\lambda'}^\vee\otimes\wedge^2_{Z''}
\right).
\end{equation}
In particular, if $\wedge^2\rho$ is nonzero on one of the two summands in~\eqref{prod_wedge_Z}, then $\wedge^2_Z\neq0$.
\end{lem}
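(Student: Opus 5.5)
The plan is to reduce the membership condition defining $\wedge^2_{Z'\times Z''}$ to a kernel computation for a tensor product of two linear maps, one coming from $Z'$ and one from $Z''$. I write $U'=U_{\lambda'}$ and $U''=U_{\lambda''}$. Fix a point $z=([x'],[x''])\in Z'\times Z''$. Its image under the Segre embedding has affine cone point $x'\otimes x''$, and affine tangent space
\[
\widehat T_z(Z'\times Z'')=x'\otimes\widehat T_{[x'']}Z''+\widehat T_{[x']}Z'\otimes x''.
\]
Decompose $w\in\wedge^2(U'^\vee\otimes U''^\vee)$ as $w=A+B$, with $A\in\wedge^2U'^\vee\otimes S^2U''^\vee$ and $B\in S^2U'^\vee\otimes\wedge^2U''^\vee$. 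Concretely,
\[
w(x'\otimes x'',y'\otimes y'')=A(x',y';x'',y'')+B(x',y';x'',y'').
\]
I evaluate $w(x'\otimes x'',v)$ on the two pieces of the tangent space separately.

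First take $v=x'\otimes y''$ with $y''\in\widehat T Z''$. Then the $A$-term is $A(x',x';x'',y'')=0$, since $A$ is skew in the first slot. Hence the condition reads $B(x',x';x'',y'')=0$. Next take $v=y'\otimes x''$ with $y'\in\widehat TZ'$. Then the $B$-term vanishes and the condition reads $A(x',y';x'',x'')=0$. So the condition on $w$ splits into independent conditions on $A$ and on $B$, which gives the direct sum in~\eqref{prod_wedge_Z}.

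Now treat $B$; the case of $A$ is symmetric. Let $r':S^2U'^\vee\to H^0(Z',\CO(2))$ be restriction, whose kernel is $I_2(Z')$. Let $\psi'':\wedge^2U''^\vee\to H^0(Z'',\Omega_{Z''}(2))$ be the Gaussian map, whose kernel is $\wedge^2_{Z''}$. The condition on $B$ for all points and tangent vectors says exactly that $(r'\otimes\psi'')(B)$ vanishes as a section over $Z'\times Z''$. The natural map
\[
H^0(Z',\CO(2))\otimes H^0(Z'',\Omega(2))\to H^0\bigl(Z'\times Z'',p_1^*\CO(2)\otimes p_2^*\Omega(2)\bigr)
\]
is injective by K\"unneth, or simply by choosing bases and evaluating pointwise. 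So the condition is $B\in\ker(r'\otimes\psi'')$. The elementary identity
\[
\ker(f\otimes g)=\ker f\otimes W+V\otimes\ker g
\]
then gives $B\in I_2(Z')\otimes\wedge^2U''^\vee+S^2U'^\vee\otimes\wedge^2_{Z''}$. The same argument for $A$, using $\psi'$ and $r''$, gives $A\in\wedge^2_{Z'}\otimes S^2U''^\vee+\wedge^2U'^\vee\otimes I_2(Z'')$. Both inclusions reverse immediately, which proves~\eqref{prod_wedge_Z}.

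For the final claim, the point is that $\wedge^2\rho$ really lands in $\wedge^2_Z$. The inclusion $Z\hookrightarrow Z'\times Z''$ of Definition~\ref{tensor_map} is the restriction of the linear inclusion $U_\lambda\subset U'\otimes U''$. Hence $\hat z\in U_\lambda$ and $\widehat T_zZ\subset\widehat T_z(Z'\times Z'')\cap U_\lambda$. Therefore a form vanishing on $(\hat z,\widehat T_z(Z'\times Z''))$ restricts, via $\rho$, to a form vanishing on $(\hat z,\widehat T_zZ)$ for every $z\in Z$. So a nonzero value of $\wedge^2\rho$ on either summand is a nonzero element of $\wedge^2_Z$.

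The only mildly delicate step is justifying that "vanishing for all points" is equivalent to vanishing of the tensor product map, that is, the injectivity of the K\"unneth-type map. I would handle this by fixing a basis of $\wedge^2U''^\vee$ adapted to $\ker\psi''$ and checking the equivalence pointwise.
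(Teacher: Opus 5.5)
Your proposal is correct and follows the same route as the paper: evaluating $w=A+B$ on the two pieces $x'\otimes\widehat T_{[x'']}Z''$ and $\widehat T_{[x']}Z'\otimes x''$ of the Segre affine tangent space decouples the conditions on the two summands, exactly as in the paper's computation $(\sigma'\otimes q'')(u'\otimes u'',v'\otimes u''+u'\otimes v'')=\sigma'(u',v')q''(u'',u'')$. Your use of $\ker(f\otimes g)=\ker f\otimes W+V\otimes\ker g$, together with injectivity of the K\"unneth map, makes explicit a step the paper leaves to ``the definitions'': passing from pure tensors to arbitrary elements of each summand.
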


\begin{proof}
For $[u']\in Z'$ and $[u'']\in Z''$, the affine tangent space of the Segre embedding is
$$
\widehat T_{[u'\otimes u'']}(Z'\times Z'')
=
u'\otimes\widehat T_{[u'']}Z''+
\widehat T_{[u']}Z'\otimes u''.
$$
For
$\sigma'\in\wedge^2U_{\lambda'}^\vee$,
$q''\in S^2U_{\lambda''}^\vee$,
$v'\in\widehat T_{[u']}Z'$, and
$v''\in\widehat T_{[u'']}Z''$, one computes
$$
(\sigma'\otimes q'')
\bigl(
u'\otimes u'',
v'\otimes u''+u'\otimes v''
\bigr)
=
\sigma'(u',v')q''(u'',u'').
$$
The analogous formula for
$q'\otimes\sigma''\in
S^2U_{\lambda'}^\vee\otimes\wedge^2U_{\lambda''}^\vee$
is obtained by interchanging the two factors. The decomposition then follows directly from the definitions of
$\wedge^2_{Z'}$, $\wedge^2_{Z''}$, $I_2(Z')$, and $I_2(Z'')$.
The last assertion is immediate from Definition~\ref{tensor_map}.
\end{proof}

Now Proposition \ref{tensor} follows from the following more precise statement.

\begin{prop}\label{non_vanish}
	If \( Z' \not=\mathbb{P}U_{\lambda'} \) then the restriction of \( \wedge^{2}\rho \) to \( I_{2}(Z') \otimes \wedge^{2}U_{\lambda''}^{\vee} \) induces a nonzero map
	\[
	\wedge^{2}\rho: I_{2}(Z') \otimes \wedge^{2}U_{\lambda''}^{\vee} \longrightarrow \wedge_{Z}^{2}.
	\]
	More precisely $ev_{3}\circ \wedge^{2}\rho \not=0$ (cf. Definition \ref{ev}) and hence $\wedge_{Z}^2 \not=0$.
\end{prop}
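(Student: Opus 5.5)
We will take an explicit decomposable element of $I_{2}(Z')\otimes\wedge^{2}U_{\lambda''}^{\vee}$, push it through $\wedge^{2}\rho$, and compute $ev_{3}$ of the result at the base point $[u_\lambda]$, with $u_\lambda=u_{\lambda'}\otimes u_{\lambda''}$. Write $u'=u_{\lambda'}$ and $u''=u_{\lambda''}$. The target is the formula
\[
ev_{3}(\sigma)([u_\lambda])(X)=\sigma\bigl(u'\otimes u'',X^{3}(u'\otimes u'')\bigr),\qquad X\in\fg ,
\]
which holds by the description of the osculating filtration and fundamental forms via the PBW filtration (\cite[Proposition 2.3]{LM03}) once we know $\sigma\in(U^\vee\otimes U^\vee)_3$.

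\emph{Step 1: filtration membership.} For $q'\in I_2(Z')$ and $\omega''\in\wedge^2U_{\lambda''}^\vee$, Lemma~\ref{lem_prod_wedge_Z} gives $q'\otimes\omega''\in\wedge^2_{Z'\times Z''}$. Its image $\sigma=\wedge^2\rho(q'\otimes\omega'')$ then lies in $\wedge^2_Z=\wedge^2U^\vee\cap(U^\vee\otimes U^\vee)_2$. By Proposition~\ref{12_ev} with $k=1$, it follows that $\sigma\in(U^\vee\otimes U^\vee)_3$, so $ev_3(\sigma)$ is defined.

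\emph{Step 2: expansion.} By the Leibniz rule,
\[
X^3(u'\otimes u'')=\sum_{a+b=3}\binom{3}{a}X^au'\otimes X^bu'' ,
\]
and
\[
\sigma(u'\otimes u'',v'\otimes v'')=q'(u',v')\,\omega''(u'',v'').
\]
Most terms vanish. The term with $b=0$ vanishes because $\omega''$ is alternating. The terms with $a\le 1$ vanish because $q'$ vanishes on $\hat Z'$, so $q'(u',\cdot)$ vanishes on $\widehat T_{[u']}Z'=U_1(\fg).u'$. Only the term $a=2$, $b=1$ survives, giving
\[
ev_3(\sigma)([u_\lambda])(X)=3\,q'(u',X^2u')\,\omega''(u'',Xu'').
\]
The first factor is $\varphi(q')_{[u']}(X)$, the second fundamental form evaluation from \eqref{Gaussian}. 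The second factor is $\psi(\omega'')_{[u'']}(X)$, the Gaussian map evaluation.

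\emph{Step 3: non-vanishing.} The ideal of a flag variety is generated by quadrics (Kostant), and $V_2=Z'$ by homogeneity. Since $Z'\neq\BP U_{\lambda'}$, Corollary~\ref{quadrics} gives $\varphi\neq0$, and likewise $\psi\neq 0$ on $Z''$. Both maps are $G$-equivariant, so after translating by elements of $G$ we may choose $q'$ with $\varphi(q')_{[u']}\neq0$ and $\omega''$ with $\psi(\omega'')_{[u'']}\neq0$. Each of these is then a nonzero polynomial function of $X\in\fg$. Indeed, each is the pullback of a nonzero form on $\fg/\fp'$, respectively $\fg/\fp''$, under the surjective projection from $\fg$. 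The polynomial ring on $\fg$ is a domain, so the product is a nonzero polynomial. Hence $ev_3(\sigma)\neq0$, so in particular $\sigma\neq0$, and therefore $\wedge^2_Z\neq0$.

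\emph{Main obstacle.} The delicate point is Step 2: one must justify that $ev_3$ at the base point is computed by the naive formula $X\mapsto\sigma(u,X^3u)$ using the full $X\in\fg$ rather than only its class in $\fg/\fp$. This follows from the PBW description of $\Pi_{3}$ together with $\sigma\in(U^\vee\otimes U^\vee)_3$, because lower-order corrections pair to zero. One must also check carefully that $q'(u',\cdot)$ kills all of $U_1(\fg).u'$, which uses that $q'$ vanishes on the cone $\hat Z'$ and hence its polarization kills the affine tangent space.
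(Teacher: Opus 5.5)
Your proposal is correct and follows essentially the same route as the paper. You choose $q'\in I_2(Z')$ and $\omega''\in\wedge^2U_{\lambda''}^\vee$ with nonzero second-fundamental-form and Gaussian evaluations at the base point (via Corollary~\ref{quadrics}), and expand $X^3(u_{\lambda'}\otimes u_{\lambda''})$ by the Leibniz rule so that only the term $3\,q'(u_{\lambda'},X^2u_{\lambda'})\,\omega''(u_{\lambda''},Xu_{\lambda''})$ survives; a product of two nonzero polynomials on $\fg$ is nonzero. You are slightly more explicit than the paper about why $\wedge^2\rho(q'\otimes\omega'')$ lies in $(U^\vee\otimes U^\vee)_3$, and you use a direct $G$-translation in place of the appeal to Lemma~\ref{inv_form}, but these differences are cosmetic.
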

\begin{proof}
	Since $Z'$ is a flag variety not isomorphic to a projective space, its ideal in $\BP U_{\lambda'}$ is generated by quadrics by a result of Kostant (cf. \cite{Gar82}). By Corollary~\ref{quadrics}, the map $\varphi = ev_{2}|_{I_{2}(Z')}$ is nonzero. Hence, by Lemma~\ref{inv_form}, there exists $\sigma' \in I_{2}(Z')$ such that the function
	\[
	f_{\sigma'}:
	\fg \longrightarrow \BC, \quad
	X \longmapsto \sigma'(u_{\lambda'}, X^{2}u_{\lambda'})
	\]
	is nonzero.
	On the other hand, since $\dim(Z'') \geqslant 1$, the Gaussian  map for $Z''$ is nonzero by Corollary \ref{quadrics}. Hence, by Lemma \ref{inv_form}, there exists $\sigma'' \in \wedge^{2}U_{\lambda''}^{\vee}$ such that
	the function
	\[
	f_{\sigma''}:
	\fg \longrightarrow \BC, \quad
	X \longmapsto \sigma''(u_{\lambda''}, Xu_{\lambda''})
	\]
	is nonzero.
		Set $\sigma = \sigma' \otimes \sigma'' \in I_{2}(Z') \otimes \wedge^{2}U_{\lambda''}^{\vee}$. We claim that $(\wedge^{2}\rho)(\sigma)$ is nonzero in $\wedge_{Z}^{2}$. Indeed, for any $k \geqslant 1$ and $X \in \fg$, we have
	\[
	(\wedge^{2}\rho)(\sigma)(u_{\lambda'} \otimes u_{\lambda''},\, X^{k}(u_{\lambda'} \otimes u_{\lambda''}))
	= \sum_{i=1}^{k} \binom{k}{i} \, \sigma'(u_{\lambda'}, X^{i}u_{\lambda'}) \cdot \sigma''(u_{\lambda''}, X^{k-i}u_{\lambda''}).
	\]
	By Proposition~\ref{12_ev}, this expression vanishes for $k \leqslant 2$. For $k=3$, the only possibly nonzero term corresponds to $i=2$, giving
	\[
	3\,\sigma'(u_{\lambda'}, X^{2}u_{\lambda'}) \cdot \sigma''(u_{\lambda''}, Xu_{\lambda''})
	= 3\, f_{\sigma'}(X) f_{\sigma''}(X),
	\]
	which is a product of two nonzero polynomial functions on $\fg$. Therefore, it is nonzero for some $X \in \fg$, and hence
	$
	ev_{3}\big((\wedge^{2}\rho)(\sigma)\big) \neq 0
	$
	by Lemma~\ref{inv_form}.
\end{proof}

\subsection{Fundamental case}
In this subsection we give the classification in the case where $\lambda=\lambda_{i}$ is a fundamental weight. 
\begin{prop}\label{prim_class}
	Let $Z \subset \BP U_{\lambda}$ be a flag variety, where $\lambda = \lambda_{i}$ is a fundamental weight. Then $Z$ is tangentially non-degenerate if and only if it is projectively isomorphic to one of the following varieties:
	\begin{itemize}
		\item[(1)] an irreducible Hermitian symmetric space of rank $\leqslant 2$ under its minimal embedding, namely
		\[
		\mathbb{P}^{n} \,(n \ge 1),\quad \mathrm{Gr}(2,n) \,(n \ge 5),\quad \mathbb{Q}^{n}\, (n \ge 3),\quad \mathbb{S}_{5},\quad E_{6}/P_{1};
		\]
		\item[(2)] an adjoint variety which is not of type $A$ or $C$, or a quasi-minuscule variety (Definition \ref{ad_coad}).
	\end{itemize}
\end{prop}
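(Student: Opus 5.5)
The ``if'' direction is already settled by the lemmas of Section~3.1.3: the Hermitian symmetric spaces in (1) are tangentially non-degenerate by \cite[Proposition 14]{HM98}, and adjoint and quasi-minuscule varieties by the conic-connectedness argument. It therefore remains to show that no other flag variety with fundamental highest weight $\lambda=\lambda_i$ has injective Gaussian map. By the exact sequence \eqref{Gaussian_sequence}, where surjectivity is Kumar's theorem \cite{Kum92}, $\wedge^2_Z=0$ holds exactly when $\dim\wedge^2U^\vee=\dim H^0(Z,\Omega_Z(2))$. The plan is to compare the two sides as $\fg$-modules, that is, to decide whether some irreducible summand of $\wedge^2U_\lambda^\vee$ fails to occur in $H^0(Z,\Omega_Z(2))$.

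\textbf{Image of the Gaussian map.} For $H^0(Z,\Omega_Z(2))$ I would use the explicit description of the image from \cite{Kum92}, rather than going directly through Theorem~\ref{PRV}. The statement I expect is that $\psi$ maps onto the sum of the components $U_{2\lambda-\beta}^\vee$ of $\wedge^2U_\lambda^\vee$ with $\beta$ ranging over positive roots such that $2\lambda-\beta$ is dominant and $\langle\lambda,\beta^\vee\rangle>0$, that is, $\beta\in\Phi^{\geq1}_Z$, each occurring with multiplicity one. This is consistent with the $P$-module structure of $(\fg/\fp)^\vee$: its weights are the roots of $\fn_\fp$, twisted by $2\lambda$. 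Extremal weights of $\Omega_Z(2)$ at the base point then correspond to $\fg_0$-lowest weights of $\fn_\fp$, and Theorem~\ref{PRV}(2) can be used to identify precisely which $2\lambda-\beta$ contribute. Tangential non-degeneracy therefore becomes the root-theoretic condition that every highest weight $\mu$ of an irreducible summand of $\wedge^2U_\lambda$ has the form $2\lambda-\beta$ with $\beta$ a root, and occurs with multiplicity one.

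\textbf{Case analysis.} The quickest way to find an obstruction is to exhibit one summand of $\wedge^2U_\lambda$ whose highest weight is $2\lambda-\gamma$ with $\gamma$ not a root, for instance $2\lambda-\gamma$ with $\gamma$ a sum of two roots of $\Phi_Z^1$, or a repeated summand. When $\lambda_i$ is (co)minuscule of rank $\geq3$, such as $Gr(k,n)$ with $3\le k\le n-3$, $\mathbb{S}_n$ with $n\ge6$, $E_7/P_7$, or the Lagrangian Grassmannians, one should find $\wedge^2U$ containing $U_{2\lambda-\alpha_i-\gamma}$ for a suitable grade-two combination. In terms of fundamental forms, this is the statement that third fundamental forms, together with the skew pairing against $T^{(2)}$, yield $\fn^-$-invariant forms $\sigma$ with $ev_3(\sigma)\ne0$, which can be detected by Lemma~\ref{inv_form}. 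For non-(co)minuscule $\lambda_i$ I would run through the Dynkin diagrams. When the grading has length $d\ge2$ and $\lambda$ is neither the highest long nor the highest short root, I would pick $\beta$ of grade $\ge2$ together with a second root and exhibit an extra component, checked using weight multiplicities near the top of $\wedge^2U_\lambda$ (for example, the number of weight vectors in weight $2\lambda-\alpha_i-\alpha_j-\cdots$ compared with the multiplicity the image can absorb). The adjoint varieties of types $A$ and $C$ drop out here only because $\lambda$ is not fundamental in those types. The quasi-minuscule varieties survive, as does the adjoint variety in the remaining types.

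\textbf{Main obstacle.} I expect the uniform exclusion argument to break down in a few isolated exceptional-type cases, most notably $E_6/P_3$ as indicated in the text. There the naive candidate weights happen to be of the form $2\lambda-\beta$, and the obstruction only appears through multiplicities. For such cases I would compute directly: decompose $\wedge^2U_{\lambda_3}$ (dimension $351$ for $E_6$) by Weyl's character formula or LiE, compute $H^0(Z,\Omega_Z(2))$ via Theorem~\ref{PRV}, and show that the dimensions differ, or produce explicitly an $\fn^-$-invariant skew form lying in $\wedge^2_Z$.
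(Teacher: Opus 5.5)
Your framework matches the paper's. The ``if'' direction is Section~3.1.3, and by Kumar's surjectivity $\psi$ is injective exactly when $\wedge^2U_\lambda^\vee$ equals the sum of the $U^\vee_{2\lambda-\beta}$ in Corollary~\ref{im_Gaussian}. (Kumar's condition there is $S_\lambda\subset F_\beta$, i.e.\ $\beta$ is lowest for the Levi factor. In non-simply-laced types this is stronger than your ``$\beta\in\Phi_Z^{\ge1}$ with $2\lambda-\beta$ dominant'', so your description overcounts the image. That only weakens your necessary condition.) The gap is in the ``only if'' direction, which carries the whole proof: you never give a mechanism that produces a summand of $\wedge^2U_\lambda$ outside the image. ``Running through the Dynkin diagrams'' has to handle infinitely many cases: $Gr(k,n)$ with $3\le k\le n-3$, $\BS_n$ with $n\ge 6$, $Lag(n,2n)$ with $n\ge 3$, and the non-(co)minuscule $Gr_\omega(k,2n)$, $OG(k,2n+1)$, $OG(k,2n)$ with $k\ge 3$. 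Comparing weight multiplicities ``near the top'' of $\wedge^2U_\lambda$ also looks in the wrong place. The summands that obstruct (an adjoint summand, a trivial summand, or for $E_6/P_3$ the summands below) sit far down. Finally, you give no reason why $E_6/P_3$ should be the only case needing separate treatment.

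The paper supplies exactly these missing ingredients. By Lemma~\ref{three_cases}, a fundamental $U_{\lambda_i}$ is self-dual, (co)minuscule, or one of $(E_6,\lambda_3)$, $(E_6,\lambda_5)$. If $U$ is self-dual, one summand of $\wedge^2U^\vee$ is known in advance. It is a copy of $\fg$ when $U$ is orthogonal (as $\fg\subset\fs\fo(U)\cong\wedge^2U$; cf.\ Proposition~\ref{small}), and the trivial module when $U$ is symplectic. Injectivity forces this summand to be some $U^\vee_{2\lambda_i-\beta}$ from Kumar's list, i.e.\ $2\lambda_i-\beta_l\in\Phi^+$ or $2\lambda_i\in\Phi^+$ (Corollary~\ref{case1_cor}), which is a finite check (Table~\ref{Table2}). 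If $\lambda_i$ is (co)minuscule, $\Omega_Z(2)$ is an irreducible homogeneous bundle, so $H^0(Z,\Omega_Z(2))$ is irreducible by Borel--Weil--Bott (Lemma~\ref{irr_Gaussian}). Injectivity is then equivalent to irreducibility of $\wedge^2U$, which Dynkin's classification \cite{Dyn52} settles without locating any grade-two component. Only $E_6/P_3$ remains, and there your prediction is off. The summands $U_{\lambda_1+\lambda_2}$ and $U_{\lambda_5}$ of $\wedge^2U_{\lambda_3}$ missing from the image have highest weights $2\lambda_3-\gamma$ with $\gamma=\alpha_1+\alpha_2+3\alpha_3+3\alpha_4+2\alpha_5+\alpha_6$ and $\gamma=2\alpha_1+2\alpha_2+4\alpha_3+4\alpha_4+2\alpha_5+\alpha_6$ respectively. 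Neither $\gamma$ is a root, since the coefficient of $\alpha_3$ exceeds $2$. So the obstruction there is of your simplest ``non-root'' type, not a multiplicity phenomenon.
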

This case follows from Kumar's description of the image of the Gaussian map below. Define
$
S_{\lambda} = \{1 \leqslant i \leqslant n: \langle \lambda,\alpha_{i}^{\vee} \rangle  = 0\},
$
and for any $\beta \in \Phi^{+}$,
$
F_{\beta} = \{1 \leqslant i \leqslant n: \beta - \alpha_{i} \notin \Phi^{+}\}.
$
\begin{thm}[\cite{Kum92}]
Let $Z \subset \BP U_{\lambda}$ be a flag variety. Then the map 
$$
ev_{1}:H^{0}(Z\times Z,\CO_Z(1)\boxtimes\CO_Z(1) \otimes \mathcal I_{\Delta_Z}) \rightarrow  H^{0}(Z,\Omega_{Z}(2))  
$$ is surjective. Moreover there is an irreducible decomposition
$$
H^{0}\left(
Z\times Z,
\CO_{Z\times Z}/\CI_{\Delta_Z}^2
\otimes\CO_Z(1)\boxtimes\CO_Z(1)
\right)=\bigoplus_{\substack{
\beta\in\Phi^+\cup \{0\},\
2\lambda-\beta\in\Lambda^+,\
S_\lambda\subset F_\beta
}}
U_{2\lambda-\beta}^{\vee}.
$$
\end{thm}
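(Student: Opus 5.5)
The plan is to identify $H^0$ of the first-order neighbourhood of the diagonal with sections of a homogeneous bundle on $Z$, compute those sections with Theorem~\ref{PRV}, and then prove surjectivity from $U^\vee\otimes U^\vee$ one irreducible component at a time.

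\emph{Reduction to a homogeneous bundle.} Restricted to the diagonal, the sheaf $\CO_{Z\times Z}/\CI_{\Delta_Z}^2\otimes\CO_Z(1)\boxtimes\CO_Z(1)$ is $\CP^1(\CO_Z(1))\otimes\CO_Z(1)$, where $\CP^1$ is the first principal parts bundle. It sits in the sequence
\[
0\to\Omega_Z(2)\to\CP^1(\CO_Z(1))\otimes\CO_Z(1)\to\CO_Z(2)\to0,
\]
which is the first step of~\eqref{filt_03}. At $z=[u_\lambda]$ the fibre of $\CP^1(\CO_Z(1))$ is $\widehat T_zZ^\vee=(\BC u_\lambda\oplus\fn_\fp^-.u_\lambda)^\vee$ by~\eqref{iden}. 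So the bundle is $E(M)$ for the $P$-module
\[
M=\widehat T_zZ^\vee\otimes\BC_{-\lambda}.
\]
The weights of $M$ are $-2\lambda$ together with $-2\lambda+\beta$ for the positive roots $\beta$ with $X_{-\beta}u_\lambda\neq0$.

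\emph{Computing $H^0$.} By Theorem~\ref{PRV}(1), $H^0(Z,E(M))=\bigoplus_\mu U_\mu^\vee\otimes\Hom_P(U_\mu^\vee,M)$. By the PRV formula (2), $\Hom_P(U_\mu^\vee,M)$ is the space of $m\in M_{-\mu}$ killed by $e_i^{\langle\mu,\alpha_i^\vee\rangle+1}$ for all $i$ and by $f_i$ for $\alpha_i\notin\Delta_Z$.
\begin{itemize}
\item Every weight space of $M$ is at most one-dimensional, so each candidate is $\mu=2\lambda-\beta$ with $\beta\in\Phi^+\cup\{0\}$, with multiplicity at most one.
\item Dominance of $\mu$ gives the condition $2\lambda-\beta\in\Lambda^+$.
\item The condition $f_i m=0$ for $i\in S_\lambda$ (these are exactly the $\alpha_i\notin\Delta_Z$) says that the dual vector $X_{-\beta}u_\lambda$ is a lowest vector for the Levi part. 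Since $f_i$ acts on the dual by $-e_i$, this means $\beta-\alpha_i$ is not a root that survives in $\fn_\fp^-.u_\lambda$. For $i\in S_\lambda$ this is exactly $i\in F_\beta$, which gives $S_\lambda\subset F_\beta$.
\item For $i\in\Delta_Z$, the $e_i$-power condition $e_i^{\langle\mu,\alpha_i^\vee\rangle+1}m=0$ has to be checked against the weight string inside $M$, using $\langle 2\lambda-\beta,\alpha_i^\vee\rangle\ge0$. I expect it to be automatic, because the $\alpha_i$-strings in $M$ have length at most two.
\end{itemize}
This yields the stated multiplicity-free decomposition.

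\emph{Surjectivity of $ev_1$.} Since the decomposition is multiplicity-free, it suffices to show that each summand $U_{2\lambda-\beta}^\vee$ is hit by the restriction map from $H^0(\CO_Z(1)\boxtimes\CO_Z(1))=U^\vee\otimes U^\vee$. I would argue as follows.
\begin{itemize}
\item For $\beta=0$, use $\Sym^2$: the Cartan component $U_{2\lambda}^\vee\subset\Sym^2U^\vee$ restricts nonzero to $\CO_Z(2)$.
\item For $\beta\neq0$, take the weight vector $w_\beta=u_\lambda\wedge X_{-\beta}u_\lambda\in\wedge^2U$ and its $\fn$-primitive projection. By Lemma~\ref{inv_form}, the resulting lowest-weight form $\sigma\in\wedge^2U^\vee$ satisfies $ev_1(\sigma)([u_\lambda])(X_{-\beta})=\sigma(u_\lambda,X_{-\beta}u_\lambda)\neq0$.
\item Hence $\psi$ is nonzero on the $U_{2\lambda-\beta}^\vee$-isotypic part, and since the image of a summand is either $0$ or all of it, the summand is hit.
\item Combining this with the $\beta=0$ summand and the sequence above gives surjectivity of $ev_1$ onto $H^0(\Omega_Z(2))$.
\end{itemize}

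\emph{Main obstacle.} The hard step is showing that $U_{2\lambda-\beta}^\vee$ really occurs in $\wedge^2U^\vee$ with a lowest-weight vector pairing nontrivially with $u_\lambda\wedge X_{-\beta}u_\lambda$. Equivalently, the highest weight vector of weight $2\lambda-\beta$ in $\wedge^2U$ must have nonzero $u_\lambda\wedge X_{-\beta}u_\lambda$ coefficient. My first attempt would be a PRV/Kostant-type argument on $U\otimes U$: its vectors of weight $2\lambda-\beta$ killed by $\fn$ correspond, via PRV for $U_\lambda\otimes U_\lambda$, to vectors of weight $\lambda-\beta$ in $U_\lambda$ satisfying the $e_i$-power conditions. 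I would then check that the conditions $2\lambda-\beta\in\Lambda^+$ and $S_\lambda\subset F_\beta$ are exactly what makes $X_{-\beta}u_\lambda$ such a vector. If that fails, the fallback is Kumar's own route: vanishing of $H^1$ of $\CI_{\Delta_Z}\otimes\CO_Z(1)\boxtimes\CO_Z(1)$, proved by Frobenius splitting of $Z\times Z$ compatibly with the diagonal.
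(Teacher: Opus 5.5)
The paper does not prove this theorem; it quotes it from Kumar. So your argument has to stand on its own. Your computation of $H^0$ of the first-order neighbourhood is essentially correct:
\begin{itemize}
\item the sheaf is $E(M)$ with $M=\widehat T_zZ^\vee\otimes\BC_{-\lambda}$;
\item its weight spaces are lines;
\item the condition $f_im=0$ for $i\in S_\lambda$ is exactly $S_\lambda\subset F_\beta$.
\end{itemize}
Your stated reason for the $e_i$-conditions with $\alpha_i\in\Delta_Z$ is false, though. Strings of positive roots, and hence the $\alpha_i$-strings in $M$, can have length $3$ or $4$. The conclusion nevertheless holds. Let $\beta\notin\{0,\alpha_i\}$ and let its $\alpha_i$-string be $\beta-p\alpha_i,\dots,\beta+q\alpha_i$. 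Then $e_i^jm\neq0$ exactly for $j\le q$. The required inequality $q\le\langle 2\lambda-\beta,\alpha_i^\vee\rangle=2\langle\lambda,\alpha_i^\vee\rangle-p+q$ is equivalent to $p\le 2\langle\lambda,\alpha_i^\vee\rangle$. This holds because $p\le 2$, with one exception: type $G_2$ with $\beta=3\alpha_1+\alpha_2$. There, dominance of $2\lambda-\beta$ already forces $\langle\lambda,\alpha_1^\vee\rangle\ge 2$.

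The genuine gap is the surjectivity of $ev_1$. That is the substance of Kumar's theorem, and your proposal leaves it open. Your reduction is sound and can be made exact:
\begin{itemize}
\item Lowest-weight vectors $\sigma\in U^\vee\otimes U^\vee$ of weight $-(2\lambda-\beta)$ correspond to $\xi=\sigma(u_\lambda,\cdot)$ with $f_i^{\langle\lambda,\alpha_i^\vee\rangle+1}\xi=0$ for all $i$.
\item For $\beta\neq0$, the symmetric part of $\sigma$ lies in $I_2(Z)$, so it pairs trivially with $u_\lambda\otimes X_{-\beta}u_\lambda$.
\end{itemize}
Hence the summand $U^\vee_{2\lambda-\beta}$ is hit if and only if
\[
X_{-\beta}u_\lambda\notin\sum_i f_i^{\langle\lambda,\alpha_i^\vee\rangle+1}\,U .
\]
This must be proved for every admissible $\beta$. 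Your first attempt, showing that $X_{-\beta}u_\lambda$ itself satisfies $e_i^{\langle\lambda,\alpha_i^\vee\rangle+1}X_{-\beta}u_\lambda=0$, is false in general. Take type $C_2$ with $\alpha_1$ short, $\lambda=\rho$ and $\beta=2\alpha_1+\alpha_2$. Then $S_\rho=\emptyset$ and $2\rho-\beta=2\lambda_2$ is dominant, yet $e_1^2X_{-\beta}u_\rho$ is a nonzero multiple of $f_2u_\rho$. The summand is still hit in this example: $X_{-\beta}u_\rho$ is proportional to $f_1^2f_2u_\rho-2f_1f_2f_1u_\rho$, and this does not lie in $\BC f_1^2f_2u_\rho$, which is the relevant weight piece of the right-hand side. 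But nothing in your proposal establishes this in general.

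Your fallback is also misstated. Surjectivity of $ev_1$ follows from $H^1(Z\times Z,\CI_{\Delta_Z}^2\otimes\CO_Z(1)\boxtimes\CO_Z(1))=0$. The vanishing with $\CI_{\Delta_Z}$ in place of $\CI_{\Delta_Z}^2$ only gives surjectivity of $U^\vee\otimes U^\vee$ onto $H^0(Z,\CO_Z(2))$. Moreover, Kumar's proof is representation-theoretic, not a Frobenius-splitting argument. Obtaining the $\CI_{\Delta_Z}^2$-vanishing by Frobenius splitting is known only in special cases, so it cannot be invoked as a routine step.
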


\begin{cor}\label{im_Gaussian}
	The Gaussian map
	$
	\psi: \wedge^{2}U_{\lambda}^{\vee} \longrightarrow H^{0}(Z,\Omega_{Z}(2))
	$
	is surjective. Moreover, there is an irreducible decomposition
\begin{equation}\label{irr_decom}
H^{0}(Z,\Omega_{Z}(2))=\bigoplus_{\substack{
\beta\in\Phi^+,\
2\lambda-\beta\in\Lambda^+,\
S_\lambda\subset F_\beta
}}
U_{2\lambda-\beta}^{\vee}.
	\end{equation}
\end{cor}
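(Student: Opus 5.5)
We plan to deduce both assertions from Kumar's theorem above, using the involution of $Z\times Z$ that exchanges the factors, together with the short exact sequence relating first-order jets along the diagonal to $\Omega_Z(2)$.

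\emph{Surjectivity.} By the Lemma identifying global sections of $p_1^*\CO_Z(1)\otimes p_2^*\CO_Z(1)\otimes\CI_{\Delta_Z}^k$, Kumar's statement says exactly that
\[
ev_1:(U^\vee\otimes U^\vee)_1\longrightarrow H^0(Z,\Omega_Z(2))
\]
is surjective. The subspace $(U^\vee\otimes U^\vee)_1$ is stable under the swap involution, so it decomposes into its symmetric and alternating parts.
\begin{itemize}
\item Every $w\in\wedge^2U^\vee$ satisfies $w(\hat z,\hat z)=0$. Since $T^{(0)}_zZ=\langle\hat z\rangle$, this gives $\wedge^2U^\vee\subset (U^\vee\otimes U^\vee)_1$, so the alternating part is all of $\wedge^2U^\vee$.
\item Proposition~\ref{12_ev} with $k=1$ gives $\Sym^2U^\vee\cap(U^\vee\otimes U^\vee)_1\subset (U^\vee\otimes U^\vee)_2=\ker(ev_1)$, so the symmetric part maps to zero.
\end{itemize}
Hence $\Im(ev_1)=ev_1(\wedge^2U^\vee)=\Im(\psi)$, and $\psi$ is surjective.

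\emph{Decomposition.} On $Z\times Z$ we use the exact sequence obtained from $0\to\CI_{\Delta}/\CI_{\Delta}^2\to\CO/\CI_\Delta^2\to\CO_\Delta\to0$ after twisting by $\CO_Z(1)\boxtimes\CO_Z(1)$. Since $\CI_\Delta/\CI_\Delta^2\cong\Omega_Z$ on the diagonal, this reads
\[
0\to\Omega_Z(2)\to \CO_{Z\times Z}/\CI_{\Delta_Z}^2\otimes\CO_Z(1)\boxtimes\CO_Z(1)\to \CO_Z(2)\to 0 .
\]
Taking global sections gives an exact sequence $0\to H^0(\Omega_Z(2))\to H^0(\text{jets})\to H^0(\CO_Z(2))$.

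The last map is surjective. Indeed, the composite $U^\vee\otimes U^\vee\to H^0(\text{jets})\to H^0(\CO_Z(2))$ is the multiplication map. By Borel--Weil (Theorem~\ref{PRV}) its target is $H^0(\CO_Z(2))=U_{2\lambda}^\vee$, and the map is nonzero and $G$-equivariant onto an irreducible module, hence onto.

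By complete reducibility of $\fg$-modules, $H^0(\Omega_Z(2))$ is then isomorphic to Kumar's decomposition with one copy of $U_{2\lambda}^\vee$ removed. This copy is precisely the $\beta=0$ summand, which satisfies $2\lambda\in\Lambda^+$ and $S_\lambda\subset F_0$ trivially. It occurs with multiplicity one, since $U_{2\lambda-\beta}\cong U_{2\lambda}$ forces $\beta=0$. This yields \eqref{irr_decom}.

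The only delicate point is the surjectivity onto $H^0(\CO_Z(2))$ and the matching of the removed summand, both of which reduce to Borel--Weil and the uniqueness of the highest weight $2\lambda$.
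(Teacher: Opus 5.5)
Your proposal is correct and follows essentially the same route as the paper: Kumar's surjectivity of $ev_1$ on $(U^\vee\otimes U^\vee)_1=I_2(Z)\oplus\wedge^2U_\lambda^\vee$, with the symmetric part killed by Proposition~\ref{12_ev}, then the first-order jet sequence on $Z\times Z$, surjectivity onto $H^0(Z,\CO_Z(2))\cong U_{2\lambda}^\vee$ via the multiplication map, and removal of the $\beta=0$ summand. Your remark that $U_{2\lambda}^\vee$ occurs with multiplicity one makes explicit a point the paper leaves implicit; note also that Borel--Weil is Theorem~\ref{PRV0} rather than Theorem~\ref{PRV}, although either yields $H^0(Z,\CO_Z(2))\cong U_{2\lambda}^\vee$.
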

\begin{proof}
By the preceding theorem, $ev_1$ is surjective. Since
$$
H^{0}\left(
Z\times Z,
\CO_Z(1)\boxtimes\CO_Z(1)\otimes\CI_{\Delta_Z}
\right)=I_2(Z)\oplus\wedge^2U_\lambda^\vee,
$$
and $ev_1|_{{I_2(Z)}}=0$ by Proposition~\ref{12_ev}, the restriction of $ev_{1}$ to
$\wedge^2U_{\lambda}^\vee$ is the Gaussian map $\psi$, which is therefore surjective. For the decomposition, consider the exact sequence on $Z\times Z$
$$
0\rightarrow
\CI_{\Delta_Z}/\CI_{\Delta_Z}^2
\otimes\CO_Z(1)\boxtimes\CO_Z(1)
\rightarrow
\CO_{Z\times Z}/\CI_{\Delta_Z}^2
\otimes\CO_Z(1)\boxtimes\CO_Z(1)
\rightarrow
\CO_{Z\times Z}/\CI_{\Delta_Z}
\otimes\CO_Z(1)\boxtimes\CO_Z(1)
\rightarrow0.
$$
Taking global sections gives
$$
0\rightarrow H^{0}(Z,\Omega_Z(2))
\rightarrow
H^{0}\left(
Z\times Z,
\CO_{Z\times Z}/\CI_{\Delta_Z}^2
\otimes\CO_Z(1)\boxtimes\CO_Z(1)
\right)
\rightarrow
H^{0}(Z,\CO_Z(2))
\rightarrow0.
$$
Indeed, the composition
$$
H^{0}\left(
Z\times Z,\CO_Z(1)\boxtimes\CO_Z(1)
\right)
\longrightarrow
H^{0}\left(
Z\times Z,
\CO_{Z\times Z}/\CI_{\Delta_Z}^2
\otimes\CO_Z(1)\boxtimes\CO_Z(1)
\right)
\longrightarrow
H^{0}(Z,\CO_Z(2))
$$
coincides with the natural multiplication map
$$
H^{0}(Z,\CO_Z(1))\otimes H^{0}(Z,\CO_Z(1))
\longrightarrow
H^{0}(Z,\CO_Z(2)),
$$
which is surjective for flag varieties. Hence the rightmost map is surjective.
By the preceding theorem, the middle term has the stated irreducible decomposition. Since
$
H^{0}(Z,\CO_Z(2))=U_{2\lambda}^{\vee}
$
is precisely its summand corresponding to $\beta=0$, the kernel of the rightmost map is the direct sum of the remaining summands. This gives~\eqref{irr_decom}.

\end{proof}

Our analysis naturally splits into three cases:
\begin{lem}\label{three_cases}
	Let $U=U_{\lambda_{i}}$
	be an irreducible $\fg$-module. Then one of the following holds:
	\begin{itemize}
		\item[(1)]
		$U_{\lambda}$ is self-dual, i.e.\
		$U_{\lambda}\cong U_{\lambda}^{\vee};$
		
		\item[(2)]
		$U_{\lambda}$ is (co)minuscule (Definition~\ref{cominuscule});
		
		\item[(3)]
		$(\fg,\lambda)=(E_{6},\lambda_{3})$ or $(E_6,\lambda_{5}).$
	\end{itemize}
\end{lem}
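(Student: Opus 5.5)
The plan is to reduce the statement to the standard description of the dual of an irreducible module. For a dominant weight $\lambda$, the dual module $U_\lambda^\vee$ is the irreducible module of highest weight $-w_0(\lambda)$. This is because the lowest weight of $U_\lambda$ is $w_0(\lambda)$, so the highest weight of the dual is $-w_0(\lambda)$. Hence $U_{\lambda_i}$ is self-dual if and only if $-w_0(\lambda_i)=\lambda_i$.

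The involution $-w_0$ permutes the fundamental weights according to the opposition involution of the Dynkin diagram $\CD$. I will first reduce to $\fg$ simple. A fundamental weight of a semisimple $\fg$ is a fundamental weight of exactly one simple factor, and $U_{\lambda_i}$ is the corresponding representation of that factor, pulled back along the projection. Both self-duality and (co)minusculity are therefore detected on that factor.

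For simple $\fg$, I recall the standard fact that $-w_0=\id$ except in the following types:
\begin{itemize}
\item type $A_n$ with $n\ge 2$, where $\lambda_k\mapsto\lambda_{n+1-k}$;
\item type $D_n$ with $n$ odd, where $\lambda_{n-1}\leftrightarrow\lambda_n$ and the other fundamental weights are fixed;
\item type $E_6$, where $\lambda_1\leftrightarrow\lambda_6$ and $\lambda_3\leftrightarrow\lambda_5$, while $\lambda_2$ and $\lambda_4$ are fixed.
\end{itemize}
These use Bourbaki's labelling \cite{Bou02}. So outside these three types every $U_{\lambda_i}$ is self-dual, which is case (1).

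Next I go through the exceptional types using Table~\ref{Table1}.
\begin{itemize}
\item In type $A_n$, every fundamental weight $\lambda_k$ is minuscule, so case (2) holds.
\item In type $D_n$ with $n$ odd, the non-self-dual weights $\lambda_{n-1}$ and $\lambda_n$ are minuscule (spinor weights), which is case (2); all other $\lambda_i$ are fixed by $-w_0$ and give case (1).
\item In type $E_6$, the weights $\lambda_2$ and $\lambda_4$ are self-dual (case (1)), and $\lambda_1,\lambda_6$ are minuscule (case (2)). The only remaining fundamental weights are $\lambda_3$ and $\lambda_5$, which is exactly case (3).
\end{itemize}

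There is no real obstacle here. The only point needing care is to state correctly the action of $-w_0$ on the fundamental weights in types $A$, $D_{\mathrm{odd}}$ and $E_6$, in particular in Bourbaki's labelling for $E_6$. In that labelling the chain is $1$--$3$--$4$--$5$--$6$ with node $2$ attached to node $4$, and the diagram automorphism fixes $2$ and $4$. I would cite this as standard (e.g.\ from the tables in \cite{Bou02}) rather than recompute $w_0$.
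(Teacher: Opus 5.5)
Your proposal is correct and follows the same argument as the paper: $U_{\lambda_i}$ is self-dual exactly when $-w_0(\lambda_i)=\lambda_i$, and checking the diagram involution in types $A$, $D$ and $E_6$ leaves only minuscule weights together with $(E_6,\lambda_3)$ and $(E_6,\lambda_5)$. Your version is slightly more careful than the paper's, since you reduce to a simple factor and note that, apart from $E_6$, $-w_0\neq\id$ only for $A_n$ with $n\ge 2$ and $D_n$ with $n$ odd, whereas the paper just says ``types $A$, $D$ and $E_6$''.
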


\begin{proof}
	Recall that $U_{\lambda_{i}}$
	is self-dual if and only if $w_{0}(\lambda)=-\lambda.$
	It is well known that $w_{0}\neq -\id$
	only for types $A,$ $D,$ and $E_{6}$ (see e.g.~\cite[p.~66, Table~1]{Hum72}), where
	$w_{0}$ is induced by the unique involution of the Dynkin diagram. Hence, up to duality, the non-self-dual cases are
	$(A_{n},\lambda_{k})$
	with $k<n+1-k,$
	$(D_{n},\lambda_{n-1}),$ and $(E_{6},\lambda_{3}).$
	The first two are cominuscule, and the claim follows.
\end{proof}
\subsubsection{Self-dual case} 
Recall that a self-dual module is either orthogonal or symplectic; namely there is a one-dimensional space of $\fg$-invariants:
\[
(S^{2}U)^{\fg} \simeq \mathbb{C} \quad \text{or} \quad (\wedge^{2}U)^{\fg} \simeq \mathbb{C}.
\]
The following unpublished result of E.~B. Vinberg is proved in
\cite[Theorem 3.3]{PY08}.

\begin{prop}\label{small}
	Let \(U=U_{\lambda_i}\) be a self-dual \(\fg\)-module. If \(U\) is
	orthogonal \textup{(}resp.\ symplectic\textup{)}, then
	\[
	\dim\Hom_{\fg}\!\left(\fg,\wedge^2U^\vee\right)=1
	\qquad
	\left(
	\textup{resp. }
	\dim\Hom_{\fg}\!\left(\fg,\Sym^{2}U^\vee\right)=1
	\right).
	\]
	Equivalently, \(\wedge^2U^\vee\) \textup{(}resp.\ \(\Sym^{2}U^\vee\)\textup{)}
	contains the adjoint representation with multiplicity one.
\end{prop}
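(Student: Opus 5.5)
The approach is to compute the multiplicity of the adjoint representation in all of $U\otimes U^\vee\cong\End(U)$, show that it equals one, and then decide in which of the two summands $\Sym^2U^\vee$, $\wedge^2U^\vee$ this single copy lies. Throughout I take $\fg$ simple. If $\fg$ is only semisimple, the factors acting trivially on $U$ contribute nothing, and one reduces to the simple factor acting faithfully.

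\emph{Step 1: the total multiplicity is one.} I will use
\[
\dim\Hom_\fg(\fg,U\otimes U^\vee)=\dim\Hom_\fg(U,\fg\otimes U).
\]
So it suffices to compute the multiplicity of $U_{\lambda}$ in $U_\theta\otimes U_\lambda$, where $\theta$ is the highest root, $U_\theta=\fg$ and $\lambda=\lambda_i$. By the PRV formula (Theorem~\ref{PRV}(2), in its usual tensor-product form), this multiplicity equals
\[
\dim\{\,v\in(U_\theta)_{\lambda-\lambda}=\ft \mid e_j^{\langle\lambda,\alpha_j^\vee\rangle+1}\cdot v=0 \text{ for all } j\,\}.
\]

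For $h\in\ft$ we have $e_j\cdot h=[e_j,h]=-\alpha_j(h)e_j$, and $e_j^2\cdot h\in[e_j,\BC e_j]=0$. Hence:
\begin{itemize}
\item for $j\neq i$, where $\langle\lambda,\alpha_j^\vee\rangle=0$, the condition is $\alpha_j(h)=0$;
\item for $j=i$ the condition $e_i^2\cdot h=0$ holds automatically.
\end{itemize}
The solution space is therefore $\{h\in\ft \mid \alpha_j(h)=0,\ j\neq i\}$, which is one-dimensional. Thus $\dim\Hom_\fg(\fg,\End U)=1$.

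\emph{Step 2: locating the summand.} Since $U$ is self-dual, fix an invariant form $b$. It is symmetric if $U$ is orthogonal and skew if $U$ is symplectic. Use $b$ to identify $U\cong U^\vee$ and $\End(U)\cong U^\vee\otimes U^\vee=\Sym^2U^\vee\oplus\wedge^2U^\vee$.

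The representation map $\fg\to\End(U)$ is nonzero and $\fg$-equivariant, and its image lies in the Lie algebra of $b$:
\begin{itemize}
\item in the orthogonal case this is $\mathfrak{so}(U,b)$, which corresponds to $\wedge^2U^\vee$ under the identification;
\item in the symplectic case it is $\mathfrak{sp}(U,b)$, which corresponds to $\Sym^2U^\vee$.
\end{itemize}
Hence the adjoint representation occurs at least once in the indicated summand. Combined with Step~1, it occurs there exactly once, and it does not occur in the other summand at all. This gives the stated dimension $1$.

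\emph{Expected obstacle.} The only delicate point is Step~1: one has to apply the PRV formula with the correct roles of the three weights so that the relevant weight space is the zero weight space $\ft$ of $\fg$. Done that way, the computation is trivial.

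An alternative route for Step~1 would be to apply Theorem~\ref{PRV}(1) to $H^0(Z,E(\fg))$. I expect the direct tensor-product PRV count above to be cleaner, so I would try that first. The fact that $\lambda$ is fundamental enters precisely through the count of simple roots $\alpha_j$ with $\langle\lambda,\alpha_j^\vee\rangle\neq0$ being equal to one.
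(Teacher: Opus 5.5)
Your proof is correct and complete. You apply the PRV count with $U_\lambda$ as the factor governing the exponents, so the relevant weight space is $\ft$, and this gives total multiplicity $\#\{j:\langle\lambda,\alpha_j^\vee\rangle\neq 0\}=1$ for $\fg$ in $\End(U)$. The representation map $\fg\to\mathfrak{so}(U,b)$, resp.\ $\mathfrak{sp}(U,b)$, then places that single copy in $\wedge^2U^\vee$, resp.\ $\Sym^2U^\vee$, and this also shows it is absent from the complementary summand. The paper gives no argument of its own here: it defers to Vinberg's result as proved in \cite[Theorem~3.3]{PY08}, a source built around the PRV formula, so your write-up is essentially the self-contained argument behind that citation.
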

Combining with Corollary \ref{im_Gaussian}, we deduce:
\begin{cor}\label{case1_cor}
	Let $Z \subset \BP U_{\lambda_{i}}$ be a flag variety. If $U_{\lambda_{i}}$ is self-dual and $Z$ is tangentially non-degenerate, then:
	\begin{itemize}
		\item[(a)] If $U_{\lambda_i}$ is orthogonal, then there exists a root $\beta \in \Phi^{+}$ such that
		\[
		2\lambda_i = \beta_{l} + \beta,
		\]
		where $\beta_{l}$ denotes the highest long root, and $\beta - \alpha_j \notin \Phi^{+}$ for all $j \neq i$.
		\item[(b)] If $U_{\lambda_i}$ is symplectic, then 
		\[
		2\lambda_i = \beta,
		\]
		where $\beta$ is either the highest long root or the highest short root, and satisfies $\beta - \alpha_j \notin \Phi^{+}$ for all $j \neq i$.
	\end{itemize}
\end{cor}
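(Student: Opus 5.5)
We argue by a multiplicity count in the Gaussian sequence~\eqref{Gaussian_sequence}. Since $Z$ is tangentially non-degenerate, $\wedge^2_Z=0$, so by Corollary~\ref{im_Gaussian} the Gaussian map $\psi$ is a $\fg$-equivariant isomorphism
\[
\wedge^2U_{\lambda_i}^\vee\;\cong\;\bigoplus_{\substack{\beta\in\Phi^+,\ 2\lambda_i-\beta\in\Lambda^+,\ S_{\lambda_i}\subset F_\beta}}U^\vee_{2\lambda_i-\beta}.
\]
Here $S_{\lambda_i}=\{j\neq i\}$, so the condition $S_{\lambda_i}\subset F_\beta$ says exactly that $\beta-\alpha_j\notin\Phi^+$ for all $j\ne i$. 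Every irreducible summand of $\wedge^2U^\vee$ must therefore be of the form $U^\vee_{2\lambda_i-\beta}$ with $\beta$ satisfying this condition. The plan is to locate a specific summand guaranteed to occur in $\wedge^2U^\vee$ and read off $\beta$ from its highest weight.

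In the orthogonal case, Proposition~\ref{small} gives that the adjoint representation $\fg\cong\fg^\vee=U^\vee_{\beta_l}$ occurs in $\wedge^2U^\vee$ (with multiplicity one). Since $\fg$ is simple and $\lambda_i$ is fundamental, $\fg$ is simple here. Matching highest weights then forces $\beta_l=2\lambda_i-\beta$ for some admissible $\beta\in\Phi^+$, which is (a). One small point to check is that we are comparing highest weights of duals consistently. Because $U^\vee_\mu\cong U_{-w_0\mu}$ and $-w_0$ fixes both $\beta_l$ and the self-dual weight $2\lambda_i$, writing everything as $U^\vee_\mu$ causes no ambiguity.

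In the symplectic case, $(\wedge^2U)^\fg\cong\BC$, so the trivial module occurs in $\wedge^2U^\vee$. The trivial module cannot correspond to any summand $U^\vee_{2\lambda_i-\beta}$ unless $2\lambda_i=\beta$ with $\beta\in\Phi^+$ admissible. A root that is twice a dominant weight is dominant. A dominant root is either the highest long root or the highest short root, since these are the only dominant roots of a simple root system (the dominant elements among the Weyl orbits of roots, and there are at most two orbits). This gives (b).

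The only step needing care is this last fact about dominant roots, and the identification that the trivial module is $U^\vee_0$, that is, $\beta=2\lambda_i$. In the symplectic case, Proposition~\ref{small} is not actually needed; the invariant symplectic form suffices.
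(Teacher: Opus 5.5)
Your proof is correct and follows the paper's argument. Tangential non-degeneracy together with Kumar's surjectivity (Corollary~\ref{im_Gaussian}) makes $\psi$ an isomorphism $\wedge^2U^\vee\cong\bigoplus U^\vee_{2\lambda_i-\beta}$; the adjoint summand (orthogonal case, via Proposition~\ref{small}) or the trivial summand from the invariant skew form (symplectic case) must then be one of these summands, which forces $\beta_l=2\lambda_i-\beta$ or $\beta=2\lambda_i$. You also spell out two points the paper leaves implicit: the matching of highest weights of duals, and the fact that the only dominant roots are $\beta_l$ and $\beta_s$. Your phrase ``since $\fg$ is simple \dots, $\fg$ is simple'' is circular and should say that $\fg$ is simple because $\lambda=\lambda_i$ is fundamental.
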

\begin{proof}
	If $U_{\lambda_i}$ is orthogonal (resp. symplectic), it follows from Proposition~\ref{small} and our assumption that $\operatorname{Im}(\psi)$ contains a copy of the adjoint representation (resp. the trivial representation). 
	The corollary then follows from the irreducible decomposition of $\operatorname{Im}(\psi)$ described in Corollary~\ref{im_Gaussian}.
\end{proof}
Thus, the classification reduces to that of highest weights satisfying one of the above conditions.
\begin{cor}
	The only highest weight module satisfying Corollary~\ref{case1_cor}(b) is
	$
	(\fs \fp_{2n},\lambda_{1}).
	$
\end{cor}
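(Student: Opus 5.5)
The plan is to reduce the condition of Corollary~\ref{case1_cor}(b) to a check on the two highest-root rows of Table~\ref{Table1}. The condition says $2\lambda_i=\beta$, where $\beta\in\{\beta_l,\beta_s\}$. So I would write $\beta_l$ and $\beta_s$ in the basis of fundamental weights and ask when one of them equals twice a fundamental weight.

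These expressions are exactly the adjoint and quasi-minuscule rows of Table~\ref{Table1}. For each simple type I compare them with the form $2\lambda_i$:
\begin{itemize}
\item In types $A_n$ ($n\ge2$), $B_n$, $D_n$, $E_6,E_7,E_8$, $F_4$ and $G_2$, the highest long root is $\lambda_1+\lambda_n$, $\lambda_2$, $\lambda_2$, $\lambda_2,\lambda_1,\lambda_8$, $\lambda_1$ and $\lambda_2$ respectively. None of these is of the form $2\lambda_i$.
\item In the non-simply-laced types, the highest short root is $\lambda_1$ ($B_n$), $\lambda_2$ ($C_n$), $\lambda_4$ ($F_4$) or $\lambda_1$ ($G_2$). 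Again none of these is twice a fundamental weight.
\item In the simply-laced types $\beta_s=\beta_l$, so the quasi-minuscule case adds nothing new.
\end{itemize}
The only survivor is $\fg$ of type $C_n$ with $\beta_l=2\lambda_1$, so $i=1$. This includes $n=1$, where $A_1=C_1$ and the adjoint weight $2\lambda_1$ fits the same pattern.

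Next I would verify the remaining condition for this survivor. In $C_n$ we have $\beta_l=2\alpha_1+\cdots+2\alpha_{n-1}+\alpha_n$. I claim that $\beta_l-\alpha_j\in\Phi^+$ only for $j=1$. This holds because $\beta_l$ is the highest root, so $\beta_l-\alpha_j$ is a root iff $\langle\beta_l,\alpha_j^\vee\rangle>0$. From $\beta_l=2\lambda_1$, that pairing is positive only for $j=1$. Hence $\beta_l-\alpha_j\notin\Phi^+$ for all $j\ne1$, as required.

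Finally, I would check consistency with the hypothesis of Corollary~\ref{case1_cor}(b): $U_{\lambda_1}=\BC^{2n}$ is indeed a symplectic self-dual module of $\fs\fp_{2n}$. Since the check is a finite inspection of the table, I do not expect a real obstacle. The only point needing care is to read $\beta_s$ and $\beta_l$ from Table~\ref{Table1} in Bourbaki's labeling, and to remember the small-rank coincidence $A_1=C_1$.
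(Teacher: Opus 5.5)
Your proposal is correct and follows the same route as the paper. You read off from the adjoint and quasi-minuscule rows of Table~\ref{Table1} which highest root has the form $2\lambda_i$, and only $\beta_l=2\lambda_1$ in type $C_n$ survives. Your extra check via $\langle\beta_l,\alpha_j^\vee\rangle$ that $\beta_l-\alpha_j\notin\Phi^+$ for $j\neq 1$ spells out what the paper leaves implicit, namely that this survivor really satisfies Corollary~\ref{case1_cor}(b).

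One small-rank coincidence is missing from your list: $B_2$ has highest root $2\lambda_2$, not $\lambda_2$. Since $B_2\cong C_2$ and $U_{\lambda_2}$ is then the standard module of $\fs\fp_4$, this case gives nothing new.
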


\begin{proof}
	By assumption, $\lambda_{i}$
	is either adjoint or quasi-minuscule (Definition~\ref{ad_coad}). From Table~\ref{Table1}, the condition $2\lambda_{i}=\beta$
	holds only for $\fg=\fs \fp_{2n}$
	with $\beta=2\lambda_{1}.$ The claim follows.
\end{proof}

\begin{lem}\label{cor_case1_table}
In the setting of Corollary~3.24(a), the representations
\((\fg,\lambda_i)\) for which
$
\beta=2\lambda_i-\beta_l
$
is a positive root are precisely those listed in Table~2.
\end{lem}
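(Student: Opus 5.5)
We will prove the lemma by a direct root-theoretic computation, carried out type by type over the simple Lie algebras. Let \((\fg,\lambda_i)\) be as in Corollary~\ref{case1_cor}(a), so \(U_{\lambda_i}\) is an orthogonal self-dual module. Write the highest long root \(\beta_l\) in fundamental-weight coordinates using Table~\ref{Table1}: it is \(\lambda_1+\lambda_n\) for \(A_n\), \(\lambda_2\) for \(B_n,D_n,E_6\), \(2\lambda_1\) for \(C_n\), \(\lambda_1\) for \(E_7\) and \(F_4\), \(\lambda_8\) for \(E_8\), and \(\lambda_2\) for \(G_2\). The candidate is then
\[
\beta=2\lambda_i-\beta_l ,\qquad \langle\beta,\alpha_j^\vee\rangle=2\delta_{ij}-\langle\beta_l,\alpha_j^\vee\rangle .
\]
We want to decide exactly when \(\beta\in\Phi^+\). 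In simple-root coordinates, \(\beta\) equals twice the \(i\)-th row of the inverse Cartan matrix minus the coefficient vector of \(\beta_l\).

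The first step is a cheap necessary filter based on lengths. If \(\beta\) is a root, then \(|\beta|^2\in\{|\beta_l|^2,|\beta_s|^2\}\). Expanding,
\[
|\beta|^2=4|\lambda_i|^2-4\langle\lambda_i,\beta_l\rangle+|\beta_l|^2 .
\]
Here \(\langle\lambda_i,\beta_l\rangle=\tfrac12|\alpha_i|^2\,m_{\alpha_i}(\beta_l^\vee)\), and \(|\lambda_i|^2\) is read off the inverse Cartan matrix. The condition therefore becomes a simple numerical equation in \(i\) (and in \(n\) for the classical families). We add two further filters. All simple-root coefficients of \(\beta\) must be nonnegative integers, which is automatic once \(2\lambda_i\) lies in the root lattice, and this already excludes many \(i\), for instance the spin weights. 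They must also be bounded by those of \(\beta_l\), since \(\beta\le\beta_l\).

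The second step applies these filters. For the classical series, \(\lambda_i=\varepsilon_1+\dots+\varepsilon_i\) in the standard \(\varepsilon\)-realisation. Then \(\beta\) is written explicitly, for example \(2(\varepsilon_1+\dots+\varepsilon_i)-(\varepsilon_1+\varepsilon_2)\) in types \(B,D\), and one reads off directly when it is of the form \(\pm\varepsilon_a\pm\varepsilon_b\), \(\varepsilon_a\) or \(2\varepsilon_a\). We expect this to leave small values of \(i\) such as \(i=1,2\), together with a few low-rank coincidences. Before this, the orthogonality requirement is imposed. Fundamental representations of \(A_n\) are self-dual only in the middle case \(n=2k-1\), \(i=k\), which is orthogonal iff \(k\) is even. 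For \(C_n\), \(\lambda_i\) is orthogonal iff \(i\) is even. For the spin weights one uses the standard mod \(8\) rule.

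For the exceptional types we will run through the finitely many \(\lambda_i\) with the inverse Cartan matrices, tabulated as in \cite{Bou02}, and the list of positive roots. The candidates surviving the length filter are then checked to be actual roots by exhibiting a chain of simple reflections or by direct comparison with the root list.

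The final step assembles the survivors into Table~2, and checks the converse direction: for each listed pair, \(\beta=2\lambda_i-\beta_l\) is indeed a positive root. The main obstacle is bookkeeping rather than ideas. The classical families are parametric in \(n\), so low-rank exceptional coincidences have to be handled separately, for example \(D_4\) triality, \(B_2=C_2\), and \(A_3=D_3\). The exceptional cases \(E_7\), \(E_8\) and \(F_4\) require care with the inverse Cartan matrices. There we would first apply the norm identity, which discards most weights immediately, and only then consult the root tables.
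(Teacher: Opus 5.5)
Your method, an explicit type-by-type check of whether $2\lambda_i-\beta_l\in\Phi^+$, is the same as the paper's. However, two steps of your plan fail as written.

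First, your integrality filter excludes nothing in this setting. Self-duality gives $-w_0\lambda_i=\lambda_i$, so $2\lambda_i=\lambda_i-w_0\lambda_i$ always lies in the root lattice. In particular it does not remove the spin weights, and it must not: $(B_4,\lambda_4)$ is in Table~\ref{Table2}, with $2\lambda_4-\beta_l=\varepsilon_3+\varepsilon_4=\alpha_3+2\alpha_4$. Discarding spin weights on that basis would lose a listed entry. Also, the correct formula is $\langle\lambda_i,\beta_l\rangle=\tfrac12|\alpha_i|^2\,m_{\alpha_i}(\beta_l)$, with the coefficient taken in $\beta_l$ itself. Your version, using $\beta_l^\vee$, is wrong when $\alpha_i$ is short.

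Second, and more seriously, the final step ``assemble the survivors into Table~\ref{Table2}'' cannot be carried out, because the survivors are not exactly Table~\ref{Table2}.
As you anticipate, $i=1$ survives in types $B$ and $D$. For $(B_n,\lambda_1)$ and $(D_n,\lambda_1)$ one gets $2\varepsilon_1-(\varepsilon_1+\varepsilon_2)=\alpha_1\in\Phi^+$. Both modules are orthogonal, and the varieties are the tangentially non-degenerate quadrics $\BQ^{2n-1}$ and $\BQ^{2n-2}$. So both pairs lie in the setting of Corollary~\ref{case1_cor}(a), but neither is listed.
Likewise, $(D_4,\lambda_3)$ and $(D_4,\lambda_4)$ give $\beta=\alpha_3$ and $\beta=\alpha_4$. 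The pair $(B_3,\lambda_3)$ gives $\beta=\alpha_3$, if one does not insist on $\fg=\Lie(\Aut(Z))$, which is the convention under which $(G_2,\lambda_1)$ is listed. For $(A_3,\lambda_2)$ one gets $\beta=\alpha_2$, not $\beta_l$.

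So a correct execution of your computation proves an amended table, not the lemma as printed, and you should state the corrected list explicitly. The extra entries are all quadrics, which is harmless downstream: $\BQ^m$ with $m\ge 3$ is already tangentially non-degenerate and is covered by the (co)minuscule case. The corollary right after Table~\ref{Table2} already includes the quasi-minuscule $B_n$ variety, which suggests $(B_n,\lambda_1)$ was simply dropped from the table.
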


\begin{proof}
This is obtained by a case-by-case calculation. Indeed, if
\(\beta=2\lambda_i-\beta_l\in\Phi^+\), then necessarily
\(\beta\prec\beta_l\). Thus, for each pair \((\lambda_i,\beta_l)\), we use
their expressions in the basis of simple roots to compute
\(2\lambda_i-\beta_l\) explicitly, and then check whether
$
2\lambda_i-\beta_l\prec\beta_l
$
and whether it is a positive root. The required expressions are taken from the
expression of the highest root in terms of simple roots~\cite[p.~66,
Table~2]{Hum72} and the expression of the fundamental weights in terms of
simple roots~\cite[p.~69, Table~1]{Hum72}. This gives exactly the list in
Table~2.
\end{proof}
\begin{table}[H]
	\centering
	\caption{$(\fg,\lambda_{i})$ with $\beta = 2\lambda_i - \beta_{l} \in \Phi^{+}$} \label{Table2}
	\vspace{5pt}
	\begin{tabular}{|c|c|c|c|c|c|c|c|c|c|}
		\hline 
		$\fg$& $A_{3}$ & $B_{n}$& $C_{n}$ &$D_{n}$ &$E_{6}$ & $E_{7}$ &$E_{8}$  &$F_{4}$& $G_{2}$  \\ \hline
		$\lambda_{i}$	&	$\lambda_{2}$ & $\lambda_{2}, \,\lambda_{4}\,(n=4)$& $\lambda_{2}$ & $\lambda_{2}$ & $\lambda_{2}$  & $\lambda_{1}$ & $ \lambda_{8}$& $ \lambda_{1},\, \lambda_{4}$ & $\lambda_{2}, \, \lambda_{1}$  \\ \hline 
		
		$\beta=2\lambda_{i}-\beta_{l}$ &	$\beta_{l}$ &$\beta_{l}$, $\alpha_{3}+2\alpha_{4}$& $\beta_{l}-2\alpha_{1}$ & $\beta_{l}$ & $\beta_{l}$  & $\beta_{l}$ & $ \beta_{l}$& $ \beta_{l}$,\, $\alpha_{2}+2\alpha_{3}+2\alpha_{4}$& $\beta_{l}$, $\alpha_{1}$ \\ \hline 
	\end{tabular}
\end{table}
Comparing with Table~\ref{Table1}, we immediately deduce the following.
\begin{cor}
	A flag variety satisfies Corollary~\ref{case1_cor}(a) if and only if it is isomorphic to
	$
	Gr(2,4)\subset \BP(\wedge^{2}\BC^{4}),
	$ the orthogonal Grassmannian $
	OG(4,9)\subset \BP\Delta,
	$
	an adjoint variety not of type $A$ or $C$, or a quasi-minuscule variety not of type $A$.
\end{cor}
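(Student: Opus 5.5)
The plan is to go through Table~\ref{Table2} entry by entry. For each $(\fg,\lambda_i)$ in it I will check the two remaining requirements of Corollary~\ref{case1_cor}(a): that $U_{\lambda_i}$ is orthogonal rather than symplectic, and that $\beta=2\lambda_i-\beta_l$ satisfies $\beta-\alpha_j\notin\Phi^+$ for all $j\neq i$, i.e.\ $S_{\lambda_i}\subset F_\beta$. Then I will identify each surviving pair with a flag variety using Table~\ref{Table1}. By Lemma~\ref{cor_case1_table}, Table~\ref{Table2} already contains every pair for which $2\lambda_i-\beta_l\in\Phi^+$, so no other pairs need to be considered.

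First I treat the entries with $\beta=\beta_l$, which means $\lambda_i=\beta_l$ is adjoint. These are $(B_n,\lambda_2)$, $(D_n,\lambda_2)$, $(E_6,\lambda_2)$, $(E_7,\lambda_1)$, $(E_8,\lambda_8)$, $(F_4,\lambda_1)$, $(G_2,\lambda_2)$, together with $(A_3,\lambda_2)$. When $\beta_l$ is a fundamental weight $\lambda_i$, one has $\langle\beta_l,\alpha_j^\vee\rangle=0$ for $j\neq i$. Since $\beta_l$ is the highest root, $\beta_l-\alpha_j$ is a root only if $\langle\beta_l,\alpha_j^\vee\rangle>0$, so the $F_\beta$ condition holds automatically. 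The adjoint module is orthogonal via the Killing form. This yields all adjoint varieties of types other than $A$ and $C$ (type $C$ is excluded since its adjoint weight $2\lambda_1$ is not fundamental, and type $A_n$ with $n\ge2$ since $\lambda_1+\lambda_n$ is not fundamental). The entry $(A_3,\lambda_2)$ is special: $\wedge^2\BC^4$ is orthogonal, and I will use $\mathfrak{sl}_4\cong\mathfrak{so}_6$ to reread it as the quadric $Gr(2,4)=\BQ^4$, listing it separately.

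Second, I treat the entries with $\beta\neq\beta_l$: $(B_n,\lambda_4)$ with $n=4$ and $\beta=\alpha_3+2\alpha_4$; $(C_n,\lambda_2)$ with $\beta=\beta_l-2\alpha_1$; $(F_4,\lambda_4)$ with $\beta=\alpha_2+2\alpha_3+2\alpha_4$; and $(G_2,\lambda_1)$ with $\beta=\alpha_1$. For each I will write $\beta$ in simple roots and test, for each $j\neq i$, whether $\beta-\alpha_j$ is a positive root. This is a finite check. I expect the quasi-minuscule cases $(C_n,\lambda_2)$, $(F_4,\lambda_4)$ and $(G_2,\lambda_1)$ to pass; here $\beta$ is the highest short root's partner and pairs positively only with $\alpha_i$. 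For $(B_4,\lambda_4)$, the spin representation of $\mathfrak{so}_9$ is $16$-dimensional and orthogonal, and I expect it to pass as well, giving $OG(4,9)$. Orthogonality of the quasi-minuscule modules follows from Table~\ref{Table1} and the standard fact that these small real-type representations carry invariant symmetric forms: $\wedge^2_0\BC^{2n}$, the $26$-dimensional $F_4$-module, and the $7$-dimensional $G_2$-module. Finally I will note that $(B_n,\lambda_1)$ and $(D_n,\lambda_1)$ also do not appear, consistent with the claim that the quasi-minuscule varieties arise except in type $A$.

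The main obstacle I expect is the bookkeeping around coincidences and low rank rather than any conceptual step. This includes $A_3=D_3$, the small-$n$ overlaps such as $C_2=B_2$ and $B_4$, and confirming that no Table~\ref{Table2} entry is secretly symplectic: for example, $(C_n,\lambda_2)$ must be orthogonal, whereas $(C_n,\lambda_1)$ is symplectic and was handled in case (b). For the $(A_3,\lambda_2)$ entry I would first try to verify the $F_\beta$ condition directly in the $D_3$ labelling, where $\wedge^2\BC^4$ becomes the standard representation. If the condition as literally stated fails in the $A_3$ labelling, I would record Gr(2,4) via this $D_3$ reinterpretation.
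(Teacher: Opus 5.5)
Your route is the paper's own: read off Table~\ref{Table2}, then match it against Table~\ref{Table1}. Your argument for the adjoint entries is fine. However, the step ``Table~\ref{Table2} already contains every pair'' is false, and your closing remark inverts the claim.

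In Table~\ref{Table1} the quasi-minuscule weight of $B_n$ is $\lambda_1$. If $(B_n,\lambda_1)$ really failed (a), the odd quadrics $\BQ^{2n-1}=B_n/P_1$ would contradict the ``if'' direction. Indeed, your quasi-minuscule list ($C_n$, $F_4$, $G_2$) never produces them. In fact $(B_n,\lambda_1)$ does satisfy (a). With $\lambda_1=\epsilon_1$ and $\beta_l=\epsilon_1+\epsilon_2$ one gets $\beta=2\lambda_1-\beta_l=\alpha_1$. Moreover $\alpha_1-\alpha_j\notin\Phi$ for $j\neq 1$, and $\BC^{2n+1}$ is orthogonal. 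Table~\ref{Table2} as printed simply omits this case, so it must be checked directly rather than by citing Lemma~\ref{cor_case1_table}.

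The same computation in type $D_n$ again gives $2\lambda_1-\beta_l=\alpha_1$. So every even-dimensional quadric $\BQ^{2n-2}=D_n/P_1$ ($n\ge 3$) satisfies Corollary~\ref{case1_cor}(a). So do $(B_3,\lambda_3)$, $(D_4,\lambda_3)$ and $(D_4,\lambda_4)$, with $\beta=\alpha_3,\alpha_3,\alpha_4$ respectively, all giving $\BQ^6$.

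For $n=3$ this is your entry $(A_3,\lambda_2)$. There $\beta=2\lambda_2-\beta_l=\alpha_2$, not $\beta_l$. The check is immediate in the $A_3$ labelling, and since the condition is intrinsic, no $D_3$ reinterpretation is needed.

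For $n\ge 4$, however, the statement fails. $\BQ^{2n-2}$ is even-dimensional, whereas all adjoint and quasi-minuscule varieties are odd-dimensional, and it is not isomorphic to $Gr(2,4)$ or $OG(4,9)$. Hence the ``only if'' direction cannot be proved as stated: the list must be enlarged by the quadrics $\BQ^{2m}$, $m\ge 3$.

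This is harmless for Proposition~\ref{prim_class}, since these quadrics are rank-two Hermitian symmetric spaces handled in the cominuscule case. But a correct proof has to run the root computation $2\lambda_i-\beta_l$ over all self-dual orthogonal fundamental weights instead of relying on Table~\ref{Table2}.
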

\subsubsection{(Co)minuscule case}
The (co)minuscule case follows from the irreducibility of $\Im(\psi)$.
\begin{lem}\label{irr_Gaussian}
	Let $Z \subset \BP U_{\lambda_{i}}$ be an irreducible Hermitian symmetric space. Then for each $k  \in \BZ$, $H^{0}(Z,\Omega_{Z}(k))$ is either zero or an irreducible $\fg$-module.	
\end{lem}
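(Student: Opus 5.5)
The plan is to compute $H^0(Z,\Omega_Z(k))$ with Theorem~\ref{PRV0} and the Peter--Weyl description of Theorem~\ref{PRV}, using that for an irreducible Hermitian symmetric space the cotangent bundle is homogeneous from an irreducible $P$-module.

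\textbf{Step 1: reduce to an irreducible homogeneous bundle.} Since $Z=G/P$ is an irreducible Hermitian symmetric space, $\fg/\fp\cong\fg_{-1}$ is an irreducible $P$-module on which $\fn_\fp=\fg_1$ acts trivially, because the grading has length $d=1$. Dually, $\Omega_Z=E(\fg_1)$ with $\fg_1\cong(\fg/\fp)^\vee$ irreducible. Moreover $\CO_Z(k)=E(\BC_{-k\lambda_i})$ is a character of $P$. Hence
\[
\Omega_Z(k)=E(M_k),\qquad M_k=\fg_1\otimes\BC_{-k\lambda_i},
\]
where $M_k$ is still an irreducible $P$-module, being the tensor product of an irreducible $L$-module with a character. (Sign conventions will be fixed so that $\CO_Z(1)$ has sections $U_\lambda^\vee$.)

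\textbf{Step 2: apply Borel--Weil--Bott.} By Theorem~\ref{PRV0} applied to the irreducible $P$-module $M_k$, the full cohomology $H^\bullet(Z,E(M_k))$ is either zero or a single irreducible $\fg$-module concentrated in one degree. In particular $H^0(Z,\Omega_Z(k))$ is either $0$ or irreducible, which proves the lemma.

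\textbf{Step 3: alternative PRV check.} As a cross-check, or if one prefers to avoid the ``unique cohomology'' formulation, I would use Theorem~\ref{PRV}(1). It gives
\[
H^0\cong\bigoplus_\mu U_\mu^\vee\otimes\Hom_P(U_\mu^\vee,M_k).
\]
A $P$-map $U_\mu^\vee\to M_k$ must kill $\fn_\fp$-coinvariants, so it factors through the irreducible $L$-module of $\fn_\fp$-coinvariants of $U_\mu^\vee$. That module is the irreducible $L$-module generated by a lowest weight vector, with lowest weight $-\mu$. By Schur's lemma the Hom space is at most one-dimensional, and it is nonzero only when $-\mu$ equals the lowest weight of $M_k$. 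So at most one $\mu$ contributes, with multiplicity one.

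\textbf{Main obstacle.} The only delicate point is the irreducibility of $M_k$ as a $P$-module. This uses $d=1$, i.e.\ that $\lambda_i$ is (co)minuscule, so that $\fg_1$ is irreducible under $\fg_0$ and $\fn_\fp$ acts trivially on it; for general $G/P$ the cotangent bundle is only filtered. This step is standard, so I expect the proof to be short.
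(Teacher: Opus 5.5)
Your proof is correct and follows the paper's argument. Like you, the paper observes that $\Omega_Z(k)$ is the homogeneous bundle of an irreducible $P$-module, by the definition of an irreducible Hermitian symmetric space, and concludes directly from the Borel--Weil--Bott theorem (Theorem~\ref{PRV0}); your Step~3, via Theorem~\ref{PRV}(1) and the $\fn_\fp$-coinvariants of $U_\mu^\vee$, is a valid independent check but is not needed.
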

\begin{proof}
	Since $Z$ is an irreducible Hermitian symmetric space, $\Omega_{Z}(k)$ is an irreducible homogeneous vector bundle by Definition \ref{cominuscule}. Thus the claim follows by Theorem \ref{PRV0}.
\end{proof}
Then our classification follows from the classification of irreducible exterior squares given by Dynkin; see also~\cite[Theorem~1(b)]{KR09} for a streamlined proof.
\begin{thm}[{\cite{Dyn52}}]
	Let $U=U_{\lambda}$ be an irreducible representation of semisimple Lie algebra $\fg$. If $\wedge^{2}U$	is irreducible, then up to duality one of the following cases occurs:
	\begin{table}[H]
		\begin{tabular}{|c|c|c|c|c|}
			\hline 
			$\fg$ & $A_{n}$ & $B_{n}$ & $D_{n}$ & $E_{6}$ \\\hline
			$\lambda$ & $\lambda_{1}$,\, $2 \lambda_{1}$,\, $\lambda_{2}\, (n \geqslant 2)$& 
			$\lambda_{1}$ & $\lambda_{1}$,\, $\lambda_{3},\lambda_{4}\, (n=4,\,5)$ & $\lambda_{1}$
			\\\hline 
		\end{tabular}
	\end{table}
\end{thm}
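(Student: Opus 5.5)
The plan is to reduce to a single fundamental (or small) weight on a simple factor, and then finish by comparing dimensions. Since this is Dynkin's classical result (cf.~\cite{KR09}), the proof below is only a route through it; nothing later in the paper depends on these details.

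\textbf{Step 1: reduce to simple $\fg$.} If $\fg=\fg_1\oplus\fg_2$ and $U=U_1\otimes U_2$, then
\[
\wedge^2U=\wedge^2U_1\otimes \Sym^2U_2\oplus \Sym^2U_1\otimes\wedge^2U_2 .
\]
This is reducible unless one summand vanishes, which forces $\dim U_1=1$ or $\dim U_2=1$. So we may assume $\fg$ is simple.

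\textbf{Step 2: highest weights of $\wedge^2U$.} For every $\alpha_i\in\Delta_Z$, the vector $u_\lambda\wedge f_iu_\lambda$ is annihilated by $\fn$. Indeed, $e_j f_i u_\lambda=\delta_{ij}h_iu_\lambda$, and the resulting term $u_\lambda\wedge u_\lambda$ vanishes. Hence $\wedge^2U$ contains a highest weight vector of weight $2\lambda-\alpha_i$ for each $i$ with $a_i\neq0$. Irreducibility then forces $|\Delta_Z|=1$, so $\lambda=m\lambda_i$.

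To bound $m$, I would restrict to $\mathfrak{sl}_{2,\alpha_i}$. The $\alpha_i$-string through $u_\lambda$ spans $V_m$, and $\wedge^2V_m\cong V_{2m-2}\oplus V_{2m-6}\oplus\cdots$. For $m\ge3$, the summand $V_{2m-6}$ yields a weight vector of weight $2\lambda-3\alpha_i$ which is killed by $e_i$. I then need to check that it is also killed by $e_j$ for $j\neq i$; this holds because $e_ju_\lambda=0$ and $e_j$ commutes with $f_i$. This vector cannot lie in $U_{2\lambda-\alpha_i}$, since its weight space there is spanned by $f_i^2(u_\lambda\wedge f_iu_\lambda)$, which lies in the $V_{2m-2}$ component. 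So $m\le2$. For $m=2$, the same analysis gives a second highest weight unless the situation is essentially $\Sym^2$ of the standard representation of type $A$. I expect this to leave only $(A_n,2\lambda_1)$.

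\textbf{Step 3: the case $\lambda=\lambda_i$.} Split according to Lemma~\ref{three_cases}.

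If $U$ is symplectic, then $\wedge^2U$ contains the trivial module. Irreducibility gives $\wedge^2U=\BC$, so $\dim U=2$, which is $(A_1,\lambda_1)$.

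If $U$ is orthogonal, Proposition~\ref{small} shows that $\wedge^2U\supset\fg$. Hence $\wedge^2U=\fg$, so $2\lambda_i-\alpha_i=\beta_l$ and $\binom{\dim U}{2}=\dim\fg$. By the computation behind Lemma~\ref{cor_case1_table} and Table~\ref{Table2}, this leaves only the vector representations $(B_n,\lambda_1)$ and $(D_n,\lambda_1)$, together with the spin representations of $D_4$ (which are triality-equivalent to $\lambda_1$).

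In the non-self-dual case, $\fg$ is of type $A$, $D$, or $E_6$. Here I would compare $\binom{\dim U}{2}$ with the Weyl dimension of $U_{2\lambda_i-\alpha_i}$:
\begin{itemize}
\item In type $A$, $\wedge^2(\wedge^k\BC^{n+1})$ is irreducible only for $k\in\{1,2\}$ up to duality. For $k\ge3$ it contains the additional summand of highest weight $\lambda_{k-2}+\lambda_{k+2}$ (or its analogue).
\item In type $D_5$, the half-spin representation has $\wedge^2\Delta=\wedge^3\BC^{10}$, which is irreducible. For $D_n$ with $n\ge6$, $\wedge^2$ of a half-spin representation splits into several $\wedge^k\BC^{2n}$.
\item For $E_6$, one has $\wedge^2U_{27}=U_{351}$, while $\lambda_3$ and $\lambda_5$ are excluded by dimension.
\end{itemize}

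\textbf{Main obstacle.} I expect the exhaustive exclusion in the non-minuscule fundamental cases to be the hardest part, since it requires producing a second highest weight vector explicitly. My first attempt there would be the weight $2\lambda_i-\alpha_i-2\alpha_j-\cdots$, found from an $\mathfrak{sl}_2$-string argument along a neighbouring node, and I would fall back on a dimension count otherwise.
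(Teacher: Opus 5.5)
Your Step~1, the reduction to $\lambda=m\lambda_i$, and the exclusion of $m\ge 3$ via the $V_{2m-6}\subset\wedge^2V_m$ vector are correct, and Step~3 is essentially sound. The genuine gap is the case $m=2$.

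\textbf{The case $m=2$.} Restricting to $\mathfrak{sl}_{2,\alpha_i}$ gives $\wedge^2V_2\cong V_2$, so ``the same analysis'' produces no highest weight vector other than $u_\lambda\wedge f_iu_\lambda$. Your claim that a second one appears unless one is in the $\Sym^2$-of-type-$A$ case has no argument behind it. A string argument at a neighbouring node will not rescue it either. For $(A_3,2\lambda_2)$ one has $\wedge^2U\cong U_{\lambda_1+2\lambda_2+\lambda_3}\oplus\fg$ (dimensions $190=175+15$). The extra highest weight $\lambda_1+\lambda_3=2\lambda-\alpha_1-3\alpha_2-\alpha_3$ is invisible in each rank-two Levi subalgebra through $\alpha_2$, because there the relevant piece is $\wedge^2\Sym^2\BC^3$, which is irreducible. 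Every $(\fg,2\lambda_i)$ other than $(A_n,2\lambda_1)$ and its dual has to be excluded, so this is a missing argument rather than a routine check. Your ``main obstacle'' is also misplaced. Once the trivial and adjoint summands dispose of the self-dual fundamental weights, the only non-minuscule fundamental cases left are $(E_6,\lambda_3)$ and $(E_6,\lambda_5)$, and the appendix computation already settles these.

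\textbf{How to close it.} The paper itself gives no proof of this theorem; it only cites \cite{Dyn52} and \cite{KR09}. You can still close the $m=2$ case with tools from the paper. Write $2\lambda_i=\lambda_i+\lambda_i$ and apply Proposition~\ref{non_vanish}: if $G\cdot[u_{\lambda_i}]\neq\BP U_{\lambda_i}$, then $\wedge^2_Z\neq 0$. Since the Gaussian map is nonzero by Corollary~\ref{quadrics}, $\wedge^2_Z$ is then a proper nonzero submodule of $\wedge^2U^\vee$. For simple $\fg$, the fundamental weights whose closed orbit fills $\BP U_{\lambda_i}$ are $(A_n,\lambda_1)$, $(A_n,\lambda_n)$ and $(C_n,\lambda_1)$. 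The last gives the adjoint representation of $\mathfrak{sp}_{2n}$, whose exterior square contains $\fg$ via the bracket, so it is reducible for $n\ge 2$.

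\textbf{Smaller corrections.} In the orthogonal case, Table~\ref{Table2} does not list $(B_n,\lambda_1)$ or $(D_n,\lambda_1)$, so it cannot produce your list. Argue directly instead: $\fg\hookrightarrow\mathfrak{so}(U)\cong\wedge^2U$, so irreducibility forces $\fg=\mathfrak{so}(U)$. This also yields $(A_3,\lambda_2)$, which your list omits. In type $A$, the extra summand of $\wedge^2(\wedge^kV)$ for $k\ge 3$ is $S_{(2^{k-3},1^6)}V$, of highest weight $\lambda_{k-3}+\lambda_{k+3}$ (with $\lambda_0=0$). The weight $\lambda_{k-2}+\lambda_{k+2}$ you named lies in $\Sym^2(\wedge^kV)$.
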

As a consequence, we obtain the following characterization.
\begin{cor}
	Let $Z\subset \BP U_{\lambda_{i}}$
	be an irreducible Hermitian symmetric space. Then
	$Z$ is tangentially non-degenerate if and only if it has rank at most two.
\end{cor}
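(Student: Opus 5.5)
We prove the corollary by playing off the two sides of the exact sequence \eqref{Gaussian_sequence}. Tangential non-degeneracy means $\wedge^2_Z=0$, and since the Gaussian map is surjective by Corollary~\ref{im_Gaussian}, this is equivalent to
\[
\wedge^2U_{\lambda_i}^\vee\;\cong\; H^0(Z,\Omega_Z(2))
\]
as $\fg$-modules. For an irreducible Hermitian symmetric space, Lemma~\ref{irr_Gaussian} applied with $k=2$ shows that $H^0(Z,\Omega_Z(2))$ is irreducible; it is nonzero because $\psi\neq 0$ by Corollary~\ref{quadrics}. Hence $Z$ is tangentially non-degenerate if and only if $\psi$ is an isomorphism, which holds if and only if $\wedge^2U_{\lambda_i}$ is irreducible. (Injectivity forces $\wedge^2U^\vee$ to be irreducible; conversely, irreducibility of $\wedge^2U^\vee$ together with $\psi\neq0$ forces $\psi$ to be injective.)

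Next we apply Dynkin's theorem above. Among fundamental weights that are (co)minuscule, $\wedge^2 U$ is irreducible, up to duality, exactly in the following cases:
\begin{itemize}
\item $(A_n,\lambda_1)$, giving $\BP^n$;
\item $(A_n,\lambda_2)$, giving $Gr(2,n+1)$;
\item $(B_n,\lambda_1)$ and $(D_n,\lambda_1)$, giving the quadrics;
\item $(D_5,\lambda_4)$ and $(D_5,\lambda_5)$, giving the spinor variety $\BS_5$;
\item $(D_4,\lambda_3)$ and $(D_4,\lambda_4)$, giving $\BQ^6$ again by triality;
\item $(E_6,\lambda_1)$, giving $E_6/P_1$.
\end{itemize}
The weight $2\lambda_1$ is excluded because it is not fundamental. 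Duality in type $A$ identifies $\lambda_k$ with $\lambda_{n+1-k}$, so every Grassmannian $Gr(2,m)$ is covered.

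It remains to compare this list with the irreducible Hermitian symmetric spaces of rank at most two. The remaining spaces in the (co)minuscule column of Table~\ref{Table1} have rank at least three:
\begin{itemize}
\item $Gr(k,n)$ with $3\le k\le n-3$;
\item $Lag(n,2n)$ for $n\ge3$, from $(C_n,\lambda_n)$;
\item $\BS_n$ for $n\ge6$;
\item $E_7/P_7$.
\end{itemize}
The space $(B_n,\lambda_n)$ is isomorphic to $\BS_{n+1}$, but its minimal embedding is a spinor module whose exterior square is reducible; for $B_4$ this is $\BS_5$ in the spin embedding of $B_4$, which is in fact covered by the $D_5$ description since it is the same embedded variety. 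Likewise $(C_n,\lambda_1)$ is $\BP^{2n-1}$, the same embedded variety as the one from $A_{2n-1}$.

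Conversely, the rank-$\le 2$ list of Theorem~\ref{class_tang}(1), namely $\BP^n$, $Gr(2,n)$, $\BQ^n$, $\BS_5$ and $E_6/P_1$, matches Dynkin's list exactly once we use the automorphism group $G=\Aut^0(Z)$ rather than a smaller transitive subgroup.

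The main obstacle is this bookkeeping: we must make sure the representation is taken for the full automorphism group, so that coincidences such as $B_{n-1}$ versus $D_n$ spinors and $C_n$ versus $A_{2n-1}$ neither add nor lose cases.
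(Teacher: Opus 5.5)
Your proof is correct and follows the paper's argument: surjectivity of the Gaussian map together with irreducibility of $H^0(Z,\Omega_Z(2))$ (Lemma~\ref{irr_Gaussian}) reduces tangential non-degeneracy to irreducibility of $\wedge^2U$. Dynkin's classification then singles out exactly the rank $\le 2$ spaces; your $B_n$/$C_n$ bookkeeping is handled in the paper by the standing convention $\fg=\mathrm{Lie}\,\Aut^0(Z)$. The only difference is the converse, where the paper simply cites \cite[Proposition~14]{HM98}, whereas you deduce injectivity from $\psi\neq 0$ and irreducibility of $\wedge^2U$; this is also valid, but it uses the ``if'' direction of Dynkin's theorem, which is true although the paper only states the ``only if'' direction.
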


\begin{proof}
	If the Gaussian map $\psi$ is injective, then by Corollary~\ref{im_Gaussian},
	$
	\wedge^{2}U_{\lambda}^{\vee}
	\cong
	H^{0}(Z,\Omega_{Z}(2))
	$
	is irreducible by Lemma~\ref{irr_Gaussian}. Comparing with the classification table above, one checks that the corresponding (co)minuscule varieties are precisely those listed in Proposition~\ref{prim_class}(1), namely the IHSS's of rank at most two. The converse follows from Lemma~3.11.
\end{proof}

\subsubsection{$E_6/P_{3}$}
Let $\fg = \fe_{6}$ be the simple Lie algebra of type $E_{6}$, let $U = U_{\lambda_{3}}$ be the irreducible representation of highest weight $\lambda_{3}$, and let $Z \subset \BP U$ be the corresponding flag variety. We exclude this case by a direct computation.

\begin{lem}
	Let
	$
	U=U_{\lambda_{3}}
	$
	be the irreducible
	$
	\fe_{6}
	$
	-module. Then
	\[
	\wedge^{2}U^{\vee}
	\cong
	U_{\lambda_{1}+\lambda_{4}}^{\vee}
	\oplus
	U_{\lambda_{3}+\lambda_{6}}^{\vee}
	\oplus
	U_{\lambda_{1}+\lambda_{2}}^{\vee}
	\oplus
	U_{\lambda_{5}}^{\vee},
	\]
	while
	\[
	H^{0}(Z,\Omega_{Z}(2))
	\cong
	U_{\lambda_{1}+\lambda_{4}}^{\vee}
	\oplus
	U_{\lambda_{3}+\lambda_{6}}^{\vee}.
	\]
	In particular, the flag varieties
	$
	E_{6}/P_{3}\subset \BP U_{\lambda_{3}}
	$
	and
	$
	E_{6}/P_{5}\subset \BP U_{\lambda_{5}}
	$
	are not tangentially non-degenerate.
\end{lem}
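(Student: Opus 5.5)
The plan has two halves: decompose $\wedge^2U^\vee$ by a weight computation, and read off $H^0(Z,\Omega_Z(2))$ from Corollary~\ref{im_Gaussian}. Comparing the two then shows the Gaussian map has a nonzero kernel.

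\textbf{Decomposing $\wedge^2U^\vee$.} Here $U=U_{\lambda_3}$ has dimension $351$, so $\dim\wedge^2U=61425$. I will compute the decomposition of $\wedge^2U_{\lambda_3}$ and then dualize, using $U_\mu^\vee\cong U_{-w_0\mu}$ with $-w_0$ exchanging $\lambda_1\leftrightarrow\lambda_6$ and $\lambda_3\leftrightarrow\lambda_5$. Concretely, I will first list the dominant weights of $U_{\lambda_3}$ with multiplicities. The leading term of $\wedge^2U$ is $U_{2\lambda_3-\alpha_3}$, and $2\lambda_3-\alpha_3=\lambda_1+\lambda_4$ in Bourbaki labeling, which matches the first summand. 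The remaining components are found by peeling off characters in decreasing dominance order, via Freudenthal's formula or a computer check with LiE. As a check I will verify that the Weyl dimensions of the four candidate summands add up to $61425$. This bookkeeping is the main obstacle. It is routine but error-prone by hand, and I would state it as a verified computation, keeping the dimension count as the consistency check.

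\textbf{Computing $H^0(Z,\Omega_Z(2))$.} Apply \eqref{irr_decom} with $\lambda=\lambda_3$, so $S_\lambda=\{1,2,4,5,6\}$. The index set consists of positive roots $\beta$ with $2\lambda_3-\beta$ dominant and $\beta-\alpha_j\notin\Phi^+$ for all $j\neq3$. Dominance of $2\lambda_3-\beta$ forces $\langle\beta,\alpha_j^\vee\rangle\le 0$ for $j\ne 3$ and $\langle\beta,\alpha_3^\vee\rangle\le 2$. I will enumerate the roots containing $\alpha_3$ that satisfy both conditions.
\begin{itemize}
\item $\beta=\alpha_3$ qualifies and gives $\lambda_1+\lambda_4$.
\item A second root, such as $\alpha_1+\alpha_3+\alpha_4+\dots$ chosen so that $2\lambda_3-\beta=\lambda_3+\lambda_6$, should also satisfy the $F_\beta$ condition. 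The actual check is whether $\lambda_3-\lambda_6$ is a positive root whose subtraction of each $\alpha_j$, $j\neq3$, leaves $\Phi^+$.
\end{itemize}
I will then verify that the weights $\lambda_1+\lambda_2$ and $\lambda_5$ do not occur. For these, $\beta=2\lambda_3-\lambda_1-\lambda_2$ and $\beta=2\lambda_3-\lambda_5$ are either not roots or violate the condition $S_\lambda\subset F_\beta$, meaning some $\beta-\alpha_j$ with $j\ne3$ is still a root. Dualizing gives exactly $U^\vee_{\lambda_1+\lambda_4}\oplus U^\vee_{\lambda_3+\lambda_6}$.

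\textbf{Conclusion.} Since $\psi$ is surjective and $G$-equivariant by Corollary~\ref{im_Gaussian}, we get
\[
\ker\psi=\wedge^2_Z\cong U^\vee_{\lambda_1+\lambda_2}\oplus U^\vee_{\lambda_5}\neq0,
\]
so $E_6/P_3$ is not tangentially non-degenerate. The case $E_6/P_5$ is the image of $E_6/P_3$ under the outer automorphism of $E_6$, which identifies $U_{\lambda_5}$ with the dual $U_{\lambda_3}^\vee$ up to twist. Tangential non-degeneracy is preserved by this identification, so $E_6/P_5$ fails as well.
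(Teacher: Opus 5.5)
Your proposal is correct and follows essentially the same route as the paper. You use a computer-verified decomposition of $\wedge^2U_{\lambda_3}$, and your dimension check does close up, since $351+1728+7371+51975=61425$; you then read off $H^0(Z,\Omega_Z(2))$ from \eqref{irr_decom} via $\beta=\alpha_3$ and $\beta=\lambda_3-\lambda_6=\alpha_1+\alpha_2+2\alpha_3+2\alpha_4+\alpha_5$ (whereas $2\lambda_3-\lambda_1-\lambda_2$ and $2\lambda_3-\lambda_5$ are not roots), and finish with the diagram involution for $E_6/P_5$, where all you actually need is that the outer automorphism makes $E_6/P_3\subset\BP U_{\lambda_3}$ and $E_6/P_5\subset\BP U_{\lambda_5}$ projectively equivalent, so the remark about duals can be dropped.
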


\begin{proof}
	The first decomposition was computed by the Weyl character algorithm implemented in SageMath; see the Appendix. The second follows directly from the decomposition~\eqref{irr_decom}. Consequently, for the flag variety
	$
	E_6/P_3\subset \BP U_{\lambda_3},
	$
	the Gaussian map~\eqref{Gaussian_sequence} has nontrivial kernel. By the diagram involution of $E_6$, the same conclusion holds for
	$
	E_6/P_5\subset \BP U_{\lambda_5}.
	$
\end{proof}
We now give the proof of Proposition~\ref{prim_class} and Theorem~\ref{class_tang}.

\begin{proof}[Proof of Proposition~\ref{prim_class}]
	The listed varieties were shown to be tangentially non-degenerate in Section~3.1.3. Conversely, by Lemma~\ref{three_cases}, the highest weight module falls into one of the three cases considered there. The claim then follows from the corresponding classifications: Corollaries~3.24, 3.25 and 3.27 for the self-dual case, Corollary~3.30 for the (co)minuscule case, and Lemma~3.31 for the remaining $E_{6}$ case.
\end{proof}
\begin{proof}[Proof of Theorem~\ref{class_tang}]
The non-fundamental and fundamental cases follow from
Corollary~\ref{nprim} and Proposition~\ref{prim_class}, respectively.
The result then follows from the identifications of the adjoint varieties of
types \(A_n\) and \(C_n\) with \(Fl(1,n;n+1)\) and
\(v_2(\BP^{2n-1})\), respectively, together with
\(Gr(2,4)\simeq \BQ^4\) and \(OG(4,9)\simeq \BS_5\).
\end{proof}
\section{Preliminary results on the Spencer homomorphism}\label{prem_spen}
In this section, we study the sufficient conditions given in Theorem~\ref{tors_prin}. For flag varieties, the condition~\emph{(i)} is standard, and~\emph{(ii)} will be verified in Proposition \ref{surj_Psi_}. Thus the main point of this section is to analyze condition~\emph{(iii)}. For a smooth projective variety $Z \subset \BP U$, recall that
\[
\Xi_Z=
	\left\{
	\sigma\in \Hom(\wedge^2U,U)
	\ \middle|\
	\sigma(\hat z,\widehat T_zZ)\subset \widehat T_zZ
	\text{ for all }z\in Z
	\right\},
	\]
	and the Spencer homomorphism is defined by
	\begin{equation}\label{Spencer}
	\partial:
	\Hom(U,\fa \fu \ft(\widehat Z))
	\longrightarrow
	\Hom(\wedge^2U,U),
	\qquad
	\partial h(u_1,u_2)
	=
	h(u_1)(u_2)-h(u_2)(u_1).
	\end{equation}
\begin{thm}\label{reduction}
	Let \(Z\subset \BP U\) be a flag variety. If the Spencer condition $
	\Xi_Z \subset \operatorname{Im}(\partial)
	$ holds, then one of the following occurs:
	\begin{itemize}
		\item[(i)] \(Z\subset \BP U\) is tangentially non-degenerate;
		\item[(ii)] $Z$ is isomorphic to a Hermitian symmetric space and
$\wedge_Z^2\simeq \BC$.

	\end{itemize}
\end{thm}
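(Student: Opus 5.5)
The plan is to compare dimensions inside the lowest piece of the natural filtration of $\Xi_Z$. Set
\[
\Xi^0_Z=\{\sigma\in\Hom(\wedge^2U,U)\mid \sigma(\hat z,\widehat T_zZ)\subset\BC\hat z\ \text{for all } z\in Z\}\subset\Xi_Z .
\]
This space contains two obvious families.
\begin{itemize}
\item[(a)] For $w\in\wedge^2_Z$ and $u\in U$, the tensor $w\otimes u$ lies in $\Xi^0_Z$, because $w(\hat z,\widehat T_zZ)=0$. So $\wedge^2_Z\otimes U\subset\Xi^0_Z$.
\item[(b)] For $a\in U^\vee$, the tensor $\sigma_a(u_1,u_2)=a(u_1)u_2-a(u_2)u_1$ lies in $\Xi^0_Z$. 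It equals $\partial h_a$ for $h_a(u)=a(u)\,\id_U$.
\end{itemize}
The two families meet trivially, since $\sigma_a$ has rank-two type and $w\otimes u$ does not, generically. Hence
\[
\dim\Xi^0_Z\ \ge\ \dim(\wedge^2_Z)\cdot\dim U+\dim U .
\]
If the Spencer condition holds, then $\Xi^0_Z\subset\partial\bigl(\partial^{-1}(\Xi^0_Z)\bigr)$. The goal is therefore to bound $\dim\partial\bigl(\partial^{-1}(\Xi^0_Z)\bigr)$ from above.

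\textbf{Step 1: constrain $h(\hat z)$.} Let $h\in\Hom(U,\hfg)$ with $\partial h\in\Xi^0_Z$. Fix $z\in Z$ and $v\in\widehat T_zZ$. Since $h(v)\in\hfg$, we have $h(v)(\hat z)\in\widehat T_zZ$. Then
\[
h(\hat z)(v)=\partial h(\hat z,v)+h(v)(\hat z)\in\widehat T_zZ,
\]
so $h(\hat z)$ lies in the stabilizer of $\widehat T_zZ$ in $\hfg$.

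Next I refine this modulo $\BC\hat z$, using that $\partial h(\hat z,v)\in\BC\hat z$ and varying $z$ along $Z$, with homogeneity to reduce to the base point $[u_\lambda]$. I expect this to show that $h(\hat z)$ acts on $T_zZ=\Hom(\BC\hat z,\widehat T_zZ/\BC\hat z)$ as a scalar. By Lemma~\ref{ker_isotropy}, this forces $h(\hat z)\in\BC\,\id_U\oplus\BC\,\ft\text{-part}\oplus\fg_d$ at the base point, i.e. $h(\hat z)$ is a scalar plus an element of the kernel of the isotropy action. Lemma~\ref{faithful_isotropy} then says the $\fg_d$-component is detected on $\widehat T_zZ$, so it is determined by the data $\partial h(\hat z,\cdot)$.

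\textbf{Step 2: split $h$.} Subtracting a suitable $h_a$ (to remove the scalar part), I expect to reduce to $h$ with $h(\hat z)\in\fg_d^{(z)}$ for all $z$. From the identity $h(u_1)u_2=h(u_2)u_1+\partial h(u_1,u_2)$ restricted along the cone, such an $h$ should be an element of the first prolongation $\hfg^{(1)}=\ker\partial$ plus a part controlled by $U^\vee$. The result I aim for is
\[
\dim\partial\bigl(\partial^{-1}(\Xi^0_Z)\bigr)\ \le\ \dim U^\vee+\dim\hfg^{(1)}.
\]
By the Kobayashi--Nagano/Ochiai classification of irreducible linear Lie algebras of infinite type or with nonzero prolongation, for flag varieties $\hfg^{(1)}=0$ unless $Z$ is a Hermitian symmetric space, in which case $\hfg^{(1)}\cong U^\vee$.

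\textbf{Step 3: conclude.} Comparing with the lower bound for $\dim\Xi^0_Z$ gives
\[
\dim(\wedge^2_Z)\cdot\dim U\le\dim\hfg^{(1)}.
\]
If $Z$ is not Hermitian symmetric, this forces $\wedge^2_Z=0$, which is case (i). If $Z$ is Hermitian symmetric, then $\dim\wedge^2_Z\le1$, which gives either case (i) or case (ii).

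The main obstacle is Step 1, specifically the use of the faithfulness lemmas to show that the constraint on $h(\hat z)$ forces $h$ into the span of the $h_a$ and $\hfg^{(1)}$. This is where the structure of $\fg_d$ and the identification $T_zZ\cong\fg/\fp$ must be used carefully.
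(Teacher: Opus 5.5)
There is a genuine gap, and it starts with the filtration level. The space you call $\Xi^0_Z$, cut out by $\sigma(\hat z,\widehat T_zZ)\subset\BC\hat z$, is the paper's middle piece $\Xi'_Z$. The bottom piece is $\Xi^0_Z=\{\sigma\mid\sigma(\hat z,\widehat T_zZ)=0\}=\wedge^2_Z\otimes U$.

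\textbf{Lower bound.} Your family (b) does not lie in $\Xi'_Z$. Indeed, $\sigma_a(\hat z,v)=a(\hat z)v-a(v)\hat z$ is congruent to $a(\hat z)v$ modulo $\BC\hat z$. So $\zeta(\sigma_a)$ is $a$ times the identity of $T_Z$, and $\sigma_a\in\Xi'_Z$ forces $a|_{\hat Z}=0$, i.e.\ $a=0$. Hence the bound $\dim(\wedge^2_Z)\dim U+\dim U$ is false. For flag varieties the correct count is $\dim\Xi'_Z=\dim(\wedge^2_Z)\dim U+h^0(Z,\Omega_Z(1))$, via the surjection $\eta$.

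\textbf{Upper bound.} Step~2 also fails at this level. $\partial^{-1}(\Xi'_Z)$ contains the space $\widetilde U$ of those $h$ with $h(\hat z)\in\fg_d$. On $\widetilde U$ the map $\partial$ is injective (Proposition~\ref{HX}(2)). Its dimension $h^0(Z,E(\langle u_\lambda\rangle^\vee\otimes\fg_d))$ is not controlled by $\dim U$: for $v_3(\BP^n)$ with $n$ large it is roughly $2\dim U$.

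\textbf{Prolongation.} $\hfg^{(1)}=\ker\partial$ can only decrease the image, so it cannot appear with a plus sign in an upper bound on $\dim\partial(\cdot)$. Moreover, by Cartan--Kobayashi--Nagano it is nonzero only for the VMRTs of irreducible Hermitian symmetric spaces, not for all Hermitian symmetric $Z$. Concretely, $Gr(3,6)$ satisfies the Spencer condition (Theorem~\ref{main_thm}), has $\wedge^2_Z\cong\BC$ and $\hfg^{(1)}=0$. Your final inequality would then read $20\le 0$.

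\textbf{What works.} Run the same counting at the true bottom level.
\begin{itemize}
\item[(1)] For $h\in\partial^{-1}(\Xi_Z)$, first show $h(u)(u)\in\BC u$ on $\hat Z$. Your Step~1 only gives the stabilizer of $\widehat T_zZ$; the paper upgrades this using birationality of the Gauss map (Proposition~\ref{pre_partial}). Then $h(u)(u)=\theta(h)(u)u$, and differentiating along arcs gives $h(u,u')+h(u',u)=\theta(h)(u)u'+\theta(h)(u')u$.
\item[(2)] If moreover $\partial h\in\wedge^2_Z\otimes U$ and $\theta(h)=0$, this yields $h(u,u')=0$ for all $u'\in\widehat T_{[u]}Z$. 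Then $h(u)$ lies in $\ker\phi_\fp=\bigoplus_\nu\fg_{[\nu,d_\nu]}$ and kills the affine tangent space, so $h(u)=0$ by Lemma~\ref{faithful_isotropy}.
\item[(3)] Thus $\theta:\partial^{-1}(\Xi^0_Z)\to U^\vee$ is injective. Surjectivity of $\partial^0$ then gives $\dim(\wedge^2_Z)\dim U\le\dim U-\dim\hfg^{(1)}$.
\item[(4)] If $\wedge^2_Z\neq0$, this forces $\dim\wedge^2_Z=1$ and $\partial^{-1}(\Xi^0_Z)\neq0$, so $\theta\neq0$. A nonzero $\theta(h)$ makes $h(u)$ act on $T_{[u]}Z$ by the nonzero scalar $-\theta(h)(u)/2$. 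This produces an Euler-type $\BC^*$-action, so $Z$ is Euler-symmetric and hence Hermitian symmetric.
\end{itemize}
This is the paper's argument (Corollary~\ref{cor_classify} with Proposition~\ref{HX}). Your Step~1 has the right local ingredients, but they must be applied to $\wedge^2_Z\otimes U$, not to $\Xi'_Z$.
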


Our strategy is to pull back the natural filtration on $\Xi_Z$ via the Spencer homomorphism and study the induced maps on its successive graded pieces. The lowest piece, closely related to $\wedge_Z^2$, already yields strong necessary conditions for the surjectivity of $\partial$. We introduce this filtration in Section~4.1, derive general obstructions to the surjectivity of $\partial^0$ in Section~4.2, and verify them for flag varieties in Section~4.3, thereby proving Theorem~\ref{reduction}.

\subsection{The filtration on $\partial^{-1}(\Xi_Z)$}
The filtration on $\Xi_{Z}$ has been defined and discussed in \cite{FH18a, LH24}:
\begin{defn}[{\cite[Definition 3.1]{FH18a}}]\label{filt_01}
	Let $Z\subset\BP U$ be a smooth projective variety.
	Denote by $\Omega_Z$ the cotangent bundle of $Z$ and set
	$\CO(1)=\CO_{\BP U}(1)|_Z$.
	For $u\in\widehat Z$, identify $T_{[u]}Z=\Hom(\BC u, \widehat{T}_{[u]} Z/\BC u)$ as before.
	
	\begin{itemize}
		\item[(i)]
		For $\sigma\in\Xi_Z$, define
		$\zeta(\sigma)\in H^0(Z,T_Z\otimes\Omega_Z(1))$
		by
		\[
		\zeta(\sigma)_{[u]}(u\otimes[u'])=[\sigma(u,u')],
		\qquad u'\in \widehat T_{[u]}Z.
		\]
		This defines a homomorphism
		\[
		\zeta\colon\Xi_Z\to H^0(Z,T_Z\otimes\Omega_ Z(1)).
		\]
		Set $\Xi_Z'=\Ker(\zeta)$.
		
		\item[(ii)]
		For $\sigma\in\Xi_Z'$, define
		$\eta_\sigma\in H^0(Z,\Omega Z(1))$ by
		\[
		\eta_{\sigma,[u]}([u'])\,u=\sigma(u,u'),
		\qquad u'\in \widehat{T}_{[u]} Z.
		\]
		This yields a homomorphism
		\[
		\eta\colon\Xi_Z'\to H^0(Z,\Omega_Z(1)).
		\]
		Set $\Xi_Z^0=\Ker(\eta)$.
	\end{itemize}
\end{defn}
Therefore, $\Xi_Z$ carries a natural filtration
$
\Xi_Z\supset\Xi'_Z\supset\Xi_Z^0.
$
More precisely, a direct calculation gives the following.

\begin{lem}
Keep the notation as above. Then
\begin{align*}
\Xi'_Z
&=
\{
\sigma\in\Hom(\wedge^2U,U)
\mid
\sigma(\hat z,\widehat T_zZ)\subset\langle\hat z\rangle
\text{ for all }z\in Z
\},\\
\Xi_Z^0
&=
\{
\sigma\in\Hom(\wedge^2U,U)
\mid
\sigma(\hat z,\widehat T_zZ)=0
\text{ for all }z\in Z
\}.
\end{align*}
In particular,
$
\Xi_Z^0=\wedge_Z^2\otimes U.
$
\end{lem}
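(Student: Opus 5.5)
The plan is to unwind the two successive kernels in Definition~\ref{filt_01} pointwise, using the identification $T_{[u]}Z=\Hom(\BC u,\widehat T_{[u]}Z/\BC u)$ and $\Omega_Z(1)_{[u]}\cong \Hom(\widehat T_{[u]}Z/\BC u,\BC u)\otimes(\BC u)^{\vee}\cdots$. All the statements are fiberwise linear algebra at each $z\in Z$. The plan handles the two descriptions in order and then identifies $\Xi^0_Z$ with $\wedge^2_Z\otimes U$.

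\emph{Step 1: $\Xi'_Z$.} Let $\sigma\in\Xi_Z$ and $z=[u]$. The value $\zeta(\sigma)_{[u]}$ sends $u\otimes[u']$ to the class of $\sigma(u,u')$ in $\widehat T_zZ/\BC u$, where $u'\in\widehat T_zZ$. This is well defined because $\sigma(u,u)=0$ by skew-symmetry, so $\sigma(u,\cdot)$ descends to $\widehat T_zZ/\BC u$. Hence $\zeta(\sigma)=0$ if and only if $\sigma(u,\widehat T_zZ)\subset\BC u$ for every $z$. Conversely, any $\sigma\in\Hom(\wedge^2U,U)$ satisfying this condition lies in $\Xi_Z$, since $\BC u\subset\widehat T_zZ$. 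This gives the stated description of $\Xi'_Z$.

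\emph{Step 2: $\Xi^0_Z$.} For $\sigma\in\Xi'_Z$, the form $\eta_\sigma$ is defined by $\sigma(u,u')=\eta_{\sigma,[u]}([u'])\,u$. This scalar is well defined, it depends on $u'$ only modulo $\BC u$, and it is homogeneous of the correct degree in $u$, so $\eta_\sigma$ is a section of $\Omega_Z(1)$. Therefore $\eta_\sigma=0$ exactly when $\sigma(u,\widehat T_zZ)=0$ for all $z$. Again every such $\sigma$ automatically lies in $\Xi'_Z$, which gives the second equality.

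\emph{Step 3: $\Xi^0_Z=\wedge^2_Z\otimes U$.} Write $\Hom(\wedge^2U,U)=\wedge^2U^\vee\otimes U$ and choose a basis $e_1,\dots,e_N$ of $U$. Then $\sigma=\sum_j w_j\otimes e_j$ with $w_j\in\wedge^2U^\vee$. The condition $\sigma(\hat z,v)=0$ for all $v\in\widehat T_zZ$ is equivalent to $w_j(\hat z,v)=0$ for every $j$, because the $e_j$ are linearly independent. So $\sigma\in\Xi^0_Z$ if and only if each $w_j\in\wedge^2_Z$. Since $Z$ is smooth, the definition of $\wedge^2_Z$ at smooth points coincides with the condition required at all $z\in Z$.

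No step is a real obstacle. The only point requiring care is checking that $\zeta$ and $\eta$ are well defined, which reduces to $\sigma(u,u)=0$ and the twisting conventions.
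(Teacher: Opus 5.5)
Your proposal is correct and takes essentially the same approach as the paper, which only says that the lemma follows by a direct calculation from Definition~\ref{filt_01}. You supply that calculation: you unwind $\ker\zeta$ and $\ker\eta$ pointwise, note that these conditions already force membership in $\Xi_Z$ (respectively $\Xi'_Z$), and write $\Hom(\wedge^2U,U)=\wedge^2U^\vee\otimes U$ in a basis of $U$ to identify $\Xi^0_Z$ with $\wedge^2_Z\otimes U$.
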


Pulling this filtration back by the Spencer homomorphism \eqref{Spencer} gives the 
following diagram.
\begin{equation}\label{diag_Spencer}
	\begin{tikzcd}
		\partial^{-1}(\Xi_{Z}) \arrow[r,"\partial"] & \Xi_{Z} \arrow[r,"\zeta"] &  H^0(Z,T_Z\otimes\Omega_Z(1)) \\
		\partial^{-1}(\Xi_{Z}^{'}) \arrow[r,"\partial^{'}"] \arrow[u, hook]& \Xi_{Z}' \arrow[r,"\eta"]  \arrow[u, hook]& H^0(Z,\Omega_Z(1))\\
		\partial^{-1}(\Xi_{Z}^{0}) \arrow[r,"\partial^{0}"] \arrow[u, hook]& \Xi_{Z}^0 \arrow[u, hook]
	\end{tikzcd}
\end{equation}
Here $\partial'$ and $\partial^{0}$ denote the restrictions of $\partial$.
Theorem~\ref{reduction} can be reformulated more explicitly as follows.
\begin{thm}\label{refor_reduction}
	Let $Z \subset \BP U$ be a flag variety. Then the map $\partial^{0}$ is surjective if and only if $Z$ satisfies one of the conditions in Theorem~\ref{reduction}.
\end{thm}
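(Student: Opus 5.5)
The plan is to treat the two directions separately. In both, the main tool is the structure of $\hat{\fg}=\aut(\hat Z)=\fg\oplus\BC\,\id_U$, which is reductive for a flag variety, together with the faithfulness statements of Lemmas~\ref{ker_isotropy} and~\ref{faithful_isotropy}.

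\emph{Easy direction, case (i).} If $Z$ is tangentially non-degenerate, then $\wedge^2_Z=0$, so $\Xi^0_Z=\wedge^2_Z\otimes U=0$ and $\partial^0$ is trivially surjective.

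\emph{Necessity.} Assume $\wedge^2_Z\neq 0$ and $\partial^0$ surjective. I would analyse an element $h\in\partial^{-1}(\Xi^0_Z)$ pointwise at the base point $z=[u_\lambda]$. The condition $\partial h\in\Xi^0_Z$ says
\[
h(u_\lambda)(v)=h(v)(u_\lambda)\qquad\text{for all } v\in \widehat T_zZ .
\]
Since $h(v)\in\hat{\fg}$, the right-hand side lies in $\hat{\fg}\cdot u_\lambda=\widehat T_zZ$. So $h(u_\lambda)$ preserves $\widehat T_zZ$, i.e.\ $h(u_\lambda)\in\hat{\fp}=\fp\oplus\BC\,\id$. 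This holds for every $z$ by $G$-equivariance.

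Write $h=c\otimes\id+h_\fg$ with $c\in U^\vee$ and $h_\fg\in\Hom(U,\fg)$. The rule $u\mapsto h(u)$ restricted to $\widehat T_zZ$ is then a symmetric "first prolongation" type condition. Using that the kernel of the isotropy action is exactly $\fg_d$ (Lemma~\ref{ker_isotropy}), and that $\fg_d$ acts faithfully on $\widehat T_zZ$ (Lemma~\ref{faithful_isotropy}), I expect to show the following: modulo the kernel of $\partial$, which is the prolongation $\hat{\fg}^{(1)}$, the space $\partial^{-1}(\Xi^0_Z)$ is spanned by explicit elements built from $U^\vee\otimes\id$ and a single $G$-module of the form $\fg_{\pm d}$-induced sections. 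Concretely, the image of $\partial^0$ should be a $G$-submodule of $\wedge^2_Z\otimes U$ of dimension at most roughly $\dim U$. The summand $\partial(c\otimes\id)(u_1,u_2)=c(u_1)u_2-c(u_2)u_1$ lies in $\Xi^0_Z$ only when $c=0$, because $\dim\widehat T_zZ\ge 2$.

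Comparing dimensions, surjectivity onto $\wedge^2_Z\otimes U$ forces $\dim\wedge^2_Z=1$. It also forces the image module to be nonzero, which should happen only when the grading has length $d=1$, i.e.\ $\fg/\fp$ is completely reducible, so $Z$ is Hermitian symmetric (Definition~\ref{cominuscule}). This gives case (ii).

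\emph{Sufficiency in case (ii).} Here $Z$ is Hermitian symmetric and $\wedge^2_Z=\BC\omega$ with $\omega$ a $G$-invariant (hence $\fg$-invariant) symplectic form. I would construct preimages directly: for $u\in U$, take
\[
h_u(u')=\omega(u,u')\,\id+\bigl(\text{a correction in }\fg\bigr),
\]
where the correction is obtained from the moment map $U\to\fg$, $u'\mapsto \omega(u,\cdot\,u')$-type element. I would then verify that $\partial h_u=\omega\otimes u$, using that $d=1$ makes $\fg_1$ act trivially on $\fg/\fp$. The main obstacle I expect is the necessity step, namely pinning down $\partial^{-1}(\Xi^0_Z)$ modulo $\ker\partial$ precisely enough to bound the image. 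I would first attempt it at the highest weight vector via weight considerations, using the PRV description (Theorem~\ref{PRV}) of $\Hom_P$-spaces to control which $G$-submodules of $\Hom(U,\fg)$ can contribute.
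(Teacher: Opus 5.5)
Case (i) is fine. There is a genuine gap, though, exactly at the step you flag as the main obstacle: necessity is announced rather than proved.

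\textbf{Necessity.} ``At most roughly $\dim U$'' is not enough. To force $\dim\wedge^2_Z=1$ from $\dim U\cdot\dim\wedge^2_Z=\dim\Xi^0_Z\le\dim\partial^{-1}(\Xi^0_Z)$ you need the exact bound $\dim\partial^{-1}(\Xi^0_Z)\le\dim U$.

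The missing idea is the section $\theta(h)\in H^0(Z,\CO_Z(1))=U^\vee$ defined by $h(u)u=\theta(h)(u)\,u$ for $u\in\widehat Z$. This uses that $h(u)$ stabilizes $\BC u$, which for $Z\neq\BP U$ is Proposition~\ref{pre_partial}; the case $Z=\BP U$ is trivial. Differentiating along $\widehat Z$ gives $h(u)u'+h(u')u=\theta(h)(u)u'+\theta(h)(u')u$ for $u'\in\widehat T_{[u]}Z$. Combined with the symmetry $h(u)u'=h(u')u$, this yields $2h(u)u'=\theta(h)(u)u'+\theta(h)(u')u$. So $h(u)$ acts on $T_{[u]}Z$ by the scalar $-\tfrac12\theta(h)(u)$.

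\emph{If $\theta(h)=0$:} $h(u)$ annihilates $\widehat T_{[u]}Z$. Moving $[u]$ to the base point, it lies in $\ker\phi_\fp=\bigoplus_\nu\fg_{[\nu,d_\nu]}$ by Lemma~\ref{ker_isotropy}, and it vanishes by Lemma~\ref{faithful_isotropy}. Since $\widehat Z$ spans $U$, $h=0$. Hence $\theta$ embeds $\partial^{-1}(\Xi^0_Z)$ into $U^\vee$, which is the bound you need.

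\emph{If $\theta(h)\neq0$:} some element of $\fp\oplus\BC$ acts on $\fg/\fp$ by a nonzero scalar. Its toral part must then take the same nonzero value on every root of $\fn_\fp^-$. That is impossible if some factor has a grading of length at least two, so $Z$ is Hermitian symmetric.

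This is the content of Lemma~\ref{iml}, Propositions~\ref{ext_seq} and~\ref{HX}, and Corollary~\ref{cor_classify}. Your sketched description of $\partial^{-1}(\Xi^0_Z)$ actually points the wrong way. The $\fg_d$-valued part (the space $\widetilde U$ of the paper) meets $\partial^{-1}(\Xi^0_Z)$ trivially. The splitting $h=c\otimes\id+h_\fg$ decouples nothing, because the elements that actually occur are built from Euler elements $H_u$, which have both an $\id$- and a $\fg$-component. So the remark that $\partial(c\otimes\id)\notin\Xi^0_Z$ for $c\neq0$ carries no information.

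\textbf{Sufficiency in case (ii).} Your ansatz is $h_{u_0}(u')=a\,\omega(u_0,u')\id+b\,\mu(u_0,u')$. It satisfies $\partial h_{u_0}=\omega\otimes u_0$ only if $b\bigl(\mu(u_0,u_1)u_2-\mu(u_0,u_2)u_1\bigr)=\omega(u_1,u_2)u_0-a\bigl(\omega(u_0,u_1)u_2-\omega(u_0,u_2)u_1\bigr)$ holds for \emph{all} $u_0,u_1,u_2\in U$. This is the Freudenthal-type identity characterizing subadjoint representations. It is an identity on all of $\wedge^2U$, so the fact that $\fg_1$ acts trivially on $\fg/\fp$ is irrelevant to it.

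To run your route you would first need to know that case (ii) consists exactly of the subadjoint varieties. You could then take $h_v(u)=[v,u]$ in the contact-graded algebra $\tilde\fg$ with $\tilde\fg_0=\hfg$, $\tilde\fg_{-1}=U$ and $[u_1,u_2]=\omega(u_1,u_2)E$; the Jacobi identity gives $\partial h_v=\omega\otimes[v,E]$.

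The paper needs no such identity. In the Hermitian symmetric case, $\theta$ is nonzero on $\partial^{-1}(\Xi^0_Z)$ by the explicit Euler-element construction of Lemma~\ref{const_euler_inverse}. It is injective as above and $\fg$-equivariant into the irreducible module $U^\vee$, hence an isomorphism. Moreover $\ker\partial^0=\hfg^{(1)}=0$: Hermitian symmetric spaces with $\hfg^{(1)}\neq0$ are VMRTs of irreducible Hermitian symmetric spaces, hence tangentially non-degenerate, contradicting $\wedge^2_Z\cong\BC$. So $\partial^0$ is an injective map between two spaces of dimension $\dim U$, and therefore surjective.
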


We first identify the preimage \(\partial^{-1}(\Xi_Z)\).
\begin{prop}\label{pre_partial}
	Let $Z \subset \BP U$ be a smooth irreducible projective non-degenerate variety with $Z \not=\BP U$. Consider the natural homomorphism: 
	$$
	\Psi: Hom(U,\hfg) \rightarrow H^{0}(Z,T_Z(1)),
	$$	
	defined in Theorem \ref{tors_prin} (ii).
	Then $Ker(\Psi)=\partial^{-1}(\Xi_{Z})=\{h \in Hom(U,\hfg): h(u,u) \in \BC u, \forall u \in \widehat{Z}\}$.
\end{prop}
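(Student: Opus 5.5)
The plan is to prove the two equalities separately. The statement $\Ker(\Psi)=\{h: h(u)(u)\in\BC u\ \forall u\in\widehat Z\}$ is just unwinding $\Psi$. The statement $\partial^{-1}(\Xi_Z)=\Ker(\Psi)$ reduces to a differentiation argument combined with the non-degeneracy of the second fundamental form of a smooth non-linear variety.

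\emph{Step 1 (the kernel of $\Psi$).} For $h\in\Hom(U,\hfg)$ and $u\in\widehat Z$, the element $h(u)\in\hfg$ acts on $Z$ by a vector field. The value of $\Psi(h)$ at $[u]$, viewed in $T_{[u]}Z\otimes\CO(1)_{[u]}=\Hom(\BC u,\widehat T_{[u]}Z/\BC u)\otimes \CO(1)_{[u]}$, is the class of the quadratic expression $f(u):=h(u)(u)$ modulo $\BC u$. This uses the definition \eqref{fautdef}, which gives $f(u)\in\widehat T_{[u]}Z$. Hence $\Psi(h)=0$ if and only if $f(u)\in\BC u$ for all $u\in\widehat Z$.

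\emph{Step 2 (reducing $\partial h\in\Xi_Z$ to a condition on $h(u)$).} Fix $u\in\widehat Z$ and $v\in\widehat T_{[u]}Z$. Since $h(v)\in\hfg$, again by \eqref{fautdef} we have $h(v)(u)\in\widehat T_{[u]}Z$. Therefore
\[
\partial h(u,v)=h(u)(v)-h(v)(u)\in\widehat T_{[u]}Z
\iff h(u)(v)\in \widehat T_{[u]}Z .
\]
Next I differentiate $f$ along $\widehat Z$. For a tangent vector $v\in\widehat T_{[u]}Z=T_u\widehat Z$ one gets $df_u(v)=h(v)(u)+h(u)(v)$. So $\partial h\in\Xi_Z$ if and only if $df_u(\widehat T_{[u]}Z)\subset\widehat T_{[u]}Z$ for all $u\in\widehat Z$.

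\emph{Step 3 (the two inclusions).} First suppose $f(u)=c(u)u$ with $c$ a linear function on $\widehat Z$; $c$ is regular on the cone, being homogeneous of degree one. Then $df_u(v)=dc_u(v)\,u+c(u)\,v\in\widehat T_{[u]}Z$, so $\Ker(\Psi)\subset\partial^{-1}(\Xi_Z)$.

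Conversely, suppose $df_u(\widehat T_{[u]}Z)\subset \widehat T_{[u]}Z$ for all $u$. Since $f(u)\in\widehat T_{[u]}Z$ for every $u$, $f$ is a section of the affine tangent bundle along $\widehat Z$. Differentiating such a section and projecting to $U/\widehat T_{[u]}Z$ computes the second fundamental form, via Proposition~\ref{gau_fund}(2) with $k=2$:
\[
df_u(w)\ \mathrm{mod}\ \widehat T_{[u]}Z=\Pi_{2,[u]}\bigl(\bar w,\overline{f(u)}\bigr),
\]
where the bars denote classes in $T_{[u]}Z$. The hypothesis therefore says that $\overline{f(u)}\in T_{[u]}Z$ lies in the kernel $\{\xi:\Pi_2(\xi,\cdot)=0\}$ of the second fundamental form. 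This kernel is exactly the tangent space to the fiber of the Gauss map $g_1$.

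\emph{Step 4 (main obstacle: the kernel of $\Pi_2$).} The key input is that this kernel vanishes at a general point. Since $Z$ is smooth and $Z\neq\BP U$, Zak's theorem on the finiteness of the Gauss map applies: $g_1$ is finite, so its differential has trivial kernel at a general point. I would cite this rather than reprove it; for flag varieties it can also be checked directly from the PBW description of $\Pi_2$. Hence $\overline{f(u)}=0$, i.e.\ $f(u)\in\BC u$, for general $u$. The condition ``$f(u)\wedge u=0$'' is closed, so it then holds on all of $\widehat Z$, giving $h\in\Ker(\Psi)$.

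The only delicate points are the identification of $df$ modulo $\widehat T$ with $\Pi_2$ and the appeal to the finiteness of the Gauss map; everything else is formal.
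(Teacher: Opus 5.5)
Your proof is correct. It shares the paper's first reduction: unwind $\Psi$, and use $h(v)(u)\in\widehat T_{[u]}Z$ to turn $\partial h\in\Xi_Z$ into $h(u)(\widehat T_{[u]}Z)\subset\widehat T_{[u]}Z$. After that, both inclusions rest on Zak's theorem, but you argue infinitesimally where the paper argues with the group.

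The paper exponentiates $h(u)\in\hfg=\Lie(\Aut(\widehat Z))$ to the one-parameter subgroup $H_u=\exp(\BC h(u))$. For $\Ker(\Psi)\subset\partial^{-1}(\Xi_Z)$, it notes that a subgroup of $\Aut(\widehat Z)$ fixing $[u]$ preserves $\widehat T_{[u]}Z$. For the converse, $H_u$ fixes $g_1([u])$, so by equivariance it preserves the fiber $g_1^{-1}(g_1([u]))$. That fiber is the single point $[u]$ for general $u$, because the Gauss map is birational onto its image.

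You instead differentiate $f(u)=h(u)(u)$ along $\widehat Z$ and identify $df_u$ modulo $\widehat T_{[u]}Z$ with $\Pi_2(\cdot,\overline{f(u)})$. You then use that $dg_1$ is injective at a general point, which follows from finiteness of $g_1$ plus generic smoothness in characteristic zero. This is precisely the linearization of the paper's argument: by equivariance, $dg_1(\overline{f(u)})$ is the map $w\mapsto h(u)(w)$ modulo $\widehat T_{[u]}Z$.

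The paper's version is shorter and needs no fundamental forms. Yours has three advantages:
\begin{itemize}
\item it uses only the pointwise facts $h(u)(u),\,h(v)(u)\in\widehat T_{[u]}Z$, never integrating to a group;
\item it shows explicitly that the obstruction is the kernel of the second fundamental form;
\item in the first inclusion it already yields the identity $df_u(v)=c(u)v+dc_u(v)u$, which the paper only derives later in Lemma~\ref{CPadjoint}.
\end{itemize}

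One small correction: $c$ is regular on $\widehat Z\setminus\{0\}$ not because it is homogeneous of degree one. It is regular because locally $c(u)=\ell(f(u))/\ell(u)$ for any linear form $\ell$ with $\ell(u)\neq 0$.
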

\begin{proof}
	By definition,
	\[
	\Ker(\Psi)
	=
	\{\, h \in \Hom(U,\hfg) : h(u,u) \in \BC u \text{ for all } u \in \widehat{Z} \,\}.
	\]
	Moreover,
	\begin{align*}
		\partial^{-1}(\Xi_{Z})&=\{h \in Hom(U,\hfg): h(u,w)-h(w,u) \in \widehat{T}_{[u]} Z, \, \forall u \in \widehat{Z}, \, \forall w \in \widehat{T}_{[u]} Z\}\\
		&=\{h \in Hom(U,\hfg): h(u,\widehat{T}_{[u]}Z) \subset \widehat{T}_{[u]} Z, \, \forall u \in \widehat{Z}\},
	\end{align*}
	where the second equality follows from the fact that
	$h(w,u) \in \widehat{T}_{[u]}Z$ since $h(w) \in \hfg$.
	
	For any $h \in \Ker(\Psi)$ and any nonzero $u \in \widehat{Z}$, the endomorphism $h(u)$ preserves the line $\BC u$, hence also preserves the affine tangent space $\widehat{T}_{[u]}Z$. This shows
	$
	\Ker(\Psi) \subset \partial^{-1}(\Xi_{Z}).
	$
	
	Conversely, let $h \in \partial^{-1}(\Xi_{Z})$. We claim that for a general point $z = [u] \in Z$, one has $h(u,u) \in \BC u$, equivalently, the one-parameter subgroup
	$
	H_{u}=\exp(\BC h(u))
	$
	fixes $z$. This implies $\Psi(h)=0$, since $T_Z(1)$ is a vector bundle. To prove the claim, consider the Gauss map
	\[
	g_{1} : Z \longrightarrow Gr(\dim Z+1,U), \qquad z \longmapsto \widehat{T}_{z}Z.
	\]
	 By \cite[Corollary 2.8]{Zak93}, since $Z \not= \BP U$, $g_1$ is birational onto its image. Hence, for a general point $z=[u]$, the subgroup $H_{u}$ fixes $z$ because it fixes $g_{1}(z)$ by definition. This proves the claim.
\end{proof}

Combining the above Proposition with Lemma 4.3, we can directly write $\partial^{-1}(\Xi_{Z}')$ and  $\partial^{-1}(\Xi_{Z}^0)$ as follows.
\begin{cor}\label{description_preimages}
	With the notation above,
	\[
	\partial^{-1}(\Xi_{Z}^{'})
	=
	\left\{
	h\in\Hom(U,\hfg)
	\ \middle|\
	\begin{array}{l}
		h(u,u)\in\BC u,\\[2pt]
		h(u,u')-h(u',u)\in \BC u,
	\end{array}
	\ \forall\,u\in\widehat Z,\ u'\in \widehat{T}_{[u]}Z
	\right\}.
	\]
	\[
	\partial^{-1}(\Xi_{Z}^{0})
	=
	\left\{
	h\in\Hom(U,\hfg)
	\ \middle|\
	\begin{array}{l}
		h(u,u)\in\BC u,\\[2pt]
		h(u,u')=h(u',u),
	\end{array}
	\ \forall\,u\in\widehat Z,\ u'\in \widehat{T}_{[u]} Z
	\right\}.
	\]
\end{cor}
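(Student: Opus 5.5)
The plan is to unwind the definitions, using the inclusions $\Xi_Z^0\subset\Xi_Z'\subset\Xi_Z$. These give
\[
\partial^{-1}(\Xi_Z^0)\subset\partial^{-1}(\Xi_Z')\subset\partial^{-1}(\Xi_Z).
\]
We then combine the two available descriptions:
\begin{itemize}
\item Proposition~\ref{pre_partial} identifies the largest of these three spaces, namely $\partial^{-1}(\Xi_Z)=\Ker(\Psi)$.
\item Lemma~4.3 describes $\Xi_Z'$ and $\Xi_Z^0$ pointwise along $\widehat Z$.
\end{itemize}
Throughout we write $h(u,u')$ for $h(u)(u')$, as in the proof of Proposition~\ref{pre_partial}.

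\emph{Forward inclusion.} Let $h\in\partial^{-1}(\Xi_Z')$. Since $h$ then lies in $\partial^{-1}(\Xi_Z)$, Proposition~\ref{pre_partial} gives $h(u,u)\in\BC u$ for all $u\in\widehat Z$. Moreover $\sigma=\partial h$ lies in $\Xi_Z'$. By Lemma~4.3 this means
\[
\sigma(u,u')=h(u,u')-h(u',u)\in\BC u
\]
for every $u\in\widehat Z$ and $u'\in\widehat T_{[u]}Z$. So $h$ lies in the displayed set.

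The same argument with $\Xi_Z^0$ in place of $\Xi_Z'$ uses the second half of Lemma~4.3. It gives $\partial h(u,u')=0$, that is, $h(u,u')=h(u',u)$, together with $h(u,u)\in\BC u$.

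\emph{Reverse inclusion.} Take $h\in\Hom(U,\hfg)$ satisfying the two displayed conditions. The second condition says exactly that $\partial h(\hat z,\widehat T_zZ)\subset\langle\hat z\rangle$ (respectively $=0$) for all $z\in Z$. By Lemma~4.3 this means $\partial h\in\Xi_Z'$ (respectively $\Xi_Z^0$), hence $h\in\partial^{-1}(\Xi_Z')$ (respectively $\partial^{-1}(\Xi_Z^0)$).

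In this direction the first condition $h(u,u)\in\BC u$ is not even needed. It is recorded in the statement only because it is automatic, by Proposition~\ref{pre_partial}, and it will be convenient later.

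There is no real obstacle here. The only point needing care is the forward direction: we must pass through $\partial^{-1}(\Xi_Z)$ in order to invoke Proposition~\ref{pre_partial}. That proposition is where the hypothesis $Z\neq\BP U$ (via birationality of the Gauss map) enters, and it is what produces the condition $h(u,u)\in\BC u$. Everything else is a direct translation of the pointwise conditions of Lemma~4.3 through the formula $\partial h(u_1,u_2)=h(u_1)(u_2)-h(u_2)(u_1)$.
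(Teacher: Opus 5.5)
Your proposal is correct and is exactly the paper's argument. The paper also obtains the corollary by combining Proposition~\ref{pre_partial}, which supplies $h(u,u)\in\BC u$ on $\partial^{-1}(\Xi_Z)\supset\partial^{-1}(\Xi_Z')\supset\partial^{-1}(\Xi_Z^0)$, with the pointwise descriptions of $\Xi_Z'$ and $\Xi_Z^0$ in Lemma~4.3; your remarks that the first condition is redundant in the reverse inclusion and that the hypothesis $Z\neq\BP U$ enters only through Proposition~\ref{pre_partial} are accurate.
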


Next we describe the composition maps $\zeta \circ \partial$ and $\eta \circ \partial'$ in \eqref{diag_Spencer} which will play a central role in verifying the surjectivities of $\partial$ and $\partial'$. Using the above descriptions, we realize these maps as taking global sections of morphisms between sheaves on $Z$ (see Lemmas \ref{CPadjoint} and \ref{etaparital'}).

Denote by
$
E(\hfg)=\hfg\otimes\CO_Z
$
the trivial vector bundle on $Z$. Then the map $\Psi$ is obtained by taking global sections of the evaluation morphism
$$
\widehat{\Psi}: E(\hfg)(1)\rightarrow T_Z(1).
$$
Accordingly, we follow the descriptions in Corollary \ref{description_preimages} to define subsheaves
\[
\widehat{\mathcal P}'_0
\subset
\widehat{\mathcal P}'
\subset
\widehat{\mathcal P}
\subset
E(\hfg),
\]
whose fibers record distinguished subspaces of the corresponding isotropy subalgebras.
\begin{defn}\label{hatP}
	Keep the notation above and set
	$
	\widehat{\CP}:=\ker(\widehat{\Psi})(-1)\subset E(\hfg).
	$
	\begin{itemize}
	\item[(i)] \(\widehat{\CP}\) is the subsheaf whose fiber at \([u]\in Z\) is the
	isotropy subalgebra
	\[
	\widehat{\CP}_{[u]}
	=
	\{A\in\hfg\mid A(u)\in\BC u\}.
	\]
	By Proposition~\ref{pre_partial}, we have
	$
	H^0(Z,\widehat{\CP}(1))
	=
	\partial^{-1}(\Xi_Z).
	$
	\item[(ii)]
	Define the evaluation morphism
	$
	\Theta:\widehat{\CP}(1)\longrightarrow\CO_Z(1)
	$
	fiberwise by
	\[
	\Theta_{[u]}:
	\Hom(\BC u,\widehat{\CP}_{[u]})
	\longrightarrow
	(\BC u)^\vee,
	\qquad
	\Theta_{[u]}(h)(u)\,u=h(u)(u).
	\]
	Taking global sections gives
	\[
	\theta:
	\partial^{-1}(\Xi_Z)
	\longrightarrow
	H^0(Z,\CO_Z(1))
	=
	U^\vee.
	\]
	\end{itemize}
\end{defn}
The isotropy subalgebra \(\widehat{\CP}_{[u]}\) acts naturally on
\(T_{[u]}Z\); denote this action by
\[
\ad_{[u]}:
\widehat{\CP}_{[u]}
\longrightarrow
\End(T_{[u]}Z).
\]
Then the composition \(\zeta\circ\partial\) admits the following description.

\begin{lem}\label{CPadjoint}
	With the notation above:
	\begin{itemize}
		\item[(1)]
		For any \(h\in\partial^{-1}(\Xi_Z)\), \(u\in\widehat Z\), and
		\(u'\in\widehat T_{[u]}Z\), one has
		\begin{equation}\label{eq:sym}
			h(u,u')+h(u',u)
			=
			\theta(h)(u)u'
			+
			\theta(h)(u')u
			\in\langle u,u'\rangle.
		\end{equation}
		
		\item[(2)]
The map \(\zeta\circ\partial\) in~\eqref{diag_Spencer} is induced on global
sections by
\[
\widehat{\zeta\circ\partial}:
\widehat{\CP}(1)
\longrightarrow
T_Z\otimes\Omega_Z(1).
\]
Its fiber at \([u]\in Z\) sends
\[
h\in\Hom(\BC u,\widehat{\CP}_{[u]})
\]
to the element of
\[
\langle u \rangle^{\vee}\otimes
End(\widehat{T}_{[u]}Z/\BC u)
\]
given by
\[
(u,\overline{u'})
\longmapsto
2h(u,u')
-
\Theta_{[u]}(h)(u)\,u'
\mod \BC u,
\qquad
\forall\,u'\in\widehat{T}_{[u]}Z.
\]
Equivalently, after twisting by \(\BC u\), it is defined by
\begin{equation}\label{adjoint_formula}
\widehat{\CP}_{[u]}
\longrightarrow
End(T_{[u]}Z),
\qquad
X\longmapsto
2\,\ad_{[u]}(X)
+
Tr(X|_{\BC u})\,\id_{T_{[u]}Z}.
\end{equation}
	\end{itemize}
\end{lem}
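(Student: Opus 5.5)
Both parts follow from the polarization identity for $h\in\partial^{-1}(\Xi_Z)$ along the curve $u+tu'$ on $\widehat Z$, combined with Proposition~\ref{pre_partial}. For (1), fix $u\in\widehat Z$ and $u'\in\widehat T_{[u]}Z$. Choose a holomorphic curve $u(t)\subset\widehat Z$ with $u(0)=u$ and $\dot u(0)=u'$; such curves exist because $\widehat T_{[u]}Z$ is the tangent space of the cone $\widehat Z$ at $u$. By Proposition~\ref{pre_partial}, $h(u(t),u(t))=c(t)\,u(t)$ for a holomorphic function $c(t)$, and by Definition~\ref{hatP} we have $c(t)=\theta(h)(u(t))$. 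Differentiating at $t=0$ and using bilinearity gives
\[
h(u,u')+h(u',u)=\tfrac{d}{dt}\big|_{t=0}\theta(h)(u(t))\,u+\theta(h)(u)\,u'=\theta(h)(u')\,u+\theta(h)(u)\,u',
\]
because $\theta(h)\in U^\vee$ is linear. This is \eqref{eq:sym}.

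For (2), start from the definition of $\zeta$ in Definition~\ref{filt_01}:
\[
\zeta(\partial h)_{[u]}(u\otimes\overline{u'})=\big[h(u,u')-h(u',u)\big]\quad \bmod \BC u .
\]
Substituting (1) in the form $h(u',u)=-h(u,u')+\theta(h)(u)u'+\theta(h)(u')u$, the term $\theta(h)(u')u$ lies in $\BC u$ and disappears. What remains is $2h(u,u')-\theta(h)(u)u'$ modulo $\BC u$. This expression is pointwise: it depends only on the value $h(u)\in\widehat{\CP}_{[u]}$ and on $\Theta_{[u]}(h)$. Hence it comes from a sheaf morphism $\widehat{\CP}(1)\to T_Z\otimes\Omega_Z(1)$ whose map on global sections is $\zeta\circ\partial$, via $H^0(Z,\widehat\CP(1))=\partial^{-1}(\Xi_Z)$.

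To obtain \eqref{adjoint_formula}, twist by $\BC u$ and write $X=h(u)\in\widehat\CP_{[u]}$, so that $X u=\mathrm{Tr}(X|_{\BC u})\,u$ and $\Theta(h)(u)=\mathrm{Tr}(X|_{\BC u})$. Under $T_{[u]}Z=\Hom(\BC u,\widehat T_{[u]}Z/\BC u)$, the action of $X$ on $\varphi\colon u\mapsto \overline{u'}$ is
\[
\ad_{[u]}(X)\varphi=\overline{X u'}-\varphi(Xu)=\overline{Xu'}-\mathrm{Tr}(X|_{\BC u})\,\overline{u'} .
\]
Therefore
\[
2\overline{Xu'}-\mathrm{Tr}(X|_{\BC u})\,\overline{u'}=2\ad_{[u]}(X)\varphi+\mathrm{Tr}(X|_{\BC u})\,\varphi .
\]

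The only delicate point is in (1): it must hold at every point, while Proposition~\ref{pre_partial} was proved via the Gauss map at general points. The identity $h(u,u)\in\BC u$ is a closed condition, so it extends from a dense set to all of $\widehat Z$; after that the derivative argument works everywhere. The remaining steps are sign and normalization checks for the twisting conventions.
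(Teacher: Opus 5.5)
Your proof is correct and follows the paper's argument: differentiate $h(\gamma(t),\gamma(t))=\theta(h)(\gamma(t))\,\gamma(t)$ along an arc in $\widehat Z$ to get (1), then substitute (1) into $\zeta(\partial h)_{[u]}(u,\overline{u'})=h(u,u')-h(u',u) \bmod \BC u$ to get (2). Your explicit computation $\ad_{[u]}(X)\varphi=\overline{Xu'}-\mathrm{Tr}(X|_{\BC u})\,\overline{u'}$ spells out the sign change that the paper only mentions, and your closedness remark is harmless but not needed, since Proposition~\ref{pre_partial} already gives $h(u,u)\in\BC u$ at every point of $\widehat Z$.
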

\begin{proof}
	For
	$
	u \in \widehat{Z},
	$
	we identify
	$
	T_{[u]}Z
	\cong
	\Hom(\BC u,\widehat{T}_{u}Z/\BC u)
	$
	as before. Let
	$
	u' \in \widehat{T}_{u}Z,
	$
	and choose a local arc
	$
	\gamma:\BC\rightarrow \hat{Z}
	$
	such that
	$
	\gamma(0)=u
	$
	and
	$
	\gamma'(0)=u'.
	$
	By Definition~\ref{hatP}, we have
	\[
	h(\gamma(t),\gamma(t))
	=
	\theta(h)(\gamma(t)) \cdot \gamma(t)
	\]
	for all $t$. Expanding at $t=0$, we obtain
	\[
	h(u,u)
	+
	t\big(h(u,u')+h(u',u)\big)
	+\cdots
	=
	\theta(h)(u)u
	+
	t\big(\theta(h)(u)u'
	+
	\theta(h)(u')u\big)
	+\cdots.
	\]
	Comparing the coefficients of $t$ gives~\eqref{eq:sym}.
	
	For the second claim, we compute from Definition \ref{filt_01} that
	\[
	(\zeta\circ \partial)(h)_{[u]}(u,\overline{u'})
	=
	h(u,u')-h(u',u)
	\mod \BC u.
	\]
	Using~\eqref{eq:sym}, this becomes
	\[
	(\zeta\circ \partial)(h)_{[u]}(u,\overline{u'})
	=
	2h(u,u')
	-
	\theta(h)(u)\cdot u'
	\mod \BC u.
	\]
	The conclusion follows directly from the definitions of
	$\Theta$ and $\theta.$
	The last claim follows from the induced action on
	$T_{[u]}Z;$
	note that the sign in the expressions changes when passing from the affine tangent space to the tangent space.
\end{proof}

Next we partially describe the composition map
$
\eta \circ \partial'
$
in~\eqref{diag_Spencer}.

\begin{defn}\label{hatP'} Keep the notation above. 
	Let
	$
	\widehat{\CP'}
	=
	\ker(\widehat{\zeta \circ \partial})(-1),
	$
	which is a subsheaf of $\widehat{\CP}$. We further define
	$
	\widehat{\CP'_0}
	=
	\ker(\Theta|_{\widehat{\CP'}(1)})(-1),
	$
	which is a subsheaf of $\widehat{\CP'}$. 
\end{defn}
\begin{lem}\label{lem_2.9}
Keep the notation above. Then $\widehat{\CP'_{0}}$ is the subsheaf of $\widehat{\CP'}$, whose local sections take values, at each point
$
[u]\in Z,
$
in the subspace
\[
(\widehat{\CP'_0})_{[u]}
=
\big\{
\CA \in \hfg
\;\big|\;
\CA(u)=0,
\quad
\CA(\widehat{T}_{[u]}Z)\subset \BC u
\big\}.
\]	
\end{lem}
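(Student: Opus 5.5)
We unwind the two kernel conditions fiberwise at a point $[u]\in Z$. Throughout, we twist by $\BC u$ as in Lemma~\ref{CPadjoint}, so that a fiber element of $\widehat{\CP}(1)$ at $[u]$ is identified with some $X\in\widehat{\CP}_{[u]}$.

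\textbf{Step 1: the fiber of $\widehat{\CP'}$.} By Definition~\ref{hatP'}, $\widehat{\CP'}$ is the kernel of $\widehat{\zeta\circ\partial}$. By formula~\eqref{adjoint_formula}, an element $X\in\widehat{\CP}_{[u]}$ lies in $(\widehat{\CP'})_{[u]}$ if and only if
\[
2\,\ad_{[u]}(X)+Tr(X|_{\BC u})\,\id_{T_{[u]}Z}=0 .
\]
We make this concrete. Since $X\in\widehat{\CP}_{[u]}$, write $X(u)=cu$, so that $c=Tr(X|_{\BC u})=\Theta_{[u]}(X)$. Under the identification $T_{[u]}Z=\Hom(\BC u,\widehat T_{[u]}Z/\BC u)$, the action $\ad_{[u]}(X)$ sends $\varphi$ to $X\circ\varphi-\varphi\circ X$. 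On $\varphi\colon u\mapsto \overline{u'}$ this gives $u\mapsto \overline{X(u')}-c\,\overline{u'}$. The displayed condition therefore reads
\[
2\bigl(\overline{X(u')}-c\,\overline{u'}\bigr)+c\,\overline{u'}=0,
\qquad\text{that is,}\qquad
X(u')\equiv \tfrac{c}{2}\,u' \mod \BC u
\quad\text{for all } u'\in\widehat T_{[u]}Z.
\]
The sign of this identification is the point the author flags at the end of Lemma~\ref{CPadjoint}. If the convention turned out to be the opposite one, the scalar would change, but the argument below would not.

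\textbf{Step 2: imposing $\Theta=0$.} The sheaf $\widehat{\CP'_0}$ is obtained by further imposing $\Theta_{[u]}(X)=0$, that is, $c=0$. This gives $X(u)=0$. Substituting $c=0$ into Step 1 gives $X(u')\in\BC u$ for every $u'\in\widehat T_{[u]}Z$. Conversely, suppose $X\in\hfg$ satisfies $X(u)=0$ and $X(\widehat T_{[u]}Z)\subset\BC u$. Then $X\in\widehat{\CP}_{[u]}$ trivially, $c=0$, and the kernel condition of Step 1 holds. This gives the claimed fiber description.

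\textbf{Step 3: sheaf-level statement.} The remaining point is that these subsheaves are defined fiberwise, as kernels of morphisms of sheaves. The description as a subsheaf of $\widehat{\CP'}$ whose local sections take values in the stated subspaces is then literally the kernel description. Over a flag variety all these maps are $G$-equivariant with constant rank, so the kernels are subbundles and no saturation issue arises.

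The only delicate step is tracking the sign and normalization in the identification of $\ad_{[u]}$ with the affine action. This is harmless here, because the trace term vanishes precisely under the $\Theta=0$ condition.
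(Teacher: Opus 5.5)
Your proposal is correct and is essentially the paper's argument: the paper reads off $\CA(u)=0$ from $\Theta=0$, and $\CA(\widehat T_{[u]}Z)\subset\BC u$ from the affine form $2h(u,u')-\Theta_{[u]}(h)(u)\,u' \mod \BC u$ of $\widehat{\zeta\circ\partial}$ in Lemma~\ref{CPadjoint}(2); you recover that affine form, with the correct sign, from \eqref{adjoint_formula}, and you also supply the converse inclusion that the paper leaves implicit. Your Step~3 is superfluous: the lemma lives in the general setting of Section~4.1, and no flag-variety or constant-rank hypothesis is needed, because the targets $T_Z\otimes\Omega_Z(1)$ and $\CO_Z(1)$ are locally free on the reduced variety $Z$, so a local section lies in each kernel exactly when it does so pointwise.
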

\begin{proof}
	Take any local section $\CA$ of $\widehat{\CP'_0}$, then $\Theta(-1)(\CA)=0$ gives $\CA_{[u]}(u)=0$; and from Lemma \ref{CPadjoint}(2), $\widehat{\zeta \circ \partial}(-1)(\CA)=0$ gives $\CA_{[u]}(\widehat{T}_{[u]}Z) \subset \BC u$. 
\end{proof}
\begin{lem}\label{etaparital'} Keep the notation above.
	We have
	$
	H^{0}(Z,\widehat{\CP^{'}}(1))
	=
	\ker(\zeta \circ \partial)
	=
\partial^{-1}(\Xi_{Z}^{'}).
	$
	Moreover, the restriction of the composition map
	$
	\eta \circ \partial'
	$
	to the subspace
	$
	H^{0}(Z,\widehat{\CP^{'}_0}(1))
	$
	is obtained by taking global sections of the morphism
	\[
	\widehat{\eta\circ\partial'}:
	\widehat{\CP^{'}_0}(1)\rightarrow \Omega_Z(1),
	\]
	defined fiberwise by
	\[
	(\widehat{\eta\circ\partial'})_{[u]}:
	\Hom(\BC u,\widehat{\CP^{'}_0}_{[u]})
	\longrightarrow
	\Hom\big(
	\BC u \otimes
	\hat{T}_{[u]}Z/\langle u\rangle,
	\BC u
	\big), \quad
	(\widehat{\eta\circ\partial'})_{[u]}(h)(u,\overline{u'})
	=
	2h(u,u').
	\]
\end{lem}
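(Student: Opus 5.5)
The first assertion is essentially a matter of unwinding the definitions. By Definition~\ref{hatP'}, $\widehat{\CP'}(1)$ is the kernel of $\widehat{\zeta\circ\partial}:\widehat{\CP}(1)\to T_Z\otimes\Omega_Z(1)$. Since $H^0$ is left exact, it follows that
\[
H^0(Z,\widehat{\CP'}(1))=\ker\bigl(H^0(\widehat{\CP}(1))\to H^0(T_Z\otimes\Omega_Z(1))\bigr).
\]
Definition~\ref{hatP}(i) identifies $H^0(Z,\widehat{\CP}(1))$ with $\partial^{-1}(\Xi_Z)$, and Lemma~\ref{CPadjoint}(2) identifies the induced map on global sections with $\zeta\circ\partial$. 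Hence $H^0(Z,\widehat{\CP'}(1))=\ker(\zeta\circ\partial)$. Moreover $\Xi'_Z=\ker\zeta$, so $h\in\partial^{-1}(\Xi_Z)$ lies in $\ker(\zeta\circ\partial)$ exactly when $\partial h\in\Xi'_Z$, which gives $\ker(\zeta\circ\partial)=\partial^{-1}(\Xi'_Z)$.

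For the second assertion, take $h\in H^0(Z,\widehat{\CP'_0}(1))$, and fix $u\in\widehat Z$ and $u'\in\widehat T_{[u]}Z$. Lemma~\ref{lem_2.9} tells us that $h(u)\in(\widehat{\CP'_0})_{[u]}$, so $h(u)(u)=0$ and $h(u)(u')\in\BC u$; in particular $\theta(h)(u)=0$. Since $\theta(h)\in U^\vee$ is a linear form vanishing on the cone $\widehat Z$, and $Z$ is nondegenerate, $\theta(h)=0$ identically. Substituting into \eqref{eq:sym} then gives $h(u',u)=-h(u,u')$, and therefore
\[
\partial h(u,u')=h(u,u')-h(u',u)=2h(u,u')\in\BC u.
\]
By Definition~\ref{filt_01}(ii), $\eta_{\partial h,[u]}(\overline{u'})\,u=\partial h(u,u')=2h(u,u')$. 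This expression depends only on $u'$ modulo $\BC u$, because $h(u,u)=0$. It is also linear in $u$ and depends only on the fiber value of $h$ at $[u]$. It therefore defines a bundle morphism $\widehat{\CP'_0}(1)\to\Omega_Z(1)$ of the stated form, and the global sections of this morphism recover $\eta\circ\partial'$.

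The only point needing care is the global vanishing $\theta(h)=0$. It is needed to invoke \eqref{eq:sym} at the point $u'$, which is not on the cone. Non-degeneracy of $Z$ settles it. Alternatively, since $u'$ is a tangent vector to $\widehat Z$ at $u$, one can differentiate the identity $\theta(h)|_{\widehat Z}=0$ along $\widehat Z$ to get $\theta(h)(u')=0$. Either route works, so there is no real obstacle here.
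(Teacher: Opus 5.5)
Your proposal is correct and follows the same route as the paper: the first claim comes from left exactness of $H^0$ combined with Definition~\ref{hatP}(i), Lemma~\ref{CPadjoint}(2) and $\Xi'_Z=\ker\zeta$, and the second from Lemma~\ref{lem_2.9} together with \eqref{eq:sym} at $\theta(h)=0$, which gives $\partial h(u,u')=2h(u,u')\in\BC u$. One small simplification: $\theta(h)=0$ holds directly by construction, since $\widehat{\CP'_0}(1)$ is defined as $\ker(\Theta|_{\widehat{\CP'}(1)})$. Your non-degeneracy argument is also valid.
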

\begin{proof}
The first claim follows from the above Lemmas and that  $\Xi_{Z}'=Ker(\zeta)$ by Definition \ref{filt_01}(i). The second claim also follows directly from applying Lemmas \ref{CPadjoint} and \ref{lem_2.9}.
\end{proof}
We summarize the above constructions in the following sequences:
\begin{align}
	0 & \longrightarrow \widehat{P}(1) \longrightarrow E(\hfg)(1) \xrightarrow{
	\widehat{\Psi}} T_Z(1) \label{eq:2.30}\\	
	0 &\longrightarrow \widehat{\CP'}(1)
	\longrightarrow \widehat{\CP}(1)
	\xrightarrow{\widehat{\zeta \circ \partial}}
	T_Z \otimes \Omega_Z(1), \label{eq:2.3}\\
	0 &\longrightarrow \widehat{\CP'_0}(1)
	\longrightarrow \widehat{\CP'}(1)
	\xrightarrow{\Theta|_{\widehat{\CP'}(1)}}
	\CO(1),\label{eq:2.4}\\
	& \phantom{0\longrightarrow}
	\widehat{\CP'_0}(1)
	\xrightarrow{\widehat{\eta\circ\partial'}}
	\Omega_Z(1). \label{eq:2.5}
\end{align}

\subsection{Obstructions}
In this subsection, we study the diagram~\eqref{diag_Spencer} in greater detail and derive conditions for the surjectivity of the bottom map $\partial^{0}$ (see Corollary~\ref{cor_classify}). These conditions will later be shown to be both necessary and sufficient for flag varieties in Proposition~\ref{HX}. 

The analysis is based on the \emph{two-step} decomposition of
$
\partial^{-1}(\Xi_{Z}^{'})
$
induced by~\eqref{eq:2.4}. Taking global sections of~\eqref{eq:2.4}, we obtain a left exact sequence
\begin{equation}\label{exac_seq2}
	0 \longrightarrow \widetilde{U}
	\longrightarrow
	\partial^{-1}(\Xi_{Z}')
	\xrightarrow{~\theta~}
	H^{0}(Z,\CO_{Z}(1))
	\cong U^{\vee},
\end{equation}
where we denote
$
\widetilde{U}
=
H^{0}(Z,\widehat{\CP'_0}(1))
$ and
by Lemma~\ref{lem_2.9},
\[
\widetilde{U}
=
\left\{
h\in \Hom(U,\hfg)
\ \middle|\
h(u)(u)=0,\;
h(u)(\widehat{T}_{[u]}Z)\subset \BC u
\text{ for all }
u\in \widehat{Z}
\right\}.
\]

For the map $\theta$, it naturally leads to the notion of Euler-symmetric varieties, which we now recall.

\begin{defn}[{\cite[Definition 2.1]{euler_sym}}]\label{euler_symdefn}
	Let \(Z \subseteq \BP U\) be a projective subvariety. A \(\BC^*\)-action on \(Z\) is called of Euler type at a nonsingular fixed point \(x\) if the induced isotropy action on the tangent space \(T_xZ\) is by scalar multiplication.
		We say that \(Z \subseteq \BP U\) is an Euler-symmetric variety if, for a general point \(x\in Z\), there exists a \(\BC^*\)-action of Euler type at \(x\), induced by a multiplicative subgroup of \(\operatorname{GL}(U)\).
\end{defn}

The following proposition is the main general observation of this subsection. 
\begin{prop}\label{ext_seq}
	Let $Z \subset \BP U$ be a smooth non-degenerate linearly normal projective variety with $Z \not= \BP U$.
	Then the following hold:
	\begin{itemize}
		\item[(1)]
		The map $\theta$ in \eqref{exac_seq2} is nonzero if and only if  $Z$ is Euler-symmetric. 
		\item[(2)]
		Restricting~\eqref{exac_seq2} to
		$
		\partial^{-1}(\Xi_{Z}^{0})
		$
		yields
		\begin{equation}\label{exac_seq3}
			0
			\longrightarrow
			\widetilde{U}\cap \partial^{-1}(\Xi_{Z}^{0})
			\longrightarrow
			\partial^{-1}(\Xi_{Z}^{0})
			\xrightarrow{~\theta~}
			H^{0}(Z,\CO_{Z}(1))
			\cong U^{\vee},
		\end{equation}
		where $\theta$ is nonzero whenever it is nonzero in \eqref{exac_seq2}.
		
		\item[(3)]
		We have
		\[
		\widetilde{U}\cap \partial^{-1}(\Xi_{Z}^{0})
		=
		\bigl\{
		h\in \Hom(U,\hfg)
		\bigm|
		h(u,u')=0
		\ \forall\,u\in \widehat{Z},\,
		u'\in \widehat{T}_{[u]}Z
		\bigr\}.
		\]
		Moreover, the restriction map
		\[
		\partial^{0}:
		\widetilde{U}\cap \partial^{-1}(\Xi_{Z}^{0})
		\longrightarrow
		\Xi_{Z}^{0}
		\]
		is injective.
	\end{itemize}
\end{prop}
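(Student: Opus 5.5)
The plan is to prove the three parts separately, using the explicit descriptions of $\partial^{-1}(\Xi_Z')$, $\partial^{-1}(\Xi_Z^0)$ and $\widetilde U$ from Corollary~\ref{description_preimages} and Lemma~\ref{lem_2.9}.

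\emph{Part (1).} First suppose $\theta(h)\neq 0$ for some $h\in\partial^{-1}(\Xi_Z')$. Take a general $z=[u]$ with $\ell:=\theta(h)$ satisfying $\ell(u)\neq0$, and set $X=h(u)\in\hfg$. Then $X u=\ell(u)u$. Since $h\in\ker(\zeta\circ\partial)$, Lemma~\ref{CPadjoint}(2) gives
\[
2\,\ad_{[u]}(X)+\ell(u)\,\id_{T_{[u]}Z}=0,
\]
so $X$ acts on $T_{[u]}Z$ by the scalar $-\ell(u)/2\neq0$. Shifting $X$ by a multiple of $\id_U\in\hfg$ does not change its action on $T_{[u]}Z$. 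We then want a multiplicative one-parameter subgroup of $\operatorname{GL}(U)$ with these properties. To get it, I would take the semisimple part of $X$, which still lies in $\hfg$ because $\hfg$ is algebraic. It has the same scalar action on $T_{[u]}Z$, since the nilpotent part acts nilpotently and hence as zero there. After rescaling and adding a multiple of the identity, it generates a $\BC^*$-action of Euler type at $[u]$. I expect the rationality of eigenvalues needed for a genuine $\BC^*$ to be a technical point; I would handle it by using the $\BQ$-span of the eigenvalues of a semisimple element.

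Conversely, suppose $Z$ is Euler-symmetric. For general $[u]$ the Euler-type $\BC^*$ gives $A_{[u]}\in\hfg$ with $A_{[u]}u=u$ and $\ad_{[u]}(A_{[u]})=-\tfrac12\id$, after normalizing by adding a multiple of $\id_U$ (which rescales the $T$ action and shifts the $u$-eigenvalue). I would like to assemble these into a global $h$ with $h(u)(u)=u$, but the dependence on $u$ is not linear. Instead I would use the known description of Euler-symmetric varieties as equivariant compactifications of vector groups, as in \cite{euler_sym}. Fix one base point $o=[u_0]$ and the grading $U=\bigoplus U_k$ by $\BC^*$-weights. Define $h\in\Hom(U,\hfg)$ on $U_0=\BC u_0$ by the Euler element, on $U_1=\widehat T/\BC u_0$ by the vector-group translations, and by $0$ on higher weights. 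Checking $h\in\partial^{-1}(\Xi_Z')$ on the open orbit is then a weight computation; this is the step I expect to be the main obstacle.

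\emph{Part (2).} Restricting \eqref{exac_seq2} to the subspace $\partial^{-1}(\Xi_Z^0)$ is formal. For the nonvanishing claim, I would show that the $h$ built in the converse of (1) actually satisfies $h(u,u')=h(u',u)$. This amounts to the commutativity of the vector group together with $[E,T_v]=T_v$. Alternatively, one can show the difference $\partial h$ lands in $\Xi_Z^0$ because its image at $[u]$ lies in $\BC u$ with the $\eta$-part killed by a weight argument.

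\emph{Part (3).} For the description, take $h\in\widetilde U$, so $h(u)(u)=0$ and $h(u)(\widehat T)\subset\BC u$. The extra condition $h(u,u')=h(u',u)$ is symmetry. Polarizing $h(\gamma,\gamma)=0$ along a curve gives $h(u,u')+h(u',u)=0$, so together $h(u,u')=0$. The converse is immediate from the formula \eqref{eq:sym}. For injectivity, if $\partial h=0$ then $h(v)(w)=h(w)(v)$ for all $v,w\in U$. For $w\in\widehat T_{[u]}Z$ we already have $h(u)(w)=0$. By linear non-degeneracy, the points $\widehat Z$ span $U$, so $h(w')(u)=h(u)(w')$ for every $w'\in U$. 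To get $h(u)=0$, I would use that $h(u)\in\hfg$ is tangent to $\hat Z$ and vanishes on $\widehat T_{[u]}Z$. Pairing $h(u)(w')=h(w')(u)$ and applying it at other points $u_2\in\widehat Z$, non-degeneracy of the Gauss map (Zak, as in Proposition~\ref{pre_partial}) then forces $h(u)w'=0$ for all $w'$. Hence $h=0$.
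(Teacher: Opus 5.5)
The injectivity claim in (3) is not established. Once $\partial h=0$, $h$ lies in the first prolongation $\hfg^{(1)}$ and satisfies $h(u,u)=0$ on $\widehat Z$.

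What symmetry actually gives you is this: for $u,u_2\in\widehat Z$, $h(u)(u_2)=h(u_2)(u)\in\widehat T_{[u]}Z\cap\widehat T_{[u_2]}Z$. Since $\widehat Z$ spans $U$ and $h(u)$ kills $\widehat T_{[u]}Z$, this yields $\Im h(u)\subset\widehat T_{[u]}Z$ and $h(u)^2=0$, and nothing more.

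The Zak-type argument of Proposition~\ref{pre_partial} only applies to an element of $\hfg$ that stabilizes $\widehat T_{[u_2]}Z$ for general $u_2$. You have no such information: for $w\in\widehat T_{[u_2]}Z$ you only know $h(u)(w)=h(w)(u)\in\widehat T_{[u]}Z$. So the sentence ``non-degeneracy of the Gauss map then forces $h(u)w'=0$'' has no mechanism behind it.

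The missing input is the nontrivial fact that an element $\sigma\in\hfg^{(1)}$ with $\sigma(u,u)=0$ for all $u\in\widehat Z$ must vanish. This is \cite[Proposition~2.1.3]{HM05}, and it is exactly what the paper invokes at this point. Without it, or a proof of it, Part (3) is incomplete.

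The rest follows the paper's route. The forward half of (1) is the paper's argument: your $\BQ$-span device is equivalent to taking a general one-parameter subgroup of the torus that is the Zariski closure of $\exp(\BC h(u)_s)$. Your converse is the construction of Lemma~\ref{const_euler_inverse}, $h(\lambda u+X(u)+\sum v_k)=\lambda H_u+[X,H_u]$. It in fact lands in $\partial^{-1}(\Xi_Z^0)$, so (2) comes for free.

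Two details you left open must be fixed there.

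First, the coefficient on $U_{-1}$ is forced. With your normalization $Au_0=u_0$, i.e.\ $A=-\tfrac12H_u$, you need $h(Xu_0)=\tfrac12X$. Taking the translation $X$ itself gives $\partial h(u_0,Xu_0)=-\tfrac12Xu_0\notin\BC u_0$.

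Second, passing from the base point to all of $Z$ is not a weight computation. It needs the $\fn_1$-equivariance $h(Yv)=[Y,h(v)]$. The paper derives this from $\hfg_{\ge2}=0$; it also follows directly from $\fn_1$ being abelian and $[H_u,X]=X$. Equivariance makes $\partial h$ equivariant under $\exp(\fn_1)$, so $\zeta\circ\partial(h)$ and $\eta\circ\partial(h)$ vanish on the open orbit, hence everywhere.
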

Consequently, a key necessary condition for the surjectivity of $\partial^{0}$ can be deduced:
\begin{cor}\label{cor_classify}
	Let $Z \subset \mathbb{P}U$ be as in Proposition~\ref{ext_seq}, and assume that 
	$\partial^{0}$ in~\eqref{diag_Spencer} is surjective. Then either $Z \subset \mathbb{P}U$ is Euler-symmetric, or the restriction map
	\[
	\widetilde{U} \cap \partial^{-1}(\Xi_{Z}^{0}) \xrightarrow{\ \partial^{0}\ } \Xi_{Z}^{0}
	\]
	is an isomorphism.
	
	In particular, if $\widetilde{U} \cap \partial^{-1}(\Xi_{Z}^{0}) = 0$, then $\partial^{0}$ is surjective if and only if one of the following occurs:
	\begin{itemize}
		\item[(i)] $Z \subset \mathbb{P}U$ is tangentially non-degenerate;
		\item[(ii)] $Z \subset \mathbb{P}U$ is Euler-symmetric and tangentially non-degenerate, and the first prolongation space 
		\[
		\hfg^{(1)} = (\Sym^{2}U^{\vee} \otimes U) \cap \Hom(U,\hfg)
		\]
		is nonzero;
		\item[(iii)] $Z \subset \mathbb{P}U$ is Euler-symmetric with $\hfg^{(1)}=0$, moreover
		the map $\theta$ in~\eqref{exac_seq3} is an isomorphism, and $\wedge_{Z}^{2} \cong \BC$ is one-dimensional.
	\end{itemize}
\end{cor}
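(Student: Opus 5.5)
The argument is linear algebra on the left exact sequence \eqref{exac_seq3}, using Proposition~\ref{ext_seq} throughout. Write $K=\widetilde{U}\cap\partial^{-1}(\Xi_Z^0)$, so that \eqref{exac_seq3} reads $0\to K\to\partial^{-1}(\Xi^0_Z)\xrightarrow{\theta}U^\vee$. Recall also $\Xi^0_Z=\wedge^2_Z\otimes U$.

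\textbf{First claim.} Assume $\partial^0$ is surjective and that $Z$ is not Euler-symmetric. By Proposition~\ref{ext_seq}(1) the map $\theta$ in \eqref{exac_seq2} is zero. Since \eqref{exac_seq3} is its restriction, $\theta$ is zero there as well, so $\partial^{-1}(\Xi_Z^0)=K$. Then $\partial^0$ restricted to $K$ is the whole map $\partial^0$, hence surjective. It is injective by Proposition~\ref{ext_seq}(3), so it is an isomorphism.

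\textbf{The case $K=0$.} Now $\theta:\partial^{-1}(\Xi^0_Z)\to U^\vee$ is injective, so $\dim\partial^{-1}(\Xi^0_Z)\le\dim U$. Surjectivity of $\partial^0$ therefore forces
\[
\dim\wedge^2_Z\cdot\dim U\le \dim U,
\]
so $\dim\wedge_Z^2\in\{0,1\}$.

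If $\wedge^2_Z=0$, then $\Xi^0_Z=0$ and $\partial^0$ is trivially surjective. This is (i), with (ii) as the Euler-symmetric subcase. For the converse direction the content of (i)/(ii) is just tangential non-degeneracy. The role of $\hfg^{(1)}$ is only to separate (ii) from (iii); I would check this by identifying $\ker\partial\cap\partial^{-1}(\Xi^0_Z)$ with $\hfg^{(1)}$. Indeed, $\ker\partial=\{h: h(u_1)u_2=h(u_2)u_1\}=(\Sym^2U^\vee\otimes U)\cap\Hom(U,\hfg)=\hfg^{(1)}$, and $\hfg^{(1)}$ lies in $\partial^{-1}(\Xi^0_Z)$ by Corollary~\ref{description_preimages}.

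If $\dim\wedge^2_Z=1$, then $\dim\Xi^0_Z=\dim U$. This is positive, so $\partial^{-1}(\Xi^0_Z)\neq0$, and since $\theta$ is injective (as $K=0$), $\theta\ne0$; by Proposition~\ref{ext_seq}(1) $Z$ is Euler-symmetric. Now count dimensions: $\partial^0$ is surjective onto a space of dimension $\dim U$, and $\theta$ embeds $\partial^{-1}(\Xi^0_Z)$ into $U^\vee$. Hence $\dim\partial^{-1}(\Xi^0_Z)=\dim U$, $\theta$ is an isomorphism, and $\partial^0$ is an isomorphism. In particular $\ker\partial^0=\hfg^{(1)}=0$, which gives (iii).

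Conversely, under (iii) $\theta$ is an isomorphism, so $\dim\partial^{-1}(\Xi^0_Z)=\dim U=\dim\Xi^0_Z$. Also $\ker\partial^0=\hfg^{(1)}=0$, so $\partial^0$ is injective and hence surjective.

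The main obstacle is the dichotomy between (ii) and (iii). The statement apparently intends (ii) and (iii) to be split by whether $\hfg^{(1)}$ vanishes. When $\wedge^2_Z\neq0$ the dimension count forces $\hfg^{(1)}=0$, so the cases sort themselves out. The step needing care is the identification $\ker\partial=\hfg^{(1)}$ together with its inclusion in $\partial^{-1}(\Xi^0_Z)$.
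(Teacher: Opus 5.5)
Your proposal is correct and follows the paper's argument. The ingredients are the same: linear algebra on \eqref{exac_seq3}, injectivity on $\widetilde{U}\cap\partial^{-1}(\Xi_Z^0)$ from Proposition~\ref{ext_seq}(3), the dimension count comparing $\dim\Xi_Z^0=\dim\wedge^2_Z\cdot\dim U$ with $\dim\partial^{-1}(\Xi_Z^0)\le\dim U$, and $\ker\partial=\hfg^{(1)}$ giving injectivity in the converse of (iii).

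The only difference is in how the cases are organized. You bound $\dim\wedge^2_Z\le 1$ first and treat (ii) as a subcase of (i). The paper instead splits on $\theta=0$ versus $\theta\neq0$. It also notes that when $Z$ is Euler-symmetric and $\Xi_Z^0=0$, the nonzero space $\partial^{-1}(\Xi_Z^0)$ lies in $\ker\partial=\hfg^{(1)}$, which forces $\hfg^{(1)}\neq0$. That refinement is not needed for the equivalence as stated.
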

\begin{proof}
	Assume that the map 
	$
	\partial^{0}: \partial^{-1}(\Xi_{Z}^{0}) \rightarrow \Xi_{Z}^{0}
	$
	in \eqref{diag_Spencer} is surjective. Then, from the exact sequence~\eqref{exac_seq3}, either $\theta$ is nonzero, which is equivalent to $Z \subset \BP U$ being Euler-symmetric by Proposition~\ref{ext_seq}(1), or $\theta=0$, in which case 
	$
	\widetilde{U} \cap \partial^{-1}(\Xi_{Z}^{0}) = \partial^{-1}(\Xi_{Z}^{0}).
	$
	In the latter case, $\partial^{0}$ must be an isomorphism, since it is injective by Proposition~\ref{ext_seq}(3). This proves the first claim.
	
	Now assume further that 
	$
	\widetilde{U} \cap \partial^{-1}(\Xi_{Z}^{0})=0.
	$
	Then from~\eqref{exac_seq3}, we obtain 
	\[
	\partial^{-1}(\Xi_{Z}^{0}) \cong \operatorname{Im}(\theta).
	\]
	
	If $\theta=0$, then $\partial^{-1}(\Xi_{Z}^{0})=0$, and hence $\Xi_{Z}^{0}=0$, that is, $Z$ is tangentially non-degenerate.
	
	If $\theta \neq 0$, then $\partial^{-1}(\Xi_{Z}^{0}) \neq 0$, and $Z$ is Euler-symmetric. We are led to two cases. If $\Xi_{Z}^{0}=0$, then $\ker(\partial)$ is nonzero, which is equivalent to the existence of a nonzero element in
	\[
	\hfg^{(1)}= \Sym^{2}U^{\vee} \otimes U \cap \Hom(U,\hfg).
	\]
	Otherwise, assume that $\Xi_{Z}^{0} \neq 0$. In this case, we note that
	\[
	\dim(\Xi_{Z}^{0}) = \dim(\wedge_{Z}^{2} \otimes U) \geqslant \dim(U),
	\]
	while
	\[
	\dim(\partial^{-1}(\Xi_{Z}^{0})) = \dim(\operatorname{Im}(\theta)) \leqslant \dim(U^{\vee}) = \dim(U).
	\]
	It follows that both $\theta$ and $\partial^{0}$ are isomorphisms, and that $\wedge_{Z}^{2} = \BC$ is one-dimensional.
	
Conversely, assume that one of the three conditions holds. Then the above argument can be directly reversed to deduce the surjectivity of $\partial^{0}$.
\end{proof}

We now establish Proposition \ref{ext_seq}. We first treat the parts involving Euler-symmetric varieties.

\begin{lem}\label{iml}
	Keep notation as in Proposition \ref{ext_seq}. The map $\theta$ is nonzero only if $Z$ is Euler-symmetric.
\end{lem}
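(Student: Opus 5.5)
Suppose $\theta(h)\neq 0$ for some $h\in\partial^{-1}(\Xi_Z')$. The plan is to produce, at a general point $[u]\in Z$, an element of $\hfg$ that generates a $\BC^*$-action of Euler type at $[u]$.

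\textbf{Step 1: choose a good point.} The section $\theta(h)\in H^0(Z,\CO_Z(1))=U^\vee$ is a nonzero linear form. Since $Z$ is non-degenerate, $\theta(h)(u)\neq0$ for $[u]$ in a dense open subset of $Z$. Fix such a $u$ and set $X:=h(u)\in\hfg$. By Definition~\ref{hatP}, $X\in\widehat{\CP}_{[u]}$ and $X(u)=\theta(h)(u)\,u$. Hence
\[
Tr(X|_{\BC u})=c:=\theta(h)(u)\neq 0 .
\]

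\textbf{Step 2: compute the isotropy action.} Since $h\in\partial^{-1}(\Xi_Z')=\ker(\zeta\circ\partial)$ by Lemma~\ref{etaparital'}, formula~\eqref{adjoint_formula} in Lemma~\ref{CPadjoint} gives
\[
2\,\ad_{[u]}(X)+c\,\id_{T_{[u]}Z}=0,
\qquad\text{so}\qquad
\ad_{[u]}(X)=-\tfrac{c}{2}\,\id_{T_{[u]}Z}.
\]
I would double-check this via the affine description. From $(\zeta\circ\partial)(h)=0$ one gets $2h(u,u')-c\,u'\in\BC u$, that is, $X u'\equiv \tfrac{c}{2}u'$ modulo $\BC u$, while $Xu=cu$. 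On $T_{[u]}Z=\Hom(\BC u,\widehat T_{[u]}Z/\BC u)$ the induced action is therefore $\tfrac c2-c=-\tfrac c2$, a nonzero scalar.

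\textbf{Step 3: integrate to a $\BC^*$-action.} This is the main obstacle. $X$ lies in the Lie algebra $\hfg$ of the algebraic group $\hat G=\Aut(\hat Z)$, but $\exp(tX)$ need not be a multiplicative subgroup; we need a semisimple element with integral eigenvalues. I would first take the Jordan decomposition $X=X_s+X_n$. Since $\hfg$ is algebraic, both parts lie in $\hfg$. Both preserve the flag $\BC u\subset\widehat T_{[u]}Z$, being polynomials in $X$. Their induced actions on $T_{[u]}Z$ are the semisimple and nilpotent parts of the scalar $-\tfrac c2$, so $X_n$ acts by $0$ there and we may replace $X$ by $X_s$.

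Next, $X_s$ lies in the Lie algebra of a torus $S\subset\hat G_{[u]}$. By the preceding paragraph, $X_s$ acts on $\BC u$ by $c$ and on $\widehat T_{[u]}Z/\BC u$ by $c/2$. Let $\chi_1$ be the character of $S$ on $\BC u$; let $\chi_2$ be the character of $S$ on $\widehat T_{[u]}Z/\BC u$ along a common eigenspace, which contains the eigenspace of $X_s$ for $c/2$. The set of cocharacters $\mu$ of $S$ such that every $S$-weight on $\widehat T/\BC u$ minus $\chi_1$ pairs with $\mu$ to the same value $-k$ is a rational linear condition. Rescaling $X_s$ gives a real solution with $k\neq0$, hence a rational solution and, after clearing denominators, an integral one. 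Concretely, I would use the $\BQ$-subspace of $\Lie(S)\otimes\BQ$ cut out by the rational linear equations ``all weights on $\widehat T/\BC u$ are equal''; it contains $X_s$ up to scaling, so it contains a rational point on which $\chi_2-\chi_1$ is nonzero. The resulting one-parameter subgroup $\BC^*\to S\subset GL(U)$ fixes $[u]$ and acts on $T_{[u]}Z$ by a nonzero scalar character, which can be normalized to weight $1$. This is exactly a $\BC^*$-action of Euler type at $[u]$. Since $[u]$ is general, $Z$ is Euler-symmetric in the sense of Definition~\ref{euler_symdefn}.
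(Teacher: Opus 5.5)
Your proof is correct and follows the paper's argument. The steps match: choose $[u]$ general with $\theta(h)(u)\neq0$, combine \eqref{eq:sym} with the description of $\partial^{-1}(\Xi_Z')$ to see that $h(u)$ acts on $T_{[u]}Z$ by the nonzero scalar $-\theta(h)(u)/2$, pass to the semisimple part, and take a suitable $\BG_m$ inside the torus it generates.

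Your rational-linear-system step is more laborious than necessary. The phrase ``rescaling $X_s$ gives a real solution'' is not literally true; your $\BQ$-subspace argument is the correct version. The paper avoids this step because the Zariski closure of $\exp(\BC X_s)$ automatically acts on $\widehat T_{[u]}Z/\BC u$ through a single character, since $X_s$ acts there by the scalar $c/2$. That closure therefore acts on $T_{[u]}Z$ by a nontrivial character, and any general cocharacter of it works.
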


\begin{proof}
	Suppose $\theta(h) \neq 0$ for some $h \in \partial^{-1}(\Xi_{Z}')$. Let $[u] \in Z$ be a general point. Recall from \eqref{eq:sym} that 
$h(u,u') + h(u',u)
=\theta(h)(u)u' + \theta(h)(u')u,
$
for any $u' \in \widehat{T}_{[u]}Z$.
	Combining with Corollary \ref{description_preimages}, we obtain
	\[
	2h(u,u') - \theta(h)(u)u' \in \BC u.
	\]
Thus $h(u)$ acts on $T_{[u]}Z$ by the scalar
$-\theta(h)(u)/2$ which is nonzero by assumption.
 Consequently, its semisimple part $h(u)_s$ is also nonzero and acts on $T_{[u]}Z$ by the same nonzero scalar.
	Now consider the one-parameter subgroup $\exp(\BC h(u)_s) \subset \Aut(\widehat{Z})$. Its Zariski closure is a subtorus that fixes $[u]$ and acts on $T_{[u]}Z$ via a nontrivial character. Choosing a general $\BG_m$-subgroup of this torus yields a one-parameter subgroup that fixes $[u]$ and acts on $T_{[u]}Z$ by scalar multiplication. Hence $Z \subset \BP U$ is Euler-symmetric in the sense of Definition \ref{euler_symdefn}.
\end{proof}
For the converse, we recall some key properties of Euler-symmetric varieties.

\begin{thm}[{\cite{euler_sym}}]\label{euler_sym}
	Let $Z \subset \BP U$ be an Euler-symmetric variety of dimension $n$. For a general point $z = [u] \in Z$, there exists a subgroup $\BC^{*} \subset \Aut(\widehat{Z})$ whose action on $U$ yields a weight decomposition
	$$
	U = U_{-2} \oplus U_{-1} \oplus U_{0} \oplus \cdots \oplus U_{r-2},
	$$
	where $U_{k}$ denotes the weight subspace of weight $k$, with $U_{-2} = \BC u$ and $U_{-2} \oplus U_{-1} = \widehat{T}_{[u]}Z$. Moreover, there exists an abelian subalgebra $\fn_{1} \subset \hfg$ satisfying:
	\begin{itemize}
		\item[(1)] $\fn_{1}$ is $\BC^{*}$-invariant and has weight $1$;
		\item[(2)] $\fn_{1}.U_{k} \subset U_{k+1}$ for every $k$, so that one obtains an iteration map
		$$
		f_{k} \colon \Sym^{k}(\fn_{1}) \longrightarrow \Hom(U_{-2}, U_{k-2}), \qquad f_{k}(X^{k})(u) = X^{k}(u).
		$$
		The map $f_{1}$ is an isomorphism, and $f_{k}$ is surjective for every $k \geq 1$.
	\end{itemize}
\end{thm}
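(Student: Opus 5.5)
The plan is to produce the grading from a single Euler-type $\BC^*$-action at a general point. The abelian subalgebra $\fn_1$ will then be extracted from the weight-one part of $\hfg$, and the claims on $f_k$ will be read off from the osculating filtration via Proposition~\ref{gau_fund}.

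\emph{Step 1: the grading.} Fix a general $z=[u]\in Z$ and a $\BC^*\subset \operatorname{GL}(U)$ of Euler type at $z$, as in Definition~\ref{euler_symdefn}. Since $\BC^*$ preserves $\widehat Z$, it lies in $\Aut(\widehat Z)$. Twisting by a homothety, and passing to a finite cover of $\BC^*$ if necessary, I normalize the action so that it has weight $-2$ on $\BC u$ and acts on $T_zZ=\Hom(\BC u,\widehat T_zZ/\BC u)$ by weight $1$. Because $\BC^*$ fixes $z$, it preserves every osculating space $T^{(k)}_zZ$. By Proposition~\ref{gau_fund}(3), the $k$-th fundamental form $\Pi_{k,z}\colon \Sym^kT_zZ\to \CO_Z(1)_z\otimes T^{(k)}_zZ/T^{(k-1)}_zZ$ is surjective and $\BC^*$-equivariant. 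Hence $\BC^*$ acts on $T^{(k)}_zZ/T^{(k-1)}_zZ$ purely by weight $k-2$. A splitting by weight spaces then gives
\[
U=U_{-2}\oplus U_{-1}\oplus\cdots\oplus U_{r-2},\qquad T^{(k)}_zZ=\textstyle\bigoplus_{j\le k-2}U_j ,
\]
and $T^{(r)}_zZ=U$ by non-degeneracy. In particular $U_{-2}=\BC u$ and $U_{-2}\oplus U_{-1}=\widehat T_zZ$.

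\emph{Step 2: constructing $\fn_1$ (the main obstacle).} Decompose $\hfg=\bigoplus_m\hfg_m$ by the adjoint $\BC^*$-weights. Every $A\in\hfg_1$ maps $U_k$ into $U_{k+1}$, and in particular sends $u$ into $U_{-1}$. The difficulty is to show that the map $\hfg_1\to U_{-1}\cong T_zZ\otimes\CO_Z(1)_z$, $A\mapsto A(u)$, is surjective. For this I would use Euler-symmetry at nearby points. For $v\in U_{-1}$, take an arc $y(s)=[u+sv+O(s^2)]$ in $Z$ and Euler-type generators $E_{y(s)}\in\hfg$ chosen holomorphically in $s$. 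This is possible after restricting to the open set where such $\BC^*$ exist, since they form an algebraic family in $\hfg$. Differentiating the relation ``$E_{y(s)}$ scales $\widehat y(s)$ by $-2$'' at $s=0$ and projecting $\tfrac{d}{ds}E_{y(s)}|_{s=0}$ to the weight-one component, I expect to obtain $A_v\in\hfg_1$ with $A_v(u)\equiv c\,v$ for a nonzero constant $c$. I would then set $\fn_1:=\{A_v\}$, choosing a complement of $\ker(\hfg_1\to U_{-1})$; this makes $f_1$ an isomorphism. Whether the differentiation really lands in weight one with a nonzero constant is the step I am least sure of. If it fails, the fallback is to construct $A_v$ as the infinitesimal translation on the open Bia{\l}ynicki-Birula cell of $z$, which is $\BC^*$-equivariantly isomorphic to $T_zZ$, and check that it extends linearly using linear normality.

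\emph{Step 3: commutativity and iteration.} For $A,B\in\fn_1$, the bracket $[A,B]$ has weight $2$. By the symmetry of $\Pi_{2,z}$ (Proposition~\ref{gau_fund}(3)) together with $A(B(u))\equiv \Pi_2(A,B)$, we get $[A,B](u)=0$. Then a weight-two element of $\hfg$ killing $u$ and preserving $\widehat Z$ should vanish, by evaluating at the generic points $\exp(\fn_1)\cdot u$; alternatively, one can replace $\fn_1$ by a maximal abelian choice. Finally, $X^k(u)\in U_{k-2}$ reduces modulo $T^{(k-1)}_zZ$ to $\Pi_{k,z}(X^k)(u)$ by Proposition~\ref{gau_fund}(3). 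Since $\Pi_{k,z}$ is surjective onto $U_{k-2}$, the map $f_k$ is surjective for all $k\ge1$, which completes the plan.
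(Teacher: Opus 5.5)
Your proposal is correct in outline but takes a genuinely different route from the paper. The paper gives no intrinsic argument. It imports the structure theorem \cite[Theorem~3.7]{euler_sym}, which realizes an Euler-symmetric $Z$ as the closure of an orbit of a vector group $W=\BG_a^n\subset\Aut(\widehat Z)$ acting linearly on $U$ through the symbol system of fundamental forms at $z$. It then sets $\fn_1=\Lie(W)$, so the weight-one property, commutativity and surjectivity of the $f_k$ are built into that explicit model. You instead work inside $\hfg$ starting only from the Euler-type $\BC^*$. The grading comes from equivariance of the $\Pi_{k,z}$, surjectivity of $\hfg_1\to U_{-1}$ comes from a first-order deformation of the isotropy, and the $f_k$ again come from the fundamental forms. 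The citation is shorter and supplies the explicit normal form. Your route is self-contained and, once completed, proves more: $\hfg_k=0$ for $k\ge2$ (the lemma the paper proves right afterwards), $\hfg_1\to U_{-1}$ injective and hence $\fn_1=\hfg_1$ canonically (any $K\in\hfg_1$ with $Ku=0$ commutes with $\fn_1$, so it kills every $X^ku$), and openness of the $\exp(\fn_1)$-orbit that Lemma~\ref{const_euler_inverse} later quotes from \cite{euler_sym}.

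Some steps need tightening. In Step~2 you do not need Euler-type generators at nearby points; generators of $\BC^*$-subgroups do not form an algebraic family, because of integrality. Near a general $z$ the isotropy subalgebras $\{A\in\hfg\mid A\hat y\in\BC\hat y\}$ have locally constant dimension (this is the bundle $\widehat{\CP}$ of Definition~\ref{hatP}). So $H$ extends to a holomorphic family $E(s)$ with $E(s)\hat y(s)=\chi(s)\hat y(s)$, $\hat y(0)=u$, $\hat y'(0)=v+cu$. Differentiating gives $E'(0)u=\chi'(0)u-v$, so the weight-one component of $E'(0)$ sends $u$ to exactly $-v$: the step you doubted works.

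In Step~3, $[A,B]u=0$ is immediate from weights, since $\hfg_m u\subset U_{m-2}\cap\widehat T_zZ=0$ for $m\ge2$; the second fundamental form is not needed. To get $[A,B]=0$, take $C\in\hfg_m$ with $m\ge2$ and $X\in\fn_1$. Every term of $e^{-t\,\ad X}C$ lies in some $\hfg_j$ with $j\ge2$ and so kills $u$. Hence $C\exp(tX)u=\exp(tX)\,(e^{-t\,\ad X}C)\,u=0$, and as $t$ and $X$ vary these vectors span $\bigoplus_k U_{k-2}=U$. You should therefore prove surjectivity of $f_k$ before commutativity; it does not use commutativity. The congruence $X^ku\equiv\Pi_{k,z}(\overline{Xu}^{\,k})(u)$ modulo $T^{(k-1)}_zZ$ is Fa\`a di Bruno applied to $t\mapsto\exp(tX)u$, where only the top-order term survives; it is not literally Proposition~\ref{gau_fund}(3).

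Drop the ``maximal abelian choice'' fallback. A maximal abelian subalgebra of $\hfg_1$ need not surject onto $U_{-1}$, so $f_1$ could fail to be an isomorphism.
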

\begin{proof}
	The existence of the $\BC^{*}$-subgroup follows from the construction in~\cite[Theorem~3.7]{euler_sym}. Moreover, loc.~cit.~constructs a vector group action on $Z$
	arising from a linear representation $W=\BG_{a}^{n}\subset \Aut(\hat{Z}).$
	Setting $\fn_{1}=\Lie(W),$ the stated properties follow directly from the construction.
\end{proof}

\begin{lem}
	Let $Z \subset \BP U$ be as in Theorem~\ref{euler_sym}, and consider the adjoint action of $\BC^{*}$ on $\hfg$ with weight space decomposition $\hfg = \bigoplus_{k} \hfg_{k}$. Then $\hfg_{k} = 0$ for every $k \geq 2$.
\end{lem}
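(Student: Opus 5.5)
The plan is to use the graded structure coming from the $\BC^*$-action together with the fact that $\hfg$ preserves the affine cone $\widehat Z$, hence preserves the osculating flag at the fixed point. Let $A\in\hfg_k$ with $k\ge 2$; by definition $A(U_j)\subset U_{j+k}$ for every $j$. I want to show $A=0$, and I will do this by showing that $A$ kills each $U_j$, inducting on $j$ starting from $U_{-2}=\BC u$.

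\textbf{Step 1: $A$ kills $U_{-2}$ and $U_{-1}$.} Since $A\in\hfg=\aut(\widehat Z)$, the definition \eqref{fautdef} gives $A(u)\in \widehat T_{[u]}Z=U_{-2}\oplus U_{-1}$. But $A(u)$ also lies in $U_{k-2}$ with $k-2\ge 0$, so $A(u)=0$. Next take $u'\in U_{-1}$. By Theorem~\ref{euler_sym}(2), $f_1$ is an isomorphism, so $u'=X(u)$ for some $X\in\fn_1$. Then
\[
A(u')=AX(u)=[A,X](u)+XA(u)=[A,X](u).
\]
Here $[A,X]\in\hfg_{k+1}\subset\hfg$, so by the same argument as before $[A,X](u)\in U_{k-1}\cap\widehat T_{[u]}Z=0$.

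\textbf{Step 2: induction on the degree.} By the surjectivity of the maps $f_m$ in Theorem~\ref{euler_sym}(2), every $U_{m-2}$ is spanned by vectors of the form $X_1\cdots X_m(u)$ with $X_i\in\fn_1$. Commuting $A$ past the $X_i$ one at a time gives
\[
A X_1\cdots X_m u=\sum_{i} X_1\cdots X_{i-1}[A,X_i]X_{i+1}\cdots X_m u + X_1\cdots X_m A u .
\]
The last term vanishes because $Au=0$. Each element $[A,X_i]$ lies in $\hfg_{k+1}$, which again has degree at least $2$. So I will prove the stronger statement that every $B\in\hfg_{\ge2}$ annihilates $U_{m-2}$ for all $m$, by induction on $m$ simultaneously for all such $B$. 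Step 1 is the base case. For the inductive step, each term above is of the form $X_1\cdots X_{i-1}B'(w)$ with $B'=[A,X_i]\in\hfg_{\ge 2}$ and $w=X_{i+1}\cdots X_m u\in U_{m-i-2}$, a lower degree, so the induction hypothesis gives $B'(w)=0$. Since the pieces $U_j$ span $U$, this yields $A=0$.

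There is a subtlety in the inductive step: writing $AX_1\cdots X_m u$ as a sum of terms $X\cdots B'(\text{lower})$ terminates, because the degree of $w$ strictly decreases. The step I expect to need the most care is the base case, namely justifying $A(u)\in\widehat T_{[u]}Z$ for elements of $\hfg$ at the particular general point $u$. This is exactly the definition of $\aut(\widehat Z)$, so it should be fine; the only other point to confirm is that the $\BC^*$-action from Theorem~\ref{euler_sym} normalizes $\hfg$, which holds because $\BC^*\subset\Aut(\widehat Z)$.
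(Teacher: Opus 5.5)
Your proof is correct and rests on the same two facts as the paper's proof. First, an element $A\in\hfg_k$ with $k\ge 2$ kills $u$ because $A(u)\in U_{k-2}\cap\widehat T_{[u]}Z=0$. Second, $U$ is generated from $u$ by $\fn_1$, via the surjectivity of the maps $f_m$.

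The only difference is bookkeeping. The paper takes $d\ge2$ maximal with $\hfg_d\neq0$, so that $[\hfg_d,\fn_1]\subset\hfg_{d+1}=0$ and $\hfg_d$ commutes directly past the $X_i$. You instead absorb the commutator terms $[A,X_i]\in\hfg_{\ge 2}$ by induction on degree, which is equally valid.
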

\begin{proof}
	Suppose otherwise, and let $d\ge2$ be maximal such that
$\widehat\fg_d\neq0$. Then $d \geqslant 2$. We have $\hfg_{d}.U_{k} \subset U_{k+d}$ for any $k$. It follows that
	$$
	\hfg_{d}.U_{-2} \subset U_{d-2} \cap \widehat{T}_{[u]}Z=0.
	$$
	On the other hand we have $[\hfg_{d},\hfg_{1}] \subset \hfg_{d+1}=0$ by assumption.  Since $\fn_{1} \subset \hfg_{1}$ by Theorem \ref{euler_sym}(1), the action of $\hfg_{d}$ commutes with $\fn_{1}$, we deduce that
	$$
	\hfg_{d}.U_{k-2}=\hfg_{d}.(f_{k}(\Sym^{k}\fn_{1}). U_{-2})=\Sym^{k}\fn_{1}.(\hfg_{d}.U_{-2})=0,
	$$
	where the first equality follows from Theorem \ref{euler_sym}(2). Thus $\widehat\fg_d$ acts trivially on $U$, so
$\widehat\fg_d=0$, a contradiction.
\end{proof}
Next we prove the converse of Lemma~\ref{iml} by constructing explicitly an element
$h\in \partial^{-1}(\Xi_{Z}^{0})$ such that $\theta(h)\neq 0.$

\begin{lem}\label{const_euler_inverse}
	Let $Z\subset\BP U$ be as in Theorem~\ref{euler_sym}, and assume that $Z$ is smooth. Let $H_u\in\hfg$ be a nonzero infinitesimal generator of the $\BC^*$-subgroup, so that
$
H_u|_{U_k}=k\cdot\id_{U_k}
$
for all $k$. Define
$$
h:U\to\hfg,\qquad
\lambda u+X(u)+\sum_{k=0}^{r-2}v_k
\longmapsto
\lambda H_u+[X,H_u],
$$
where $X\in\fn_1$, $u\in U_{-2}$, and $v_k\in U_k$. Then $h$ is $\fn_1$-invariant, lies in $\partial^{-1}(\Xi_Z^0)$, and satisfies $\theta(h)\neq0$.

\end{lem}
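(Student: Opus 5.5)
The plan is to exploit the vector group $W=\exp(\fn_1)\subset\Aut(\widehat Z)$ from Theorem~\ref{euler_sym}. It lets us verify everything at the single point $[u]$ and then spread the conditions over an open dense subset of $Z$.

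\emph{Step 1: $h$ is well defined.} First I will check that $h$ is a well-defined linear map into $\hfg$. Since $f_1\colon\fn_1\to\Hom(U_{-2},U_{-1})$ is an isomorphism, every element of $U_{-2}\oplus U_{-1}$ is uniquely $\lambda u+X(u)$ with $X\in\fn_1$. Hence $h$ is well defined and linear. It takes values in $\hfg$ because $H_u\in\hfg$ and $\fn_1\subset\hfg$.

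\emph{Step 2: $\fn_1$-invariance.} By invariance I mean $h(Y v)=[Y,h(v)]$ for all $Y\in\fn_1$ and $v\in U$. Since $\fn_1$ has weight $1$, we have $[H_u,X]=X$, so $[X,H_u]=-X$. I check the identity on each graded piece.
\begin{itemize}
\item For $v=u$: $h(Yu)=[Y,H_u]=[Y,h(u)]$ by definition.
\item For $v=X(u)\in U_{-1}$: $Y X(u)\in U_0$, so $h(YX(u))=0$. On the other side, $[Y,h(X u)]=[Y,-X]=0$ because $\fn_1$ is abelian.
\item For $v\in U_k$ with $k\ge0$: $Yv\in U_{k+1}$, and both sides vanish.
\end{itemize}
Integrating, $h(g v)=\Ad(g)h(v)$ for every $g\in W$.

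\emph{Step 3: the conditions at $u$.} By Corollary~\ref{description_preimages}, membership in $\partial^{-1}(\Xi^0_Z)$ means two things. First, $h(w)(w)\in\BC w$ for all $w\in\widehat Z$. Second, $h(w)(w')=h(w')(w)$ for all $w'\in\widehat T_{[w]}Z$. At $w=u$ we have $h(u)(u)=H_u u=-2u$. For $w'=\lambda u+X(u)$ one computes
\[
h(u)(w')=H_u(\lambda u+Xu)=-2\lambda u-Xu,
\]
and
\[
h(w')(u)=\lambda H_u u+[X,H_u]u=-2\lambda u-Xu.
\]
These agree, so both conditions hold at $u$. The same computation gives $\theta(h)(u)=-2\neq0$, hence $\theta(h)\neq0$.

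\emph{Step 4: transport to an open set and extend.} For $g\in W$, Step~2 gives $h(gu)(gw')=g\,h(u)g^{-1}gw'=g\bigl(h(u)w'\bigr)$, and $g$ maps $\widehat T_{[u]}Z$ onto $\widehat T_{[gu]}Z$. So both conditions hold at every point of the orbit $W\cdot[u]$. Both conditions are also homogeneous in the scaling of $w$.

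The main obstacle is extending from this orbit to all of $Z$. The $W$-orbit of $[u]$ is open by the construction in \cite{euler_sym}: $W$ is a vector group acting with an open orbit, and $f_1$ is an isomorphism. The conditions $h(w)(w)\wedge w=0$ and $h(w)(w')-h(w')(w)=0$ for $w'\in\widehat T_{[w]}Z$ are closed conditions on the smooth variety $Z$, since $\widehat T$ is a vector bundle there. Therefore they hold on all of $Z$, which gives $h\in\partial^{-1}(\Xi_Z^0)$.

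The only delicate point besides this extension is keeping the sign conventions straight, namely $[X,H_u]=-X$ and the weight $-2$ on $U_{-2}$.
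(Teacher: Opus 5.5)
Your proof is correct and follows essentially the same route as the paper: establish $\fn_1$-invariance, check $\partial(h)(u,\widehat T_{[u]}Z)=0$ and $h(u)(u)=-2u$ at the base point, transport these conditions along the open $W=\exp(\fn_1)$-orbit, and extend to all of $Z$ by closedness on the smooth variety $Z$. The one minor difference is how you kill $[Y,[X,H_u]]$: you use $[X,H_u]=-X$ together with the abelianness of $\fn_1$, whereas the paper invokes the preceding lemma that $\hfg_k=0$ for $k\ge 2$, so your version does not need that lemma at this point.
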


\begin{proof}
	For any
	$
	Y\in \fn_{1}
	$
	and
	$
	v=\lambda u+X(u)+\sum_{k=0}^{r-2}v_{k}\in U,
	$
	we compute
	\[
	[Y,h(v)]
	=
	[Y,\lambda H_{u}+[X,H_{u}]]
	=
	\lambda [Y,H_{u}],
	\]
	where 
	$
	[Y,[X,H_{u}]]=0
	$
	since
	$
	H_{u} \in \fg_{0}
	$
	and
	$
	[Y,[X,H_{u}]] \in \fg_{2}=0
	$
	from the above Lemma.
	On the other hand,
	\[
	Y(v)
	=
	\lambda\,Y(u)
	+
	Y(X(u))
	+
	\sum_{k=1}^{r-1}Y(v_{k-1}),
	\]
	hence
	$
	h(Y(v))
	=
	[\lambda Y,H_{u}]
	=
	[Y,h(v)].
	$
	Thus
	$
	h
	$
	is
	$
	\fn_{1}
	$
	-invariant.
	
	Since
	$
	H_{u}|_{U_{k}}=k \cdot \id,
	$
	we also have
	$
	ad(H_{u})|_{\fg_{k}}=k \cdot \id
	$
	for all
	$
	k.
	$
	Then for any
	$
	v=\lambda u+X(u)\in \hat{T}_{[u]}Z,
	$
	\[
	\partial(h)(u,v)
	=
	\partial(h)(u,X(u))
	=
	H_u(X(u))-[X,H_{u}](u)
	=
	-X(u)-(-X(u))
	=
	0.
	\]
	As $h$
is
$\fn_{1}$-invariant, it is invariant under
$W=\exp(\fn_{1}),$
which acts on $Z$
with an open orbit by~\cite[Theorem~3.7]{euler_sym}. Hence
$
\partial(h)(u',\widehat T_{[u']}Z)=0
$
along the open $W$-orbit. Since
$
\zeta\circ\partial(h)
$
in \eqref{Spencer} is a global section of vector bundle on the smooth variety $Z$, it vanishes identically. Then similarly $
\eta\circ\partial(h)=0
$ and thus
$
h\in\partial^{-1}(\Xi_Z^0).
$
Finally, $$
h(u)(u)
=H_u(u)=
-2u
\neq0,
$$
and hence
$
\theta(h)\neq0.
$
\end{proof}
We now complete the proof of Proposition \ref{ext_seq}.
\begin{proof}[Proof of Proposition \ref{ext_seq}]
	(1) follows from the preceding lemmas. For (2), we consider the restriction of \eqref{exac_seq2} to $\partial^{-1}(\Xi_Z^0)$. Suppose that $\theta$ is nonzero in~\eqref{exac_seq2}. Then by part~(1), $Z$ is Euler-symmetric. Hence, by Lemma~\ref{const_euler_inverse}, there exists $h \in \partial^{-1}(\Xi_Z^0)$ such that $\theta(h) \neq 0$. This proves the claim. 
	
	For (3), assume that $h \in \widetilde{U} \cap \partial^{-1}(\Xi_Z^0)= \ker(\theta) \cap \partial^{-1}(\Xi_Z^0).$
	By definition, we have $h(u,u') = h(u',u)$ and $\theta(h)(u)=0$ for any $u \in \hat{Z}$ and $u' \in \hat{T}_{[u]}Z$. Combining this with \eqref{eq:sym}, we obtain $h(u,u')=0$. This proves the first claim.
	Finally, take any $h \in \widetilde{U} \cap \partial^{-1}(\Xi_Z^0)$ such that $\partial(h)=0$. Then
	$
	h \in \Sym^2 U^{\vee} \otimes U \;\cap\; \Hom(U,\hfg),
	$
	hence $h \in \hfg^{(1)}$, the first prolongation of the Lie subalgebra $\hfg \subset \fg \fl(U)$. By \cite[Proposition 2.1.3]{HM05}, we must have $h=0$, since $h(u,u)=\theta(h)(u)=0$ for any $u \in \widehat{Z}$.
\end{proof}

\subsection{Flag varieties}
We now specialize the preceding discussion to flag varieties. In this setting, the bottom kernel
$
\widetilde{U}\cap \partial^{-1}(\Xi_Z^0)
$
vanishes and we have a complete description of the decomposition $\partial^{-1}(\Xi_{Z}')$, see Proposition \ref{HX} and Corollary \ref{cor_end}.

We fix the following notation. Set $G = \Aut^{0}(Z)$, $\fg = \Lie(G)$ and
$
\hfg = \fg \oplus \BC\, \mathrm{id}_{U}.
$
Accordingly, for any $h \in \Hom(U,\hfg)$, we write
$
h = h^{*} + h^{\circ},
$
where $h^{*} \in \Hom(U,\fg)$ and $h^{\circ} \in \Hom(U,\BC id_{U})$.

We decompose $G = \prod_{\nu=1}^{r} G_{[\nu]}$ into simple factors, and accordingly
\[
\fg = \bigoplus_{\nu=1}^{r} \fg_{[\nu]}, 
\qquad 
\lambda = \sum_{\nu=1}^{r} \lambda_{[\nu]}, 
\qquad 
U_{\lambda} = \bigotimes_{\nu} U_{\lambda_{[\nu]}},
\qquad
Z = \prod_{\nu=1}^{r} Z_{\nu}
= \prod_{\nu=1}^{r} G_{[\nu]}/P_{[\nu]},
\]
where $P_{[\nu]}$ is the parabolic subgroup corresponding to $\lambda_{[\nu]}$. We set $\fp_{[\nu]} = \Lie(P_{[\nu]})$ and write $u_{\lambda} = \bigotimes_{\nu} u_{\lambda_{[\nu]}}$ for a highest weight vector. Finally, for each pair $(\fg_{[\nu]}, \fp_{[\nu]})$, we consider the associated parabolic grading in Definition \ref{para_grad}:
\[
\fg_{[\nu]} = \bigoplus_{k=-d_{\nu}}^{d_{\nu}} \fg_{[\nu,k]}.
\]

We now identify the subsheaves of
$
E(\hfg)=\hfg\otimes\CO
$
introduced in Section~4.1 as homogeneous subbundles. We first treat $\widehat{\CP}$ and $\widehat{\CP'_0}$:
\begin{lem}\label{bundle_CP} Assume that $Z \not= \BP U$.
	Then $\widehat{\CP}$ and $\widehat{\CP_{0}'}$ are homogeneous subbundles represented by
	\[
	\widehat{\CP}
	\cong
	E(\fp \oplus \BC\,\mathrm{id}_{U}),
	\qquad
	\widehat{\CP'_0}
	\cong
	E(\ker(\phi_{\fp})),
	\]
	where
	$
	\phi_{\fp}:\fp \rightarrow \End(\fg/\fp)
	$
	is the isotropy representation defined in~\eqref{phip_defn}. Moreover,
	\[
	\ker(\phi_{\fp})
	=
	\bigoplus_{\nu}\fg_{[\nu,d_{\nu}]}.
	\]
\end{lem}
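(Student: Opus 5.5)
Both subbundles are $G$-invariant subsheaves of the trivial bundle $E(\hfg)$. By homogeneity, each is determined by its fiber at the base point $z=[u_\lambda]$, and we identify that fiber as a $P$-submodule of $\hfg$. Here $P$ acts on $\hfg$ by the adjoint action, which is how a trivial bundle with a $G$-action becomes $E(\hfg|_P)$. The plan is therefore to compute the two fibers explicitly and then compute $\ker(\phi_\fp)$ factor by factor.

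For $\widehat{\CP}$, Definition~\ref{hatP} gives the fiber $\{A\in\hfg\mid A(u_\lambda)\in\BC u_\lambda\}$. Write $A=X+c\,\id_U$ with $X\in\fg$, which is possible since $\hfg=\fg\oplus\BC\,\id_U$. Then $A(u_\lambda)\in\BC u_\lambda$ exactly when $X$ lies in the stabilizer of the line $[u_\lambda]$, and that stabilizer is $\fp$. Hence the fiber is $\fp\oplus\BC\,\id_U$.

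We will also need the hypothesis $Z\neq\BP U$, for two reasons. First, it is what makes $\widehat{\CP}$ coincide with $\ker\widehat\Psi(-1)$ as a subbundle. Second, it keeps $\hfg$ equal to $\fg\oplus\BC\,\id$, since otherwise $\aut(\hat Z)=\fg\fl(U)$. Since $\widehat\Psi$ is $G$-equivariant, it has constant rank, so its kernel is a genuine subbundle, namely $E(\fp\oplus\BC\,\id_U)$.

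For $\widehat{\CP'_0}$, Lemma~\ref{lem_2.9} gives the fiber $\{A\in\hfg\mid A(u_\lambda)=0,\ A(\widehat T_zZ)\subset\BC u_\lambda\}$. Again write $A=X+c\,\id_U$. The first condition forces $X\in\fp$ and $c=-\lambda'(X)$, where $\lambda'$ denotes the character through which $\fp$ acts on $\BC u_\lambda$. By \eqref{iden}, $\widehat T_z Z=\BC u_\lambda\oplus\fn_\fp^-.u_\lambda$. For $Y\in\fn^-_\fp$ we compute
\[
A(Y u_\lambda)=[X,Y]u_\lambda+Y X u_\lambda+cYu_\lambda=[X,Y]u_\lambda .
\]
So the second condition becomes $[X,Y]u_\lambda\in\BC u_\lambda$ for all $Y$, which says $[X,Y]\in\fp$, i.e.\ $\phi_\fp(X)=0$. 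Conversely, every $X\in\ker\phi_\fp$ yields a unique such $A$, namely $A=X-\lambda'(X)\id_U$. The map $A\mapsto X$ is therefore a $P$-equivariant isomorphism from the fiber onto $\ker(\phi_\fp)$, and $\widehat{\CP'_0}\cong E(\ker\phi_\fp)$.

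Finally we compute $\ker\phi_\fp$. Here $\fp=\bigoplus_\nu\fp_{[\nu]}$ and $\fg/\fp=\bigoplus_\nu\fg_{[\nu]}/\fp_{[\nu]}$, and $\phi_\fp$ is the direct sum of the maps $\phi_{\fp_{[\nu]}}$, because distinct simple factors commute. Hence $\ker\phi_\fp=\bigoplus_\nu\ker\phi_{\fp_{[\nu]}}$, and Lemma~\ref{ker_isotropy} identifies each summand with $\fg_{[\nu,d_\nu]}$.

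There is one degenerate case to handle: a factor with $Z_\nu$ a point cannot occur, because $\lambda_{[\nu]}\neq0$ for every simple factor $\nu$ of $\Aut^0(Z)$. The main obstacle I expect is the identification of the sheaf-theoretic kernels, defined via $\widehat\Psi$ and $\Theta$, with their pointwise fibers. I will handle it by equivariance: the maps have constant rank, so the kernels are subbundles whose fibers are the pointwise kernels computed above.
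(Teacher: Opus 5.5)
Your proposal is correct and is essentially the paper's argument. Both identify the fibers at $[u_\lambda]$ by homogeneity, reduce the second condition on $(\widehat{\CP'_0})_{[u_\lambda]}$ to $\phi_{\fp}(X)=0$, and split $\ker(\phi_{\fp})$ over the simple factors via Lemma~\ref{ker_isotropy}. The only difference is that the paper notes that elements of $\bigoplus_\nu\fg_{[\nu,d_\nu]}$ are nilpotent, so your $\lambda'(X)$ vanishes and $(\widehat{\CP'_0})_{[u_\lambda]}=\ker(\phi_{\fp})\subset\fn_{\fp}$ holds literally inside $\hfg$; this literal form is the one used later. Your side remark that $Z\neq\BP U$ is needed to keep $\hfg=\fg\oplus\BC\,\id_U$ is inaccurate, since $\fg\fl(U)=\fs\fl(U)\oplus\BC\,\id_U$, but it plays no role in your proof.
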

\begin{proof}
	By Definitions~\ref{hatP} and~\ref{hatP'}, both
	$
	\widehat{\CP}
	$
	and
	$
	\widehat{\CP'_0}
	$
	are homogeneous bundles. Hence
	\[
	\widehat{\CP}
	\cong
	E(\widehat{\CP}_{[u_{\lambda}]}),
	\qquad
	\widehat{\CP'_0}
	\cong
	E(\widehat{\CP'_0}_{[u_{\lambda}]}).
	\]
	
	From Definition~\ref{hatP},
	$
	\widehat{\CP}_{[u_{\lambda}]}
	$
	is precisely the stabilizer of the line
	$
	\BC u_{\lambda},
	$
	hence
	$
	\widehat{\CP}_{[u_{\lambda}]}
	=
	\fp \oplus \BC\,\mathrm{id}_{U}.
	$
	
	Similarly, Lemma~\ref{lem_2.9} gives
	\[
	\widehat{\CP'_0}_{[u_{\lambda}]}
	=
	\{
	X \in \hfg
	\mid
	X(u_{\lambda})=0,
	\quad
	X(\widehat{T}_{[u_{\lambda}]}Z) \subset \BC u_{\lambda}
	\}.
	\]
	We claim that
	$
	\widehat{\CP'_0}_{[u_{\lambda}]}
	=
	\ker(\phi_{\fp}).
	$
	Indeed, let
	$
	X \in \widehat{\CP'_0}_{[u_{\lambda}]},
	$
	and write
	\[
	X=X^{0}\cdot \mathrm{id}_{U}+X^{*},
	\qquad
	X^{*}=\sum_{\nu}X^{*}_{\nu},
	\]
	where
	$
	X^{*}_{\nu}\in \fp_{[\nu]}
	$
	for each $\nu$. Since
	$
	X(u_{\lambda})=0,
	$
	it induces an infinitesimal action on
	$
	T_{[u_{\lambda}]}Z.
	$
	
	Recall that
	\[
	T_{[u_{\lambda}]}Z
	\cong
	\fg/\fp
	\cong
	\bigoplus_{\nu}\fg_{[\nu]}/\fp_{[\nu]}.
	\]
	Since the identity
	$
	\mathrm{id}_{U}
	$
	acts trivially on
	$
	\fg/\fp,
	$
	the isotropy action of $X$ is induced by $X^{*}$ and is given by
	\[
	\phi_{\fp}(X^{*})
	=
	\sum_{\nu}
	\phi_{\fp_{[\nu]}}(X^{*}_{\nu})
	\in
	\bigoplus_{\nu}
	\End(\fg_{[\nu]}/\fp_{[\nu]}).
	\]
	By assumption,
	$
	X
	$
	annihilates
	$
	T_{[u_{\lambda}]}Z,
	$
	hence
	$
	\phi_{\fp}(X^{*})=0.
	$
	Therefore
	$
	\phi_{\fp_{[\nu]}}(X^{*}_{\nu})=0
	$
	for every $\nu$. By Lemma~\ref{ker_isotropy}, we obtain
	$
	X^{*}_{\nu}\in \fg_{[\nu,d_{\nu}]}
	$
	for all $\nu$. In particular,
	$
	X^{*}
	$
	is nilpotent, hence annihilates the highest weight line
	$
	\BC u_{\lambda}.
	$
	But since
	$
	X(u_{\lambda})=0,
	$
	we have also
	$
	X^{0}=0.
	$
	Therefore
	$
	X=X^{*}\in \ker(\phi_{\fp}),
	$
	proving the claim. The final assertion also follows from the above discussion.
\end{proof}
Next we describe the bundle
$
\widehat{\CP'},
$
together with the sequence \eqref{eq:2.4}. By Definition~\ref{hatP'}, $\widehat{\CP'}$ is a homogeneous bundle and we can write
$
\widehat{\CP'}=E((\widehat{\CP'})_{[u_{\lambda}]}).
$
Moreover the formula \eqref{adjoint_formula} gives
\begin{equation}\label{eq:2.9}
	(\widehat{\CP'})_{[u_{\lambda}]}
	=
	\left\{
	X \in \fp \oplus \BC\,\mathrm{id}_{U}
	\ \middle|\
	ad(X)|_{T_{[u_{\lambda}]}Z}
	=
	-\frac{1}{2}Tr(X|_{\BC u_{\lambda}})
	\cdot
	\mathrm{id}_{T_{[u_{\lambda}]}Z}
	\right\},
\end{equation}
where $X$ acts on
$
T_{[u_{\lambda}]}Z
$
through the isotropy representation.

By Definition~\ref{hatP}, after twisting by
\(\langle u_{\lambda}\rangle^{\vee}\), the sequence~\eqref{eq:2.4}
is given by the following sequence of \(P\)-modules:
\begin{equation}\label{local_P'}
	0
	\longrightarrow
	\bigl(\widehat{\CP'_0}\bigr)_{[u_\lambda]}
	\longrightarrow
	\bigl(\widehat{\CP'}\bigr)_{[u_\lambda]}
	\xrightarrow{\ \operatorname{Tr|}_{\BC u_\lambda}\ }
	\BC.
\end{equation}
In the next lemma we describe a $\fg_{0}$-equivariant section of $\operatorname{Tr}|_{\BC u_{\lambda}}$ whenever it is nonzero.

\begin{lem}\label{lem_section_tr}
	Keep the notation above.
	\begin{enumerate}
		\item
		The space
		$$
		(\ft\oplus \BC\,\mathrm{id}_{U})
		\cap
		\bigl(\widehat{\CP'}\bigr)_{[u_{\lambda}]}
		$$
		has dimension at most one. If it is nonzero, then it is a trivial
		$\fg_{0}$-module, and each of its nonzero elements acts on
		\(T_{[u_{\lambda}]}Z\) by a nonzero scalar.
		
		\item
		The map
		\[
		\operatorname{Tr}|_{\BC u_{\lambda}}
		:
		\bigl(\widehat{\CP'}\bigr)_{[u_{\lambda}]}
		\longrightarrow \BC
		\]
		is nonzero if and only if its restriction to
		$
		(\ft\oplus \BC\,\mathrm{id}_{U})
		\cap
		\bigl(\widehat{\CP'}\bigr)_{[u_{\lambda}]}
		$
		is nonzero.
	\end{enumerate}
\end{lem}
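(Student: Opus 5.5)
For (1), I will describe the toral part of $(\widehat{\CP'})_{[u_\lambda]}$ directly from~\eqref{eq:2.9}. Take $X=H+c\,\id_U$ with $H\in\ft$ and $c\in\BC$. Then $\id_U$ acts trivially on $T_{[u_\lambda]}Z\cong\fg/\fp=\fn_\fp^-$. The element $H$ acts on the root space $\fg_{-\alpha}$, for $\alpha\in\Phi^+$ with $\fg_{-\alpha}\not\subset\fp$, by the scalar $-\alpha(H)$. Also $\operatorname{Tr}(X|_{\BC u_\lambda})=\lambda(H)+c$. So membership in $(\widehat{\CP'})_{[u_\lambda]}$ means
\[
\alpha(H)=\tfrac12\bigl(\lambda(H)+c\bigr)\qquad\text{for every root }\alpha\text{ with }\fg_{-\alpha}\subset\fn_\fp^-.
\]
Write $s:=\tfrac12(\lambda(H)+c)$. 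Then $H$ acts on $T_{[u_\lambda]}Z$ by the scalar $-s$.

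Next I show that $s$ determines $X$. The roots $\alpha_i\in\Delta_Z$ lie in $\Phi^{\ge1}_Z$, so $\alpha_i(H)=s$ for $\alpha_i\in\Delta_Z$. For $\alpha_j\notin\Delta_Z$, choose $\alpha_i\in\Delta_Z$ adjacent to $\alpha_j$ in its simple factor; such an $\alpha_i$ exists unless the factor is trivial, and we use $Z\neq\BP U$ when needed. Then $\alpha_i+\alpha_j$ is a root of grading $1$, so $\alpha_i(H)+\alpha_j(H)=s$ and $\alpha_j(H)=0$. Walking along paths in the Dynkin diagram of each simple factor, one gets $\alpha_j(H)=0$ for all $\alpha_j\notin\Delta_Z$. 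Indeed, for any $\alpha_j\notin\Delta_Z$ there is a root of degree $1$ containing $\alpha_j$ whose support is a path ending at some $\alpha_i\in\Delta_Z$, with $\alpha_i$ its only $\Delta_Z$-node; the sum of $\alpha_k(H)$ over that path is $s$, and induction along the path gives zero for the non-$\Delta_Z$ nodes. Hence $H$ is determined by $s$, and then $c=2s-\lambda(H)$ is determined too. This gives a linear injection of the intersection into $\BC$, so it has dimension at most one, and a nonzero element must have $s\neq0$.

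For the $\fg_0$-triviality I argue as follows. $H$ vanishes on every simple root of $\fg_0$, since these are the $\alpha_j\notin\Delta_Z$. So $H$ is central in $\fg_0$, and $\id_U$ is central as well. Therefore $\fg_0$ acts trivially on the intersection.

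For (2), one direction is trivial. For the other, assume $\operatorname{Tr}|_{\BC u_\lambda}$ is nonzero on $(\widehat{\CP'})_{[u_\lambda]}$. This space is $\ft$-stable because $\ft$ acts on $T_{[u_\lambda]}Z$ compatibly with~\eqref{eq:2.9}. So it decomposes into $\ft$-weight spaces: its intersection with $\ft\oplus\BC\id_U$, plus root spaces $\fg_\alpha\cap(\widehat{\CP'})_{[u_\lambda]}$ for $\alpha\in\Phi^+\cup\Phi^0_Z$. The map $\operatorname{Tr}|_{\BC u_\lambda}$ is $\ft$-equivariant with target the trivial module, so it kills all nonzero-weight spaces. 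Concretely, a root vector $X_\alpha$ sends $u_\lambda$ to the $(\lambda+\alpha)$-weight space and has no diagonal component on $\BC u_\lambda$, so $\operatorname{Tr}(X_\alpha|_{\BC u_\lambda})=0$. Hence the trace is nonzero only on the zero-weight part, which is exactly the toral intersection.

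The main obstacle is the step in (1) showing $\alpha_j(H)=0$ for $\alpha_j\notin\Delta_Z$ in every simple factor. This needs a suitable degree-one root through each such node. I would take the lowest root of degree $1$ containing $\alpha_j$, namely the sum of simple roots along the Dynkin path from $\alpha_j$ to the nearest $\Delta_Z$-node. I would also check separately the case of several simple factors and the degenerate case of a factor with $\Delta_Z$ empty; the latter cannot occur, since $G$ acts effectively.
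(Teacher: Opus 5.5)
Your proof is correct, but both parts take a different route from the paper's.

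For (1), the paper does no root computation. It uses $(\widehat{\CP'_0})_{[u_\lambda]}=\ker(\phi_{\fp})\subset\fn_{\fp}$ (Lemmas~\ref{ker_isotropy} and~\ref{bundle_CP}), so the sequence~\eqref{local_P'} makes $\operatorname{Tr}|_{\BC u_\lambda}$ injective on the toral part. It gets $\fg_0$-triviality from $\phi_{\fp}([H^*,Y])=[\phi_{\fp}(H^*),\phi_{\fp}(Y)]=0$ together with $\ker(\phi_{\fp})\cap\fg_0=0$. You instead show directly, using the degree-one roots attached to Dynkin paths, that $H$ is $s$ times the grading element. Your first claim, that some node of $\Delta_Z$ is adjacent to $\alpha_j$, is false in general, but the path version you then give is right; the remark about $Z\neq\BP U$ is unnecessary. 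Your argument is self-contained and yields more. Since $\alpha(H)=s\deg(\alpha)$ for every root, a nonzero intersection forces all roots of positive degree to have degree one, i.e.\ $Z$ is Hermitian symmetric. The paper obtains this in Proposition~\ref{HX}(1) only via the classification of Euler-symmetric flag varieties.

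For (2), the paper uses the parabolic $\BZ$-grading on $\End(\fg/\fp)$ to show that the $\fn_{\fp}$-component of $X$ lies in $\ker(\phi_{\fp})=(\widehat{\CP'_0})_{[u_\lambda]}$, and then subtracts it. You use the $\ft$-weight decomposition of the $\ft$-stable space $(\widehat{\CP'})_{[u_\lambda]}$ and the $\ft$-invariance of the trace. Your version automatically disposes of the components of $X$ in the root spaces of $\fg_0$. The paper's decomposition $X=X^0\id_U+X_{\ft}+X_{\fn_{\fp}}$ passes over these; they can be removed there by the centrality argument of (1). The paper's version instead exhibits the discarded part explicitly as an element of $(\widehat{\CP'_0})_{[u_\lambda]}$, in line with the exact sequence~\eqref{eq:2.4} used afterwards.
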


\begin{proof}
	By Lemma~\ref{bundle_CP},
	$
	\bigl(\widehat{\CP'_0}\bigr)_{[u_{\lambda}]}
	=
	\ker(\phi_{\fp})
	\subset \fn_{\fp}.
	$
	Consequently,
	\[
	(\ft\oplus \BC\,\mathrm{id}_{U})
	\cap
	\bigl(\widehat{\CP'_0}\bigr)_{[u_{\lambda}]}
	=
	0.
	\]
	It follows from~\eqref{local_P'} that the restriction of
	\(\operatorname{Tr}|_{\BC u_{\lambda}}\) to
	$
	(\ft\oplus \BC\,\mathrm{id}_{U})
	\cap
	\bigl(\widehat{\CP'}\bigr)_{[u_{\lambda}]}
	$
	is injective. In particular, this space has dimension at most one.
	Suppose that it is nonzero, and choose a generator
	\[
	H=H^{0}\,\mathrm{id}_{U}+H^{*},
	\qquad
	H^{0}\in \BC,\quad H^{*}\in \ft.
	\]
	By injectivity,
	$
	\operatorname{Tr}(H|_{\BC u_{\lambda}})\neq 0.
	$
	Equation~\eqref{eq:2.9} then gives
	\[
	\phi_{\fp}(H^{*})
	=
	-\frac{1}{2}
	\operatorname{Tr}(H|_{\BC u_{\lambda}})
	\,\mathrm{id}_{\fg/\fp},
	\]
	so \(H\) acts on \(T_{[u_{\lambda}]}Z\simeq \fg/\fp\) by a
	nonzero scalar.	Moreover, for every \(Y\in \fg_{0}\),
	\[
	\phi_{\fp}([H^{*},Y])
	=
	[\phi_{\fp}(H^{*}),\phi_{\fp}(Y)]
	=
	0.
	\]
	Since
	\(\ker(\phi_{\fp})\cap \fg_{0}=0\), it follows that
	\([H^{*},Y]=0\) for all \(Y\in \fg_{0}\). Thus \(H^{*}\) lies in
	the center of \(\fg_{0}\).
	This proves~(1).
	
	For~(2), suppose that
	$
	\operatorname{Tr}(X|_{\BC u_{\lambda}})\neq 0
	$
	for some
	\(X\in \bigl(\widehat{\CP'}\bigr)_{[u_{\lambda}]}\).
	Write
	\[
	X
	=
	X^{0}\,\mathrm{id}_{U}
	+
	X_{\ft}
	+
	X_{\fn_{\fp}},
	\]
	where
	$
	X^{0}\in \BC,
	X_{\ft}\in \ft$ and $
	X_{\fn_{\fp}}\in \fn_{\fp}.
	$
	By~\eqref{eq:2.9}, the induced action on
	\(T_{[u_{\lambda}]}Z\simeq \fg/\fp\) satisfies
	\[
	\phi_{\fp}(X_{\ft}+X_{\fn_{\fp}})
	=
	-\frac{1}{2}
	\operatorname{Tr}(X|_{\BC u_{\lambda}})
	\,\mathrm{id}_{\fg/\fp}.
	\]
	
	Consider the grading
	$
	\fg/\fp\simeq \bigoplus_{k<0}\fg_{k}.
	$
	The endomorphism \(\phi_{\fp}(X_{\ft})\) preserves each graded
	piece, whereas \(\phi_{\fp}(X_{\fn_{\fp}})\) strictly raises the
	grading. Since their sum is a scalar endomorphism, the
	positive-degree component must vanish. Therefore
	$
	\phi_{\fp}(X_{\fn_{\fp}})=0
	$ and
	hence
	$
	X_{\fn_{\fp}}
	\in
	\ker(\phi_{\fp})
	=
	\bigl(\widehat{\CP'_0}\bigr)_{[u_{\lambda}]}.
	$
	Subtracting this element from \(X\), we obtain
	\[
	X^{0}\,\mathrm{id}_{U}+X_{\ft}
	\in
	(\ft\oplus \BC\,\mathrm{id}_{U})
	\cap
	\bigl(\widehat{\CP'}\bigr)_{[u_{\lambda}]}.
	\]
	Its restriction to \(\BC u_{\lambda}\) has the same nonzero trace
	as that of \(X\). Thus the restriction of
	\(\operatorname{Tr}|_{\BC u_{\lambda}}\) to the 
	one-dimensional space is nonzero.
\end{proof}

The following is the main result of this subsection.

\begin{prop}\label{HX}
	Let \(Z\subset \mathbb P U\) be a flag variety with
	\(Z\neq \mathbb P U\). Keep the notation above. Then:
	\begin{enumerate}
		\item
		In~\eqref{exac_seq2}, the map \(\theta\) is nonzero if and only if
		\(Z\) is a Hermitian symmetric space, equivalently if and only if $
		\Theta|_{\widehat{\CP'}(1)}\neq 0
		$
		in~\eqref{eq:2.4}. In this case, both
		\(\theta\) and \(\Theta|_{\widehat{\CP'}(1)}\) are surjective.
		
		\item
		In~\eqref{eq:2.5}, the morphism
		\[
		\widehat{\eta\circ\partial'}
		:
		\widehat{\CP'_0}(1)
		\longrightarrow
		\Omega_Z(1)
		\]
		is injective. More precisely, it is induced, after twisting by
		\(\langle u_{\lambda}\rangle^{\vee}\), by the inclusion of
		\(P\)-modules
		\[
		\bigoplus_{\nu}\fg_{[\nu,d_{\nu}]}
		\hookrightarrow
		\fn_{\fp}
		\cong
		(\fg/\fp)^{\vee},
		\]
		where the last identification is induced by the Killing form.
		
		\item
		The restricted map
		\[
		\eta\circ\partial'|_{\widetilde U}
		:
		\widetilde U
		\longrightarrow
		H^{0}(Z,\Omega_Z(1))
		\]
		is injective. Equivalently, in~\eqref{exac_seq3},
		$
		\widetilde U\cap \partial^{-1}(\Xi_Z^{0})=0.
		$
	\end{enumerate}
\end{prop}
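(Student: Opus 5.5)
We prove the three parts in order, all at the base point $[u_\lambda]$: every bundle involved is homogeneous, so each statement reduces to a computation with $P$-modules followed by Borel--Weil theory.

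\textbf{Part (1).} By Lemma~\ref{lem_section_tr}(2), $\operatorname{Tr}|_{\BC u_\lambda}$ in \eqref{local_P'} is nonzero if and only if there is $H=H^0\,\id_U+H^*$ with $H^*\in\ft$ acting on $\fg/\fp$ by a nonzero scalar.

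\emph{If $Z$ is Hermitian symmetric.} Then $Z=\prod Z_\nu$ and each grading $\fg_{[\nu]}$ has length $d_\nu=1$. We take $H^*$ to be the sum of the grading elements, scaled so that $\phi_\fp(H^*)=c\,\id$ with $c\neq 0$. Since $H^*(u_\lambda)=\lambda(H^*)u_\lambda$ is a multiple of $u_\lambda$, we can then solve \eqref{eq:2.9} for $H^0$.

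\emph{If $Z$ is not Hermitian symmetric.} Some factor has $d_\nu\ge2$, so $\fg/\fp$ contains both $\fg_{-1}$ and $\fg_{-2}$. An element of $\ft$ acts on $\fg_{-2}$ with eigenvalue twice its eigenvalue on $\fg_{-1}$, because $[\fg_{-1},\fg_{-1}]=\fg_{-2}$. A nonzero scalar action is therefore impossible.

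This establishes the equivalence with $\Theta|_{\widehat{\CP'}(1)}\neq0$. For $\theta$ we pass to global sections. When $\Theta\neq 0$, the morphism $\widehat{\CP'}(1)\to\CO(1)$ is a surjection of bundles with a $\fg_0$-equivariant splitting $\BC H$ at the base point. To lift sections, we show that the composite $H^0(\widehat{\CP'}(1))\to H^0(\CO(1))=U^\vee$ is a nonzero $\fg$-equivariant map; this suffices because $U^\vee$ is irreducible. For nonvanishing we invoke Proposition~\ref{ext_seq}(1) together with Lemma~\ref{const_euler_inverse}, since Hermitian symmetric spaces are Euler-symmetric. Conversely, if $\theta\ne0$ then $\Theta\ne0$ trivially.

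\textbf{Part (2).} By Lemma~\ref{bundle_CP}, $(\widehat{\CP'_0})_{[u_\lambda]}=\bigoplus_\nu\fg_{[\nu,d_\nu]}$. For $X$ in this space and $Y\in\fg$ (so that $u'=Y u_\lambda$), the formula of Lemma~\ref{etaparital'} gives
\[
2X(Yu_\lambda)=2[X,Y]u_\lambda+2Y Xu_\lambda=2[X,Y]u_\lambda .
\]
Here $[X,Y]$ lies in $\fg_{\ge0}=\fp$ when $Y\in\fg_{<0}$, and its $\BC u_\lambda$-component is $\lambda([X,Y]_\ft)$. This component is nonzero only when $Y$ lies in the opposite root space $\fg_{-d}$, where it is proportional to the Killing pairing $\langle X,Y\rangle$. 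The resulting map $X\mapsto\bigl(\overline Y\mapsto 2\lambda([X,Y])\bigr)$ is therefore the restriction of the Killing identification $\fn_\fp\cong(\fg/\fp)^\vee$, up to the nonzero factor $\lambda(H_\beta)$ coming from Lemma~\ref{faithful_isotropy}. Injectivity on each irreducible summand follows from that lemma.

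\textbf{Part (3).} Taking global sections of an injective bundle map preserves injectivity, so $\eta\circ\partial'|_{\widetilde U}$ is injective. For $h\in\widetilde U\cap\partial^{-1}(\Xi_Z^0)$, Proposition~\ref{ext_seq}(3) gives $h(u,u')=0$. Hence $\eta\circ\partial'(h)=0$, so $h=0$.

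The main obstacle is Part (2): one must carefully identify the pairing $X\mapsto \lambda([X,\cdot])$ with the Killing form, with the correct normalization and sign, especially across the several simple factors $\nu$.
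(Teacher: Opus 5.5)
Your argument is correct, but part (1) takes a different route from the paper.

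\textbf{Part (1).} The paper gets $\theta\neq0\Leftrightarrow Z$ Hermitian symmetric from Proposition~\ref{ext_seq}(1) together with the cited fact that a flag variety is Euler-symmetric exactly when it is Hermitian symmetric. It then obtains $\Theta|_{\widehat{\CP'}(1)}\neq0\Rightarrow\theta\neq0$ by turning the element of Lemma~\ref{lem_section_tr} into an Euler-type $\BC^*$-action, as in Lemma~\ref{iml}.

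You instead prove $\Theta|_{\widehat{\CP'}(1)}\neq0\Leftrightarrow Z$ Hermitian symmetric directly at the base point. When all $d_\nu=1$, grading elements supply the required torus element. When some $d_\nu\ge2$, an element acting by $c$ on $\fg_{[\nu,-1]}$ acts by $2c$ on $[\fg_{[\nu,-1]},\fg_{[\nu,-1]}]=\fg_{[\nu,-2]}\neq0$, so $c=0$. You should phrase this step that way, rather than speaking of ``the'' eigenvalue of an element of $\ft$ on $\fg_{-1}$. You then use the Euler-symmetric machinery only in the easy direction, to produce a nonzero global section. This avoids the external classification and makes the argument more self-contained.

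\textbf{Part (2).} The paper proves injectivity abstractly, applying Lemma~\ref{faithful_isotropy} factor by factor, and only asserts the identification with the Killing inclusion. Your explicit formula $X\mapsto(\overline{Y}\mapsto 2\chi([X,Y]))$, where $\chi$ is the character of $\fp$ on $\BC u_\lambda$, actually justifies the ``more precisely'' clause.

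The normalization issue you flagged closes as follows. The pairing $(X,Y)\mapsto\chi([X,Y])$ on $\fg_{[\nu,d_\nu]}\times\fg_{[\nu,-d_\nu]}$ is $\fg_0$-invariant, because $\chi$ vanishes on $[\fg_0,\fg_0]$. Since $\fg_{[\nu,d_\nu]}$ is irreducible, Schur's lemma makes this pairing $c_\nu$ times the Killing pairing, with $c_\nu\neq0$ by Lemma~\ref{faithful_isotropy}. Different constants $c_\nu$ on different factors only amount to a $P$-module automorphism of $\bigoplus_\nu\fg_{[\nu,d_\nu]}$, so the stated description holds.

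\textbf{Part (3).} Your argument is the same as the paper's.
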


\begin{proof}
	We first prove~(1). By Proposition~\ref{ext_seq},
	\(\theta\neq 0\) if and only if \(Z\) is Euler-symmetric. For flag varieties, being Euler-symmetric is equivalent to being Hermitian symmetric by~\cite[Example~3.13]{euler_sym}, proving the first equivalence.
	
	 For the second, first recall that $\theta$ in \eqref{exac_seq2} is obtained by taking global sections of $\Theta|_{\widehat{P'}(1)}$ in \eqref{eq:2.4}. Hence  $\theta$ is nonzero implies that $\Theta|_{\widehat{\CP'}(1)}$ is also nonzero. Conversely assume that
	\(\Theta|_{\widehat{\CP'}(1)}\neq 0\). By homogeneity, the fiber map at $[u_{\lambda}]$, that is 
	\(\operatorname{Tr}|_{\BC u_{\lambda}}\) in~\eqref{local_P'} is nonzero. Lemma~\ref{lem_section_tr} then yields an element
	\(H\in \hfg\) preserving the line \(\BC u_{\lambda}\) and acting on
	\(T_{[u_{\lambda}]}Z\) by a nonzero scalar. As in Lemma~\ref{iml}, the corresponding one-parameter subgroup defines an Euler-type \(\BC^{*}\)-action at \([u_{\lambda}]\). Since \(Z\) is homogeneous, it is Euler-symmetric, and hence \(\theta\neq 0\), proving the second equivalence.
	
	Finally, since the map \(\theta\) is \(\fg\)-equivariant and its target \(U^{\vee}\) is irreducible, it is surjective whenever it is nonzero. In this case,
	the map \(\Theta|_{\widehat{\CP'}(1)}\) is also surjective. This proves~(1).

(3) follows from (2) by taking global sections of $\widehat{\eta \circ \partial'}$. For~(2), by Lemma~\ref{etaparital'}, the morphism
$
\widehat{\eta\circ\partial'}
$
is induced by the following $P$-equivariant map:
\[
\Hom(\BC u_{\lambda},(\widehat{\CP'_0})_{[u_{\lambda}]})
\longrightarrow
\Hom(\BC u_{\lambda}\otimes \widehat{T}_{[u_{\lambda}]}Z/\BC u_{\lambda},\BC u_{\lambda}),
\qquad
h \longmapsto \big((u_{\lambda},\overline{u'})\mapsto 2h(u_{\lambda},u')\big).
\]

For the first claim, it suffices to prove that this map is injective. Let
$
h \in \Hom(\BC u_{\lambda},(\widehat{\CP'_0})_{[u_{\lambda}]})
$
have zero image. Equivalently,
$
h(u_{\lambda})
$
annihilates
$
\widehat{T}_{[u_{\lambda}]}Z.
$
Write
\[
h(u_{\lambda})
=
\sum_{\nu}h(u_{\lambda})_{\nu},
\qquad
h(u_{\lambda})_{\nu}\in \fg_{[\nu,d_{\nu}]},
\]
as in the Lemma \ref{bundle_CP}.
Recall from \eqref{iden} that
\[
\widehat{T}_{[u_{\lambda}]}Z
=
\BC u_{\lambda}
\oplus
\bigoplus_{\nu}
\fn_{\fp_{[\nu]}}^{-}\cdot u_{\lambda},
\]
on which each component
$
h(u_{\lambda})_{\nu}
$ of $h(u_{\lambda})$
acts trivially on
$
\bigoplus_{\nu'\neq \nu}
\fn_{\fp_{[\nu']}}^{-}\cdot u_{\lambda},
$
while its action on
$
\BC u_{\lambda}
\oplus
\fn_{\fp_{[\nu]}}^{-}\cdot u_{\lambda}
$
coincides with the isotropy action on the affine tangent space of
$
Z_{[\nu]}
$
at
$
[u_{\lambda_{[\nu]}}].
$
Since
$
h(u_{\lambda})
$
annihilates
$
\widehat{T}_{[u_{\lambda}]}Z,
$
it follows that each
$
h(u_{\lambda})_{\nu}\in \fg_{[\nu,d_{\nu}]}
$
annihilates the affine tangent space of
$
Z_{[\nu]}
$
at
$
[u_{\lambda_{[\nu]}}].
$
Hence, by Lemma~\ref{faithful_isotropy},
$
h(u_{\lambda})_{\nu}=0
$
for all $\nu$. 
Therefore
$
h(u_{\lambda})=0.
$
The last claim follows from $h(u_{\lambda})_{\nu} \in \fg_{[\nu,d_{\nu}]}$ for each $\nu$.
\end{proof}
As a corollary, we obtain a complete description of $\partial^{-1}(\Xi_{Z}')$ by comparing $\eta \circ \partial'$ and \eqref{exac_seq2}.
\begin{cor}\label{cor_end}
	Let $Z \subset \BP U$ be a flag variety with $Z \not= \BP U$. Then the morphism
	$
	\eta \circ \partial'
	:
	\partial^{-1}(\Xi_{Z}')
	\longrightarrow
	H^{0}(Z,\Omega_{Z}(1))
	$
	induces a decomposition
	\begin{equation}
		\partial^{-1}(\Xi_{Z}')
		=
		\partial^{-1}(\Xi_{Z}^{0})
		\oplus
		\widetilde{U},
	\end{equation}
	where:
	\begin{itemize}
		\item[(i)]
		$
		\ker(\eta \circ \partial')
		=
		\partial^{-1}(\Xi_{Z}^{0})
		\cong
		\operatorname{Im}(\theta),
		$
		which is either zero or isomorphic to $U^{\vee}$, and the latter case happens exactly when $Z$ is a Hermitian symmetric space;
		
		\item[(ii)]
			There is 
			an isomorphism of \(\mathfrak g\)-modules
			\[
			\widetilde{U} \cong 
			\bigoplus_{\nu =1}^{r} 
			H^{0}\!\left(Z_{[\nu]},\, 
			E\big(\langle u_{\lambda_{[\nu]}} \rangle^{\vee} \otimes \mathfrak g_{[\nu,d_{\nu}]}\big)
			\right)
			\otimes \!\!\bigotimes_{\nu' \neq \nu} U_{\lambda_{[\nu']}}^{\vee},
			\]
			where each nonzero summand is an irreducible \(\fg\)-module.
	\end{itemize}
\end{cor}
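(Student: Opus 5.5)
The argument combines the exact sequence \eqref{exac_seq2} with Proposition~\ref{HX}. By Proposition~\ref{HX}(3), the restriction of \(\eta\circ\partial'\) to \(\widetilde U=\ker(\theta)\) is injective. In particular \(\widetilde U\cap\ker(\eta\circ\partial')=0\).

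\emph{Identifying the kernel.} By the definitions, \(\ker(\eta\circ\partial')=\partial^{-1}(\Xi_Z^0)\), since \(\Xi_Z^0=\ker\eta\). Proposition~\ref{HX}(3) then gives \(\widetilde U\cap\partial^{-1}(\Xi_Z^0)=0\). Restricting \(\theta\) to \(\partial^{-1}(\Xi_Z^0)\) as in \eqref{exac_seq3} therefore yields an injection \(\partial^{-1}(\Xi_Z^0)\hookrightarrow U^\vee\). Its image is \(\operatorname{Im}(\theta|_{\partial^{-1}(\Xi_Z^0)})\). By Proposition~\ref{ext_seq}(2) and Proposition~\ref{HX}(1), this image is nonzero exactly when \(Z\) is Hermitian symmetric. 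The map \(\theta\) is \(\fg\)-equivariant and \(U^\vee\) is irreducible, so in that case the image is all of \(U^\vee\). In particular \(\operatorname{Im}(\theta|_{\partial^{-1}(\Xi_Z^0)})=\operatorname{Im}(\theta)\), both being either \(0\) or \(U^\vee\) under the same condition. This proves (i).

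\emph{The direct sum.} We have \(\partial^{-1}(\Xi_Z^0)\cap\widetilde U=0\). For \(h\in\partial^{-1}(\Xi_Z')\), the element \(\theta(h)\in\operatorname{Im}\theta=\theta(\partial^{-1}(\Xi_Z^0))\), so we may pick \(h_0\in\partial^{-1}(\Xi_Z^0)\) with \(\theta(h_0)=\theta(h)\). Then \(h-h_0\in\ker\theta=\widetilde U\). This gives \(\partial^{-1}(\Xi_Z')=\partial^{-1}(\Xi_Z^0)\oplus\widetilde U\), with \(\eta\circ\partial'\) injective on the second summand.

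\emph{Computing \(\widetilde U\) (part (ii)).} By definition \(\widetilde U=H^0(Z,\widehat{\CP'_0}(1))\). By Lemma~\ref{bundle_CP}, \(\widehat{\CP'_0}(1)\cong E(\langle u_\lambda\rangle^\vee\otimes\bigoplus_\nu\fg_{[\nu,d_\nu]})\). On \(Z=\prod Z_{[\nu]}\), the \(\nu\)-th summand is the exterior product
\[
E_{Z_{[\nu]}}\bigl(\langle u_{\lambda_{[\nu]}}\rangle^\vee\otimes\fg_{[\nu,d_\nu]}\bigr)\boxtimes\bigotimes_{\nu'\ne\nu}\CO_{Z_{[\nu']}}(1).
\]
Here the character \(\langle u_\lambda\rangle^\vee\) splits as a tensor product over the factors, and \(\fg_{[\nu,d_\nu]}\) is acted on trivially by the other factors. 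The Künneth formula, together with \(H^0(Z_{[\nu']},\CO(1))=U^\vee_{\lambda_{[\nu']}}\), gives the stated decomposition.

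\emph{Irreducibility.} The module \(\fg_{[\nu,d_\nu]}\) is an irreducible \(P_{[\nu]}\)-module by Lemma~\ref{ker_isotropy}, and twisting by a character keeps it irreducible. Hence each \(H^0\) factor is either zero or irreducible by Theorem~\ref{PRV0}. A tensor product of irreducible modules of distinct simple factors is irreducible for \(\fg=\bigoplus\fg_{[\nu]}\).

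The main point needing care is the Künneth step: one has to check that the \(P\)-module \(\langle u_\lambda\rangle^\vee\otimes\fg_{[\nu,d_\nu]}\) really decomposes as an exterior tensor product of \(P_{[\nu']}\)-modules. This holds because \(P=\prod P_{[\nu]}\) and the highest weight line is \(\bigotimes_\nu\BC u_{\lambda_{[\nu]}}\).
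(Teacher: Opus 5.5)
Your proposal is correct and follows essentially the same route as the paper. It combines \eqref{exac_seq2} with Proposition~\ref{HX}: injectivity of $\eta\circ\partial'$ on $\widetilde U$, and surjectivity of $\theta$ whenever it is nonzero. It also uses that $\theta$ stays nonzero on $\partial^{-1}(\Xi_Z^0)$, which you take from Proposition~\ref{ext_seq}(2) while the paper cites Lemma~\ref{const_euler_inverse} directly; these are the same fact. Your Künneth and Borel--Weil--Bott argument for (ii), via Lemma~\ref{bundle_CP}, simply spells out what the paper compresses into ``(ii) follows directly from Proposition~\ref{HX}.''
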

\begin{proof}
The fact that	$
	\ker(\eta \circ \partial')
	=
	\partial^{-1}(\Xi_{Z}^{0})$ follows from Definition \ref{filt_01}(2).
	We consider the decomposition of
	$
	\partial^{-1}(\Xi_{Z}')
	$
	given by~\eqref{exac_seq2}:
	\[
	0 \longrightarrow \widetilde{U}
	\longrightarrow
	\partial^{-1}(\Xi_{Z}')
	\xrightarrow{~\theta~}
	H^{0}(Z,\CO_{Z}(1))
	\cong U^{\vee}.
	\]
	
	By Proposition~\ref{HX}(2),
	$
	\widetilde{U}
	$
	intersects trivially with
	$
	\ker(\eta\circ \partial').
	$
	Hence, if
	$
	\theta=0,
	$
	the decomposition follows immediately.
	
	Assume now that
	$
	\theta\neq 0.
	$
	By Proposition~\ref{HX}(1),
	$
	\theta
	$
	is surjective. Moreover, by Lemma~\ref{const_euler_inverse}, the restriction
	$
	\theta|_{\partial^{-1}(\Xi_{Z}^{0})}
	$
	is also nonzero. Since
	$
	U^{\vee}
	$
	is an irreducible
	$
	\fg
	$-module, the decomposition follows. (ii) follows directly from Proposition \ref{HX}.
\end{proof}
We now prove Theorems~\ref{reduction} and \ref{refor_reduction}.
\begin{proof}[Proof of Theorem~\ref{reduction} and Theorem~\ref{refor_reduction}]
We first deduce Theorem~\ref{reduction} from Theorem~\ref{refor_reduction}. Under condition~\emph{(iii)},
$
\Xi_Z\subset\operatorname{Im}(\partial),
$
the bottom map
$
\partial^0
$
in~\eqref{diag_Spencer} is surjective, so the conclusion follows from Theorem~\ref{refor_reduction}.

It remains to prove Theorem~\ref{refor_reduction}. If
$
Z=\BP U,
$
then
$
\wedge_Z^2=0,
$
and the claim is immediate. Assume therefore that
$
Z\neq\BP U.
$
By Proposition~\ref{HX}\emph{(3)},
$$
\widetilde U\cap\partial^{-1}(\Xi_Z^0)=0.
$$
Hence, by Corollary~\ref{cor_classify}, the map
$
\partial^0
$
is surjective if and only if either $Z$ is tangentially non-degenerate, or $Z$ is Euler-symmetric with
$
\hfg^{(1)}=0
$
and
$
\wedge_Z^2\simeq\BC.
$
By Proposition~\ref{HX}\emph{(1)}, the latter case is Hermitian symmetric.
Finally, the Hermitian symmetric spaces with
$
\hfg^{(1)}\neq0
$
were classified by Cartan and Kobayashi–Nagano; equivalently, they are the VMRTs of irreducible Hermitian symmetric spaces which are exactly the varieties listed in Theorem \ref{class_tang}\emph{(1)}
(see~\cite[Table~5]{ML99}). Hence they are tangentially
non-degenerate. This proves the theorem.
\end{proof}

\section{The Hwang--Li criterion for flag varieties}
In this section, we classify the flag varieties satisfying the conditions in Theorem~\ref{tors_prin}. In Section~5.1, we verify condition~(ii) by reducing it to a root-theoretic statement via Theorem~\ref{PRV}. In Section~5.2, using Theorems~\ref{reduction} and~\ref{class_tang}, we classify the flag varieties for which the restricted map
$\partial^0$
in \eqref{diag_Spencer} is surjective. Finally, in Section~5.3, we refine the Spencer diagram, prove the surjectivity of $\partial'$, and reduce the surjectivity of $\partial$ to the fiberwise equality
$
\mathrm{Im}(\widetilde{\phi_{\fp}})=\fl_z,
$
whenever the VMRT is defined.

Throughout this section, we keep the notation as in Section 2. Let $Z \subset \BP U$ be a flag variety, and let $G=Aut^{0}(Z)$ with $\fg=Lie(G)$. Then we can identify $\fa \fu \ft(\hat{Z})$ with $\hfg=\fg \oplus \BC id_{U} \subset End(U)$. We also fix a highest weight vector $u_{\lambda} \in U_{\lambda}$.
\subsection{The surjectivity of $\Psi$}
\begin{prop}\label{surj_Psi_}
	Let $Z=G/P \subset \BP U$ be a flag variety. Then it is linearly normal, and the map 
	\[
	\Psi:\hat{\fg}\otimes U^{\vee}
	\longrightarrow
	H^{0}(Z,T_{Z}(1))
	\]
	defined in Theorem \ref{tors_prin} is surjective.
\end{prop}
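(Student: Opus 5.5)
Linear normality is the Borel--Weil theorem: $H^0(Z,\CO_Z(1))=H^0(G/P,E(\BC_{-\lambda}))\cong U_\lambda^\vee$, so $U^\vee\to H^0(Z,\CO_Z(1))$ is an isomorphism. For the surjectivity of $\Psi$, I will use that $\Psi$ is $G$-equivariant and factors as
\[
\hfg\otimes U^\vee\;\twoheadrightarrow\;\fg\otimes U^\vee\longrightarrow H^0(Z,T_Z(1)),
\]
with $\BC\,\id_U$ acting trivially on $Z$. By Theorem~\ref{PRV}(1), $H^0(Z,T_Z(1))=\bigoplus_\mu U_\mu^\vee\otimes\Hom_P(U_\mu^\vee,\fg/\fp\otimes\BC_{-\lambda})$, where $T_Z(1)=E(\fg/\fp\otimes\langle u_\lambda\rangle^\vee)$. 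So it suffices to show that every irreducible summand $U_\mu^\vee$ of $H^0(Z,T_Z(1))$ occurs in the image, with full multiplicity.

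I will first reduce to the simple case. Write $Z=\prod_\nu Z_\nu$ and $U=\bigotimes_\nu U_{\lambda_{[\nu]}}$. Then $T_Z(1)=\bigoplus_\nu p_\nu^*T_{Z_\nu}(1)\otimes\bigotimes_{\nu'\neq\nu}p_{\nu'}^*\CO_{Z_{\nu'}}(1)$. By Künneth and linear normality of each factor, $H^0$ of the $\nu$-th summand is $H^0(Z_\nu,T_{Z_\nu}(1))\otimes\bigotimes_{\nu'\ne\nu}U_{\lambda_{[\nu']}}^\vee$. The restriction of $\Psi$ to $\fg_{[\nu]}\otimes U^\vee$ lands in this summand and equals $\Psi_\nu\otimes\id$. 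So it suffices to treat $\fg$ simple.

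In the simple case, I will compute the image fiberwise and then globally. On the fiber at $[u_\lambda]$, the evaluation $\fg\otimes\langle u_\lambda\rangle^\vee\to\fg/\fp\otimes\langle u_\lambda\rangle^\vee$ is surjective. Its kernel is $E(\fp\otimes\BC_{-\lambda})$, so $\Psi$ is surjective if $H^1(Z,E(\fp)(1))\to H^1(Z,\fg\otimes\CO(1))=0$ is injective. This holds if $H^1(Z,E(\fp)(1))=0$, since $H^1(Z,\CO(1))=0$. I would filter $\fp=\fg_0\oplus\fg_{>0}$ by the grading and apply Theorem~\ref{PRV0} to each irreducible graded piece $M\otimes\BC_{-\lambda}$. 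For a $P$-irreducible $M\subset\fp$ with lowest weight $\mu_{\rm low}$, the twisted module has lowest weight $\mu_{\rm low}-\lambda$. One then needs to show that $\rho-\mu_{\rm low}+\lambda$ is either singular or becomes dominant after a Weyl element of length $\ne1$.

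\textbf{Main obstacle.} The main obstacle is this Bott vanishing check. The graded pieces of $\fp$ have lowest weights equal to roots $\gamma$ that are $\fg_0$-lowest in $\fp$, i.e. negative roots of the Levi, the torus, or positive roots in $\fn_\fp$. For $\gamma\in\Phi^+$ or $\gamma=0$, the weight $\rho+\lambda-\gamma$ is nearly dominant. A simple reflection $s_i$ makes it dominant only if $\langle\rho+\lambda-\gamma,\alpha_i^\vee\rangle<0$, which forces $\langle\gamma,\alpha_i^\vee\rangle\ge 2+a_i$. That is impossible unless $\gamma=\alpha_i$ with $a_i=0$; but then $\langle\cdot,\alpha_i^\vee\rangle=-1$, which is singular, giving vanishing. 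I expect this root-theoretic case analysis, including the non-reductive extensions, to be the delicate part. If it is awkward, I would instead use Theorem~\ref{PRV}(2): identify each $\Hom_P(U_\mu^\vee,\fg/\fp\otimes\BC_{-\lambda})$ with lowest-type vectors $m=\overline{X_{-\gamma}}$, and exhibit explicit preimages $X\otimes u_{-\lambda}$-type elements in $\fg\otimes U^\vee$ mapping onto them.
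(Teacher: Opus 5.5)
The gap is the cohomology vanishing on which your whole argument rests. Your reduction is correct, and in fact sharper than you state. Since $H^1(Z,\fg\otimes\CO_Z(1))=0$, surjectivity of $\Psi$ is \emph{equivalent} to $H^1(Z,E(\fp\otimes\BC_{-\lambda}))=0$. The passage to simple factors matches the paper.

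The Bott check you sketch does not prove this vanishing, for three reasons. First, you only treat lowest weights $\gamma\in\Phi^+\cup\{0\}$. The simple ideals of the Levi $\fg_0$ also give pieces, of lowest weight $-\theta'$, where $\theta'$ is the highest root of a component of $\CD\setminus\Delta_Z$. For $\fg$ of type $G_2$ and $\lambda=\lambda_1$, the $\mathfrak{sl}_2$-ideal spanned by $\fg_{\pm\alpha_2}$ gives $\rho+\lambda_1+\alpha_2=-\lambda_1+3\lambda_2$. Its $s_1$-image $\lambda_1+2\lambda_2$ is dominant regular, so this piece has nonzero $H^1$. This is not an artifact: for $G_2$ acting on $G_2/P_1\cong\BQ^5\subset\BP^6$, $\Psi$ genuinely fails to be surjective, since $\dim(\Lie(G_2)\otimes U^\vee)=98<105=h^0(\BQ^5,T_{\BQ^5}(1))$. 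Hence any proof must use $G=\Aut^0(Z)$ (here $SO_7$), and yours never does; the paper invokes it at the end of Lemma~5.5.

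Second, your positive-root step is incorrect as written. In $G_2$ one has $\langle 3\alpha_1+\alpha_2,\alpha_1^\vee\rangle=3$, so $\langle\gamma,\alpha_i^\vee\rangle\ge 2+a_i$ does occur, with $a_i=1$. Also, a pairing equal to $-1$ does not force singularity. For example, if $a_i=0$ then $s_i(\rho+\lambda-\alpha_i)=\rho+\lambda$, which is regular with $w=s_i$ of length one.

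Third, and more seriously, piece-by-piece vanishing is false even for genuine flag varieties, so no correct case analysis can rescue the criterion you state. Take $Z=G_2/B\subset\BP U_{\lambda_1+a_2\lambda_2}$ with $a_2\ge1$, where $\Aut^0(Z)=G_2$. The one-dimensional graded piece $\fg_{3\alpha_1+\alpha_2}$ gives $\rho+\lambda-\gamma=-\lambda_1+(2+a_2)\lambda_2$. Its $s_1$-image $\lambda_1+(1+a_2)\lambda_2$ is dominant regular, so this piece has $H^1\cong U^\vee_{a_2\lambda_2}\neq0$. That class disappears only through the connecting map from $H^0$ of the piece $\fg_{2\alpha_1+\alpha_2}$, which has lowest weight $\lambda_1$ and $H^0\cong U^\vee_{a_2\lambda_2}$. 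In other words, it is killed exactly by the non-split extensions in $\fp$ that you flagged but left uncontrolled.

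Your route can be completed, as follows:
\begin{itemize}
\item Root-string bounds show $\rho+\lambda-\gamma$ is dominant for every piece except in type $G_2$ with $a_1=1$.
\item The case $a_2=0$ there is excluded because $\Aut^0(Z)=SO_7$.
\item For $a_2\ge1$, note that $H^1(Z,E(\fp\otimes\BC_{-\lambda}))$ is a quotient of $H^0(Z,T_Z(1))$. By Theorem~\ref{PRV}, an isotypic type $U_\mu^\vee$ of $H^0(Z,T_Z(1))$ requires $\lambda-\mu$ to be a root of $\fn_\fp^-$. This rules out $\mu=a_2\lambda_2$, since $\lambda-\mu=\lambda_1$ is a positive root.
\end{itemize}
None of this is in your proposal.

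Your fallback via Theorem~\ref{PRV}(2) is the paper's actual route, but there the whole content is Lemma~5.5. It shows that a root vector $X\in\fg_{\lambda-\mu}\subset\fn_\fp^-$ with $\mathrm{ad}(e_i)^{\langle\mu,\alpha_i^\vee\rangle+1}X\in\fp$ and $\mathrm{ad}(f_k)X\in\fp$ must in fact satisfy these with $=0$. The proof is an $\alpha_i$-string argument that forces a coefficient $\ge3$ at an end node, hence type $E_8$ or $G_2$, and each is then ruled out. Saying you would ``exhibit explicit preimages'' does not supply this.
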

\begin{cor}
    Let $Z=G/P \subset \BP U$ be a flag variety.  Suppose $\CC \subset \BP TM$ is a $Z$-isotrivial cone structure and let $\CP$ be the associated $G$-structure. Then any conic connection on $\CC$ is locally induced by a principal connection on $\CP$.
\end{cor}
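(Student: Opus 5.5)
The corollary should be a direct application of the first half of Theorem~\ref{tors_prin}. That theorem needs only conditions (i) and (ii), both of which are properties of the model $Z\subset\BP U$ alone and not of the manifold $M$. So the plan is to check those two conditions for an arbitrary flag variety and then invoke the theorem. Both conditions are packaged in Proposition~\ref{surj_Psi_}, which I take as given.

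\textbf{Condition (i).} Proposition~\ref{surj_Psi_} asserts that $Z=G/P\subset\BP U_\lambda$ is linearly normal, i.e.\ $H^0(Z,\CO(1))=U^\vee$. This is also immediate from the Borel--Weil theorem (Theorem~\ref{PRV0} applied to the one-dimensional $P$-module $\langle u_\lambda\rangle^\vee$), which gives $H^0(Z,\CO_Z(1))\cong U_\lambda^\vee$. Hence (i) holds.

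\textbf{Condition (ii).} Proposition~\ref{surj_Psi_} states that
\[
\Psi:\hfg\otimes U^\vee\to H^0(Z,T_Z(1))
\]
is surjective, with $\hfg=\fa\fu\ft(\widehat Z)=\fg\oplus\BC\,\id_U$ as identified at the start of this section. This is precisely condition (ii) of Theorem~\ref{tors_prin}.

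\textbf{Conclusion.} Let $\CC\subset\BP TM$ be any $Z$-isotrivial cone structure and $\CP$ its associated $G$-structure. Since (i) and (ii) hold, the first conclusion of Theorem~\ref{tors_prin} applies and says that every conic connection on $\CC$ is locally induced by a principal connection on $\CP$. The Spencer condition (iii) is not used here; it is needed only for the torsion-free refinement.

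\textbf{Where the difficulty lies.} The corollary itself poses no real obstacle. All the substance is in Proposition~\ref{surj_Psi_}, specifically the surjectivity of $\Psi$. If I had to prove that, I would proceed as follows:
\begin{itemize}
\item Write $H^0(Z,T_Z(1))$ via \eqref{peter_wey} and the PRV formula (Theorem~\ref{PRV}) as a sum of irreducibles $U_\mu^\vee$, indexed by vectors $m$ in $(\fg/\fp)\otimes\langle u_\lambda\rangle^\vee$ satisfying the stated highest-weight conditions.
\item Show each such $\mu$ arises from $\fg\otimes U^\vee$ by exhibiting, for every admissible weight vector, a preimage of the form $X\otimes\ell$ evaluated at the base point.
\end{itemize}
This reduces surjectivity to a root-theoretic check, and that check would be the main work.
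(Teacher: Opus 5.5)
Your proposal is correct and matches the paper's argument exactly: the paper deduces this corollary in one line from Proposition~\ref{surj_Psi_} (linear normality and surjectivity of $\Psi$, i.e.\ conditions (i) and (ii)) together with the first half of Theorem~\ref{tors_prin}. Your sketch of how Proposition~\ref{surj_Psi_} would be proved also follows the paper's route, which uses \eqref{peter_wey} and the PRV formula to reduce surjectivity to a root-theoretic lemma: the PRV conditions imposed modulo $\fp$ already force the corresponding conditions in $\fg$.
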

\begin{lem}\label{Psi-local}
	The map $\Psi$
	is induced, under the identification
	$T_{Z}\simeq E(\fg/\fp),$
	by taking global sections of the homogeneous bundle morphism
	\[
	\widehat{\Psi}:
	E(\langle u_{\lambda}\rangle^{\vee}\otimes \hfg)
	\longrightarrow
	E(\langle u_{\lambda}\rangle^{\vee}\otimes \fg/\fp),
	\]
	associated with the
	$
	P
	$
	-module morphism
	\[
	\widetilde{\Psi}:
	\langle u_{\lambda}\rangle^{\vee}\otimes \hat{\fg}
	\longrightarrow
	\langle u_{\lambda}\rangle^{\vee}\otimes \fg/\fp,
	\qquad
	u_{\lambda}^{\vee}\otimes (X+c\,\id_{U})
	\longmapsto
	u_{\lambda}^{\vee}\otimes \overline{X},
	\]
	where
	$
	X\in \fg,
	$
	$
	c\in \BC,
	$
	and
	$
	\overline{X}
	$
	denotes the image of
	$
	X
	$
	in
	$
	\fg/\fp.
	$
\end{lem}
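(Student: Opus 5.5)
The plan is to unwind the definition of $\Psi$ from Theorem~\ref{tors_prin}(ii) fiberwise and use the homogeneity of every object involved. Recall that $\Psi$ sends $X\otimes \ell\in\hfg\otimes U^\vee$ to the section $[u]\mapsto \ell(u)\cdot \overline{X(u)}$ of $T_Z(1)$. Here we use the identification $T_{[u]}Z=\Hom(\BC u,\widehat T_{[u]}Z/\BC u)$, so that $X(u)$ modulo $\BC u$ is the value of the vector field induced by $X$. We regard $U^\vee=H^0(Z,\CO_Z(1))$ and $\hfg\otimes U^\vee=H^0(Z,\hfg\otimes\CO_Z(1))$ as global sections of the trivial bundle $E(\hfg)$ twisted by $\CO_Z(1)=E(\langle u_\lambda\rangle^\vee)$. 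Then $\Psi$ is the map on global sections induced by the bundle morphism $\widehat\Psi:E(\hfg)(1)\to T_Z(1)$ introduced before Definition~\ref{hatP}. It therefore suffices to identify $\widehat\Psi$ as a $G$-equivariant morphism and compute its fiber at the base point $[u_\lambda]$.

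First, $\widehat\Psi$ is $G$-equivariant. For $g\in G$ we have $g\cdot(X\otimes\ell)=\mathrm{Ad}(g)X\otimes g\ell$, and the infinitesimal action satisfies $g_*(\xi_X)=\xi_{\mathrm{Ad}(g)X}$. Next, $E(\hfg)$ is a trivial bundle, so it is the homogeneous bundle $E(\hfg|_P)$ with $P$ acting by $\mathrm{Ad}$, via the standard isomorphism $G\times^P M\cong G/P\times M$ for a $G$-module $M$. We also use the identification \eqref{iden}, $T_Z\simeq E(\fg/\fp)$, and $\CO_Z(1)=E(\langle u_\lambda\rangle^\vee)$.

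Then a $G$-equivariant morphism of homogeneous bundles is determined by its fiber at $[u_\lambda]$, which is a $P$-module map. So it remains to compute $\widehat\Psi_{[u_\lambda]}$. By definition it sends $u_\lambda^\vee\otimes Y$, for $Y=X+c\,\id_U\in\hfg$, to $u_\lambda^\vee\otimes\big(u_\lambda\mapsto Y(u_\lambda)\bmod \BC u_\lambda\big)$. Since $\id_U(u_\lambda)=u_\lambda\in\BC u_\lambda$, the scalar part contributes nothing. Moreover, under \eqref{iden} the class of $X(u_\lambda)$ in $\widehat T_{[u_\lambda]}Z/\BC u_\lambda$ corresponds to $\overline X\in\fg/\fp$, because the isomorphism $\fg/\fp\to\widehat T/\BC u_\lambda$ is exactly $\overline{X}\mapsto X.u_\lambda$ (its kernel $\fp$ is the stabilizer of the line). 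This gives the stated formula for $\widetilde\Psi$.

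The only point needing care is checking that the identification $E(\hfg)\cong G/P\times\hfg$ is compatible with the evaluation formula, i.e.\ that the fiber at $[g u_\lambda]$ of $\widehat\Psi$ is the $g$-translate of $\widetilde\Psi$. This follows from equivariance, and I expect it to be a routine but notationally fiddly step.
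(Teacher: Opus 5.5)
Your proposal is correct and is essentially the paper's argument: unwind the definition of $\Psi$ as evaluation of the infinitesimal action, and use $G$-equivariance to reduce to the $P$-module map on the fiber at $[u_\lambda]$, where the scalar part dies because $\id_U(u_\lambda)\in\BC u_\lambda$. The paper cites \eqref{peter_wey} to identify the global sections, whereas you identify $H^0$ of the twisted trivial bundle $\hfg\otimes\CO_Z(1)$ directly as $\hfg\otimes U^\vee$; this is an equally valid justification.
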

\begin{proof}
	This follows directly from the definition of $\Psi$ and the decomposition~\eqref{peter_wey}.
\end{proof}
Note that $\widetilde{\Psi}\big|_{\langle u_{\lambda}\rangle^{\vee}\otimes \BC\,\id_{U}}
=0.$  Using Theorem~\ref{PRV}, we further reduce the problem to the following statement.

\begin{lem}
With the notation above,
	$
	\Psi
	$
	is surjective if and only if, for every dominant weight
	$
	\mu\in \Lambda^{+},
	$
	the canonical map
	\begin{equation}\label{surj_Psi}
	\Hom_{P}(U_{\mu}^{\vee},\langle u_{\lambda}\rangle^{\vee}\otimes \fg)
	\longrightarrow
	\Hom_{P}(U_{\mu}^{\vee},\langle u_{\lambda}\rangle^{\vee}\otimes \fg/\fp)
	\end{equation}
	is surjective. 
\end{lem}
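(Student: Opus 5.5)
We take global sections of $\widehat{\Psi}$ and decompose both sides with Theorem~\ref{PRV}(1). By Lemma~\ref{Psi-local}, $\Psi$ is the map on global sections of the homogeneous bundle morphism induced by the $P$-module map $\widetilde{\Psi}$. Both sides are global sections of homogeneous bundles, so Theorem~\ref{PRV}(1) gives $G$-equivariant decompositions
\[
H^0\bigl(Z,E(\langle u_\lambda\rangle^\vee\otimes\hfg)\bigr)\cong\bigoplus_{\mu\in\Lambda^+}U_\mu^\vee\otimes\Hom_P\bigl(U_\mu^\vee,\langle u_\lambda\rangle^\vee\otimes\hfg\bigr),
\]
and similarly for $\langle u_\lambda\rangle^\vee\otimes\fg/\fp$. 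These decompositions are natural in the $P$-module: a $P$-morphism $M\to M'$ induces on global sections the map $\id_{U_\mu^\vee}\otimes(\text{post-composition})$ on each isotypic summand. This naturality holds because \eqref{peter_wey} is a canonical Frobenius reciprocity isomorphism. Hence $\Psi$ is surjective if and only if, for every $\mu$, post-composition with $\widetilde{\Psi}$,
\[
\Hom_P\bigl(U_\mu^\vee,\langle u_\lambda\rangle^\vee\otimes\hfg\bigr)\to\Hom_P\bigl(U_\mu^\vee,\langle u_\lambda\rangle^\vee\otimes\fg/\fp\bigr),
\]
is surjective. (We use that a direct sum of maps $\id\otimes f_\mu$ is surjective iff each $f_\mu$ is, since $U_\mu^\vee\neq0$.)

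It remains to replace $\hfg$ by $\fg$ in the source. We have $\hfg=\fg\oplus\BC\,\id_U$ as a $P$-module, since $\id_U$ is central. The summand $\langle u_\lambda\rangle^\vee\otimes\BC\,\id_U$ is killed by $\widetilde{\Psi}$, as noted after Lemma~\ref{Psi-local}. So
\[
\Hom_P(U_\mu^\vee,\langle u_\lambda\rangle^\vee\otimes\hfg)=\Hom_P(U_\mu^\vee,\langle u_\lambda\rangle^\vee\otimes\fg)\oplus\Hom_P(U_\mu^\vee,\langle u_\lambda\rangle^\vee),
\]
and the second summand maps to zero. The image of the map above therefore equals the image of the restriction to the first summand, and that restriction is the canonical map \eqref{surj_Psi}, induced by the quotient $\fg\to\fg/\fp$. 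This proves the equivalence.

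The only point needing care is the naturality of \eqref{peter_wey} in $M$, i.e.\ that the induced map on sections respects the isotypic decomposition. We check it by writing a section as a $P$-equivariant function $G\to M$ and applying Frobenius reciprocity (algebraic Peter--Weyl), which is functorial in $M$. Everything else is formal.
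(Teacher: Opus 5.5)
Your proposal is correct and follows essentially the same route as the paper: the paper's proof also rests on the functoriality of the decomposition \eqref{peter_wey} under $P$-module morphisms. It likewise uses the fact that $\widetilde{\Psi}$ kills the $\langle u_\lambda\rangle^\vee\otimes\BC\,\id_U$ summand, so that only the $\fg$-part of the source contributes to the image.
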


\begin{proof}
	By Theorem \ref{PRV}(1), we have the canonical isomorphism~\eqref{peter_wey}. This isomorphism is functorial. More precisely, given a homogeneous bundle morphism
	$
	E(M)\to E(N)
	$
	induced by
	$
	\phi:M\to N,
	$
	the induced map on global sections is given by the canonical restriction maps on the direct summands:
	\[
	\Hom_{P}(U_{\mu}^{\vee},M)
	\longrightarrow
	\Hom_{P}(U_{\mu}^{\vee},N).
	\]
	The claim therefore follows from Lemma~5.3 and the above discussion.
\end{proof}
To verify the restriction map~\eqref{surj_Psi} is surjective, we apply the PRV formula
(Theorem~\ref{PRV}(2)) to the two
$P$-modules
\[
M:=\langle u_\lambda\rangle^\vee\otimes\fg,
\qquad
\overline{M}:=\langle u_\lambda\rangle^\vee\otimes\fg/\fp.
\]
Via evaluation at a lowest weight vector
$
u_{-\mu}\in U_{\mu}^{\vee},
$
the space
$
\Hom_{P}(U_{\mu}^{\vee},M)
$
is identified with the space of elements
$
u_{\lambda}^{\vee}\otimes X\in M_{-\mu}
$
satisfying the PRV conditions. Since
$
u_{\lambda}^{\vee}
$
has weight
$
-\lambda,
$
the element
$
X
$
has weight
$
\alpha=\lambda-\mu.
$
Hence Theorem~\ref{PRV}(2) yields the identification
\[
\Hom_{P}(U_{\mu}^{\vee},\langle u_{\lambda}\rangle^{\vee}\otimes \fg)
\simeq
\left\{
X\in \fg_{\alpha}
\ \middle|\
\begin{array}{l}
	\alpha=\lambda-\mu,\\[1mm]
	ad(e_{i})^{\langle \mu,\alpha_{i}^{\vee}\rangle+1}(X)=0
	\quad \text{for all } i,\\[1mm]
	ad(f_{k})(X)=0
	\quad \text{for all } \alpha_{k}\notin \Delta_{Z}
\end{array}
\right\},
\]
given by the evaluation map
$
f\mapsto f(u_{-\mu})(u_{\lambda}).
$
Similarly, using the representative of a class in
\(\mathfrak g/\mathfrak p\) lying in \(\mathfrak n_p^-\), we obtain
\[
\Hom_{P}(U_{\mu}^{\vee}, \langle u_{\lambda} \rangle^{\vee} \otimes \mathfrak g/\mathfrak p)
\simeq
\left\{
X \in \mathfrak g_{\alpha}\subset\mathfrak n_p^- \ \middle|\
\begin{array}{l}
	\alpha=\lambda-\mu,\\[2mm]
	ad(e_i)^{\langle\mu,\alpha_i^\vee\rangle+1}( X)\in\mathfrak p
	\quad \text{for all } i,\\[1mm]
	ad(f_k)(X)\in\mathfrak p
	\quad \text{for all } \alpha_k\notin\Delta_Z
\end{array}
\right\},
\]
also given by the evaluation map
$
f'\mapsto f'(u_{-\mu})(u_{\lambda}).
$

Under these identifications, since the restriction map in \eqref{surj_Psi}
is induced by the natural projection
$
\mathfrak g\longrightarrow \mathfrak g/\mathfrak p,
$
it remains to show that every element satisfying the quotient
conditions already satisfies the corresponding conditions in \(\mathfrak g\).
This is the content of the following lemma.
\begin{lem}
	Let  $\fg$ be simple, and let $U=U_{\lambda}$ be an irreducible module. Let
	 $X \in \fg_{\alpha} \subset \fn_{\fp}^{-}$ and assume that
	$
	\mu=\lambda-\alpha \in \Lambda^{+}.
	$
	Suppose that
	\[
	ad(e_{i})^{\langle \mu,\alpha_{i}^{\vee}\rangle+1}(X)\in \fp
	\quad (1\le i\le n),
	\qquad
	ad(f_{k})(X) \in \fp
	\quad \text{for all } \alpha_{k} \notin \Delta_{Z}.
	\]
	Then
	\[
	ad(e_{i})^{\langle \mu,\alpha_{i}^{\vee}\rangle+1}(X)=0
	\quad (1\le i\le n),
	\qquad
	ad(f_{k})(X) =0
	\quad \text{for all } \alpha_{k} \notin \Delta_{Z}.
	\]
\end{lem}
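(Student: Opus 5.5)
The plan is to treat the two families of conditions separately, using only the parabolic grading of Definition~\ref{para_grad} and the $\alpha_i$-string of roots through $\alpha$. Throughout, write $\alpha\in\Phi_Z^{-k}$ with $k\ge 1$; this is possible since $X\in\fg_\alpha\subset\fn_\fp^-$. Also write $\lambda=\sum_j a_j\lambda_j$.

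First, the conditions on the $f_k$ with $\alpha_k\notin\Delta_Z$. The vector $ad(f_k)(X)$ lies in $\fg_{\alpha-\alpha_k}$. Since $\alpha_k\notin\Delta_Z$, the grading degree of $\alpha-\alpha_k$ equals that of $\alpha$, namely $-k<0$. Hence $ad(f_k)(X)\in\fn_\fp^-$. Because $\fn_\fp^-\cap\fp=0$, the hypothesis $ad(f_k)(X)\in\fp$ forces $ad(f_k)(X)=0$.

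The same argument handles the conditions on the $e_i$ with $\alpha_i\notin\Delta_Z$. Here $ad(e_i)^{m+1}(X)\in\fg_{\alpha+(m+1)\alpha_i}$, where $m=\langle\mu,\alpha_i^\vee\rangle$. Its degree is again $-k<0$, so this vector lies in $\fn_\fp^-$ and must vanish.

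It remains to treat $\alpha_i\in\Delta_Z$, so that $a_i\ge1$. This is the main step. I will use the $\mathfrak{sl}_2$-theory of the $\alpha_i$-string through $\alpha$, written $\alpha-p\alpha_i,\dots,\alpha+q\alpha_i$, which satisfies $p-q=\langle\alpha,\alpha_i^\vee\rangle$ and has length $p+q\le 3$. The vector $ad(e_i)^{m+1}(X)$ is nonzero exactly when $q\ge m+1$. Since
\[
m=\langle\lambda,\alpha_i^\vee\rangle-\langle\alpha,\alpha_i^\vee\rangle=a_i-p+q,
\]
the condition $q\ge m+1$ is equivalent to $p\ge a_i+1\ge 2$.

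Suppose then that $ad(e_i)^{m+1}(X)\ne0$ while lying in $\fp$. The degree of $\alpha+(m+1)\alpha_i$ is $-k+m+1$, and the hypothesis requires $-k+m+1\ge0$, so $q\ge m+1\ge k\ge1$. Combined with $p\ge2$, this gives $p+q\ge3$. Hence the string has length exactly $4$, which happens only in type $G_2$ with $\alpha_i$ short. It forces $p=2$, $q=1$, $a_i=1$, $m=0$ and $k=1$.

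I will exclude this residual configuration by a direct inspection of $G_2$. There $\alpha$ is the second root of a string $\beta,\beta+\alpha_i,\beta+2\alpha_i,\beta+3\alpha_i$, with $\alpha+\alpha_i$ of degree $0$ and $\alpha-2\alpha_i$ of degree $-3$. I will check which parabolic $\fp$ and which $\lambda$ with $a_i=1$ are compatible with this, and verify that $\mu=\lambda-\alpha$ then fails to be dominant (the candidate being the $\alpha_j$-coefficient of $\mu$ for the long root $\alpha_j$), or that the degree constraint fails. I expect this $G_2$ bookkeeping to be the only real obstacle, since the string-length bound makes every other case automatic.

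If this check unexpectedly leaves a genuine survivor, I would fall back on the $\mathfrak{sl}_2$-module spanned by the string. The idea would be to show that the required vanishing still holds because $X$ itself must be zero under the dominance of $\mu$.
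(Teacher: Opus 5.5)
\textbf{There is a genuine gap at the last step.} Your reduction up to that step is correct, and it is shorter than the paper's. From $m=a_i-p+q$, nonvanishing of $\operatorname{ad}(e_i)^{m+1}(X)$ is equivalent to $p\ge a_i+1\ge 2$. Nonvanishing together with the degree condition gives $q\ge m+1\ge k\ge 1$. The bound $p+q\le 3$ then forces type $G_2$ with $\alpha_i$ short, $p=2$, $q=1$, $a_i=1$, $m=0$, $k=1$. The paper instead writes $\alpha=-k\alpha_i-\beta$ and uses highest-root coefficients to land in $E_8$ or $G_2$; your string bound shows directly that $E_8$ cannot occur. You should treat $\alpha=-\alpha_i$ separately, since there the string passes through the zero weight. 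This is harmless: $m+1=a_i+3\ge 4$, while $\operatorname{ad}(e_i)^3f_i=0$.

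The residual configuration cannot be excluded the way you plan. In Bourbaki numbering ($\alpha_1$ short), the only negative root whose $\alpha_1$-string has $p=2$, $q=1$ is $\alpha=-\alpha_1-\alpha_2$. Then $k=1$ forces $\alpha_2\notin\Delta_Z$, and $a_1=1$ gives $\lambda=\lambda_1$. But $\mu=\lambda_1+\alpha_1+\alpha_2=\lambda_2$ is dominant and $\langle\mu,\alpha_1^\vee\rangle=0$. For $0\ne X\in\fg_{-\alpha_1-\alpha_2}$, the vector $\operatorname{ad}(e_1)(X)$ is a nonzero element of $\fg_{-\alpha_2}\subset\fg_0\subset\fp$. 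The other hypotheses hold trivially: $-\alpha_1+\alpha_2$ and $-\alpha_1-2\alpha_2$ are not roots, so $\operatorname{ad}(e_2)^2(X)=0=\operatorname{ad}(f_2)(X)$. Thus the statement, read literally for an arbitrary simple $\fg$, fails for $(G_2,\lambda_1)$, and neither your dominance check nor your fallback ($X=0$) can rescue it.

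The missing ingredient is the standing convention of Section~5 that $\fg=\Lie(\Aut^0(Z))$. For $\lambda=\lambda_1$ one has $Z=G_2/P_1\cong\BQ^5$, whose automorphism algebra is $\fs\fo_7$, strictly larger than the Lie algebra of type $G_2$, so this pair never arises. This is exactly how the paper disposes of its $G_2$ case. With that observation inserted in place of the dominance check, your argument is complete.
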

\begin{proof}
Let $X \in \fg_{\alpha} \subset \fn_{\fp}^{-}$ satisfying the above conditions. We first treat the simple roots outside $\Delta_{Z}$. If $\alpha_{k} \notin \Delta_{Z}$,  then by the definition of the parabolic grading, $ad(f_{k}).\fn_{\fp}^{-} \subset \fn_{\fp}^{-}$. Therefore if $ad(f_{k})(X) \in \fp$, then $ad(f_{k})(X) \in \fn_{\fp}^{-} \cap \fp =0.$ Similarly, if $\alpha_{i} \notin \Delta_{Z}$, we have 
$
ad(e_{i}).\fn_{\fp}^{-} \subset \fn_{\fp}^{-},
$
hence
$
ad(e_{i})^{\langle \mu,\alpha_{i}^{\vee}\rangle+1}(X)
\in \fn_{\fp}^{-}\cap \fp =0.
$

It remains to treat the case \(\alpha_i\in\Delta_Z\). 
Suppose, to the contrary, that for some \(\alpha_i\in\Delta_Z\),
\[
\operatorname{ad}(e_i)^{\langle\mu,\alpha_i^\vee\rangle+1}(X)
\]
is nonzero and lies in \(\mathfrak p\). Then it follows from $X \in \fg_{\alpha}$ that
\[
\alpha+\bigl(\langle\mu,\alpha_i^\vee\rangle+1\bigr)\alpha_i
\]
is a nonzero root. We will derive a contradiction from the \(\alpha_i\)-string through \(\alpha\). Write
$$
\alpha=-k\alpha_{i}-\beta,
$$
where $\beta$ is a nonnegative sum of simple roots satisfying
$
m_{\alpha_{i}}(\beta)=0,
$
and $k>0$. We distinguish two cases.

Assume first that $\beta=0$. Then necessarily $k=1$ and
$
\alpha=-\alpha_{i}.
$
Since $\mu=\lambda-\alpha$, we have
\[
\langle \mu,\alpha_{i}^{\vee}\rangle+1
=
\langle \lambda-\alpha,\alpha_{i}^{\vee}\rangle+1
=
3+\langle \lambda,\alpha_{i}^{\vee}\rangle
\ge 3.
\]
Hence
\[
0\neq
\operatorname{ad} (e_i)^{\langle\mu,\alpha_i^\vee\rangle+1}(X)
\in \fg_{m\alpha_{i}}
\]
for some $m\ge 2$, which is impossible.

Assume now that $\beta \neq 0$. Then
$
\alpha+(\langle \mu,\alpha_{i}^{\vee}\rangle+1)\alpha_{i}
$
is a root that can be written as
$$
\alpha+(\langle \mu,\alpha_{i}^{\vee}\rangle+1)\alpha_{i}
=
(\langle \mu,\alpha_{i}^{\vee}\rangle+1-k)\alpha_{i}-\beta,
$$
whose coefficient of $\alpha_{i}$ is
\[
m_{\alpha_{i}}
\big(
\alpha+(\langle \mu,\alpha_{i}^{\vee}\rangle+1)\alpha_{i}
\big)
=
\langle \mu,\alpha_{i}^{\vee}\rangle+1-k
\ge 0,
\]
since
$
(\operatorname{ad} e_i)^{\langle\mu,\alpha_i^\vee\rangle+1}(X) \in \fp
$.
But since $\beta$ is nonzero and positive, this implies that the coefficient of $\alpha_{i}$ has to be zero, that is
\begin{equation}\label{eq:4.1}
	k= \langle \mu,\alpha_{i}^{\vee}\rangle+1 \ge 1
\end{equation}
More precisely we know that 
\begin{equation}\label{eq:4.105}
-\beta \in \Phi_{Z}^{0}, \quad  -\beta+ \alpha_{i} \notin \Phi,
\end{equation}
 since $\beta$ is added from $\alpha \in \Phi_{Z}^{-}$ with multiple of the simple root $\alpha_{i} \in \Delta_{Z}$.
We may therefore write the $\alpha_i$-string through $\alpha$ as
\[
\alpha-\gamma\alpha_{i},
\dots,
\alpha,
\alpha+\alpha_{i},
\dots,
\alpha+k\alpha_{i}=-\beta,
\]
satisfying the relation
$
\gamma-k=\langle \alpha,\alpha_{i}^{\vee}\rangle
$
by the $s_{\alpha_{i}}$-action on the string. Combining this with~\eqref{eq:4.1}, we obtain
\begin{equation}\label{eq:4.2}
	\gamma
	=
	k+\langle \alpha,\alpha_{i}^{\vee}\rangle
	=
	\langle \mu+\alpha,\alpha_{i}^{\vee}\rangle+1
	=
	\langle \lambda,\alpha_{i}^{\vee}\rangle+1 \ge 2,
\end{equation}
where the last inequality follows from $\alpha_{i}\in \Delta_{Z}$.

On the other hand, since
$
\alpha=-k\alpha_{i}-\beta,
$
we have
\begin{equation}\label{eq:4.3}
	-\langle \beta,\alpha_{i}^{\vee}\rangle
	=
	\langle \alpha,\alpha_{i}^{\vee}\rangle+2k
	=
	k+\gamma \ge 3,
\end{equation}
from \eqref{eq:4.1} and \eqref{eq:4.2}.
We now derive a contradiction.

First, we claim that $\alpha_{i}$ lies at an end of $\CD_{\alpha}$, which is the subdiagram generated by the support of $\alpha$. Indeed, as $\alpha$ and $-\beta$ are roots, $\CD_{\alpha}$ and $\CD_{-\beta}$ are both connected subgraphs of the Dynkin diagram. Moreover since
\[
-\beta=\alpha +k\alpha_{i},
\qquad \text{with} \,\,
m_{\alpha_i}(\beta)=0,
\]
the diagram $\CD_{-\beta}$ is obtained from $\CD_{\alpha}$ by deleting the node $\alpha_i$. This is only possible if $\alpha_i$ is an end node of $\CD_{\alpha}$.

Next let $\Phi'$ be the root subsystem generated by $\operatorname{Supp}(\alpha)$. Then $\Phi'$ is the root system of a simple Lie subalgebra $\fg'\subset \fg$, whose Dynkin diagram is precisely $\CD_{\alpha}$, and where $\alpha_i$ corresponds to an end node. Note that
$
\operatorname{Supp}(\alpha-\gamma\alpha_i)
=
\operatorname{Supp}(\alpha),
$
since $m_{\alpha_{i}}(\alpha)=-k <0$.
Therefore 
$
\alpha-\gamma\alpha_i \in \Phi'
$
is a negative root whose coefficient of $\alpha_i$ equals
\begin{equation}\label{eq:4.4}
	m_{\alpha_{i}}(\alpha-\gamma\alpha_{i})=-k-\gamma \le -3
\end{equation}
by~\eqref{eq:4.3}. 

Summarizing above we find a positive root $-\alpha+\gamma \alpha_{i}$ in the root system $\Phi'$, and a simple root $\alpha_{i}$ on the end of the Dynkin diagram, such that the coefficient of $\alpha_{i}$ in $-\alpha+\gamma \alpha_{i}$ is at least three. In particular for the highest positive root $\beta_{l}'$ of $\Phi'$, we have $m_{\alpha_{i}}(\beta_{l}') \ge 3$. From the table~\cite[p.~66, Table~2]{Hum72},  this can occur only when
\[
(\fg',\alpha_i)= (E_8,\alpha_2')
\qquad\text{or}\qquad
(\fg',\alpha_i)= (G_2,\alpha_1'),
\]
where we follow the numbering convention of~\cite{Hum72}.

In both cases, we have necessarily $\fg=\fg'$, and moreover the coefficient of $\alpha_i$ in the highest root of $\Phi'$ equals $3$. It then follows from \eqref{eq:4.4} that $k+\gamma=3$. Combining this with~\eqref{eq:4.1}, \eqref{eq:4.2}, and~\eqref{eq:4.3}, we obtain
$$
k=1,\qquad \gamma=2,\qquad
\langle \beta,\alpha_{i}^{\vee}\rangle=-3.
$$

If $(\fg',\alpha_{i})=(G_2,\alpha_1'),$ then $\beta$ is a positive root in $\Phi'$ with $\CD_{\beta}=\CD_{\alpha}\backslash \{\alpha_{i}\}=A_1,$ that is $\beta=\alpha_{2}'$. By \eqref{eq:4.105}, this just means that $\lambda$ is proportional to $\lambda_{1}$. But then  $Z=G_{2}/P_{1} \cong \BQ^5$, and by our assumption $\fg$ is taken as $Lie(Aut(Z)) \cong \fs \fo_{7}$, which is strictly larger than $\fg_2$, a contradiction.

If $(\fg',\alpha_i)= (E_8,\alpha_2'),$ then  by definition
$
\CD_{\beta}=\CD_{\alpha}\backslash \{\alpha_{i}\}=A_7.
$
Hence
$
m_{\alpha_{j}}(\beta)=1
$
for every
$
\alpha_{j} \in \operatorname{Supp}(\beta).
$
This contradicts
$
\langle \beta,\alpha_{i}^{\vee} \rangle=-3,
$
since $\alpha_{i}$ has only one adjacent node in $\CD_{\alpha}$. 
\end{proof}
\begin{proof}[Proof of Proposition \ref{surj_Psi_} and Corollary 5.2]
    The linear normality is automatic. For the surjectivity of $\Psi$, by Lemma~5.4, it suffices to prove that the canonical restriction map~\eqref{surj_Psi} is surjective for every
	$
	\mu\in \Lambda^{+}.
	$
	When
	$
	\fg
	$
	is simple, this follows from Lemma~5.5 as explained above; assume that
	$
	\fg
	$
	is reducible. Decomposing into products as in Section~4.3, the map
	$
	\Psi
	$
	becomes
	\[
	\BC\id_{U}\otimes U^{\vee}
	\oplus
	\bigoplus_{\nu}
	\left(
	\fg_{[\nu]}\otimes U_{\nu}^{\vee}
	\otimes
	\bigotimes_{\nu'\neq \nu}U_{\nu'}^{\vee}
	\right)
	\longrightarrow
	\bigoplus_{\nu}
	H^{0}(Z_{\nu},T_{Z_{\nu}}(1))
	\otimes
	\bigotimes_{\nu'\neq \nu}U_{\nu'}^{\vee},
	\]
	where the summand
	$
	\BC\id_{U}\otimes U^{\vee}
	$
	is mapped to zero, and the remaining summands are mapped by
	$
	\Psi_{\nu}
	$
	on each simple factor, tensored with the identity on the other factors. Hence
	$
	\Psi
	$
	is surjective if and only if each
	$
	\Psi_{\nu}
	$
	is surjective. Finally Corollary 5.2 follows from Proposition 5.1 and Theorem \ref{tors_prin}.
\end{proof}
\subsection{On the surjectivity of $\partial^{0}$}
In this subsection, we complete the classification of flag varieties for which the restricted map
$
\partial^{0}
$
in~\eqref{diag_Spencer} is surjective.

\begin{prop}\label{partial0_surj_classify}
	Let $Z \subset \BP U$ be a flag variety. Then the restricted map
	$
	\partial^{0}
	$
	in~\eqref{diag_Spencer} is surjective if and only if $Z$ is isomorphic to one of the following varieties:
	\begin{itemize}
		\item[(1)] the tangentially non-degenerate varieties classified in Theorem~\ref{class_tang};
		
		\item[(2)] Hermitian symmetric spaces satisfying
		$
		\wedge_{Z}^{2} \cong \BC
		$, which are classified by subadjoint varieties, namely:
		\begin{itemize}
			\item
			$
			\BP^1 \times \BQ^{m}
			\subset
			\BP(\BC^2 \otimes \BC^{m+2})
			$
			($m \ge 1$), and
			$
			v_{3}(\BP^1) \subset \BP(S^3 \BC^2);
			$
			
			\item
			$
			E_{7}/P_{7},
			$
			$
			\BS_{6},
			$
			$
			Gr(3,6),
			$
			and
			$
			Lag(3,6)
			$
			under their minimal embeddings.
		\end{itemize}
	\end{itemize}
\end{prop}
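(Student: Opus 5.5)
By Theorem~\ref{refor_reduction}, $\partial^0$ is surjective if and only if either $Z$ is tangentially non-degenerate, or $Z$ is a Hermitian symmetric space with $\wedge_Z^2\cong\BC$. The tangentially non-degenerate case is exactly Theorem~\ref{class_tang}, which gives item~(1). So the proof reduces to classifying Hermitian symmetric flag varieties $Z\subset\BP U$ with $\dim\wedge_Z^2=1$.

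The first step is to make $\wedge_Z^2$ computable. By Corollary~\ref{im_Gaussian} we have $\wedge_Z^2=\ker\psi$ with $\psi$ surjective, so
\[
\dim\wedge_Z^2=\dim\wedge^2U^\vee-\dim H^0(Z,\Omega_Z(2)).
\]
Since $\wedge_Z^2$ is a $\fg$-submodule, the condition $\wedge_Z^2\cong\BC$ says that $\wedge^2U^\vee$ contains exactly one trivial summand not hit by $H^0(Z,\Omega_Z(2))$. In particular $U$ must be symplectic, i.e.\ self-dual with $(\wedge^2U)^{\fg}\cong\BC$, and this invariant form must be degenerate on no tangent direction. Equivalently, $\widehat{T}_zZ$ is Lagrangian-isotropic to first order, so $\hat z$ is $\omega$-orthogonal to $\widehat T_zZ$ and $Z$ is Legendrian in $\BP U$ for the invariant symplectic form.

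Next I handle the non-irreducible case. If $Z=\prod Z_\nu$ has at least two factors, Lemma~\ref{lem_prod_wedge_Z} together with Proposition~\ref{non_vanish} and Proposition~\ref{tensor} shows that $\wedge_Z^2$ is large unless each factor is tightly constrained. For a Segre product $Z'\times Z''$, Lemma~\ref{lem_prod_wedge_Z} gives $\wedge^2_Z\supset \wedge^2_{Z'}\otimes S^2U''^\vee+\wedge^2U'^\vee\otimes I_2(Z'')+\dots$. Requiring total dimension one forces, say, $Z'=\BP^1$ (so $\wedge^2U'^\vee=\BC$ and $\wedge^2_{Z'}=0$), $I_2(Z'')\cong\BC$ (so $Z''$ is a quadric $\BQ^m$), and then $\wedge^2_{Z''}=0$, with at most two factors. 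This yields $\BP^1\times\BQ^m$; the case $m=1$ is $\BP^1\times v_2(\BP^1)$. I then check directly that $\wedge_Z^2$ is exactly the single product $\omega_{\BP^1}\otimes q$.

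In the irreducible case $Z$ is (co)minuscule (or $v_k(\BP^n)$ in the non-fundamental case; Corollary~\ref{nprim} and Proposition~\ref{tensor} rule out all of these except $v_3(\BP^1)$, whose $\dim\wedge^2 U^\vee=6$ and $h^0(\Omega(2))=h^0(\CO(4))=5$). By Lemma~\ref{irr_Gaussian}, $H^0(Z,\Omega_Z(2))$ is irreducible, so $\wedge^2U^\vee$ must be the sum of one irreducible module and a trivial one. That means $U$ is symplectic with $\wedge^2U/\BC$ irreducible. I would get this from the classification of representations with exactly two summands in $\wedge^2$ (the symplectic analogue of Dynkin's theorem, e.g.\ \cite{KR09}), or from the known list of minuscule symplectic representations. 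Either way I expect the list $\fs\fp_{2n},\lambda_1$ (excluded, since $Z=\BP^{2n-1}$), $\fs\fl_6,\lambda_3$, $\fs\fp_6,\lambda_3$, $\fs\fo_{12}$ half-spin, $E_7,\lambda_7$, and $\fs\fl_2$ with $3\lambda_1$. The subadjoint varieties are the Legendrian ones in this list.

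The main obstacle is this representation-theoretic step: showing that $\wedge^2U^\vee=H^0(\Omega_Z(2))\oplus\BC$ singles out exactly the subadjoint cases. For the converse I would invoke the known fact that subadjoint varieties are Legendrian with $\wedge^2U^\vee=\BC\oplus U_{2\lambda-\beta}^\vee$, matching the decomposition~\eqref{irr_decom}.
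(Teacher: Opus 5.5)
Your proposal is essentially the paper's proof. It reduces via Theorem~\ref{refor_reduction}, treats products with Lemma~\ref{lem_prod_wedge_Z} to get $\BP^1\times\BQ^m$, handles the simple case via the irreducibility of $H^0(Z,\Omega_Z(2))$ (Lemma~\ref{irr_Gaussian}) together with \cite[Theorem~1(b)]{KR09}, and obtains the converse from the invariant symplectic form killing $\hat z\wedge\widehat T_zZ$ (Legendrian property).

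Two side remarks are inaccurate but not load-bearing. First, Corollary~\ref{nprim} and Proposition~\ref{tensor} only show $\wedge^2_Z\neq0$, not $\dim\wedge^2_Z\ge2$. So the non-fundamental weights $m\lambda_i$ must also be settled by \cite{KR09}, which the paper applies to arbitrary highest weights; these include Veronese re-embeddings of every irreducible Hermitian symmetric space, not only $v_k(\BP^n)$. Second, $w(\hat z,\widehat T_zZ)=0$ only makes $\widehat T_zZ$ isotropic, not Lagrangian, so ``equivalently Legendrian'' is an overstatement, although it does hold for the resulting list.
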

%Recall from Theorem~\ref{refor_reduction} that $\partial^{0}$ is surjective if and only if either $Z$ is tangentially non-degenerate, or $Z$ is a Hermitian symmetric space satisfying $\wedge_{Z}^{2}\cong \BC.$

\begin{proof}[Proof of Proposition \ref{partial0_surj_classify}]
We divide the classification in (2) into two cases.
	
\medskip
\noindent\textbf{Case 1.}  Assume that
$
Z=Z' \times Z'' \subset \BP(U_{\lambda'}\otimes U_{\lambda''})
$
can be written as a nontrivial product of flag varieties. We claim that
$
Z \cong \BP^1 \times \BQ^m \subset \BP(\BC^2 \otimes \BC^{m+2}).
$
Indeed, by~\eqref{prod_wedge_Z}, up to interchanging $Z'$ and $Z''$, the condition
$
\wedge_{Z}^2\cong \BC
$
is equivalent to
\[
\wedge_{Z'}^{2}=I_{2}(Z')=0,
\qquad
\wedge^{2}U_{\lambda'}^{\vee} \cong \BC,
\qquad
I_{2}(Z'')\cong \BC,
\qquad
\wedge_{Z''}^{2}=0.
\]

Since flag varieties are cut out by quadrics, as proved by Kostant (see \cite{Gar82}), the first two conditions imply that
$
Z' \cong \BP^1.
$
%Indeed, the condition$I_{2}(Z')=0$forces $Z'$ to be a projective space, while$\wedge^{2}U_{\lambda'}^{\vee}\cong \BC$ implies$\dim U_{\lambda'}=2.$
Similarly, the condition
$
I_{2}(Z'')\cong \BC
$
implies that $Z''$ is a hyperquadric. Hence
$
Z \cong \BP^1 \times \BQ^m
$
as claimed.

\medskip
\noindent\textbf{Case 2.} Assume that $G=Aut^{0}(Z)$ is a simple group, that is $Z=G/P$ is isomorphic to an IHSS. Then from Lemma \ref{irr_Gaussian} the image of the Gaussian map $H^{0}(Z,\Omega_{Z}(2))$ is irreducible. Hence from the exact sequence of the Gaussian map \eqref{Gaussian_sequence}, we obtain the decomposition
$$
\wedge^{2}U^{\vee} \cong H^{0}(Z,\Omega_{Z}(2)) \oplus \BC,
$$
which splits as a trivial plus a nontrivial irreducible representation. This condition was classified in \cite[Theorem 1.b]{KR09} for arbitrary flag varieties. 
\begin{thm}[{\cite[Theorem~1(b)]{KR09}}]
	Let $Z \subset \BP U_{\lambda}$ be a flag variety such that
	$
	G=\Aut^{0}(Z)
	$
	is simple. Then
	$
	\wedge^{2}U_{\lambda}^{\vee}
	$
	decomposes as the direct sum of a trivial representation and a nontrivial irreducible representation if and only if $Z$ is isomorphic to one of the following varieties:
	\[
	v_{3}(\BP^1) \subset \BP(S^3 \BC^2), 
	\qquad
	E_{7}/P_{7},
	\qquad
	\BS_{6},
	\qquad
	Gr(3,6),
	\qquad
	Lag(3,6),
	\]
	under their minimal embeddings.
\end{thm}
It remains to show that, for each of the varieties listed above, there exists a nonzero form
$
w\in\wedge^2U^\vee
$
such that \(\psi(w)=0\). Indeed, these varieties are known to be Legendrian (cf. \cite[Proposition 4.5]{LH24}), i.e.,  there exists a \(\fg\)-invariant non-degenerate skew-symmetric form
$
w\in\wedge^2U^\vee
$
such that, for every \(z\in Z\), the affine tangent space
\(\widehat T_zZ\subset U\) is a maximal isotropic subspace with respect to \(w\). In particular,
$
w(\hat z,\widehat T_zZ)=0
$
for all $z\in Z$,
and hence \(\psi(w)=0\) by definition. This completes the proof.
\end{proof}
\subsection{The surjectivity of $\partial'$ and $\partial$}
In this subsection, we complete the proof of Theorem~\ref{main_thm}. To make the picture clearer, we first refine the diagram~\eqref{diag_Spencer} via the Gaussian map as follows. 

Tensoring the Gaussian sequence \eqref{Gaussian_sequence} with $U$ we obtain an exact sequence:
$$	
0 \rightarrow \Xi_{Z}^{0} \rightarrow \wedge^{2}U^{\vee}\otimes U \xrightarrow{\psi \otimes U} H^{0}(Z,\Omega_{Z}(2)\otimes U) \rightarrow 0.
$$
Recall the inclusions $\CO_{Z}(-1) \subset \widehat{T} \subset U \otimes \CO_{Z}$ and the sequence \eqref{eulerseq}, tensoring the latter with $\Omega_{Z}(2)$ induces the natural exact sequence:
\begin{equation}\label{iq}
0 \rightarrow \Omega_{Z}(1)  \xrightarrow{\widehat{i_T}}     \Omega_{Z}(2) \otimes \hat{T}  \xrightarrow{\widehat{q_T}} T_Z\otimes\Omega_Z(1) \rightarrow 0
\end{equation}

The following result follows immediately from Definition \ref{filt_01}.
\begin{lem}
	Let $Z \subset \BP U$ be a flag variety. 
	 Regarding $ \Omega_{Z}(1)$ and $\Omega_{Z}(2) \otimes \hat{T}$ as subbundles of $\Omega_{Z}(2) \otimes U$, we have:
	$$
	\Xi_{Z}=(\psi \otimes U)^{-1}(	H^{0}(Z,\Omega_{Z}(2)\otimes \widehat{T})), \quad  \Xi_{Z}'=(\psi \otimes U)^{-1}(H^{0}(Z,\Omega_{Z}(1))).
	$$
	In particular the map $\eta: \Xi_{Z}' \rightarrow H^{0}(Z,\Omega_Z(1))$ in \eqref{diag_Spencer} coincides with $(\psi \otimes U)|_{\Xi_{Z}'}$, and therefore it is surjective.
\end{lem}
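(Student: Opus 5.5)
The plan is to identify $\psi\otimes U$ fiberwise and then read off the three claims by comparing with Definition~\ref{filt_01} and Lemma~4.3. Identify $\Hom(\wedge^2U,U)=\wedge^2U^\vee\otimes U$. The first step is to describe $\psi\otimes U$ at a point $z=[u]\in Z$. By Definition~\ref{tan_nond_denf2}, for $w\in\wedge^2U^\vee$ the value $\psi(w)_z$ is the functional
\[
u\otimes u\otimes\overline{u'}\longmapsto w(u,u'),\qquad u'\in\widehat T_zZ,
\]
on $(\BC u)^{\otimes 2}\otimes T_zZ(-1)$. This is well defined because $w(u,u)=0$. Tensoring with $U$, the value $(\psi\otimes U)(\sigma)_z$ is the linear map $\overline{u'}\mapsto\sigma(u,u')\in U$, twisted by $(\BC u)^{\vee\otimes 2}$. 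Thus $\Omega_Z(2)\otimes U$ is viewed as $\Hom$ from $T_Z(-1)\otimes\CO_Z(-1)$ to $U$. Under the same identification, the subbundles $\Omega_Z(2)\otimes\widehat T$ and $\Omega_Z(1)=\Omega_Z(2)\otimes\CO_Z(-1)$ consist of those maps with values in $\widehat T_zZ$ and in $\BC u$, respectively. These are the inclusions induced by $\CO_Z(-1)\subset\widehat T\subset U\otimes\CO_Z$ and by $\widehat{i_T}$ in \eqref{iq}.

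Next, since both are genuine subbundles of the vector bundle $\Omega_Z(2)\otimes U$, a global section of $\Omega_Z(2)\otimes U$ lies in either subbundle if and only if it does so pointwise at every $z\in Z$. Applying this to the section $(\psi\otimes U)(\sigma)$ gives the following.
\begin{itemize}
\item[(a)] $(\psi\otimes U)(\sigma)\in H^0(Z,\Omega_Z(2)\otimes\widehat T)$ if and only if $\sigma(\hat z,\widehat T_zZ)\subset\widehat T_zZ$ for all $z\in Z$. This is exactly the defining condition of $\Xi_Z$.
\item[(b)] $(\psi\otimes U)(\sigma)\in H^0(Z,\Omega_Z(1))$ if and only if $\sigma(\hat z,\widehat T_zZ)\subset\BC\hat z$ for all $z\in Z$. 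This is exactly the description of $\Xi'_Z$ in Lemma~4.3.
\end{itemize}
Comparing the formula $\eta_{\sigma,[u]}(\overline{u'})\,u=\sigma(u,u')$ of Definition~\ref{filt_01}(ii) with the fiber formula above shows that $\eta$ is literally $(\psi\otimes U)|_{\Xi'_Z}$, once $\Omega_Z(1)$ is regarded inside $\Omega_Z(2)\otimes U$ via $\widehat{i_T}$. The only care needed is to track the twists and the convention of Definition~\ref{filt_01}.

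Finally, for surjectivity I would invoke Corollary~\ref{im_Gaussian}: $\psi$ is surjective for flag varieties, so $\psi\otimes U:\wedge^2U^\vee\otimes U\to H^0(Z,\Omega_Z(2))\otimes U=H^0(Z,\Omega_Z(2)\otimes U)$ is surjective. Given $s\in H^0(Z,\Omega_Z(1))\subset H^0(Z,\Omega_Z(2)\otimes U)$, choose any preimage $\sigma$. By the preimage description in (b), $\sigma$ automatically lies in $\Xi'_Z$, and $\eta(\sigma)=s$. Nothing here is a real obstacle; the only point deserving attention is the consistent identification of the twists, and in particular the sign convention relating $\Omega_Z(1)$ to $\Hom(\BC u\otimes\widehat T_zZ/\BC u,\BC u)$.
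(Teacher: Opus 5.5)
Your proposal is correct and follows essentially the same approach as the paper. The paper only remarks that the two preimage descriptions follow directly from Definition~\ref{filt_01} (together with Lemma~4.3), and it obtains the surjectivity of $\eta$ from the surjectivity of $\psi\otimes U$, i.e.\ from Kumar's result (Corollary~\ref{im_Gaussian}); your fiberwise identification of $\psi\otimes U$, your pointwise check of membership in the subbundles, and your possible sign in the Gaussian map (which affects none of the three claims) simply make explicit what the paper leaves implicit.
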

Summing the above up, we actually obtain the following diagram refining \eqref{diag_Spencer}:
\begin{equation}\label{refine_diag_Spencer}
	\begin{tikzcd}
		U^{\vee}\otimes \hfg \arrow[r, "\partial"]	&\wedge^{2}U^{\vee}\otimes U
		\arrow[r, two heads, "\psi \otimes U"]
		&
		H^{0}(Z,\Omega_{Z}(2)\otimes U)
		\\
		\partial^{-1}(\Xi_{Z}) \arrow[r, "\partial"]\arrow[u, hook]	& \Xi_{Z}
		\arrow[u, hook]
		\arrow[r, two heads, "\psi \otimes U|_{\Xi_{Z}}"]
		&
		H^{0}(Z,\Omega_{Z}(2)\otimes \hat{T})
		\arrow[r, "q_{T}"] \arrow[u,hook]
		&
		H^{0}(Z,T_Z\otimes\Omega_Z(1))
		\\
		\partial^{-1}(\Xi_{Z}') \arrow[r, "\partial'"]	\arrow[u, hook]& \Xi_{Z}'
		\arrow[u, hook]
		\arrow[r, two heads, "\eta"]
		&
		H^{0}(Z,\Omega_Z(1))
		\arrow[u, hook, "i_{T}"] \\
		\partial^{-1}(\Xi_{Z}^{0}) \arrow[r, "\partial^{0}"]  \arrow[u, hook]&
		\Xi_{Z}^{0} \arrow[u, hook] 
	\end{tikzcd}
\end{equation}
where $q_{T}$ and $ i_{T}$ are obtained by taking global sections of the sequence \eqref{iq}, and $q_{T} \circ(\psi \otimes U)|_{\Xi_{Z}}$ coincides with $\zeta$ in \eqref{diag_Spencer}.

\subsubsection{On the surjectivity of $\partial'$}
\begin{prop}\label{partial0_imply_partial'}
	Let
	$
	Z \subset \BP U
	$
	be a flag variety with $Z \not= \BP U$. If the restricted map
	$
	\partial^{0}
	$
	is surjective, then the composition $\eta \circ \partial': \partial^{-1}(\Xi_{Z}') \rightarrow H^{0}(Z,\Omega_Z(1))$ is surjective. Consequently 
	$
	\partial' 
	$
	is surjective.
\end{prop}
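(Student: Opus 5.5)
The plan is to prove surjectivity of $\eta\circ\partial'$ first and then deduce surjectivity of $\partial'$ by a diagram chase in \eqref{refine_diag_Spencer}. For the chase, let $\sigma\in\Xi_Z'$. If $\eta\circ\partial'$ is surjective, there is $h\in\partial^{-1}(\Xi_Z')$ with $\eta(\partial'h)=\eta(\sigma)$. Then $\sigma-\partial h\in\ker\eta=\Xi_Z^0$, and by hypothesis $\Xi_Z^0=\partial^0(\partial^{-1}(\Xi_Z^0))$. Hence $\sigma\in\Im(\partial')$.

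For surjectivity of $\eta\circ\partial'$, I would use Corollary~\ref{cor_end}: $\partial^{-1}(\Xi_Z')=\partial^{-1}(\Xi_Z^0)\oplus\widetilde U$, and $\eta\circ\partial'$ vanishes on the first summand. So it suffices to show that $\eta\circ\partial'|_{\widetilde U}$ is surjective. By Lemma~\ref{etaparital'} and Proposition~\ref{HX}(2), this map is $H^0$ of the injective homogeneous morphism
\[
E\Bigl(\langle u_\lambda\rangle^\vee\otimes\textstyle\bigoplus_\nu\fg_{[\nu,d_\nu]}\Bigr)\longrightarrow E\bigl(\langle u_\lambda\rangle^\vee\otimes\fn_\fp\bigr)\cong\Omega_Z(1).
\]
Its cokernel is $E(\langle u_\lambda\rangle^\vee\otimes Q)$ with $Q=\fn_\fp/\bigoplus_\nu\fg_{[\nu,d_\nu]}$. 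Since $H^0$ is left exact, it is enough to prove
\[
H^0\bigl(Z,E(\langle u_\lambda\rangle^\vee\otimes Q)\bigr)=0 .
\]

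I would check this vanishing only for the varieties allowed by Proposition~\ref{partial0_surj_classify}, since surjectivity of $\partial^0$ is assumed.
\begin{itemize}
\item If $Z$ is Hermitian symmetric, each $d_\nu=1$, so $Q=0$ and there is nothing to prove. This covers the varieties in Theorem~\ref{class_tang}(1) and all subadjoint cases, including the product case $\BP^1\times\BQ^m$.
\item The remaining varieties are the adjoint varieties and the quasi-minuscule varieties $Gr_\omega(2,2n)$ and $F_4/P_4$. Here $\fg$ is simple, the grading has length $d=2$, and $Q=\fg_1$.
\end{itemize}

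The main obstacle is to show that $H^0(Z,E(\langle u_\lambda\rangle^\vee\otimes\fg_1))=0$ in these $d=2$ cases. I would first try Theorem~\ref{PRV}(2), applied separately to each irreducible $\fg_0$-summand of $\fg_1$, which is irreducible as a $P$-module because $\fg_2$ acts on it trivially modulo $\fg_3=0$. A nonzero $P$-homomorphism from $U_\mu^\vee$ requires a vector $u_\lambda^\vee\otimes X$ of weight $-\mu$ killed by all $f_k$ with $\alpha_k\notin\Delta_Z$, so $X$ is a $\fg_0$-lowest weight vector of $\fg_1$ of some root $\alpha=\lambda-\mu$. It must also satisfy the $e_i$-conditions, computed in the quotient module where $\ad(e_i)$ sends $\fg_1$ into $\fg_2$, which is zero in $Q$. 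Taking $\alpha_i\in\Delta_Z$ with $\alpha+\alpha_i\in\Phi$, I expect $\langle\mu,\alpha_i^\vee\rangle=\langle\lambda-\alpha,\alpha_i^\vee\rangle\le 0$ (equivalently $\mu$ not dominant) to kill the section, using that $\lambda$ is the highest (long or short) root. Alternatively, $\rho-\mu_{\mathrm{low}}$ should be singular in Theorem~\ref{PRV0}. If this uniform root computation resists, the fallback is a direct case-by-case Borel--Weil--Bott check. For the adjoint case this is the standard contact-geometry fact that $\Omega_Z(1)\to \fg_1$-part has no sections; for $Gr_\omega(2,2n)$ and $F_4/P_4$ one examines the lowest weight of the irreducible $\fg_0$-module $\fg_1$ directly.
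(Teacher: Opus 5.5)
Your proof is correct. The reduction steps match the paper's: the splitting $\partial^{-1}(\Xi_Z')=\partial^{-1}(\Xi_Z^0)\oplus\widetilde U$ from Corollary~\ref{cor_end}, the identification of $\eta\circ\partial'|_{\widetilde U}$ with $H^0$ of the injective bundle map of Proposition~\ref{HX}(2), the Hermitian symmetric case $Q=0$, and the final diagram chase giving surjectivity of $\partial'$.

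You diverge at the key step in the $d=2$ cases. In this proof the paper never looks at the cokernel bundle. It restricts sections of $\Omega_Z(1)$ to lines, where $\Hom(\CO(2),\CO(1))=0$ forces them to vanish on the span of the VMRT (Lemma~\ref{esti_parital'}). It then uses the Landsberg--Manivel description of lines (Lemma~\ref{inf_vmrt_adco}) to obtain:
\begin{itemize}
\item $H^0(Z,\Omega_Z(1))=0$ for $Gr_\omega(2,2n)$ and $F_4/P_4$;
\item $H^0(Z,\Omega_Z(1))=H^0(Z,E(\langle u_\lambda\rangle^\vee\otimes\fg_{-1}^\perp))$ for adjoint varieties, with $Fl(1,n;n+1)$ needing both of its families of lines.
\end{itemize}
You instead show by Borel--Weil--Bott that the cokernel bundle has no sections. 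This is uniform, needs no input about lines, and stays inside the homogeneous-bundle formalism; the paper runs essentially this computation for higher cohomology in Lemma~\ref{van_ome}. The paper's route is more geometric, and it sets up the restriction-to-lines technique reused for $\End(T_Z)(1)$ in Lemma~\ref{esti_partial}.

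You leave the vanishing as an expectation, but it closes in one line. Your PRV heuristic needs correcting in two places: the relevant node is not an $\alpha_i$ with $\alpha+\alpha_i\in\Phi$, and $\langle\mu,\alpha_i^\vee\rangle\le 0$ does not mean $\mu$ is non-dominant. Here is the computation:
\begin{itemize}
\item Since $[\fg_1,\fg_1]\subset\fg_2$ and $[\fg_2,\fg_1]=0$, $\fn_\fp$ acts trivially on $Q=(\fg_1\oplus\fg_2)/\fg_2$. So $Q$ is the sum of the irreducible $\fg_0$-summands of $\fg_1$, one for each $\alpha_j\in\Delta_Z$, each with lowest weight $\alpha_j$.
\item The corresponding summand of $\langle u_\lambda\rangle^\vee\otimes Q$ has $\mu_{\mathrm{low}}=-\lambda+\alpha_j$, and
\[
\langle\rho-\mu_{\mathrm{low}},\alpha_j^\vee\rangle=\langle\lambda,\alpha_j^\vee\rangle-1=0 .
\]
This holds because in every remaining case $\lambda$ is a fundamental weight, or $\lambda_1+\lambda_n$ in type $A_n$, so it has coefficient $1$ at each node of $\Delta_Z$.
\item Equivalently, in PRV terms, $\mu=\lambda-\alpha_j$ satisfies $\langle\mu,\alpha_j^\vee\rangle=-1$, so $\mu$ is not dominant.
\end{itemize}
Working summand by summand also correctly handles type $A$, where $\fg_1$ is reducible.
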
 
\begin{proof}
	
By Corollary~\ref{cor_end}, under the condition $Z \not= \BP U$, to prove the surjectivity of
$
\eta \circ \partial',
$
it suffices to show that the restriction
$
(\eta \circ \partial')|_{\widetilde{U}}
$
is surjective. This map is obtained by taking global sections of the inclusion of homogeneous bundles
\[
\widehat{\eta \circ \partial'}
:
\widehat{\CP'_0}(1)
\longrightarrow
\Omega_{Z}(1)
\]
in~\eqref{eq:2.5}, which by Proposition~\ref{HX}(2) is induced by the inclusion of $P$-modules
\[
\langle u_{\lambda} \rangle^{\vee}
\otimes
\bigoplus_{\nu}\fg_{[\nu,d_{\nu}]}
\hookrightarrow
\langle u_{\lambda} \rangle^{\vee}
\otimes
\fn_{\fp}
\cong
\langle u_{\lambda} \rangle^{\vee}
\otimes
(\fg/\fp)^{\vee}.
\]

Assume first that $Z$ is a Hermitian symmetric space. Then for each $\nu$, $\fg_{[\nu]}/\fp_{[\nu]}$ is $\fp_{\nu}$-irreducible from Definition \ref{cominuscule}, that is the above inclusion is an isomorphism. Hence
$
\widehat{\eta \circ \partial'}
$
is an isomorphism, and the claim follows.

Assume now that $Z$ is not a Hermitian symmetric space. By the classification in Proposition~\ref{partial0_surj_classify} and Theorem~\ref{class_tang}, $Z$ is either an adjoint variety or a quasi-minuscule variety. The corresponding highest weights are listed in Table~\ref{Table1}. In particular, $\fg$ is simple and the parabolic grading has length
$
d=2.
$
Thus the above inclusion becomes
\[
\langle u_{\lambda} \rangle^{\vee}
\otimes
\fg_{2}
\hookrightarrow
\langle u_{\lambda} \rangle^{\vee}
\otimes
(\fg_{1}\oplus \fg_{2})
\cong
\langle u_{\lambda} \rangle^{\vee}
\otimes
(\fg/\fp)^{\vee}.
\]

Recall that
$
\fg/\fp
\cong
\fg_{-1}\oplus \fg_{-2}
$
as $\fg_{0}$-modules. The above inclusion identifies
\[
\fg_{2}
\cong
\{
\Lambda\in (\fg/\fp)^{\vee}
\mid
\Lambda(\fg_{-1})=0
\}
=
\fg_{-1}^{\perp}.
\]

The claim now follows from the following two lemmas,
adapted from~\cite{FH18a,LH24}.
\begin{lem}\label{esti_parital'}
	Let
	$
	Z \subset \BP U
	$
	be a smooth projective variety covered by lines. Choose an irreducible covering family
	$
	\CK
	$
	of lines on $Z$, and denote by
	$
	\CC\subset \BP TZ
	$
	the associated VMRT. Then for a general point
	$
	z=[u]\in Z,
	$
	and any
	$
	\sigma\in H^{0}(Z,\Omega_{Z}(1)),
	$
	the linear form
	\[
	\sigma_{z}
	\in
	(\BC u\otimes T_{z}Z)^{\vee}
	\]
	vanishes on the linear span
	$
	\langle \widehat{\CC}_{z}Z\rangle
	$
	of the affine cone of the VMRT at $z$.
\end{lem}
\begin{proof}
	Let
	$
	z
	$
	be a general point of $Z$, and let
	$
	\sigma \in H^{0}(Z,\Omega_{Z}(1)).
	$
	Choose a line
	$
	l\in \CK
	$
	through $z$. 
	Restricting $\sigma$ to $l$, we obtain from \eqref{unbendable} that 
	\[
	\sigma|_{l}
	\in
	\Hom\bigl(
	\CO_{l}(2)
	\oplus
	\CO_{l}(1)^{\oplus r}
	\oplus
	\CO_{l}^{\,n-1-r},
	\CO_{l}(1)
	\bigr).
	\]
	Since
	$
	\Hom(\CO_{l}(2),\CO_{l}(1))=0,
	$
	the section
	$
	\sigma|_{l}
	$
	vanishes on the summand
	$
	\CO_{l}(2)\subset T_{Z}|_{l}.
	$
	Hence
	$
	\sigma_{z}
	$
	vanishes on
	$
	T_{z}l=\CO_{l}(2)_{z}.
	$
	When the line $l$ varies, the tangent directions
	$
	T_{z}l
	$
	span
	$
	\langle \widehat{\CC}_{z}Z\rangle.
	$
	Therefore
	$
	\sigma_{z}
	$
	vanishes on
	$
	\langle \widehat{\CC}_{z}Z\rangle.
	$
\end{proof}
\begin{lem}\label{inf_vmrt_adco}
	Let
	$
	Z \subset \BP U
	$
	be an adjoint or a quasi-minuscule variety which is not a Hermitian symmetric space. Then $Z$ is covered by lines. Moreover:
	\begin{itemize}
		\item[(a)]
		If
		$
		Z \cong Gr_{\omega}(2,2n)
		$
		with $n \ge 3$
		or
		$
		F_{4}/P_{4},
		$
		then $Z$ admits a unique family of lines whose VMRT at a general point is non-degenerate.
		
		\item[(b)]
		Assume that
		$
		Z
		$
		is not isomorphic to
		$
		Gr_{\omega}(2,2n),
		$
		$
		F_{4}/P_{4},
		$
		or
		$
		Fl(1,n;n+1).
		$
		Then $Z$ admits a unique family of lines whose VMRT spans a distribution
		$
		\CF\subset T_{Z}
		$
		given by
		$
		\fg_{-1}\subset \fg/\fp,
		$
		namely
		\[
		\CF
		\cong
		E(\fg_{-1})
		\subset
		T_{Z}
		\cong
		E(\fg/\fp).
		\]
		
		If
		$
		Z \cong Fl(1,n;n+1),
		$
		then $Z$ admits two families of lines
		$
		\CK
		$
		and
		$
		\CK',
		$
		whose VMRTs together span a distribution with the same property.
	\end{itemize}
\end{lem}
\begin{proof}
	From Proposition 2.5, we can identify a general line through a point with its tangent direction. For a rational homogeneous space
	$
	Z=G/P
	$
	with \(G\) simple, the spaces of lines on \(Z\) in its minimal embedding, as well as the spaces of lines through a fixed point, were described explicitly in~\cite{LM03}. In particular, statement~\emph{(a)} follows from~\cite[Theorem~4.8(2) and Corollary~5.5]{LM03} for
	$
	Gr_{\omega}(2,2n),
	$
	and from~\cite[Proposition~6.5]{LM03} for
	$
	F_{4}/P_{4}.
	$
	The cases in~\emph{(b)} follow from~\cite[Theorem~4.8(1)]{LM03}.
\end{proof}
Now suppose that $Z$ is one of the varieties appearing in Lemma~5.11 (a). Then the above lemmas imply that
$
H^{0}(Z,\Omega_{Z}(1))=0,
$
hence
$
\eta \circ \partial'
$
is surjective; suppose that $Z$ is one of the varieties in Lemma~5.11(b). Then
\[
H^{0}(Z,\Omega_{Z}(1))
=
\{
\sigma\in H^{0}(Z,\Omega_{Z}(1))
\mid
\sigma|_{\CF}=0
\}
=
H^{0}\bigl(
Z,
E(\langle u_{\lambda}\rangle^{\vee}\otimes \fg_{-1}^{\perp})
\bigr).
\]
By the above discussion, this space coincides with the image of $\eta \circ \partial'$ and the claim follows.
\end{proof}
\subsubsection{On the surjectivity of $\partial$}
In this subsection, we complete the proof of Theorem~\ref{main_thm} by analyzing the surjectivity of
$
\partial
$
under the assumption that
$
\partial^{0}
$
and
$
\partial'
$
are surjective. The main result of this subsection is the following proposition.

\begin{prop}\label{main_432}
	Let
	$
	Z \subset \BP U
	$
	be a flag variety with $Z \not= \BP U$, and assume that the maps
	$
	\partial^{0}
	$
	and
	$
	\partial'
	$
	are surjective. Then the map
	\[
	\partial:
	\partial^{-1}(\Xi_{Z})
	\longrightarrow
	\Xi_{Z}
	\]
	is surjective if and only if the composition map
	$
	\zeta \circ \partial
	$
	in~\eqref{diag_Spencer} is surjective. Moreover, this happens if and only if
	$
	Z
	$
	is isomorphic to one of the varieties listed in Theorem~\ref{main_thm}.
\end{prop}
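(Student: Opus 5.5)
The proof has two parts: a diagram chase in \eqref{refine_diag_Spencer}, then a fiberwise comparison along lines, which reduces the claim to a case check over the finite list in Proposition~\ref{partial0_surj_classify}.

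\textbf{First equivalence.} By hypothesis $\partial'$ is surjective onto $\Xi_Z'=\ker(\zeta|_{\Xi_Z})$. A standard diagram chase then shows that $\partial\colon\partial^{-1}(\Xi_Z)\to\Xi_Z$ is surjective if and only if $\operatorname{Im}(\zeta\circ\partial)=\operatorname{Im}(\zeta)$. Indeed, given $\sigma\in\Xi_Z$, choose $h$ with $\zeta\partial h=\zeta\sigma$; then $\sigma-\partial h\in\Xi_Z'=\partial'(\partial^{-1}\Xi_Z')$. In the refined diagram \eqref{refine_diag_Spencer}, $\psi\otimes U|_{\Xi_Z}$ is onto $H^0(Z,\Omega_Z(2)\otimes\widehat T)$. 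Hence $\operatorname{Im}(\zeta)=\operatorname{Im}(q_T)$, and "$\zeta\circ\partial$ surjective" means surjective onto $\operatorname{Im}(q_T)\subset H^0(Z,T_Z\otimes\Omega_Z(1))$. This gives the first claim.

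\textbf{Fiberwise comparison.} By Lemma~\ref{CPadjoint}(2) and Lemma~\ref{bundle_CP}, $\zeta\circ\partial$ is the map on global sections of a homogeneous bundle morphism. At $[u_\lambda]$, after twisting, it is
\[
\widetilde{\phi_{\fp}}\colon \fp\oplus\BC\,\id_U\to\End(\fg/\fp),\qquad X\mapsto 2\phi_\fp(X)+\operatorname{Tr}(X|_{\BC u_\lambda})\id .
\]
Its image is $\phi_\fp(\fp)+\BC\,\id$. To bound $\operatorname{Im}(q_T)$ from above, I would adapt Lemma~\ref{esti_parital'} and the argument of \cite{FH18a,LH24}. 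Restrict a section of $\Omega_Z(2)\otimes\widehat T$ to a line $l$ of the family given by Lemma~\ref{inf_vmrt_adco}, using the splitting \eqref{unbendable} and \eqref{unbendable_}. Degree considerations then force $q_T(s)_z$ to preserve the cone $\widehat{\CC}_z$ to first order. So $q_T(s)_z$ lies in $\fl_z:=\aut(\widehat{\CC}_z)$, or, where the VMRT is degenerate, in the endomorphisms preserving the flag $\langle\widehat{\CC}_z\rangle\subset T_zZ$ compatibly. Conversely, $\fl_z$ is a $P$-module containing $\operatorname{Im}(\widetilde{\phi_\fp})$, and the sections of $E(\langle u_\lambda\rangle^\vee\otimes\fl_z)$ should all arise from $\Omega_Z(2)\otimes\widehat T$ via Theorem~\ref{PRV}.

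This yields a sufficient condition (Corollary~\ref{suff_condition_surj}): if $\operatorname{Im}(\widetilde{\phi_\fp})=\fl_z$, then $\zeta\circ\partial$ is surjective. For the converse, if $\fl_z$ is strictly larger, I would pick a $P$-highest weight vector of $\fl_z/\operatorname{Im}(\widetilde{\phi_\fp})$. The aim is to produce, via PRV, a section in $\operatorname{Im}(q_T)$ whose value at $[u_\lambda]$ is not in $\operatorname{Im}(\widetilde{\phi_\fp})$. Such a section cannot come from $\zeta\circ\partial$, since the values of sections coming from $\partial^{-1}(\Xi_Z)$ lie fiberwise in $\operatorname{Im}(\widetilde{\phi_\fp})$.

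\textbf{Case check.} Run through the list of Proposition~\ref{partial0_surj_classify}: VMRTs of IHSS, subadjoint varieties, adjoint varieties, $Gr_\omega(2,2n)$ and $F_4/P_4$. For Hermitian symmetric $Z$ one has $\fg/\fp$ irreducible, and the equality $\operatorname{Im}(\widetilde{\phi_\fp})=\fl_z$ should be the classical statement that $\fg_0$ is the full linear automorphism algebra of the VMRT, which holds by Cartan and Kobayashi–Nagano via \cite{ML99}. The exceptions are $\BP^1\times v_2(\BP^1)$ and the type-$B_3$ subadjoint variety, where the VMRT has extra symmetry. For adjoint varieties I would invoke \cite[Theorem~1.5]{LH24} and \cite[Example~8.8]{LH24} to include all types except $A_{n\neq2}$. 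For $Gr_\omega(2,2n)$ and $F_4/P_4$ I would compute the VMRTs from \cite{LM03} and compare $\dim\fl_z$ with $\dim\fg_0+1$. I expect equality to hold exactly for $n=3$ and for $F_4/P_4$; for $n\ge4$ the VMRT should carry an extra $\fg\fl$-factor acting on the degenerate part, giving a larger $\fl_z$.

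The main obstacle is the converse direction of the fiberwise comparison. One must show that when $\fl_z$ exceeds the isotropy image, the excess is realized by a global section in $\operatorname{Im}(q_T)$ and not merely by a fiber element. I would attack this case by case with Borel–Weil–Bott (Theorem~\ref{PRV0}) on the quotient bundle, checking that its $H^0$ is nonzero.
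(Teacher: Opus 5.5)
\textbf{Overall structure.} Your architecture matches the paper's. You do the diagram chase using surjectivity of $\partial'$ onto $\Xi'_Z=\ker\zeta$. You compare $\operatorname{Im}(\widetilde{\phi_{\fp}})$ with $\fl_z$ fiberwise by restricting to lines (Lemma~\ref{esti_partial}). You then check the cases of Proposition~\ref{partial0_surj_classify}, using \cite[Example~8.8]{LH24} for type $A$.

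\textbf{The gap: missing cohomology vanishing.} You never establish the vanishing results that turn these fiberwise statements into statements about global sections.

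First, the equality $\operatorname{Im}(\widetilde{\phi_{\fp}})=\fl_z$ only says that $\widehat{\zeta\circ\partial}\colon\widehat{\CP}(1)\to E(\fl_z\otimes\langle u_\lambda\rangle^\vee)$ is a surjection of homogeneous bundles with kernel $\widehat{\CP'}(1)$. Surjectivity on $H^0$ additionally needs $H^1(Z,\widehat{\CP'}(1))=0$. This is a hypothesis of Corollary~\ref{suff_condition_surj}, which your restatement drops. The paper proves it in Lemma~\ref{van_ome}:
\begin{itemize}
\item by \eqref{eq:2.4} and Proposition~\ref{HX}(1), the question reduces to $\widehat{\CP'_0}(1)$;
\item by Proposition~\ref{HX}(2), $\widehat{\CP'_0}(1)$ injects into $\Omega_Z(1)$, isomorphically in the Hermitian symmetric case, and with irreducible cokernel of lowest weight $-\lambda+\alpha_i$ in the adjoint and quasi-minuscule cases;
\item Borel--Weil--Bott then gives the vanishing in each case.
\end{itemize}

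Second, the same lemma gives $H^1(Z,\Omega_Z(1))=0$. Hence $q_T$ in \eqref{iq} is surjective, and so is $\zeta$. This is what makes the first equivalence true as stated, with target $H^0(Z,T_Z\otimes\Omega_Z(1))$; you replaced the target by $\operatorname{Im}(q_T)$ instead. It also removes what you call the main obstacle. Every section of $E(\fl_z\otimes\langle u_\lambda\rangle^\vee)\subset\End(T_Z)(1)$ then lies in $\operatorname{Im}(\zeta)$ automatically, so no realization through $\Omega_Z(2)\otimes\widehat T$ is needed.

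\textbf{The converse needs a lifting step.} A nonzero $H^0$ of the quotient bundle $E\bigl((\fl_z/\operatorname{Im}\widetilde{\phi_{\fp}})\otimes\langle u_\lambda\rangle^\vee\bigr)$ is not enough on its own; the section must lift to $E(\fl_z\otimes\langle u_\lambda\rangle^\vee)$.
\begin{itemize}
\item For $Gr_\omega(2,2n)$ with $n\ge 4$, the paper secures the lift by proving $H^1\bigl(Z,E(\operatorname{Im}(\widetilde{\phi_{\fp}})\otimes\langle u_\lambda\rangle^\vee)\bigr)=0$, using the filtration with graded pieces $\BC,\fg_0,\fg_1$.
\item For $\BP^1\times v_2(\BP^1)$ the extension splits as $\fp$-modules, so no lifting issue arises.
\end{itemize}

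\textbf{Errors in the case check.}
\begin{itemize}
\item Your dimension test is wrong. Since $\ker\widetilde{\phi_{\fp}}=\fg_2$, one has $\operatorname{Im}(\widetilde{\phi_{\fp}})\cong\fg_0\oplus\fg_1\oplus\BC$. Showing that no element acts by a nonzero scalar uses that Euler-symmetric flag varieties are Hermitian symmetric. So $\dim\fl_z$ must be compared with $\dim\fg_0+\dim\fg_1+1$; for $F_4/P_4$ this reads $31=22+8+1$. Comparing with $\dim\fg_0+1$ can never give equality.
\item For reducible Hermitian symmetric $Z$, $\fg/\fp$ is not irreducible. There you need $\fl_z=\fl_{z_1}\oplus\fl_{z_2}$, coming from the families of lines on the factors.
\item $\BP^1\times v_2(\BP^1)$ is the type-$B_3$ subadjoint variety, so you have one exception, not two.
\item For $Gr_\omega(2,2n)$ the VMRT is non-degenerate. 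The excess of $\fl_z$ over the isotropy image is $\fs\fl(Q)/\fs\fp(Q)\cong\wedge^2_0Q$, not a factor acting on a degenerate part.
\end{itemize}
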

The main task in this subsection is to study the surjectivity of
$
\zeta \circ \partial,
$
for which we study the sequence~\eqref{eq:2.3}:
\[
0
\longrightarrow
\widehat{\CP'}(1)
\longrightarrow
\widehat{\CP}(1)
\xrightarrow{\widehat{\zeta \circ \partial}}
T_Z\otimes\Omega_Z(1).
\]
By Lemma~\ref{CPadjoint}, the map
$
\zeta \circ \partial
$
is obtained by taking global sections of
$
\widehat{\zeta \circ \partial}.
$
Based on this, we shall derive a sufficient condition in Corollary \ref{suff_condition_surj} for the surjectivity of $\zeta \circ \partial$. 
\begin{defn}
	Let $Z=G/P \subset \BP U$ be a flag variety.
	\begin{itemize}
		\item[(1)] Denote the extended isotropy representation at
	$z=[u_{\lambda}]$ by
	\begin{equation}\label{ex_phip}
		\widetilde{\phi_{\fp}}:
		\fp \oplus \BC
		\xrightarrow{\phi_{\fp}\oplus \mathrm{id}}
		\End(\fg/\fp),
	\end{equation}
	where $\phi_{\fp}$ is the isotropy action \eqref{phip_defn} and $\BC$ is mapped into scalar multiplications. 
	
 \item[(2)]	Assume that $Z \subset \BP U$ is covered by lines. Let
	$
	\{\CK^{i}\}
	$
	be all irreducible families of lines on $Z$, and let
	$
	\{\CC^{i}\subset \BP TZ\}
	$
	be the associated VMRT-structures. For any point
	$
	z\in Z,
	$
	define the total isotropy subalgebra as
	\[
	\fl_{z}
	=
	\bigcap_{i}
	\fa \fu \ft(\widehat{\CC}^{i}_{z}Z) \subset End(T_{z}Z).
	\]
	Since
	$
	G
	$
	acts on each family
	$
	\CK^{i},
	$
	the isotropy representation at
	$
	z=[u_{\lambda}]
	$
	factors as
	\[
	\widetilde{\phi_{\fp}}:
	\fp \oplus \BC 
	\longrightarrow
	\fl_{z}
	\subset
	\End(T_{z}Z).
	\]
	\end{itemize}
\end{defn}

\begin{cor}\label{suff_condition_surj}
	Let
	$
	Z \subset \BP U
	$
	be a flag variety with $Z \not= \BP U$, and keep the above notation. Assume that
	$
	H^{1}(Z,\widehat{\CP'}(1))=0,
	$
	and that one of the following conditions holds for $\widetilde{\phi_{\fp}}$ defined in \eqref{ex_phip}:
	\begin{itemize}
		\item[(1)]
		$
		\widetilde{\phi_{\fp}}
		$ 
		is surjective;
		
		\item[(2)]
		$
		Z
		$
		is covered by lines, and
		$
		\widetilde{\phi_{\fp}}
		$
		maps
		$
		\fp\oplus \BC
		$
		surjectively onto
		$
		\fl_{z}.
		$
	\end{itemize}
	
	Then the composition map
	$
	\zeta \circ \partial
	$
	in~\eqref{diag_Spencer} is surjective. 
\end{cor}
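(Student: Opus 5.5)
The plan is to take global sections of the exact sequence \eqref{eq:2.3},
\[
0\longrightarrow \widehat{\CP'}(1)\longrightarrow \widehat{\CP}(1)\xrightarrow{\widehat{\zeta\circ\partial}} T_Z\otimes\Omega_Z(1),
\]
and to exploit the vanishing $H^1(Z,\widehat{\CP'}(1))=0$. The first step is to identify the image sheaf. Denote by $\mathcal Q\subset T_Z\otimes\Omega_Z(1)$ the image of $\widehat{\zeta\circ\partial}$. Since all the bundles are homogeneous, $\mathcal Q$ is the homogeneous subbundle $E(\langle u_\lambda\rangle^\vee\otimes \Im(\widetilde{\phi_{\fp}}))$. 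By formula \eqref{adjoint_formula}, the fiber map at $[u_\lambda]$ is $X\mapsto 2\,\ad_{[u]}(X)+\mathrm{Tr}(X|_{\BC u})\,\id$ on $\fp\oplus\BC\,\id_U$. This map has the same image as $\widetilde{\phi_{\fp}}$. To see this, write an element as $X^*+c\,\id_U$ with $X^*\in\fp$. The trace term produces scalars, and the $\id_U$ direction contributes $c\cdot\id$ through the trace, since $\id_U$ acts trivially on $T_zZ$. After rescaling, the image is therefore $\phi_{\fp}(\fp)+\BC\,\id$, which is exactly $\Im(\widetilde{\phi_{\fp}})$. I will check this rescaling carefully, because the scalar coming from $\mathrm{Tr}(X^*|_{\BC u_\lambda})=\lambda(X^*)$ mixes with the pure identity direction. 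The span is still $\phi_{\fp}(\fp)+\BC\id$, because $c$ is free.

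The second step gives the short exact sequence
\[
0\to\widehat{\CP'}(1)\to\widehat{\CP}(1)\to\mathcal Q\to0.
\]
Together with $H^1(Z,\widehat{\CP'}(1))=0$, it shows that $H^0(\widehat{\CP}(1))\to H^0(\mathcal Q)$ is surjective. Since $H^0(\widehat{\CP}(1))=\partial^{-1}(\Xi_Z)$ by Definition~\ref{hatP}, the image of $\zeta\circ\partial$ is exactly $H^0(Z,\mathcal Q)$. It remains to show that $H^0(Z,\mathcal Q)$ contains the full image of $\zeta\colon\Xi_Z\to H^0(Z,T_Z\otimes\Omega_Z(1))$.

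In case (1), $\mathcal Q=T_Z\otimes\Omega_Z(1)$, and there is nothing more to prove. In case (2), I will show that every $\zeta(\sigma)$ takes values fiberwise in $\CO(1)\otimes\fl_z$, and at a general point this is enough. The key step is a lines argument in the spirit of Lemma~\ref{esti_parital'}, following \cite{FH18a,LH24}. Fix a line $l\in\CK^i$ through a general point $z$. For $u\in\widehat l$, the trilinear data $\sigma(u,\cdot)$ preserves $\widehat T_{[u]}Z$. Restricting $\zeta(\sigma)$ to $l$ gives a section of $\mathrm{End}(T_Z|_l)\otimes\CO(1)$. The splitting \eqref{unbendable}, applied to the degree-two part of the tangent bundle, then shows that $\zeta(\sigma)_z$ sends $T_zl$ into $\widehat T_{[T_zl]}\CC^i_z$. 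A degree count proves this: the $\CO(2)\to\CO(-1)\otimes\CO(1)$ components vanish, and $\Hom(\CO(2),\CO(1)\otimes\CO(0))$ vanishes. Hence $\zeta(\sigma)_z\in\fa\fu\ft(\widehat{\CC}^i_z)$ for every $i$, so $\zeta(\sigma)_z\in\fl_z=\Im(\widetilde{\phi_{\fp}})$. Since $\mathcal Q$ is a subbundle, it is saturated, so a section lying in $\mathcal Q$ generically lies in $\mathcal Q$ everywhere. This gives $\zeta(\Xi_Z)\subset H^0(\mathcal Q)=\Im(\zeta\circ\partial)$.

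I expect the main obstacle to be the pointwise containment $\zeta(\sigma)_z\in\fa\fu\ft(\widehat\CC^i_z)$. The degree count on $l$ only controls where $T_zl$ goes, not the infinitesimal automorphism condition at every point of the VMRT. To fix this, I will first try varying $l$ through $z$ over all of $\CK^i_z$. Since $\CC^i_z$ is then swept out by these tangent directions, the pointwise condition at each $[T_zl]$ is exactly the defining condition \eqref{fautdef} for $\fa\fu\ft(\widehat\CC^i_z)$.
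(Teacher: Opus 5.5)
This is essentially the paper's proof. You identify the image of $\widehat{\zeta\circ\partial}$ with $E(\Im(\widetilde{\phi_{\fp}})\otimes\langle u_\lambda\rangle^\vee)$, since the free scalar $c$ absorbs the trace term. You use $H^1(Z,\widehat{\CP'}(1))=0$ to get surjectivity onto its global sections. In case (2) you restrict to lines through $z$, using $H^0(l,\Hom(\CO(2),\CO)\otimes\CO(1))=0$ and letting $l$ range over all of $\CK^i_z$; this is exactly Lemma~\ref{esti_partial}.

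One correction is needed in case (2). The corollary asserts surjectivity of $\zeta\circ\partial$ onto all of $H^0(Z,T_Z\otimes\Omega_Z(1))$, not merely onto $\zeta(\Xi_Z)$. So apply your line argument to an arbitrary $A\in H^0(Z,\End(T_Z)(1))$; it never uses that the section comes from some $\sigma$. This yields $H^0(Z,\End(T_Z)(1))=H^0(Z,\mathcal Q)$, as in the paper.
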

For the proof, we first identify the image of
$
\widehat{\zeta \circ \partial}.
$
Let $z=[u_{\lambda}] \in Z$. As in~\eqref{iden}, write \(T_zZ=\fg/\fp\). Moreover,
\[
T_Z \otimes \Omega_Z(1)=\End(T_Z)(1)
=
E(\End(\fg/\fp)\otimes \langle u_{\lambda}\rangle^{\vee}), \qquad
\widehat{\CP}(1)
=
E((\fp\oplus \BC)\otimes \langle u_{\lambda}\rangle^{\vee})
\]
as in Lemma~\ref{bundle_CP}. 
\begin{lem}
	Keep the notation above. Then the image of
	$
	\widehat{\zeta \circ \partial}
	$
	coincides with the homogeneous subbundle
	\[
	E(\operatorname{Im}(\widetilde{\phi_{\fp}})\otimes \langle u_{\lambda}\rangle^{\vee}).
	\]
\end{lem}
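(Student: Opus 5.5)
The plan is to compute the fiber of $\widehat{\zeta\circ\partial}$ at $z=[u_\lambda]$ and then use homogeneity. All bundles in \eqref{eq:2.3} are $G$-homogeneous and $\widehat{\zeta\circ\partial}$ is $G$-equivariant: both the construction of $\widehat{\CP}$ and the formula in Lemma~\ref{CPadjoint}(2) are canonical in $u$. Hence $\widehat{\zeta\circ\partial}$ is the homogeneous bundle morphism attached to a $P$-module map between the fibers at $z$, and its image is the homogeneous subbundle attached to the image of that map. It therefore suffices to show that the image of the fiber map
\[
(\fp\oplus\BC)\otimes\langle u_\lambda\rangle^\vee\longrightarrow \End(\fg/\fp)\otimes\langle u_\lambda\rangle^\vee
\]
equals $\operatorname{Im}(\widetilde{\phi_{\fp}})\otimes\langle u_\lambda\rangle^\vee$.

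After twisting away $\langle u_\lambda\rangle^\vee$, formula \eqref{adjoint_formula} says that the fiber map sends $X\in\widehat{\CP}_{[u_\lambda]}=\fp\oplus\BC\,\id_U$ to
\[
2\,\ad_{[u_\lambda]}(X)+\operatorname{Tr}(X|_{\BC u_\lambda})\,\id_{\fg/\fp}.
\]
Write $X=X^*+c\,\id_U$ with $X^*\in\fp$. Then $\ad_{[u_\lambda]}(X)=\phi_{\fp}(X^*)$, up to the sign convention noted at the end of the proof of Lemma~\ref{CPadjoint}; the identity acts trivially on $T_zZ=\fg/\fp$. We also have $\operatorname{Tr}(X|_{\BC u_\lambda})=\lambda(X^*_{\ft})+c$, where $X^*_{\ft}$ is the $\ft$-component of $X^*$, since $\fn_{\fp}$ kills $u_\lambda$. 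So the image of $X$ is
\[
\pm2\phi_{\fp}(X^*)+(\lambda(X^*_{\ft})+c)\,\id.
\]

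This lies in $\phi_{\fp}(\fp)+\BC\,\id=\operatorname{Im}(\widetilde{\phi_{\fp}})$, which gives one inclusion. For the reverse inclusion, $\phi_{\fp}(X^*)$ is recovered by choosing $c=-\lambda(X^*_{\ft})$ and scaling by $\pm\tfrac12$. The scalar $\id$ is the image of $X=\id_U$, since $\phi_{\fp}$ vanishes on it. Both parts of $\operatorname{Im}(\widetilde{\phi_{\fp}})$ are therefore attained, and the image of the fiber map is exactly $\operatorname{Im}(\widetilde{\phi_{\fp}})$. Twisting back by $\langle u_\lambda\rangle^\vee$ and using that the image of a morphism of homogeneous bundles is the bundle associated to the fiberwise image, we conclude $\operatorname{Im}(\widehat{\zeta\circ\partial})=E(\operatorname{Im}(\widetilde{\phi_{\fp}})\otimes\langle u_\lambda\rangle^\vee)$.

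The only delicate point is the bookkeeping of signs and of the trace term: one must confirm that $\id_U$ contributes only the scalar $\id$ and nothing else. Since the fiber map is linear and both generating pieces ($\phi_{\fp}(\fp)$ and $\BC\,\id$) are hit independently, the sign convention does not affect the image. Once this is checked the argument is a direct computation.
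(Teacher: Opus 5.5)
Your proposal is correct and is essentially the paper's argument. Both read off from Lemma~\ref{CPadjoint}(2) that \(\widehat{\zeta\circ\partial}\) is induced by the \(P\)-module map \((X,c)\mapsto 2\phi_{\fp}(X)+\bigl(\operatorname{Tr}(X|_{\BC u_\lambda})+c\bigr)\id\), twisted by \(\langle u_\lambda\rangle^\vee\); since \(c\) ranges freely, its image is visibly \(\phi_{\fp}(\fp)+\BC\,\id=\operatorname{Im}(\widetilde{\phi_{\fp}})\), and one then passes to the associated homogeneous subbundle. Your extra checks, namely that the free parameter \(c\) absorbs the trace term and that the sign convention does not affect the image, simply spell out what the paper calls immediate.
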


\begin{proof}
	By the last statement of Lemma~\ref{CPadjoint},
	$
	\widehat{\zeta \circ \partial}
	$
	is induced by the following map of $P$-modules, twisted by
	$
	\langle u_{\lambda}\rangle^{\vee}:
	$
	\[
	\fp\oplus \BC
	\longrightarrow
	\End(\fg/\fp),
	\]
	\[
	(X,c)\cdot Y
	=
	2\phi_{\fp}(X)(Y)
	+
	\bigl(
	Tr(X|_{\langle u_{\lambda}\rangle})+c
	\bigr)Y,
	\qquad
	X\in \fp,\ c\in \BC, Y \in \fg/\fp.
	\]
	 The claim follows immediately.
\end{proof}

Next we  recall the following lemma, which was proved in \cite{FH18a} by an argument similar to that used in Lemma \ref{esti_parital'} combined with \eqref{unbendable_}.

\begin{lem}[{\cite[Proof of Lemma~7.2]{FH18a}}]\label{esti_partial}
Let
$
Z \subset \BP U
$
be a smooth projective variety covered by lines. Choose an irreducible covering family
$
\CK
$
of lines on $Z$, and denote by
$
\CC\subset \BP TZ
$
the associated VMRT. Then for a general point
$
z=[u]\in Z,
$
and any
$
A\in H^{0}(Z,\End(T_{Z})(1)),
$
the endomorphism
\[
A_{z}
\in
\End(T_{z}Z)\otimes \langle u\rangle^{\vee}
\]
lies in
$
\fa \fu \ft(\widehat{\CC}_{z}Z)
\otimes
\langle u\rangle^{\vee}.
$
\end{lem}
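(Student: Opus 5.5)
The plan is to restrict $A$ to a general line and use the splitting type \eqref{unbendable} together with the identification \eqref{unbendable_} of the affine tangent space of the VMRT. Fix a general point $z=[u]$ and a general line $l\in\CK$ through $z$. Then $T_Z|_l\cong\CO(2)\oplus\CO(1)^{\oplus s}\oplus\CO^{\oplus n-s-1}$ and $\CO_Z(1)|_l=\CO_l(1)$. I will view $A|_l$ as a section of $\End(T_Z|_l)\otimes\CO_l(1)$, i.e.\ a morphism $T_Z|_l\to T_Z|_l(1)$. The goal is to show that, at $z$, the endomorphism $A_z$ maps the line $T_zl=\CO(2)_z$ into the positive part $P_z:=(\CO(2)\oplus\CO(1)^{\oplus s})_z$. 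By \eqref{unbendable_}, $P_z$ is the affine tangent space $\widehat T_{[T_zl]}\CC_z$.

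\textbf{Key degree step.} Any morphism $\CO(2)\to\CO(a)(1)$ with $a=0$ vanishes, since $\deg\CO(1)<\deg\CO(2)$. Hence the composite $\CO(2)\to T_Z|_l(1)\to\CO^{\oplus n-s-1}(1)$ is zero. Therefore $A|_l(\CO(2))\subset(\CO(2)\oplus\CO(1)^{\oplus s})(1)$. Evaluating at $z$ gives $A_z(T_zl)\subset P_z\otimes\langle u\rangle^\vee$.

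Here the splitting is canonical in the sense required: the subbundle $\CO(2)\oplus\CO(1)^{\oplus s}=T_Z|_l^{>0}$ is the unique maximal positive subbundle. So the conclusion is intrinsic and does not depend on the choice of splitting.

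\textbf{Translation into $\fa\fu\ft$.} Write $A_z$ (after trivializing $\langle u\rangle^\vee$) as an endomorphism of $T_zZ$. For a general point $v=T_zl\in\widehat\CC_zZ$ we have shown $A_z(v)\in\widehat T_{[v]}\CC_z$. Since general lines through $z$ give a dense set of points of $\widehat\CC_z$, the condition $A_z(v)\in\widehat T_{[v]}\CC_z$ holds on a dense open subset of the smooth cone. This is a closed condition on the locus where $\widehat T\CC_z$ has constant rank, i.e.\ on all smooth points of $\CC_z$. By the definition \eqref{fautdef}, this means $A_z\in\fa\fu\ft(\widehat\CC_zZ)$, and tensoring back with $\langle u\rangle^\vee$ gives the claim.

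\textbf{Main obstacle.} The delicate point is ensuring the identification \eqref{unbendable_} applies to every general member through $z$ and that these members are dense in $\CC_z$. I would handle this by choosing $z$ outside the proper closed set over which non-standard lines dominate. This works because $\CK$ is a covering family of lines, so a general member through a general point is standard (free with the splitting above). The VMRT $\CC_z$ is then smooth by Proposition 2.6, and the closure argument goes through.
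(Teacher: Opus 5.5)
Your proposal is correct and follows essentially the same route the paper indicates (it defers to \cite{FH18a} and points to the argument of Lemma~\ref{esti_parital'} combined with \eqref{unbendable_}): restrict $A$ to a general line $l$ through $z$, use $\Hom(\CO(2),\CO(1))=0$ on the splitting \eqref{unbendable} to get $A_z(T_zl)\subset(\CO(2)\oplus\CO(1)^{\oplus s})_z\otimes\langle u\rangle^\vee$, and identify the latter with the affine tangent space of $\CC_z$ via \eqref{unbendable_}. Your closure argument over the smooth locus of $\widehat\CC_zZ$, which extends the conclusion from general tangent directions to the whole cone, is a valid way to finish.
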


\begin{proof}[Proof of Corollary \ref{suff_condition_surj}]
Taking global sections of the sequence~\eqref{eq:2.3}, together with the description of the image of
$
\widehat{\zeta\circ\partial}
$
from Lemma~5.15, yields the exact sequence
\[
0
\longrightarrow
H^{0}(Z,\widehat{\CP'}(1))
\longrightarrow
H^{0}(Z,\widehat{\CP}(1))
\xrightarrow{\zeta \circ \partial}
H^{0}\bigl(
Z,
E(\operatorname{Im}(\widetilde{\phi_{\fp}})\otimes \langle u_{\lambda}\rangle^{\vee})
\bigr)
\longrightarrow
0,
\]
where the right exactness follows from the vanishing
$
H^{1}(Z,\widehat{\CP'}(1))=0.
$
Hence, if
$
\widetilde{\phi_{\fp}}
$
is surjective, then
$
\zeta \circ \partial
$
is surjective. Similarly in (2), let
$
A\in H^{0}(Z,\End(T_Z)(1)).
$
By the preceding lemma, for every point
$
z'=[u]\in Z,
$
one has
\[
A_{z'}
\in
\fl_{z'}\otimes \langle u\rangle^{\vee}.
\]
Equivalently,
$
A
\in
H^{0}\bigl(
Z,
E(\fl_{z}\otimes \langle u_{\lambda}\rangle^{\vee})
\bigr).
$
Therefore
\[
H^{0}(Z,\End(T_Z)(1))=H^{0}(Z,E(Im(\widetilde{\phi_{\fp}}) \otimes \langle u_{\lambda} \rangle^{\vee}))=H^{0}\bigl(
Z,
E(\fl_{z}\otimes \langle u_{\lambda}\rangle^{\vee})
\bigr),
\]
and the surjectivity of
$
\zeta \circ \partial
$
follows.
\end{proof}

To apply,  we then examine the conditions for the varieties appearing in Proposition~\ref{partial0_surj_classify}.

\begin{prop}\label{prop_last_verify}
	Let $Z=G/P \subset \BP U$ be a flag variety with $Z \not= \BP U$, and assume that the maps $\partial^{0}$ and $\partial'$ are surjective. Then $H^{1}(Z,\widehat{P'}(1))=0$, and the following hold:
	\begin{itemize}
		\item[(1)]
		If $Z$ is not covered by lines, namely
		\[
		Z\cong v_{2}(\BP^{n}) \subset \BP(S^{2}\BC^{n+1})
		\quad \text{or} \quad
		v_{3}(\BP^{1}) \subset \BP(S^{3}\BC^{2}),
		\]
		then $\widetilde{\phi_{\fp}}$ is surjective.
		
		\item[(2)]
		If $Z$ is covered by lines, then $\widetilde{\phi_{\fp}}$ 
		maps onto $\fl_{z}$ if and only if $Z$ is isomorphic to one of the varieties listed in Theorem~\ref{main_thm}.
	\end{itemize}
\end{prop}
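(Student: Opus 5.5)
The proof is a case-by-case analysis over the list in Proposition~\ref{partial0_surj_classify} together with Proposition~\ref{partial0_imply_partial'}. By these results, the hypotheses force $Z$ to be one of three kinds: a tangentially non-degenerate variety of Theorem~\ref{class_tang}, a subadjoint Hermitian symmetric space, or an adjoint or quasi-minuscule variety.

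\textbf{The vanishing $H^{1}(Z,\widehat{\CP'}(1))=0$.} By homogeneity, $\widehat{\CP'}=E((\widehat{\CP'})_{[u_\lambda]})$, and \eqref{local_P'} filters this fiber as an extension of a subspace of $\BC$ by $\ker(\phi_{\fp})=\bigoplus_\nu\fg_{[\nu,d_\nu]}$. I would therefore compute the cohomology of $E(\langle u_\lambda\rangle^\vee\otimes \fg_{[\nu,d_\nu]})$ and of $\CO_Z(1)$ via Theorem~\ref{PRV0}.
\begin{itemize}
\item The module $\fg_{[\nu,d_\nu]}$ is irreducible with lowest weight $w$ equal to the lowest root of degree $d_\nu$. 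After twisting, $\rho-\mu_{\mathrm{low}}=\rho+\lambda-(\text{lowest root in }\fg_{d})$.
\item For the Hermitian symmetric cases ($d=1$), $\fg_1\cong(\fg/\fp)^\vee$, so the bundle is $\Omega_Z(1)$.
\item For the adjoint and quasi-minuscule cases ($d=2$), $\fg_2$ is one-dimensional or small, and the weight is easily checked to be dominant or singular.
\end{itemize}
In every case only $H^0$ survives, and the vanishing follows from the long exact sequence. For products, I would use Künneth together with the factor decomposition of Corollary~\ref{cor_end}.

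\textbf{Part (1): the cases not covered by lines.} For $v_2(\BP^n)$, the isotropy representation is $\fp\to \fg\fl(T_z)=\fg\fl_n$ given by the Levi $\fg\fl_n$ itself, so $\widetilde{\phi_\fp}$ is surjective. For $v_3(\BP^1)$, we have $T_z$ one-dimensional and $\widetilde{\phi_\fp}$ contains the scalars. Both are immediate.

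\textbf{Part (2): the cases covered by lines.} Here $\fl_z$ is the intersection of the automorphism algebras of the VMRT cones. The comparison splits by family.
\begin{itemize}
\item \emph{Irreducible Hermitian symmetric spaces.} The VMRT is the homogeneous subvariety of \cite{HM99}, and $\aut(\widehat{\CC}_z)=\fg_0\oplus\BC$ (or $\fg_0$) is known, so equality holds. Products $\BP^1\times\BQ^m$ and Segre varieties are handled componentwise: $\fl_z$ is the sum of the factor algebras, matching $\fg_0$. The subadjoint varieties are handled similarly, using the VMRTs from \cite{LM03}.
\item \emph{Adjoint varieties.} Here $T_z=\fg_{-1}\oplus\fg_{-2}$ and the VMRT is the subadjoint (Legendrian) variety in $\BP\fg_{-1}$. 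An element of $\fl_z$ must preserve the distribution $\fg_{-1}$ and restrict into $\aut$ of the subadjoint cone, which is $\fg_0$. Its remaining components lie in $\Hom(\fg_{-2},\fg_{-1})$ or act on $\fg_{-2}$. The $\fg_{-2}$-part is forced to be the one induced from the symplectic form, and the $\Hom(\fg_{-2},\fg_{-1})\cong\fg_1$ part is realized by $\fg_1\subset\fp$. So equality holds, except for type $A$: there $Fl(1,n;n+1)$ has two families, and the intersection acquires extra endomorphisms $\Hom(\fg_{-2},\fg_{-1})$ exceeding $\fg_1$ when $n\ge3$. This matches the exclusion in Theorem~\ref{main_thm}, and $B_3$-type exclusions arise similarly.
\item \emph{$Gr_\omega(2,2n)$ and $F_4/P_4$.} These have non-degenerate VMRT, described in \cite{LM03}. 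I would compute $\aut(\widehat{\CC}_z)$ and compare its dimension with $\dim\fp+1-\dim\fg_2$. For $n=3$ and for $F_4$ they agree. For $n\ge4$ the VMRT of $Gr_\omega(2,2n)$ is a $\BP^1$-bundle-type variety (a hyperplane section of a Segre-type variety), whose automorphism algebra contains the full $\fg\fl$ of an extra factor and exceeds the image; this yields the exclusion.
\end{itemize}

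\textbf{Expected obstacle.} The main difficulty is this explicit computation of $\aut(\widehat{\CC}_z)$ for the non-Hermitian cases, especially showing strict containment for $Gr_\omega(2,2n)$ with $n\ge4$ and for the $B_3$ subadjoint case. I would attack it by comparing prolongation and graded components degree by degree with respect to the grading element of $\fg_0$.
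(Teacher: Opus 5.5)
Your handling of $H^1(Z,\widehat{\CP'}(1))=0$ and of part~(1) follows the paper: you extend the (possibly zero) quotient $\CO(1)$ by $E(\langle u_\lambda\rangle^\vee\otimes\ker\phi_{\fp})$ and then apply Bott. The gap is in part~(2). There, two of the exclusions needed for the ``only if'' direction are either contradicted by your own argument or derived from the wrong mechanism.

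First, your product bullet asserts that for $\BP^1\times\BQ^m$ and the Segre varieties, $\fl_z$ is the sum of the factor algebras. This requires every simple factor to carry its own covering family of lines. It fails for $\BP^1\times\BQ^1=\BP^1\times v_2(\BP^1)$, which is covered by lines but is not in Theorem~\ref{main_thm}. The conic contains no lines, so there is a single family, and its VMRT cone at $z=(z_1,z_2)$ is $T_{z_1}Z_1$. Hence $\fl_z=\{A\in\End(T_zZ)\mid A(T_{z_1}Z_1)\subset T_{z_1}Z_1\}$ contains the off-diagonal block $\Hom(T_{z_2}Z_2,T_{z_1}Z_1)$. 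By contrast, $\Im(\widetilde{\phi_{\fp}})=\End(T_{z_1}Z_1)\oplus\End(T_{z_2}Z_2)$ is diagonal. Your argument as written would therefore prove equality in this case. Your closing remark that the ``$B_3$ subadjoint case'' needs a strict containment contradicts that bullet. Also, no prolongation or degree-by-degree comparison is needed here, only the observation above. Finally, ``$B_3$-type exclusions arise similarly'' does not belong in the adjoint bullet. The adjoint variety of type $B_3$ satisfies the condition, since its VMRT $\BP^1\times v_2(\BP^1)$ has cone automorphism algebra equal to $\phi_{\fp}(\fg_0)|_{\fg_{-1}}$, and it lies in Theorem~\ref{main_thm}(3).

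Second, for $Fl(1,n;n+1)$ you place the excess of $\fl_z$ in $\Hom(\fg_{-2},\fg_{-1})$. But $\phi_{\fp}|_{\fg_1}:\fg_1\to\Hom(\fg_{-2},\fg_{-1})$ is an isomorphism for every adjoint variety, type $A$ included, so that block is always attained. The excess actually lies in the $\fg_{-1}$-block. The two VMRTs are the linear spaces $\BP(\BC^{n-1})$ and $\BP((\BC^{n-1})^\vee)$, so $\fl_z|_{\fg_{-1}}=\End(\BC^{n-1})\oplus\End((\BC^{n-1})^\vee)$ has dimension $2(n-1)^2$. The image of $\fg_0\cong\fg\fl_{n-1}\oplus\BC$, acting on the two summands by dual representations, has dimension $(n-1)^2+1$. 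These agree only for $n=2$.

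There are also two smaller gaps.
\begin{itemize}
\item The $\fg_{-2}$-column of $\fl_z$ is unconstrained, because the VMRT spans only $\fg_{-1}$; it is not ``forced by the symplectic form''. It is attained because $\widetilde{\phi_{\fp}}(\BC)$ and $\ad(H)$, with $H$ the grading element, give independent scalars on $\fg_{-1}$ and $\fg_{-2}$.
\item Your dimension count for $Gr_\omega(2,2n)$ and $F_4/P_4$ presupposes $\ker\widetilde{\phi_{\fp}}=\fg_2$. This requires showing that $\phi_{\fp}(X)=-c\,\id$ with $c\neq 0$ is impossible: such an $X$ would make $Z$ Euler-symmetric, hence Hermitian symmetric.
\end{itemize}
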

 The first condition follows from the following more precise calculations:
\begin{lem}\label{van_ome}
		Let
	$
	Z \subset \BP U
	$
	be a flag variety with $Z \not= \BP U$, and assume that the maps
	$
	\partial^{0}
	$
	and
	$
	\partial'
	$
	are surjective.  Then $H^{i}(Z,\Omega_{Z}(1))=H^{i}(Z,\widehat{\CP^{'}}(1))=H^{i}(Z,\widehat{\CP'_0}(1))=0$, for any $i \geqslant 1$.
\end{lem}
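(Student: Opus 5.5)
By Propositions~\ref{partial0_surj_classify} and~\ref{partial0_imply_partial'}, the hypotheses force $Z$ to be one of the following: a product $\BP^1\times\BQ^m$, an irreducible Hermitian symmetric space (IHSS) in the lists of Theorem~\ref{class_tang}(1) or Proposition~\ref{partial0_surj_classify}(2), $\BP^k\times\BP^l$, an adjoint variety, or a quasi-minuscule variety. The plan is to prove the three vanishings one at a time, in the order $\Omega_Z(1)$, then $\widehat{\CP'_0}(1)$, then $\widehat{\CP'}(1)$, using Borel--Weil--Bott (Theorem~\ref{PRV0}) together with the filtrations of the corresponding $P$-modules.

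\textbf{Step 1: $\Omega_Z(1)$.} We have $\Omega_Z(1)=E(\langle u_\lambda\rangle^\vee\otimes\fn_{\fp})$. For a product, the Künneth formula reduces the claim to the simple factors, since $\CO(1)$ is an exterior product and the other factors contribute only $H^0$ of line bundles. For a simple factor, $\fn_\fp$ has the $P$-stable filtration $\fg_d\subset\fg_{\ge d-1}\subset\cdots$, whose graded pieces are the $\fg_0$-modules $\fg_k$. In the IHSS case $\fn_\fp=\fg_1$ is irreducible. In the adjoint and quasi-minuscule cases $d=2$ and $\fg_2$ is one-dimensional or small. For each irreducible constituent with lowest weight $\mu_{\rm low}=\lambda+\gamma_{\rm low}$, we will check that $\rho-\mu_{\rm low}$ is either singular or made dominant by the identity, which forces cohomology only in degree $0$. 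The point is that the weights involved are $-\lambda+(\text{positive roots in }\fn_\fp)$, so after adding $\rho$ at most one reflection is relevant. The long exact sequences of the filtration then give $H^{i}=0$ for $i\ge1$.

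\textbf{Step 2: $\widehat{\CP'_0}(1)$.} By Lemma~\ref{bundle_CP}, $\widehat{\CP'_0}(1)\cong E(\langle u_\lambda\rangle^\vee\otimes\bigoplus_\nu\fg_{[\nu,d_\nu]})$, and each summand is an irreducible $P$-module. This is the top graded piece already treated in Step 1, so its higher cohomology vanishes by the same Bott computation (in the Hermitian case it equals $\Omega_Z(1)$ itself).

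\textbf{Step 3: $\widehat{\CP'}(1)$.} We use the sequence~\eqref{eq:2.4}, together with the surjectivity of $\Theta|_{\widehat{\CP'}(1)}$ in the Hermitian symmetric case (Proposition~\ref{HX}(1)) and its vanishing otherwise. This gives a short exact sequence $0\to\widehat{\CP'_0}(1)\to\widehat{\CP'}(1)\to\CO(1)\to0$, where the last term is replaced by $0$ in the non-Hermitian case. Since $H^{i}(Z,\CO(1))=0$ for $i\ge1$ by Kodaira vanishing or Bott, Step 2 yields the claim.

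The main obstacle is the case-by-case Bott check in Step 1 for the adjoint and quasi-minuscule cases. The piece $\fg_1\otimes\langle u_\lambda\rangle^\vee$ can have several $\fg_0$-constituents (for example in types $C$ and $G_2$), and regularity of $\rho-\mu_{\rm low}$ must be excluded at non-identity Weyl elements. To handle this, I would first try the uniform observation that every such $\mu_{\rm low}$ has the form $-w(\lambda)$ plus a root. Then $\langle\rho-\mu_{\rm low},\alpha_i^\vee\rangle\ge0$ except possibly for a single $\alpha_i$, where the value is $0$ (singular) or the weight is already dominant. If this fails in a case, I would fall back on the tables of Table~\ref{Table1}.
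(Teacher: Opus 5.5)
Your proposal is correct and is essentially the paper's argument. The paper's sequence $0\to\widehat{\CP'_0}(1)\to\Omega_Z(1)\to\Coker(\widehat{\eta\circ\partial'})\to0$ is your two-step filtration $\fg_2\subset\fg_1\oplus\fg_2$, checked piece by piece with Borel--Weil--Bott and K\"unneth for products, and the paper passes between $\widehat{\CP'_0}(1)$ and $\widehat{\CP'}(1)$ via \eqref{eq:2.4} and $H^{i}(Z,\CO_Z(1))=0$ exactly as you do.

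The Bott check you flag as the main obstacle needs no case analysis. Let $\gamma$ be the lowest root of a $\fg_0$-constituent of $\fg_k$, so $\mu_{\mathrm{low}}=-\lambda+\gamma$. Then $\langle\rho+\lambda-\gamma,\alpha_j^\vee\rangle\ge1$ for $\alpha_j\notin\Delta_Z$, and $\langle\rho+\lambda-\gamma,\alpha_j^\vee\rangle=1+\langle\lambda,\alpha_j^\vee\rangle-\langle\gamma,\alpha_j^\vee\rangle\ge0$ for $\alpha_j\in\Delta_Z$. The value $\langle\gamma,\alpha_j^\vee\rangle=3$ would occur only for $G_2$ with a short $\alpha_j\in\Delta_Z$, which does not arise here. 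Hence $\rho-\mu_{\mathrm{low}}$ is always dominant, so it is either singular or regular dominant.

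Your remark about several constituents is misplaced: in types $C$ and $G_2$, $\fg_1$ is $\fg_0$-irreducible. Among the varieties at hand, $\fg_1$ is reducible only for $Fl(1,n;n+1)$, with two pieces of lowest roots $\alpha_1$ and $\alpha_n$. Your constituent-by-constituent treatment covers this case; the paper's appeal to irreducibility of $\fg_{-1}$ does not literally apply there.
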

\begin{proof}
Recall from Proposition \ref{HX}(1) that we have the following left exact sequence \eqref{eq:2.4}:
\begin{equation*}
	0 \longrightarrow \widehat{\CP'_0}(1)
	\longrightarrow \widehat{\CP'}(1)
	\xrightarrow{\Theta|_{\widehat{\CP'}(1)}}
	\CO(1),
\end{equation*}
where the rightmost map is surjective whenever nonzero. Therefore the vanishing of $H^{i}(Z,\widehat{\CP^{'}}(1))$ is equivalent to that of $H^{i}(Z,\widehat{\CP'_0}(1))$ since $H^{i}(Z,\CO_{Z}(1))=0$ for $i \ge 1$ by Theorem \ref{PRV0}.
On the other hand, from Proposition \ref{HX}(2), there is an exact sequence of homogeneous bundles:
\[
0  \rightarrow \widehat{\CP'_0}(1)   \xrightarrow{\widehat{\eta \circ \partial'}} \Omega_Z(1) \rightarrow Coker(\widehat{\eta \circ \partial'}) \rightarrow 0
\]
Thus the claim would follow from the vanishing of $H^{i}$ for both sides. We verify it in two cases.

Assume that
$
Z
$
is a Hermitian symmetric space. Then the map
$
\widehat{\eta \circ \partial'}
$
is an isomorphism as seen in the proof of Proposition \ref{partial0_imply_partial'}.  Moreover when $Z$ is reducible as written in Section 4.3 the Künneth formula gives:
\[
H^{i}(Z,\Omega_{Z}(1))
\cong
\bigoplus_{\nu}
H^{i}(Z_{[\nu]},\Omega_{Z_{[\nu]}}(1))
\otimes
\bigotimes_{\nu'\neq \nu}
U_{\lambda_{\nu'}}^{\vee}.
\]
Hence it suffices to consider the case where $Z$ is irreducible.
In this case,
$
\lambda=m\lambda_{i}
$
for some
$
m\geqslant 1,
$
and
\[
\Omega_{Z}(1)
=
E(\langle u_{\lambda}\rangle^{\vee}\otimes \fg_{1}),
\]
whose associated $P$-module has lowest weight
\[
\mu_{low}=-m\lambda_{i}+\alpha_{i}
=
(2-m)\lambda_{i}
+
\sum_{j\neq i} n_{ij}\lambda_{j},
\]
where
$
n_{ij}=\langle \alpha_{i},\alpha_{j}^{\vee}\rangle 
$
are the Cartan integers.
If
$
m=1,
$
then
$
\rho-\mu_{low}
$
is singular; and if
$
m\geqslant 2,
$
then
$
\rho-\mu_{low}
$
is regular and dominant.  Hence in both cases all higher cohomology groups vanish by Theorem~\ref{PRV0}. 

Now assume that
$
Z
$
is not a Hermitian symmetric space. By Proposition~\ref{partial0_surj_classify},
$
Z
$
is either an adjoint variety or a quasi-minuscule variety. In both cases, as we already discussed in Section 5.3.1, the length of the parabolic grading is equal to two, and we have
\[
\widehat{\CP'_0}(1)
=
E(\langle u_{\lambda}\rangle^{\vee}\otimes \fg_{2}),
\qquad
\Coker(\widehat{\eta\circ\partial'})
=
E(\langle u_{\lambda}\rangle^{\vee}\otimes (\fg_{1} \oplus \fg_{2})/\fg_{2}).
\]
We claim that the second bundle is an irreducible homogeneous bundle whose associated $P$-module has lowest weight $-\lambda+\alpha_i$. The required vanishing of $H^i$ then follows exactly as in the Hermitian symmetric case.
Indeed, $R_u(P)$ acts trivially on
$
(\fg_1\oplus\fg_2)/\fg_2,
$
while the induced action of $L$ identifies with its action on $\fg_1$. This representation is dual to the action of $L$ on $\fg_{-1}$, which is irreducible by~\cite[Proposition~2.4]{LM03}.

For the vanishing of
$
\widehat{\CP'_0}(1),
$
note first that in the adjoint case it is known that
$
\widehat{\CP'_0}(1)
\cong
\CO_{Z},
$
since
$
E(\fg_{2})
$
is isomorphic to the dual of the contact line bundle
$
L\cong \CO_{Z}(1)
$
(see for example~\cite{LH24}), and the vanishing follows; in the quasi-minuscule case,
$
Z
$
is isomorphic to either
$
Gr_{\omega}(2,2n)
$
($n \ge 3$)
or
$
F_{4}/P_{4}.
$
The lowest weights of the corresponding \(P\)-modules
$
\langle u_{\lambda}\rangle^{\vee}\otimes \fg_{2}
$
are of the form \(-\lambda+\alpha\), where \(\alpha\) is the lowest root in \(\fg_{2}\) with respect to the Levi subgroup. Explicitly, these roots are respectively
\[
\alpha=2\alpha_{2}+\cdots+2\alpha_{n-1}+\alpha_n,
\qquad
\alpha=\alpha_2+2\alpha_3+2\alpha_4.
\]
Hence the corresponding lowest weights are
\[
\mu_{\mathrm{low}}
=
-2\lambda_{1}+\lambda_{2},
\qquad
-\lambda_{1}+\lambda_{4}.
\]
In both cases, \(\rho-\mu_{\mathrm{low}}\) is singular. Therefore all cohomology groups vanish by Theorem~\ref{PRV0}.
\end{proof}
We now prove Proposition \ref{prop_last_verify}:
\begin{proof}[Proof of Proposition \ref{prop_last_verify}]
For (1), $Z\cong v_{r}(\BP^{n})$, in which the Levi subalgebra $\fg_{0} \cong \fg \fl_{n}$, and there is an isomorphism:
$
\phi_{\fp}|_{\fg_{0}}: \fg_{0} \simeq End(\fg/\fp).
$
Therefore the claim follows. 

For~(2), we divide the proof into the following cases.

\medskip
\noindent\textbf{Case 1.} $Z$ is isomorphic to a Hermitian symmetric space. 

\medskip
\noindent\textbf{Case 1.1.} If $Z$ is irreducible,
we need to show the condition (2) holds. 

By~\cite[Theorem~4.8]{LM03} there is a unique family of lines on $Z$, and the VMRT at $z=[u_{\lambda}]$
is given by
\[
\CC_{z}Z
\cong
L/Q
\subset
\BP T_{z}Z,
\]
where $L \subset P$ is the Levi subgroup, and $L/Q$ is the unique closed orbit of the action of $L$ on $\BP T_{z}Z.$
From the explicit description of $L/Q$ in~loc.\ cit., one checks that the action of $L$ induces a surjection
\[
\fg_{0}=\Lie(L)
\longrightarrow
H^{0}(\CC_{z}Z,T_{\CC_{z}Z}).
\]
Moreover, $Z$ admits an Euler-type $\BC^{*}$-action at $z=[u_{\lambda}],$ coming from a multiplicative subgroup of $L$. Hence $\operatorname{Im}(\phi_{\fp}) \subset \End(T_{z}Z)$
contains the scalars. Therefore from the decomposition of $\fa \fu \ft(\widehat{\CC}_{z}Z)$ as in \eqref{eulerseq_02},
$
\phi_{\fp}
$
maps
$
\fp
$
surjectively onto
$
\fl_{z},
$
and the claim follows.

\medskip
\noindent\textbf{Case 1.2.} Assume now that $Z$
is reducible. By Proposition~\ref{partial0_surj_classify}, $Z$
is isomorphic to either $\BP^{k}\times \BP^{l}$ or $\BP^{1}\times \BQ^{n-1}$
for
$
n\geqslant 2,
$
under their minimal embeddings. We need to show the condition (2) holds if and only if $Z\not=\BP^{1}\times \BQ^{1}$.

If $Z$ is neither $\BP^{1}\times \BQ^{1}$ nor $\BP^{1}\times \BQ^{2}$, then $Z$ admits two families of lines, one induced from each factor. Denote the corresponding VMRT-structures by $\CC^{i}$, $i=1,2$. Writing
$Z=Z_{1}\times Z_{2}$ and $z=(z_{1},z_{2}),$
we have
$$
\widehat{\CC}_{z}^{1}=\widehat{\CC}{z_{1}}Z_{1},
\qquad
\widehat{\CC}_{z}^{2}=
\widehat{\CC}_{z_{2}}Z_{2},
$$
under the decomposition
$
T_{z}Z
=
T_{z_{1}}Z_{1}
\oplus
T_{z_{2}}Z_{2}.
$
Since both
$\widehat{\CC}_{z_1}^{1}$
and
$\widehat{\CC}_{z_2}^{2}$
are non-degenerate, one computes directly from \eqref{fautdef} that
$$
\fl_{z}=
\{
A\in \End(T_{z}Z)
\ \mid
A|_{T_{z_{i}}Z_{i}}
\in
\fa\fu\ft(\widehat{\CC}_{z_i}Z_i),
\ i=1,2
\}
=
\fl_{z_{1}}
\oplus
\fl_{z_{2}}.
$$
On the other hand, the isotropy representation decomposes as
$
\phi_{\fp}
=\phi_{\fp_1}\oplus\phi_{\fp_2},
$
so the claim follows from the irreducible case. If
$
Z=\BP^1\times\BQ^2\simeq\BP^1\times\BP^1\times\BP^1,
$
the claim follows similarly by applying the same argument to the three factors.

If
$
Z=\BP^{1}\times\BQ^{1}=
\BP^1\times v_{2}(\BP^1),$
then there is a unique family of lines $\CK_1$, induced from the first factor. Hence
$$
\fl_{z}=
\{
A\in\End(T_{z}Z)
\ \mid
A(T_{z_{1}}Z_{1})\subset T_{z_{1}}Z_{1}
\},
$$
which also contains the off-diagonal endomorphisms
$
\Hom(T_{z_2}Z_2,T_{z_1}Z_1).
$
On the other hand, the isotropy representation
$
\phi_{\fp}
=
\phi_{\fp_1}\oplus\phi_{\fp_2}
$
acts diagonally. Therefore
$
\operatorname{Im}(\widetilde{\phi_{\fp}})
\subsetneq
\fl_{z}.
$

\medskip
\noindent\textbf{Case 2.} Now assume $Z$ is not isomorphic to a Hermitian symmetric space. Then by Proposition \ref{partial0_surj_classify}, $Z$ is isomorphic to either an adjoint variety, or $Gr_{\omega}(2,2n)$ for $n \ge 3$, or $F_4/P_4$ in their minimal embeddings. In this case $\fg$ is simple and the parabolic grading has length $d=2$. We can identify the tangent space as $\fg_{0}$-modules:
$$
T_{z}Z=\fg/\fp \cong \fg_{-1} \oplus \fg_{-2}.
$$

\medskip
\noindent\textbf{Case 2.1} Assume that $Z$ is an adjoint variety
$
Z=X_{\mathrm{ad}}(\fg)\subset \BP\fg.
$
We can further assume that $\fg$
is not of type $C_{n},$
since in this case it is projectively isomorphic to $v_{2}(\BP^{2n-1})$ which already occurs in Case 1. We need to show the condition (2) holds if and only if $Z$ is not of type $A_{n \not=2}$. The following well known facts can be found in \cite[Section 6.3]{Man13}.
\begin{lem} Keep the notation above.
	\begin{itemize}
	\item[(1)]  $\fg_{-1} \subset \fg/\fp$ is a hyperplane and the adjoint action induces an isomorphism:
\begin{equation}\label{contact}
\phi_{\fp}|_{\fg_{1}}: \fg_{1} \simeq Hom(\fg_{-2},\fg_{-1}).
\end{equation}
\item[(2)]
There exists $H$ in the Cartan subalgebra $ \ft \subset \fg_{0}$ such that $ad(H)|_{\fg_{k}}=k \cdot id_{\fg_{k}}$. In particular the span of $\widetilde{\phi_{\fp}}(\BC)$ and $ad(H)$ gives
$
\langle \widetilde{\phi_{\fp}}(\BC),  ad(H) \rangle=\BC id_{\fg_{-1}} \oplus \BC id_{\fg_{-2}}.
$ 
\end{itemize}
\end{lem}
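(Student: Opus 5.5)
The plan is to identify the parabolic grading of the adjoint variety with the eigenspace grading of the coroot of the highest root. Both assertions then follow from elementary $\mathfrak{sl}_2$-theory.

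\emph{Step 1: identify the grading.} Let $\beta=\beta_l$ be the highest root, so $\lambda=\beta$. Put $H:=\beta^\vee\in\ft$, the coroot. By Table~\ref{Table1}, outside type $C$ we have either $\lambda=\lambda_j$ for a single node $\alpha_j$, or, in type $A_n$, $\lambda=\lambda_1+\lambda_n$. In every case the relevant simple roots $\alpha_j\in\Delta_Z$ are long (this is where type $C$, with $\lambda=2\lambda_1$, is excluded) and satisfy $\langle\beta,\alpha_j^\vee\rangle=1$. For $\alpha_j\in\Delta_Z$ we then get
\[
\langle\alpha_j,\beta^\vee\rangle=\langle\beta,\alpha_j^\vee\rangle\,\frac{(\alpha_j,\alpha_j)}{(\beta,\beta)}=1,
\qquad\text{while}\qquad
\langle\alpha_i,\beta^\vee\rangle=0\quad\text{for }\alpha_i\notin\Delta_Z,
\]
the latter because $\langle\beta,\alpha_i^\vee\rangle=0$ there. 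Consequently
\[
\alpha(H)=\sum_{\alpha_j\in\Delta_Z}m_{\alpha_j}(\alpha),
\]
so $\fg_k$ is exactly the $k$-eigenspace of $\ad(H)$. This gives the element required in~(2). It is also standard that the eigenvalues of $\ad(\beta^\vee)$ lie in $\{-2,\dots,2\}$, and that $\pm2$ occurs only on $\fg_{\pm\beta}$. Hence $d=2$, $\fg_{\pm2}=\fg_{\pm\beta}$ is one-dimensional, and $\fg/\fp\cong\fg_{-1}\oplus\fg_{-2}$ contains $\fg_{-1}$ as a hyperplane. I expect this step to be the main point requiring care, namely checking the long-root normalization case by case from Table~\ref{Table1}.

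\emph{Step 2: the isomorphism~\eqref{contact}.} Choose an $\mathfrak{sl}_2$-triple $(X_\beta,X_{-\beta},H)$ and decompose $\fg$ under it. Every $H$-weight is at most $2$ in absolute value, so the weight-$1$ space $\fg_1$ lies in a sum of $2$-dimensional irreducibles, since a $4$-dimensional one would need weight $3$. In each such summand, $\ad(X_{-\beta})$ maps the weight-$1$ line isomorphically onto the weight-$(-1)$ line. Therefore $X\mapsto[X,X_{-\beta}]$ is an isomorphism $\fg_1\to\fg_{-1}$. Since $[X,X_{-\beta}]\in\fg_{-1}\subset\fn_\fp^-$ is already the representative of its class in $\fg/\fp$, and $\fg_{-2}=\BC X_{-\beta}$, this map is precisely
\[
\phi_\fp|_{\fg_1}\colon\fg_1\to\Hom(\fg_{-2},\fg_{-1}),
\]
because the component landing in $\fg_{-2}$ is trivially zero by degree. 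This proves~(1).

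\emph{Step 3: the span.} On $\fg/\fp\cong\fg_{-1}\oplus\fg_{-2}$, the endomorphism $\phi_\fp(H)=\ad(H)$ acts by $-\id$ on $\fg_{-1}$ and by $-2\,\id$ on $\fg_{-2}$, whereas $\widetilde{\phi_\fp}(\BC)$ consists of the global scalars. Since the two eigenvalues differ, $\ad(H)$ and $\id$ are linearly independent inside the $2$-dimensional space $\BC\,\id_{\fg_{-1}}\oplus\BC\,\id_{\fg_{-2}}$. Hence they span it, which proves~(2).
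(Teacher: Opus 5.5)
Your proof is correct. The paper itself does not prove this lemma: it records the facts as well known and refers to \cite[Section~6.3]{Man13}. Your route is the standard argument behind that reference. You identify the $\Delta_Z$-grading with the eigenspace grading of $\ad(\beta^\vee)$ for the highest root $\beta=\lambda$, and then read off $\fg_1\cong\Hom(\fg_{-2},\fg_{-1})$ from the $\mathfrak{sl}_2$-triple $(X_\beta,X_{-\beta},\beta^\vee)$. This has the merit of making explicit that the inputs are only that $\beta$ is long and dominant and that $\fg$ has rank at least two.

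I have two small comments.

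\emph{Step 1 needs no case check against Table~\ref{Table1}.} For the highest root $\beta$ and any simple root $\alpha_i\neq\beta$, one has $\langle\alpha_i,\beta^\vee\rangle\in\{0,1\}$. The value is $1$ exactly when $\langle\beta,\alpha_i^\vee\rangle\neq0$, i.e.\ when $\alpha_i\in\Delta_Z$. Hence $\alpha(\beta^\vee)=\sum_{\alpha_j\in\Delta_Z}m_{\alpha_j}(\alpha)$ holds uniformly.

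This also shows your parenthetical remark about type $C$ is misattributed. In type $C_n$ the node $\alpha_1$ is short with $\langle\beta,\alpha_1^\vee\rangle=2$, yet still $\langle\alpha_1,\beta^\vee\rangle=1$. So the root computation goes through for $(\fs\fp_{2n},\fp_1)$ as well. Type $C$ is excluded for a different reason: $Z\cong v_2(\BP^{2n-1})$ has $\Aut^0(Z)=PGL_{2n}$. With the paper's convention, $\fg=\fs\fl_{2n}$ and the relevant grading has length one, which is Case~1. The same reason covers $A_1=C_1$ and $B_2=C_2$, which your case check would otherwise have to treat separately.

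\emph{Step 2 should state one more fact explicitly.} For $X\in\fg_1$, $\phi_\fp(X)$ annihilates $\fg_{-1}$, because $[\fg_1,\fg_{-1}]\subset\fg_0\subset\fp$. This is what places $\phi_\fp(\fg_1)$ inside the block $\Hom(\fg_{-2},\fg_{-1})$ of $\End(\fg/\fp)$. Your exclusion of $4$-dimensional summands is harmless but unnecessary. In every finite-dimensional $\mathfrak{sl}_2$-module, $\ad(X_{-\beta})$ maps the weight-$1$ space isomorphically onto the weight-$(-1)$ space.
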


\medskip
\noindent\textbf{Case 2.1 (a)} If $\fg$ is not of type $A_{n}$, namely $Z \not=Fl(1,n;n+1)$. Then 
from Lemma \ref{inf_vmrt_adco}(b), $Z$ admits a unique family of lines, whose VMRT at $z$ spans the hyperplane $\fg_{-1}$. Moreover from \cite[Theorem 4.8]{LM03}, the VMRT
$$\CC_{z}Z=L/Q \subset \BP \fg_{-1}$$
is the unique closed orbit of the action of $L$ on $\BP \fg_{-1}$, where $L \subset P$ is the Levi subgroup. Therefore by a direct calculation using \eqref{eulerseq_02}, we can decompose $\fl_{z}$ as $\fg_{0}$-modules:
\begin{align*}
\fl_{z}&=
\{
A\in \End(\fg_{-1} \oplus \fg_{-2})
\mid
A|_{\fg_{-1}} \in \fa \fu \ft(\widehat{L/Q}) \subset End(\fg_{-1})\}\\
&=\fa \fu \ft(\widehat{L/Q}) \oplus \Hom(\fg_{-2},\fg_{-1}) \oplus \Hom(\fg_{-2},\fg_{-2})
\end{align*}
We now show that each factor lies in the image of $\widetilde{\phi_{\fp}}$:
\begin{itemize}
\item $\fa \fu \ft(\widehat{L/Q}) \subset \phi_{\fp}(\fg_{0})|_{\fg_{-1}}$. From the explicit description of $L/Q$ in~loc.\ cit., one checks that the action of $L$ induces a surjection
$
\phi_{\fp}|_{\fg_{0}}: \fg_{0}=\Lie(L)
\longrightarrow
H^{0}(L/Q,T_{L/Q}).
$
Together with (2) in the above Lemma, the claim follows;
\item $\Hom(\fg_{-2},\fg_{-1}) \subset \phi_{\fp}(\fg_{1})$. This is \eqref{contact}.
\item $\Hom(\fg_{-2},\fg_{-2})
\subset
\widetilde{\phi}_{\fp}(\BC)\oplus\phi_{\fp}(\fg_0)$. This follows from (2) in the above Lemma.
\end{itemize}

\medskip
\noindent\textbf{Case 2.1~(b)} If $Z=Fl(1,n;n+1)$. Then from Lemma \ref{inf_vmrt_adco}(b) it admits two families of lines whose VMRTs at $z$ together span the hyperplane $\fg_{-1}$. More precisely:
$$
\fg_{-1} \cong \BC^{n-1} \oplus (\BC^{n-1})^{\vee}, \quad \CC_{z}Z=\BP(\BC^{n-1}),\quad  \CC'_{z}Z=\BP((\BC^{n-1})^{\vee}),
$$
and $\fg_{0}\cong \fg \fl_{n-1} \oplus \BC$ for $n \ge 3$, $\fg_{0}\cong \ft$ for $n=2$. 

Then a similar discussion as in (a) yields that $\widetilde{\phi_{\fp}}$ is surjective onto $\fl_{z}$ if and only if 
$$
\fl_{z}|_{\fg_{-1}}=\End(\BC^{n-1}) \oplus \End((\BC^{n-1})^{\vee}) \subset \phi_{\fp}(\fg_{0})|_{\fg_{-1}}.
$$
From the above description, this can only happen if $n=2$. In this case $\fg_{-1} \cong \BC^{2}$ and $\phi_{\fp}(\fg_{0})|_{\fg_{-1}}$ equals the diagonal matrices, hence coinciding with $\fl_{z}|_{\fg_{-1}}$.

\medskip
\noindent\textbf{Case 2.2}
Assume $Z$ is isomorphic to either $Gr_{\omega}(2,2n)$ for $n \ge 3$ or $F_4/P_4$.
Then from~\cite{LM03}, $\CC_{z}Z \subset \BP T_{z}Z$
is a non-degenerate smooth projective subvariety on which $P$
acts with an open dense orbit. Our analysis proceeds with the help of the following fact:
\begin{lem}
	If $Z$ is isomorphic to one of the varieties above, then
$$ker(\widetilde{\phi_{\fp}})=ker(\phi_{\fp})=\fg_{2} \subset \fp.$$
	In particular $\widetilde{\phi_{\fp}}$ induces an injection:
	$$
	\widetilde{\phi_{\fp}}|_{\fg_{0} \oplus \fg_{1} \oplus \BC}: \fg_{0} \oplus \fg_{1} \oplus \BC \rightarrow \fa \fu \ft(\widehat{\CC}_{z}Z) \subset \End(\fg/\fp).
	$$
\end{lem}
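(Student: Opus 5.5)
The statement has two parts: the kernel computation, and the factorization through $\fa\fu\ft(\widehat{\CC}_zZ)$ with injectivity on $\fg_0\oplus\fg_1\oplus\BC$.

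\emph{Kernel.} First compare $\ker(\widetilde{\phi_{\fp}})$ with $\ker(\phi_{\fp})$. An element $(X,c)\in\fp\oplus\BC$ lies in $\ker(\widetilde{\phi_{\fp}})$ exactly when $\phi_{\fp}(X)=-c\,\id_{\fg/\fp}$. We show this forces $c=0$. Decompose $X=X_0+X_1+X_2$ along the grading $\fp=\fg_0\oplus\fg_1\oplus\fg_2$. Then $\phi_{\fp}(X_0)$ preserves each $\fg_{-k}$, while $\phi_{\fp}(X_1)$ and $\phi_{\fp}(X_2)$ strictly raise the degree modulo $\fp$. Hence, exactly as in the proof of Lemma~\ref{lem_section_tr}, $\phi_{\fp}(X_1+X_2)=0$ and $\phi_{\fp}(X_0)=-c\,\id$.

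Next we show that $\fg_0$ contains no element acting by a nonzero scalar on $\fg_{-1}\oplus\fg_{-2}$. Suppose $X_0$ acted by $-c\neq 0$. Then for $Y\in\fg_{-1}$ and $Y'\in\fg_{-1}$, the Jacobi identity gives $[X_0,[Y,Y']]=-2c[Y,Y']$. Since $[\fg_{-1},\fg_{-1}]=\fg_{-2}\neq 0$ (the grading has length two and $\fg$ is simple), this contradicts $X_0$ acting by $-c$ on $\fg_{-2}$. So $c=0$, and hence $\ker(\widetilde{\phi_{\fp}})=\ker(\phi_{\fp})\oplus 0$. By Lemma~\ref{ker_isotropy} this equals $\fg_d=\fg_2$.

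\emph{Factorization.} Since $G$, and hence $P$, preserves the unique family of lines of Lemma~\ref{inf_vmrt_adco}(a), $P$ preserves $\CC_zZ$. The scalar part $\BC$ preserves any cone. Therefore $\widetilde{\phi_{\fp}}$ lands in $\fa\fu\ft(\widehat{\CC}_zZ)$, as already observed in the definition of $\fl_z$.

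\emph{Injectivity.} Since $\fp\oplus\BC=(\fg_0\oplus\fg_1\oplus\BC)\oplus\fg_2$ and the kernel is exactly $\fg_2$, the restriction of $\widetilde{\phi_{\fp}}$ to $\fg_0\oplus\fg_1\oplus\BC$ is injective.

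\emph{Main obstacle.} The only delicate step is excluding a scalar action from $\fg_0$. This uses the bracket-generating property $[\fg_{-1},\fg_{-1}]=\fg_{-2}$, which we justify because $\fg_{-}$ is generated by $\fg_{-1}$ for any parabolic grading coming from simple roots, and $\fg_{-2}\neq 0$ for these two quasi-minuscule cases.
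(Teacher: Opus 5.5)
Your proof is correct, but it rules out $c\neq 0$ by a different route than the paper.

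\textbf{The paper's route.} If $\phi_{\fp}(X)=-c\,\id$ with $c\neq 0$, then, as in Lemma~\ref{iml}, the semisimple part of $X$ yields an Euler-type $\BC^*$-action at $z$. Homogeneity then makes $Z$ Euler-symmetric, hence Hermitian symmetric (the characterization used in Proposition~\ref{HX}(1)), which contradicts the assumption.

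\textbf{Your route.} You first remove the $\fg_1\oplus\fg_2$-component by the degree argument of Lemma~\ref{lem_section_tr}. You then exclude a scalar action of $X_0\in\fg_0$ directly with the Jacobi identity: a scalar $-c$ on $\fg_{-1}$ forces the scalar $-2c$ on $[\fg_{-1},\fg_{-1}]=\fg_{-2}\neq 0$.

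\textbf{Comparison.} Your argument is self-contained and elementary. It needs only that $\fg_{-}$ is generated by $\fg_{-1}$ and that the depth is $d=2$. So it applies verbatim to any simple $\fg$ whose parabolic grading has depth at least two. In effect, it reproves the relevant half of ``Euler-symmetric flag varieties are Hermitian symmetric'' instead of importing it. The paper's argument is shorter on the page because it reuses the Euler-symmetric machinery of Section~4. The cost is dependence on that external characterization and on passing to semisimple parts and Zariski closures of one-parameter subgroups.

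The remaining steps coincide with the paper:
\begin{itemize}
\item the case $c=0$, handled via Lemma~\ref{ker_isotropy};
\item the factorization through $\fa\fu\ft(\widehat{\CC}_zZ)$, because $P$ preserves the family of lines;
\item injectivity on $\fg_0\oplus\fg_1\oplus\BC$, which is a complement of the kernel $\fg_2$.
\end{itemize}
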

\begin{proof}
Assume that $(X,c) \in ker(\widetilde{\phi_{\fp}})$ in \eqref{ex_phip}. Then it follows $\phi_{\fp}(X)=-c \cdot id_{\fg/\fp}$ is a scalar multiplication on $\fg/\fp$. If \(c=0\), then $X \in \fg_{2}$ from Lemma \ref{ker_isotropy}; for $c \not=0$, then as in the proof of Lemma \ref{iml}, the semisimple part of $X$ induces an Euler-type $\BC^{*}$-action on $Z$. Since $Z$ is homogeneous, this implies $Z$ is Euler-symmetric in the sense of Definition \ref{euler_symdefn}, and therefore $Z$ is isomorphic to a Hermitian symmetric space, contradicting our assumption. 
\end{proof}
\medskip
\noindent\textbf{Case 2.2 (a)}  $Z=Gr_{\omega}(2,2n)$.  We need to show the condition (2) holds if and only if $n=3$. 

In this case, a precise description of the isotropy representation
$
\phi_{\fp}
$
together with
$
\fa \fu \ft(\widehat{\CC}_{z}Z)
$
can be found in~\cite{LH21}. The claim follows from the description that we summarize below.

\begin{lem}
Let $U$ be a two-dimensional vector space, $(Q,w)$ be a vector space equipped with a non-degenerate skew form, and denote by $\fs\fp(Q)$ the symplectic Lie algebra. Set
$$W=U\otimes Q\oplus\Sym^{2}U.
$$
	\begin{itemize}
		\item[(a)](\cite[Corollary~5.5]{LM03}, \cite[Proposition~4.14]{LH21}) There exists an isomorphism
		$
		\varphi:T_{z}Z\cong W
		$
		such that
        the isotropy action of
		$
		\fg_{0}
		$
		on
		$
		T_{z}Z
		$
		coincides with the natural action of
		$
		\fg \fl(U)\oplus \fs \fp(Q)
		$
		on
		$
		W.
		$
		Moreover,
		\[
		\fg_{1}\cong \Hom(U,Q),
		\qquad
		\fg_{2}\cong \Sym^{2}U^{\vee}
		\]
		as
		$\fg_{0}$-modules. We still denote by
		$
		\phi_{\fp}:\fp\to \End(W)
		$
		the induced action of
		$
		\fp
		$
		on
		$
		W.
		$
		
		\item[(b)](\cite[Lemma~6.2]{LH21})
		The Lie algebra
		$
		\fa \fu \ft(\widehat{\CC}_{z}Z)
		$
		is equal to the direct sum of
		$$
		\BC\,\mathrm{id}_{W} \oplus \fg \fl(U) \oplus \fs\fl(Q)  \oplus \Hom(U,Q),
		$$
		where the action of $\fg \fl(U) \oplus \fs\fl(Q)$ is by their representation on $W$, and the action of $Hom(U,Q)$ is the same as the isotropy action of $\phi_{\fp}(\fg_{1})$.
	\end{itemize}
\end{lem}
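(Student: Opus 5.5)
The plan is to verify both parts on an explicit model of $Z=Gr_\omega(2,2n)=Sp(V)/P_2$ with $V=\BC^{2n}$ and symplectic form $\omega$, using the base point $z=[E]$ for an isotropic plane $E\subset V$. Set $U:=E^{\vee}$.

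\emph{Part (a).} Choose an isotropic plane $E'$ paired with $E$ by $\omega$, so that $E'\cong E^\vee$, and put $Q:=(E\oplus E')^{\perp}$. Then $Q$ is symplectic of dimension $2n-4$ and $V=E\oplus Q\oplus E'$.
\begin{itemize}
\item Take the grading element $H$ acting by $-1$ on $E$, by $0$ on $Q$ and by $+1$ on $E'$. Under $\fs\fp(V)\cong \Sym^2V$ this yields the $\BZ$-grading of length $2$. The Levi factor is $\fg_0=\fg\fl(E)\oplus\fs\fp(Q)$.
\item Read off the graded pieces: $\fg_{-1}\cong E^{\vee}\otimes Q=U\otimes Q$, $\fg_{-2}\cong\Sym^2E^\vee=\Sym^2U$, $\fg_1\cong\Hom(U,Q)$ and $\fg_2\cong\Sym^2U^\vee$ (up to the sign conventions fixed by $H$).
\item Identify $T_zZ$ with $\{\varphi\in\Hom(E,V/E):\omega(\varphi(e),e')+\omega(e,\varphi(e'))=0\}$. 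This gives $T_zZ\cong \Hom(E,Q)\oplus\Sym^2E^\vee=W$, equivariantly for $L=GL(E)\times Sp(Q)$.
\item By the Killing-form duality $\fg_1\cong(\fg_{-1})^\vee$, $\phi_\fp(\fg_1)$ acts as the nilpotent maps $W\to W$ of degree $+1$, namely $\Sym^2U\to U\otimes Q$.
\end{itemize}
Part (a) is routine bookkeeping, which is why it is cited to \cite{LM03,LH21}.

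\emph{Part (b), description of the VMRT.} Lines through $[E]$ are the pencils $\{P: L\subset P\subset F\}$, where $L\subset E$ is a line and $F\supset E$ is a $3$-space with $F\subset L^\perp$. Their tangent directions are the rank-one homomorphisms $E\to E/L\to F/E$. In the coordinates of $W$, the affine cone $\widehat{\CC}_zZ$ is the closure of the image of
\[
(u,q,t)\longmapsto u\otimes q + t\,u^2,\qquad u\in U,\ q\in Q,\ t\in\BC .
\]
Equivalently, $\widehat{\CC}_zZ$ is the cone over a projective bundle over $\BP U\times\BP Q$ (completed by $\BP U$).

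\emph{Part (b), lower bound.} Since the parametrization above never uses $w$, the whole $\fg\fl(U)\oplus\fg\fl(Q)$ preserves the cone. This explains why $\fs\fl(Q)$ appears rather than $\fs\fp(Q)$. For $A\in\Hom(U,Q)$ acting by $\phi_\fp$, the relevant contraction sends $u^2\mapsto 2\,u\otimes A(u)$ and kills $U\otimes Q$. The exponential of this map sends $u\otimes q+tu^2$ to $u\otimes(q+2tA u)+tu^2$, which stays in the cone. Together with scalars, this gives the inclusion $\supseteq$.

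\emph{Part (b), upper bound (main obstacle).} I would use \eqref{eulerseq_02}: $\fa\fu\ft(\widehat{\CC}_z)/\BC\,\id\hookrightarrow H^0(\CC_z,T_{\CC_z})$. The argument would go as follows.
\begin{itemize}
\item The grading element acting by $1$ on $U\otimes Q$ and by $2$ on $\Sym^2U$ preserves the cone. Hence $\fa\fu\ft(\widehat{\CC}_z)$ is graded, and any automorphism is determined by its graded components.
\item Components of degree $\le -1$ would map $U\otimes Q\to\Sym^2U$. Tangency along $\{u\otimes q\}$ (the $t=0$ locus) forces such a map to vanish when $\dim Q\ge 2$, i.e. $n\ge 3$: the image of $u\otimes q$ would have to lie in $\widehat T=u\otimes Q+U\otimes q+\BC u^2$, and varying $q$ kills it.
\item Degree $0$ components must preserve the Segre cone $\widehat{\BP U\times\BP Q}\subset U\otimes Q$ and the conic $v_2(\BP U)$. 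This gives $\fg\fl(U)\oplus\fg\fl(Q)$ acting compatibly, modulo one scalar ambiguity fixed by the relation between $u^2$ and $u\otimes q$.
\item Degree $+1$ components are maps $\Sym^2U\to U\otimes Q$ satisfying $u^2\mapsto u\otimes(\cdot)$. Using that $\Hom(\Sym^2U, U\otimes Q)$ splits, the condition is that the map factors as $u^2\mapsto u\otimes A(u)$, which is exactly $\Hom(U,Q)$.
\end{itemize}
The delicate point is this tangency computation in degrees $0$ and $\pm1$. If it becomes messy, I would instead compute $H^0(\CC_z,T_{\CC_z})$ for the projective bundle, via Borel--Weil on $\BP U\times\BP Q$, and match dimensions.
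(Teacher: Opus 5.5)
Your proposal is correct, and it takes a genuinely different route from the paper. The paper gives no argument of its own here: part (a) is quoted from \cite[Corollary~5.5]{LM03} and \cite[Proposition~4.14]{LH21}, and part (b) from \cite[Lemma~6.2]{LH21}. You instead verify everything on the model $V=E\oplus Q\oplus E'$:
\begin{itemize}
\item the tangent-space computation produces $W$;
\item the pencils $L\subset P\subset F\subset L^{\perp}$ give the cone $\{u\otimes q+t\,u^{2}\}$;
\item since the centre of $\fg\fl(U)$ lies in $\fa\fu\ft(\widehat{\CC}_{z}Z)$, the latter is graded, and the upper bound reduces to three tangency checks.
\end{itemize}
All three checks go through. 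In degree $0$, the Segre cone forces the $U\otimes Q$-block to come from some $(a,b)\in\fg\fl(U)\oplus\fg\fl(Q)$, and the conic forces the $\Sym^{2}U$-block to come from some $a'\in\fg\fl(U)$; tangency at a point with $t\neq0$ then forces $a'-a\in\BC\,\id_U$. For maps $N$ sending $\Sym^{2}U$ into $U\otimes Q$, tangency gives $N(u^{2})\in u\otimes Q$, hence $N(uv)=\tfrac12(u\otimes Av+v\otimes Au)$ for some $A\in\Hom(U,Q)$.

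What your route buys is a self-contained proof that shows why $\fs\fl(Q)$ rather than $\fs\fp(Q)$ appears: the cone never sees $w$. This is exactly the mechanism behind the $n=3$ versus $n\ge4$ dichotomy used right afterwards. The citation route is shorter, and it also imports the identification of $\phi_{\fp}(\fg_{1})$ with the contraction maps. You assert that identification but should justify it. For instance, $\phi_{\fp}|_{\fg_{1}}$ is $\fg_{0}$-equivariant and injective by Lemma~\ref{ker_isotropy}, and $\Hom(U,Q)$ occurs with multiplicity one in $\Hom(\Sym^{2}U,U\otimes Q)$.

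Three small repairs are needed.
\begin{itemize}
\item Your $H$ in (a) and your grading element in (b) are the negatives of the conventions you then use. With $H$ as written, $\fp=\fg_{\le0}$. With weights $1,2$ on $W$, maps $U\otimes Q\to\Sym^{2}U$ have degree $+1$, not $\le -1$. This is harmless once the signs are flipped.
\item For maps $N\colon U\otimes Q\to\Sym^{2}U$, varying $q$ does nothing, because $\widehat T\cap\Sym^{2}U=\BC u^{2}$ does not depend on $q$. What kills $N$ is that $u\mapsto N(u\otimes q)$ is linear while $u\mapsto u^{2}$ is quadratic. So $N=0$, with no assumption on $\dim Q$.
\item For your fallback, the relevant structure is $\CC_{z}Z\cong\BP(\CO(-1)\otimes Q\oplus\CO(-2))$ over $\BP U$, not a bundle over $\BP U\times\BP Q$; the latter describes its blow-up along $v_{2}(\BP U)$. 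With $m=2n-4$ one gets $h^{0}(T_{\CC_{z}Z})=m^{2}+2m+3$. Together with \eqref{eulerseq_02} and your lower bound, this already gives equality by a dimension count.
\end{itemize}
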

Together with Lemma 5.20 one sees that $Im(\widetilde{\phi_{\fp}})$ coincides with $\fl_{z}=\fa \fu \ft(\widehat{\CC}_{z}Z)$ if and only if $\fs \fp(Q)=\fs \fl(Q)$. This happens exactly when $n=3$, where $Q$ is two-dimensional.

\medskip
\noindent\textbf{Case 2.2 (b)}
Assume that $Z=F_{4}/P_{4}$. We need to show that condition~(2) holds.

In this case, $\CC_{z}Z$ is a smooth hyperplane section of $\BS_{5}$,
which is a horospherical variety of Picard number one whose automorphism
group was determined in~\cite[Theorem~1.11]{Pas09}.

\begin{lem}
Keep the notation above.
\begin{itemize}
\item[(a)] (\cite[Proposition~6.5]{LM03})
The Levi subalgebra and the positive graded pieces are given by
$$
\fg_{0}\cong\fs\fo_{7}\oplus\BC,
\qquad
\fg_{1}\cong U_{8},
\qquad
\fg_{2}\cong V_{7},
$$
as $\fs\fo_{7}$-modules, where $V_{7}$ is the standard vector
representation of $\fs\fo_{7}$ and $U_{8}$ is its spin representation.
	\item[(b)] (\cite[Theorem~1.11]{Pas09})
	The Lie algebra of the automorphism group of $\CC_{z}Z$ decomposes,
	as an $\fs\fo_{7}$-module, as
	\[
	H^{0}(\CC_{z}Z,T_{\CC_{z}Z})
	=
	\BC\oplus\fs\fo_{7}\oplus U_{8}.
	\]
\end{itemize}
\end{lem}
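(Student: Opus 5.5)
The lemma has two independent parts. For each, I would re-derive the cited fact directly from the root data of $F_4$ and the geometry of $\BS_5$, rather than only quoting \cite{LM03,Pas09}.

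For (a), I would work in Bourbaki numbering for $F_4$. There $\alpha_1,\alpha_2$ are long, $\alpha_3,\alpha_4$ are short, and the highest root is $\beta_l=2\alpha_1+3\alpha_2+4\alpha_3+2\alpha_4$.
\begin{itemize}
\item Since $\lambda=\lambda_4$, the grading is given by $m_{\alpha_4}$. The coefficient of $\alpha_4$ in $\beta_l$ is $2$, so the grading has length $d=2$.
\item Removing the node $\alpha_4$ leaves the subdiagram on $\alpha_1,\alpha_2,\alpha_3$. This is of type $B_3$, with $\alpha_3$ the short end node. Hence $\fg_0\cong\fs\fo_7\oplus\BC$, of dimension $22$.
\item Since $\dim F_4=52$, we get $\dim\fg_1+\dim\fg_2=15$.
\item The piece $\fg_1$ has lowest root $\alpha_4$. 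Its $\fs\fo_7$-weight is $-\langle\alpha_4,\alpha_3^\vee\rangle\varpi_3=\varpi_3$ up to sign, which is the spin weight of $B_3$. So $\fg_1\cong U_8$.
\item The piece $\fg_2$ has lowest root $\alpha_2+2\alpha_3+2\alpha_4$, as already used in the proof of Lemma~\ref{van_ome}. Pairing it with $\alpha_1^\vee,\alpha_2^\vee,\alpha_3^\vee$ gives the weight $\varpi_1$ up to sign, i.e.\ the vector representation $V_7$.
\item As a check, $8+7=15$. Alternatively, I would verify irreducibility of each piece by \cite[Proposition~2.4]{LM03} and then match dimensions.
\end{itemize}

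For (b), the key steps are as follows.
\begin{itemize}
\item By \cite[Proposition~6.5]{LM03}, $\CC_zZ$ is a smooth hyperplane section $\BS_5\cap H\subset\BP^{14}$.
\item The hyperplane $H$ corresponds to a vector $\xi\in\Delta_{16}^\vee$ lying in the open $\mathrm{Spin}_{10}\times\BC^*$-orbit; smoothness of the section forces this.
\item By Igusa's classification of spinor orbits, the stabilizer of $[\xi]$ in $\mathrm{Spin}_{10}\times\BC^*$ is $(\mathrm{Spin}_7\times\BC^*)\ltimes\BG_a^8$. Its unipotent radical is the $8$-dimensional spin module of $\mathrm{Spin}_7$.
\item This gives an injection $\BC\oplus\fs\fo_7\oplus U_8\hookrightarrow H^0(\CC_zZ,T_{\CC_zZ})$. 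One should also check that the $\fs\fo_7$ arising here agrees with the Levi factor from (a); this follows from $L$-equivariance.
\end{itemize}

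For the reverse inclusion, I would use that $\CC_zZ$ is horospherical of Picard number one. Pasquier's theorem \cite[Theorem~1.11]{Pas09} identifies its connected automorphism group with exactly this non-reductive group, and I would cite that theorem. As an independent check, I would bound $h^0(T_{\CC_zZ})$ from the normal-bundle sequence
\[
0\to T_{\CC_zZ}\to T_{\BS_5}|_{\CC_zZ}\to \CO(1)|_{\CC_zZ}\to 0,
\]
combined with the restriction sequence for $T_{\BS_5}$ from $\BS_5$ to $\CC_zZ$ and Bott vanishing on $\BS_5$. This computes $h^0=\dim\fs\fo_{10}+1-\dim\bigl(\text{orbit of }[\xi]\bigr)=45+1-16=30$, which matches $1+21+8$.

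The main obstacle is this upper bound in (b), that is, showing that there are no further infinitesimal automorphisms. The cohomological route needs $H^1(\BS_5,T_{\BS_5}(-1))=0$ together with the identification of $H^0(\CO(1)|_{\CC_zZ})$ with the tangent space to the orbit of the hyperplane. If that becomes delicate, I would simply rely on Pasquier's classification.
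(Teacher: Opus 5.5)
Your proposal is correct. It is also more self-contained than the paper, which gives no argument here beyond quoting \cite[Proposition~6.5]{LM03} for (a) and \cite[Theorem~1.11]{Pas09} for (b).

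Your root-theoretic derivation of (a) is right. In coordinates, $m_{\alpha_4}$ is twice the $e_1$-coordinate. So $\fg_1$ is spanned by the root vectors of the eight roots $\tfrac12(e_1\pm e_2\pm e_3\pm e_4)$, and $\fg_2$ by those of $e_1\pm e_j$ and $e_1$. Your lowest-weight-plus-dimension count then pins down both pieces.

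For (b), the cohomological route you treat as a fallback actually closes, and it makes Pasquier's theorem unnecessary. On $\BS_5=D_5/P_5$, the bundle $T_{\BS_5}(-1)=E(\fg_{-1}\otimes\langle u_\lambda\rangle)$ has lowest weight $-\lambda_2+\lambda_5$ (in $D_5$ notation). Since $\rho+\lambda_2-\lambda_5$ is singular, all its cohomology vanishes by Theorem~\ref{PRV0}. Hence $H^0(T_{\BS_5}|_X)=\fs\fo_{10}$, and $H^0(X,T_X)=\{A\in\fs\fo_{10}\mid A\xi\in\BC\xi\}$ exactly. This has dimension $45-15=30$. Igusa's description of this stabilizer then gives the $\fs\fo_7$-module structure. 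The paper only uses the lemma for $\dim\operatorname{Im}(\widetilde{\phi_{\fp}})=\dim\fl_z=31$, and this dimension count alone suffices for that.

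There are two small slips:
\begin{itemize}
\item The stabilizer of $[\xi]$ in $\mathrm{Spin}_{10}\times\BC^*$ is $31$-dimensional, because the scalars act trivially on $\BP\Delta^\vee$. The group $(\mathrm{Spin}_7\times\BC^*)\ltimes\BG_a^8$ is the stabilizer of the vector $\xi$, equivalently of $[\xi]$ in $\mathrm{Spin}_{10}$. Likewise, your $16$-dimensional ``orbit of $[\xi]$'' is really the orbit of the vector.
\item ``$L$-equivariance'' is vague as a way to match the two copies of $\fs\fo_7$. Levi--Malcev does it: a $21$-dimensional semisimple subalgebra of $(\fs\fo_7\oplus\BC)\ltimes U_8$ is a Levi factor, and all Levi factors are conjugate.
\end{itemize}
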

Together with Lemma~5.20, it follows that
$Im(\widetilde{\phi_{\fp}})$ and
$\fl_{z}=\fa\fu\ft(\widehat{\CC}_{z}Z)$ have the same dimension.
Therefore they are equal.
\end{proof}
Next we verify that $\zeta \circ \partial$ is non-surjective provided that $\widetilde{\phi_{\fp}}$ is not onto $\fl_{z}$ as classified above. For the case $Gr_{\omega}(2,2n)$ see also Example \ref{nonhomospin} for an alternative proof.
\begin{lem}
	Assume that
	$
	Z
	$
	is isomorphic to either
	$
	Gr_{\omega}(2,2n)
	$
	for
	$
	n\geqslant 4
	$
	or
	$
	\BP^{1}\times v_{2}(\BP^{1}).
	$
	Then the map
	$
	\zeta \circ \partial
	$
	in~\eqref{diag_Spencer} is not surjective.
\end{lem}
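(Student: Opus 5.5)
The idea is to show that the target of $\zeta\circ\partial$ is strictly larger than its image by exhibiting a global section of $\End(T_Z)(1)$ that is not induced from $\mathrm{Im}(\widetilde{\phi_{\fp}})$.

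By Lemma~5.15 (the description of the image of $\widehat{\zeta\circ\partial}$), the image of $\zeta\circ\partial$ lies in
\[
H^0\bigl(Z,E(\mathrm{Im}(\widetilde{\phi_{\fp}})\otimes\langle u_\lambda\rangle^\vee)\bigr)\subset H^0(Z,\End(T_Z)(1)).
\]
On the other hand, for any $\sigma\in\Xi_Z$ the section $\zeta(\sigma)$ lies in $H^0(Z,\End(T_Z)(1))$. By the refined diagram~\eqref{refine_diag_Spencer}, $\zeta=q_T\circ(\psi\otimes U)|_{\Xi_Z}$, and $(\psi\otimes U)|_{\Xi_Z}$ is surjective onto $H^0(Z,\Omega_Z(2)\otimes\widehat T)$. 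So $\mathrm{Im}(\zeta)=\mathrm{Im}(q_T)$. Since $H^1(Z,\Omega_Z(1))=0$ (Lemma~\ref{van_ome} applies because $\partial^0$ and $\partial'$ are surjective in both cases), $q_T$ is surjective onto $H^0(Z,\End(T_Z)(1))$. Hence $\zeta$ itself is surjective. It therefore suffices to produce a section of $\End(T_Z)(1)$ whose value at $z=[u_\lambda]$ lies outside $\mathrm{Im}(\widetilde{\phi_{\fp}})\otimes\langle u_\lambda\rangle^\vee$. Any such section is not in the image of $\zeta\circ\partial$ but is in $\mathrm{Im}(\zeta)$, so $\zeta\circ\partial$ is not surjective.

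To build these sections I would use Theorem~\ref{PRV}. A $\fg$-submodule $U_\mu^\vee\subset H^0(Z,\End(T_Z)(1))$ corresponds to a $P$-equivariant map $U_\mu^\vee\to\End(\fg/\fp)\otimes\langle u_\lambda\rangle^\vee$. Its image at the base point is a $P$-submodule generated by the lowest-weight vector. I would find a $P$-stable piece of $\End(\fg/\fp)$ lying in $\fl_z$ but not in $\mathrm{Im}(\widetilde{\phi_{\fp}})$, and whose lowest weight, after twisting by $-\lambda$, satisfies the PRV conditions.

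\textbf{Case $\BP^1\times v_2(\BP^1)$.} Here $Z=\BP^1\times\BP^1$ embedded by $\CO(1,2)$, and $\End(T_Z)(1)$ contains the summand $\Hom(T_{Z_2},T_{Z_1})(1)=\CO(2,-2)\otimes\CO(1,2)=\CO(3,0)$. This line bundle has a $4$-dimensional space of sections by Künneth. Its fibre at $z$ is the off-diagonal block $\Hom(T_{z_2}Z_2,T_{z_1}Z_1)$, which is not in $\mathrm{Im}(\widetilde{\phi_{\fp}})$ because $\widetilde{\phi_{\fp}}$ acts diagonally (Case 1.2). This gives the required section directly.

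\textbf{Case $Gr_\omega(2,2n)$ with $n\ge4$.} Using Lemma~5.21, the quotient $\fl_z/\mathrm{Im}(\widetilde{\phi_{\fp}})\cong\fs\fl(Q)/\fs\fp(Q)\cong\wedge^2_0Q^\vee$ acts on $W$ through the $U\otimes Q$ factor. Its $P$-stable lift in $\End(W)$, i.e.\ $\wedge^2_0 Q$ acting on $U\otimes Q$ and composed with the projection $W\to U\otimes Q$, is preserved by $\fg_1$ up to $\mathrm{Im}\,\phi_{\fp}$. I would check that the corresponding homogeneous bundle $E(\wedge^2_0Q\otimes\langle u_\lambda\rangle^\vee)$ has nonzero $H^0$ using Theorem~\ref{PRV0}. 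Its lowest weight is $-\lambda_2+(\text{lowest weight of }\wedge^2_0Q)$, which is $-\lambda_2+(\alpha_3+2\alpha_4+\dots)$-type and should come out as $\lambda_4-\lambda_2$-like, so that $\rho-\mu_{\mathrm{low}}$ is regular dominant. I would then verify that the section lifts into $\End(T_Z)(1)$, and this lifting is the step I expect to be the main obstacle. If it fails, the fallback is Example~\ref{nonhomospin}: there the nonsurjectivity is derived from the existence of a non-locally-symmetric isotrivial structure coming from hyperplane sections (\cite{BFM20}), which contradicts Theorem~\ref{appp_homoge} if the Spencer condition held, given that $\partial^0$ and $\partial'$ are surjective.
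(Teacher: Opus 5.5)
Your treatment of $\BP^1\times v_2(\BP^1)$ is correct and is essentially the paper's argument. The off-diagonal summand $\Hom(T_{Z_2},T_{Z_1})(1)\cong\CO(3,0)$ of $\End(T_Z)(1)$ has nonzero sections, and these never take values in the block-diagonal subbundle $E(\Im(\widetilde{\phi_{\fp}})\otimes\langle u_\lambda\rangle^\vee)$. Your detour through the surjectivity of $\zeta$ is harmless but not needed, since the target of $\zeta\circ\partial$ is already $H^0(Z,\End(T_Z)(1))$.

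For $Gr_\omega(2,2n)$ with $n\ge 4$, the direct argument has a genuine gap exactly at the lifting step you flagged, and the ``$P$-stable lift'' you describe does not exist.

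\begin{itemize}
\item By $\fg_0$-equivariance, the only $\fg_0$-stable lift of $\wedge^2_0Q$ in $\fl_z$ is $S=\{\id_U\otimes A$ on $U\otimes Q$, $0$ on $\Sym^2U\mid A\in\wedge^2_0Q\}$.
\item For $X\in\fg_1=\Hom(U,Q)$, one computes $[\phi_{\fp}(X),\id_U\otimes A]=-\phi_{\fp}(AX)$. This is nonzero for suitable $X$, and it lies in $\Im(\widetilde{\phi_{\fp}})$ but not in $S$.
\item Hence $0\to\Im(\widetilde{\phi_{\fp}})\to\fl_z\to\wedge^2_0Q\to 0$ is a non-split extension of $P$-modules. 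So $E(\wedge^2_0Q\otimes\langle u_\lambda\rangle^\vee)$ is only a quotient of $E(\fl_z\otimes\langle u_\lambda\rangle^\vee)$, not a subbundle.
\end{itemize}

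The missing idea is the vanishing $H^1\bigl(Z,E(\Im(\widetilde{\phi_{\fp}})\otimes\langle u_\lambda\rangle^\vee)\bigr)=0$. This makes the map $H^0(Z,E(\fl_z\otimes\langle u_\lambda\rangle^\vee))\to H^0(Z,E(\wedge^2_0Q\otimes\langle u_\lambda\rangle^\vee))$ surjective, which is what lifts your section.

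The paper proves this vanishing using $\ker\widetilde{\phi_{\fp}}=\fg_2$. That gives a $P$-filtration of $\Im(\widetilde{\phi_{\fp}})$ with successive quotients $\BC$, $\fg_0=\BC\oplus\fs\fl_2\oplus\fs\fp_{2n-4}$, and $\fg_1$. After twisting by $\langle u_\lambda\rangle^\vee$ these give:
\begin{itemize}
\item copies of $\CO_Z(1)$;
\item two irreducible bundles with $\mu_{\mathrm{low}}=-2\lambda_1$ and $\mu_{\mathrm{low}}=\lambda_2-2\lambda_3$, for which $\rho-\mu_{\mathrm{low}}$ is regular dominant and singular, respectively;
\item the $\fg_1$-piece, already handled in Lemma~\ref{van_ome}.
\end{itemize}
Bott's theorem then kills every $H^1$.

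Your weight computation also has the wrong sign. The lowest weight of $\wedge^2_0Q$ is $-(\epsilon_3+\epsilon_4)$, not $+(\epsilon_3+\epsilon_4)$. So $\wedge^2_0Q\otimes\langle u_\lambda\rangle^\vee$ has lowest weight $-\lambda_4$, and it is this that makes $\rho-\mu_{\mathrm{low}}$ regular dominant and gives $H^0\neq 0$.

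Your fallback does close the case; it is the alternative proof the paper itself records in Example~\ref{nonhomospin}. You need to make the final implication explicit, though. Suppose $\zeta\circ\partial$ were surjective. For $\sigma\in\Xi_Z$, choose $h$ with $\zeta(\partial h)=\zeta(\sigma)$. Then $\sigma-\partial h\in\ker\zeta=\Xi'_Z=\Im(\partial')$, because $\partial'$ is surjective (this follows from the surjectivity of $\partial^0$, as $Gr_\omega(2,2n)$ is tangentially non-degenerate). So the Spencer condition would hold, contradicting Example~\ref{nonhomospin}.

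The trade-off is that this route imports heavy external inputs: the non-quasi-homogeneity of general hyperplane sections of $\BS_{2n}$ for $2n\ge 8$, the Cartan--Fubini extension theorem, and the Hwang--Li local-symmetry theorem. The paper's main argument, by contrast, is a self-contained Bott computation.
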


\begin{proof}
	In both cases,
	$
	Z
	$
	is covered by a unique family of lines. By Lemmas~5.15 and~5.16, it suffices to show that the inclusion
	\[
	H^{0}\bigl(
	Z,
	E(\operatorname{Im}(\widetilde{\phi_{\fp}})\otimes \langle u_{\lambda}\rangle^{\vee})
	\bigr)
	\subset
	H^{0}\bigl(
	Z,
	E(\fl_{z}\otimes \langle u_{\lambda}\rangle^{\vee})
	\bigr)
	\]
	is strict, where
	$
	\operatorname{Im}(\widetilde{\phi_{\fp}})
	\subset
	\fl_{z}=\fa \fu \ft(\widehat{\CC}_{z}Z)
	$
	are subalgebras of
	$
	\End(\fg/\fp).
	$
	
	If
	$
	Z=\BP^{1}\times v_{2}(\BP^{1}).
	$
	Writing
	$
	z=(z_{1},z_{2}),
	$
	we have
	$
	\widehat{\CC}_{z}Z
	=
	\widehat{\CC}_{z_{1}}(\BP^{1})
	\subset
	T_{z_{1}}Z_{1},
	$
	under the decomposition
	$
	T_{z}Z
	=
	T_{z_{1}}Z_{1}
	\oplus
	T_{z_{2}}Z_{2}.
	$
	Hence, as
	$
	\fp
	$
	-modules, there is a direct sum of
	\[
	\fl_{z}
	=
	\{
	A\in \End(T_{z}Z)
	\mid
	A(T_{z_{1}}Z_{1})
	\subset
	T_{z_{1}}Z_{1}
	\}
	=
	\End(T_{z_{1}}Z_{1})
	\oplus
	\Hom(T_{z_{2}}Z_{2},T_{z}Z),
	\]
	while 
	$
	\operatorname{Im}(\widetilde{\phi_{\fp}})
	=
	\End(T_{z_{1}}Z_{1})
	\oplus
	\End(T_{z_{2}}Z_{2}).
	$
	Therefore we have as $\fp$-modules: 
	\[
	\fl_{z}= \Im(\widetilde{\phi_{\fp}})  \oplus 
	\Hom(T_{z_{2}}Z_{2},T_{z_{1}}Z_{1}).
	\]
	For the latter factor, the corresponding homogeneous bundle is
	$$
	E(\Hom(T_{z_{2}}Z_{2},T_{z_{1}}Z_{1}) \otimes  \langle u_{\lambda} \rangle^{\vee} )=T_{\BP^{1}}(1)\boxtimes \Omega_{v_{2}(\BP^{1})}(1)
	\cong
	T_{\BP^{1}}(1)\boxtimes \CO_{\BP^{1}},
	$$
	whose space of global sections is
	$
	H^{0}(Z,T_{\BP^{1}}(1)\boxtimes \CO_{\BP^{1}})
	\cong
	H^{0}(\BP^{1},T_{\BP^{1}}(1))
	\neq 0.
	$ The claim for this case then follows.
    
 If $Z=Gr_{\omega}(2,2n)$ for $n\geq4$, then by Lemmas~5.20 and~5.21, there is an exact sequence of $P$-modules
$$
0\rightarrow \Im(\widetilde{\phi_{\fp}})
\rightarrow \fl_z
\rightarrow \fs\fl(Q)/\fs\fp(Q)\cong\wedge^2_0Q\rightarrow0,
$$
where $\wedge^2_0Q$ is the nontrivial irreducible $\fs\fp(Q)$-submodule of $\wedge^2Q$. We claim that
$$
H^1\left(
Z,
E\left(
\Im(\widetilde{\phi_{\fp}})
\otimes
\langle u_\lambda\rangle^\vee
\right)
\right)=0,
\qquad
H^0\left(
Z,
E\left(
\wedge^2_0Q
\otimes
\langle u_\lambda\rangle^\vee
\right)
\right)
\not=
0.
$$
The non-surjectivity then follows from the associated long exact sequence.

For the first vanishing, Lemma~5.20 gives a $P$-equivariant filtration of $\Im(\widetilde{\phi_{\fp}})$ whose successive quotients are
$\BC,\fg_0,\fg_1,$
respectively, on which the unipotent radical of $P$ acts trivially. After tensoring with $\langle u_\lambda\rangle^\vee,$
the first quotient corresponds to $\CO_Z(1)$, while the vanishing for the third quotient was proved in Lemma~5.18. It remains to consider $\fg_0$. Write
$$
\fg_0=\BC \oplus \fg_0^1 \oplus \fg_0^2,\qquad \fg_0^1 \cong \fs\fl_2, \qquad \fg_0^2\cong \fs\fp_{2n-4}.
$$
The central summand again corresponds to $\CO_Z(1)$. For $i=1,2$, the lowest weight of
$
\fg_0^i\otimes\langle u_\lambda\rangle^\vee
$
is
$
-\lambda-\beta^i,
$
where $\beta^i$ is the highest root of $\fg_0^i$. More precisely,
$$
\beta^1
=\alpha_1=
2\lambda_1-\lambda_2,
\qquad
\beta^2
=2\alpha_3+2\alpha_4+\cdots+2\alpha_{n-1}+\alpha_n=-2\lambda_2+2\lambda_3.
$$
Since $\lambda=\lambda_2$, the corresponding lowest weights are
$
\mu_{\mathrm{low}}
=
-2\lambda_1$ and $
\lambda_2-2\lambda_3
$ respectively.
In the first case, $\rho-\mu_{\mathrm{low}}$ is regular and dominant, while in the second case it is singular. Therefore, in both cases, the first cohomology vanishes by Theorem~\ref{PRV0}. It follows from the filtration that the first space vanishes.

Finally,
$
\wedge^2_0Q\otimes\langle u_\lambda\rangle^\vee
$
is an irreducible $P$-module of lowest weight
$
-\lambda_4,$ hence $\rho-\mu_{\mathrm{low}}$ is regular and dominant. Then Theorem~\ref{PRV0} gives the non-vanishing of the second space.
\end{proof}
We can now complete the proof of Proposition~\ref{main_432} and Theorem \ref{main_thm}.

\begin{proof}[Proof of Proposition~\ref{main_432}]
Assume that $Z\subset \BP U$ is a flag variety such that the maps $\partial^0$ and $\partial'$
are surjective. 
By Lemma~\ref{van_ome}, we have
$
H^1(Z,\Omega_Z(1))=0.
$
Hence, from the sequence~\eqref{iq}, the map
$
q_T
$
in~\eqref{refine_diag_Spencer} is surjective. In particular, $\zeta$ is surjective, and the first claim follows directly from Definition~\ref{filt_01}.

For the second claim, assume first that $Z$ is one of the varieties listed in Theorem~\ref{main_thm}. Then Corollary~\ref{suff_condition_surj}, together with Proposition~\ref{prop_last_verify}, implies that
$
\zeta\circ\partial
$
is surjective. Conversely, suppose that $Z$ does not appear in the list of Theorem~\ref{main_thm}. By Proposition~\ref{partial0_surj_classify}, $Z$ is then isomorphic to one of the following varieties:
an adjoint variety $X_{\mathrm{ad}}(\fg)\subset \BP\fg$ with $\fg$ of type $A_l$ $(l\neq 2)$;
$
Gr_{\omega}(2,2n)
$
with
$
n\geq 4;
$
or
$
\BP^1\times v_2(\BP^1).
$
In the adjoint case, the non-surjectivity of $\partial$ was proved in~\cite[Example 8.8]{LH24}; in the remaining cases, $\zeta\circ\partial$ is not surjective by Lemma~5.23. This completes the proof.
\end{proof}

\begin{proof}[Proof of Theorem~\ref{main_thm}]
If $Z=\BP U$, then
$
\Xi_Z=\Hom(\wedge^2U,U),
$
and the Spencer map is simply the skew-symmetrization map
\[
\partial:\Hom(U,\End(U))\longrightarrow \Hom(\wedge^2U,U),
\qquad
\partial h(u,v)=h(u)(v)-h(v)(u),
\]
which is clearly surjective. For $Z\neq \BP U$, the varieties satisfying condition~\textup{(iii)} were classified in Proposition~\ref{main_432}.
\end{proof}
\section{Isotrivial VMRT-structures of highest weight type}

The main geometric applications of Theorem \ref{main_thm} concern isotrivial VMRT-structures of highest weight type. In this section, following \cite[Section~8]{LH24}, we study such structures and derive consequences.

\begin{defn}
	Let \(X\) be a uniruled projective manifold, let \(\CK\) be a family of minimal rational curves, and let
	\(\CC\subset \BP TX\)
	be the associated VMRT-structure (Definition~\ref{vmrt_defn}). Let
	\(Z\subset \BP U\)
	be a smooth projective variety with
	\(\dim U=\dim X\).	
	We say that \(\CC\) is \emph{\(Z\)-isotrivial} if, over some Zariski-open subset
	\(M\subset X\),
	the restricted cone structure
	\(\CC|_M\)
	is \(Z\)-isotrivial. If \(Z\) is a flag variety, we say that \(\CC\) is \emph{isotrivial of highest weight type}.
\end{defn}

We recall two examples that are relevant to our classification. The first example produces a locally flat structure (Definition 2.2).

\begin{example}[{\cite[Example~8.3]{LH24}}]\label{flatmodel}
	Let
	\(\BP^{n-1}\subset \BP^n\)
	be a hyperplane and let
	\(Z\subset \BP^{n-1}\)
	be a smooth projective subvariety.
	Let
	\(X_Z\)
	be the blow-up of
	\(\BP^n\)
	along
	\(Z\),
	and let
	\(\CK_Z\)
	be the family of minimal rational curves whose general members are the proper transforms of lines meeting \(Z\).
	Then the associated VMRT-structure, restricted to the open subset corresponding to
	\(\BP^n\setminus \BP^{n-1}\),
	is a locally flat \(Z\)-isotrivial cone structure.
\end{example}
The following construction provides locally homogeneous but
non-locally-flat structures arising from smooth projective
symmetric varieties of Picard number one classified by~\cite{Ruz11}.

\begin{example}\label{sym_example} With the notation of Definition~1.2, let $X\subset \BP V $ be a non-homogeneous smooth projective symmetric variety of Picard number one in its minimal embedding. By~\cite{Ruz11}, there are six such varieties. They are covered by lines, and their geometry has been studied in \cite{Ruz10,Man18,Man21,KP19,CFL23}. Their geometric descriptions and VMRTs are listed in Table~3; see Proposition~6.4 for details. \end{example}
	\begin{table}[H]
	\centering
	\renewcommand{\arraystretch}{1.25}
	\caption{Non-homogeneous smooth projective symmetric varieties of Picard number one}
	\label{Table3}
	\begin{tabular}{|c|c|c|c|}
		\hline
		$X$ & $\widetilde{G}/G$ & geometric description & VMRT $Z \subset \BP U$ \\
		\hline
		$X_1$ 
		& $E_6/F_4$ 
		& hyperplane section of $E_7/P_7$ 
		& $F_4/P_4$ \\
		\hline
		$X_2$ 
		& $SL_6/Sp_6$ 
		& hyperplane section of $\BS_6$ 
		& $Gr_{\omega}(2,6)$ \\
		\hline
		$X_3$ 
		& $(SL_2\times SL_2)/SL_2$ 
		& hyperplane section of $Gr(3,6)$ 
		& $A_2$-adjoint variety \\
		\hline
		$X_4$ 
		& $SL_3/SO_3$ 
		& hyperplane section of $Lag(3,6)$ 
		& $v_4(\BP^1)$ \\
		\hline
		$X_5$ 
		& $G_2/(SL_2\times SL_2)$ 
		& Cayley Grassmannian 
		& $\BP^1\times v_3(\BP^1)$ \\
		\hline
		$X_6$ 
		& $(G_2\times G_2)/G_2$ 
		& double Cayley Grassmannian 
		& $G_2$-adjoint variety \\
		\hline
	\end{tabular}
\end{table}
\begin{prop}
	Let \(X\subset \BP V\) be a non-homogeneous smooth projective symmetric
	variety of Picard number one. Fix a \(\widetilde G\)-equivariant open
	embedding
	$
	M=\widetilde G/G\subset X,
	$
	and let \(x=[G]\in M\) be the base point. Then
	$
	T_xX\simeq \Lie(\widetilde G)/\Lie(G)
	$
	is an irreducible \(G\)-module, and the following hold:
	\begin{itemize}
		\item[(1)]
		The variety \(X\) is covered by lines in \(\BP V\). Let \(\CK\) be the
		corresponding family of minimal rational curves and
		\(\CC\subset \BP TX\) its VMRT-structure. Then the VMRT \(\CC_x\) is
		isomorphic to the unique closed \(G\)-orbit in \(\BP T_xX\). Denote this closed orbit by \(Z\simeq\CC_x\).
		
		\item[(2)]
		The restriction \(\CC|_M\) of the VMRT-structure to the open orbit
		\(M=\widetilde G/G\) is \(Z\)-isotrivial but not locally flat.
	\end{itemize}
\end{prop}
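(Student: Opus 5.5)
The plan is to treat the six varieties of Table~3 uniformly through the symmetric-space structure. Write $\widetilde\fg=\Lie(\widetilde G)=\fg_G\oplus\fm$ for the Cartan decomposition with respect to $\nu$, where $\fg_G=\Lie(G)$ is the $+1$-eigenspace and $\fm$ the $-1$-eigenspace, so that $T_xX\simeq\widetilde\fg/\fg_G\simeq\fm$ as $G$-modules. Irreducibility of $\fm$ will be read off from Ruzzi's list~\cite{Ruz11} case by case: $E_6/F_4$ gives the $26$-dimensional module, $SL_6/Sp_6$ gives $\wedge^2_0\BC^6$, $SL_3/SO_3$ gives $\Sym^2_0\BC^3\cong\Sym^4\BC^2$, $G_2/(SL_2\times SL_2)$ gives $\BC^2\otimes\Sym^3\BC^2$, and the group cases $(H\times H)/H$ give the adjoint module of $H$. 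Each of these is irreducible, which settles the preliminary assertion.

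For (1), I would first use that $X$ is covered by lines in its minimal embedding, as shown in the geometric descriptions of~\cite{Ruz10,Man18,KP19} recorded in Table~3. By Proposition~2.6, the lines form a minimal rational component and the VMRT $\CC_x$ is smooth at a general point. Since $M$ is a single $\widetilde G$-orbit and the lines are $\widetilde G$-stable, every point of $M$ is general, and $\CC_x\subset\BP T_xX$ is a $G$-stable, closed, smooth, nondegenerate subvariety. The next step is to show that $\CC_x$ is exactly the closed $G$-orbit. Because $\CC_x$ is closed and $G$-stable, it contains a closed orbit, and the irreducible module $\BP\fm$ has a unique closed orbit $Z$, so $Z\subset\CC_x$. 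For the reverse inclusion I would compare dimensions. Equation~\eqref{unbendable} gives $\dim\CC_x=-K_X\cdot l-2$, and the index of $X$ is known from the hyperplane-section descriptions (for instance $X_1$ has index $17$, and $\dim F_4/P_4=15$). In each case the value should equal $\dim Z$. Irreducibility of $\CC_x$ then follows from the uniqueness of the family of lines through a general point, or from connectedness of the stabilizer, and together with the dimension count this forces $\CC_x=Z$. The comparison with the last column of Table~3 is meant to confirm this identification in every case.

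For (2), isotriviality is immediate, since $\widetilde G$ acts transitively on $M$ and preserves $\CC$. The remaining claim is that the structure is not locally flat. I would argue by contradiction. If $\CC|_M$ were locally flat, the Cartan--Fubini extension (Theorem~\ref{HM01}), applicable because $Z$ is smooth, irreducible and nondegenerate, together with Picard number one, would give a biregular isomorphism between $X$ and a projective completion of the flat model. More concretely, the local automorphism algebra of the flat $Z$-structure contains the translations $U$ and the graded algebra $\hfg\oplus\hfg^{(1)}\oplus\cdots$, so its dimension is at least $\dim U+\dim\hfg$. On the other hand, the infinitesimal automorphisms of $\CC|_M$ extend to $\Aut(X)$ by Cartan--Fubini. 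The plan is to compare $\dim\Aut(X)$, which is known from~\cite{Ruz11,Pas09}-type computations to be essentially $\dim\widetilde G$, with $\dim\fm+\dim(\fg_G\oplus\BC)$; the first should fall short by one, reflecting the missing scalars. This dimension comparison is the step I expect to be the main obstacle. It requires knowing $\Aut^0(X)=\widetilde G$ and checking that $\hfg=\fg_G\oplus\BC$ strictly exceeds the isotropy algebra. The key point is that no Euler-type $\BC^*$ fixes $x$, because $X$ is not homogeneous; a flat structure would carry the scalar dilations, so this is exactly what rules out local flatness.
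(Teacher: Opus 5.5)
Your proposal is correct. For (2) it is essentially the paper's argument. The paper also assumes local flatness, identifies $\aut(X)$ with the symmetry algebra of the flat model via \cite[Proposition~6.13]{FH12} and \cite[Proposition~4.5]{BFM20}, and contradicts $\aut(X)=\Lie(\widetilde G)$ from \cite{Ruz10} by the count $n+\dim\aut(\widehat Z)=n+\dim G+1$.

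Your version uses only a lower bound: translations plus $\aut(\widehat Z)$, extended to all of $X$ by Cartan--Fubini. So you do not need the input $\aut(\widehat Z)^{(1)}=0$ that the paper takes from Kobayashi--Nagano. The missing dimension is exactly the scalars. $\Lie(G)$ contains no scalars because it is semisimple; this, not the non-homogeneity of $X$, is the real reason.

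Drop your preliminary sentence about a biregular isomorphism with a projective completion of the flat model. Theorem~\ref{HM01} gives biregular extension only when both sides have Picard number one, and the model of Example~\ref{flatmodel} has Picard number two. You never use that sentence anyway.

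For (1) you take a different, more uniform route. The paper cites \cite[Proposition~2.8]{CFL23} for $X_1,\dots,X_4$ and \cite[Proposition~19]{Man21} for $X_6$. Only for $X_5$ does it argue with orbits, using \cite[Theorem~1.5]{HL26} to know the VMRT is a single orbit closure.

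You instead argue in all six cases at once:
\begin{itemize}
\item $\CC_x$ is closed and $G$-stable, hence contains the unique closed orbit $Z$;
\item $\dim\CC_x=i_X-2$ by \eqref{unbendable}, and this equals $\dim Z$ once the Fano index $i_X$ is known.
\end{itemize}
This replaces the citations by index computations: adjunction for the four hyperplane sections, and Manivel's results for $X_5$ and $X_6$. You still need covering by lines as an input; for $X_5$ the paper sketches this via lines on $Gr(4,7)$ rather than citing it.

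Do the irreducibility step via connectedness of $G=\widetilde G^{\nu}$, not via uniqueness of the family of lines, since a single family can have reducible $\CK_x$. With $G$ connected, each irreducible component of the smooth variety $\CC_x$ is $G$-stable and closed, so it contains $Z$. Components of a smooth variety are disjoint, so there is only one component. This also makes the paper's appeal to \cite{HL26} in the $X_5$ case unnecessary.
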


\begin{proof}
	By~\cite[Theorem  1.1]{Ruz10}, in all six cases $G=G^\nu$ is the connected fixed subgroup, and its isotropy action on $T_xX$ is irreducible. The cases $X_3$ and $X_6$ are of adjoint type: the embedding
$
G\subset\widetilde G=G\times G
$
is diagonal, and the isotropy action is the adjoint representation of $G$. For the remaining four cases, the corresponding highest weights are given in \cite[Section 8]{Wol84}. 

The assertion (1) for the first four varieties follows from
\cite[Proposition~2.8]{CFL23}, while the case $X_6$ follows from
\cite[Proposition~19]{Man21}. For $X_5$, using its realization in~\cite{Man18} as the zero locus of a general section in
$
H^0(Gr(4,7),\wedge^3\CU_4^\vee),
$
where $\CU_4$ is the tautological rank-four subbundle, a calculation on the lines of $Gr(4,7)$ together with a dimension count shows that $X_5$ is covered by lines. Hence its VMRT at $x$ is a $G$-invariant smooth projective surface; see~\cite[Remark 4.2]{KP19}. By \cite[Theorem~1.5]{HL26}, it is also the closure of a $G$-orbit. Since it contains the unique closed $G$-orbit in $\BP T_xX$, which has the same dimension, the two coincide.

Finally, since $M$ is a $\widetilde G$-orbit, the restricted VMRT-structure
$
\CC|_M
$
is $Z$-isotrivial. If $\mathcal C|_M$ were locally flat, then
\cite[Proposition~6.13]{FH12} and
\cite[Proposition~4.5]{BFM20} would give
$$
\aut(X)\simeq
\BC^n\rtimes\aut(\widehat Z)
\oplus\aut(\widehat Z)^{(1)},
$$
where $n=\dim(X)$ and $\aut(\widehat Z)^{(1)}$ is the first prolongation of
$
\aut(\widehat Z);
$
see Corollary~\ref{cor_classify}(ii). On the other hand,
$
\aut(X)=\Lie(\widetilde G)
$
by~\cite{Ruz10}, while
$
\aut(\widehat Z)=
\BC\mathrm{id}_U\oplus\Lie(G).$
Moreover,
$
\aut(\widehat Z)^{(1)}=0
$
by the Kobayashi–Nagano classification; see~\cite[Proposition~4.3]{LH24}. Taking dimensions then contradicts
$
n=\dim(\widetilde G)-\dim(G).
$
\end{proof}
For the applications using Theorem \ref{appp_homoge}, we need the formal curvature spaces \eqref{form_curv} of the VMRTs listed in Table~\ref{Table3}, most of which were computed in~\cite{ML99,LH24}.

\begin{prop}\label{form_symm_one}
	Let $Z\subset \BP U$ be one of the varieties listed in the last column of Table~\ref{Table3}, and let
	$
	\hfg=\fg\oplus \BC\,\id_U,
	$
	where $\fg=\Lie(Aut(Z))$.
	If $Z$ is not isomorphic to $v_4(\BP^1)$ or $\BP^1\times v_3(\BP^1)$, then
	$
	\dim \BK(\hfg)=1.
	$
\end{prop}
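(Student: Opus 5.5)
The plan is to treat the four remaining cases of Table~\ref{Table3} one at a time: $F_4/P_4$, $Gr_{\omega}(2,6)$, the $A_2$-adjoint variety and the $G_2$-adjoint variety. For each I will identify $\BK(\hfg)$ with an invariant-theoretic quantity for $\fg$. The key observation is that $\BK(\hfg)$ is a $\fg$-module. Every one of these $Z$ is the VMRT at the base point of a non-locally-flat, locally symmetric model, namely the symmetric variety $\widetilde G/G$ of Proposition~6.4. The curvature of the canonical connection on $\widetilde G/G$ is
\[
R(u,v)=-\ad([u,v]_{\fg})|_{U},\qquad U=\Lie(\widetilde G)/\Lie(G)\cong\fm,
\]
and it is a nonzero $\fg$-invariant element of $\BK(\hfg)$. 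This gives $\dim\BK(\hfg)\ge 1$. It therefore suffices to prove $\dim\BK(\hfg)\le1$.

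For the upper bound I will use the Bianchi-type exact sequence
\[
0\to\BK(\hfg)\to\Hom(\wedge^2U,\hfg)\xrightarrow{\ \delta\ }\Hom(\wedge^3U,U),
\]
where $\delta$ is the skew-symmetrization. I then invoke the known computations of Spencer cohomology $H^{0,2}(\hfg)$, i.e.\ of $\BK(\hfg)$, for irreducible representations in \cite{ML99}; the adjoint cases are also treated in \cite{LH24}. Merkulov--Schwachh\"ofer list the irreducible holonomy-type representations whose curvature space is nontrivial. In the non-symmetric-space cases that list is known explicitly, and for representations arising as isotropy of symmetric spaces $\BK(\hfg)$ reduces to the $\fg$-invariant part. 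The cases split as follows.

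\textbf{Adjoint cases $X_3$, $X_6$.} Here $\hfg=\fg\oplus\BC$ acts on $U=\fg$ with $\fg$ simple of type $A_2$ or $G_2$. I will cite \cite[Section~8]{LH24}, which shows that for adjoint varieties not of type $A_{n\ne2}$ (and not $C_n$) the formal curvature space is spanned by $R(u,v)=\ad[u,v]$. I will check that $A_2$ and $G_2$ fall into the allowed range, which gives $\dim\BK=1$.

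\textbf{Cases $X_1$, $X_2$.} Here $U$ is the isotropy representation of $E_6/F_4$ or $SL_6/Sp_6$, namely the $26$-dimensional $F_4$-module and $\wedge^2_0\BC^6$ respectively. I will use the Berger/Merkulov--Schwachh\"ofer classification: these are symmetric-space holonomies and do not appear in the list of exotic holonomies, so every element of $\BK(\hfg)$ is $\fg$-equivariant up to the component along $\BC\,\id_U$. More precisely, I will decompose $\Hom(\wedge^2U,\hfg)$ into $\fg$-irreducibles and show that the kernel of $\delta$ meets only the trivial isotypic component, using highest-weight vectors and the PRV-type test (Theorem~\ref{PRV}). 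The trivial part of $\Hom(\wedge^2U,\fg\oplus\BC)$ is $(\wedge^2U^\vee\otimes\fg)^{\fg}\oplus(\wedge^2U^\vee)^{\fg}$. Since $U$ is orthogonal, $(\wedge^2U^\vee)^{\fg}=0$. By Proposition~\ref{small}, $(\wedge^2U^\vee\otimes\fg)^{\fg}$ is one-dimensional. Together these give $\dim\le1$.

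I expect the main obstacle to be the non-trivial isotypic components in the $X_1$, $X_2$ cases. For those I would rely directly on the tables of \cite{ML99}, where $H^{0,2}$ is computed for all irreducible representations with nonzero prolongation-free curvature, rather than redo the computation.
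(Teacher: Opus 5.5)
Your proposal is essentially the paper's argument: the lower bound comes from the symmetric-pair curvature $R(X,Y)=[X,Y]\in\fg$ on the $(-1)$-eigenspace $U$ of the involution, which is nonzero because that eigenspace is not abelian, and the upper bound is by citation, namely \cite[Theorem~1.6]{LH24} for the adjoint cases and \cite[Proof of Theorem~6.10]{ML99} for $Gr_{\omega}(2,6)$ and $F_4/P_4$. One caution: your heuristic that $\BK(\hfg)$ reduces to its $\fg$-invariant part for isotropy representations of symmetric spaces is false in general (for $\fs\fo_n$ on $\BC^n$, the isotropy of the sphere, $\BK$ contains all algebraic curvature tensors, not only the invariant one), so your count via Proposition~\ref{small} bounds only the trivial isotypic component, and the upper bound genuinely rests on the \cite{ML99} computation, exactly as in the paper.
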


\begin{proof}
The varieties in the third and sixth rows are adjoint varieties of types
$A_2$ and $G_2$, respectively, so the claim follows from
\cite[Theorem~1.6]{LH24}. For the remaining cases, it is known that each symmetric pair $(\widetilde G,G)$ determines
an element of
$
\BK(\fg)\subset \BK(\hfg)
$ (cf. \cite[Theorem 3.2 (1)]{KN69}). Indeed, writing the eigenspace
decomposition of
\(
\widetilde{\fg}=\Lie(\widetilde G)
\)
with respect to the involution $\nu$ as
$
\widetilde{\fg}
=
\fg\oplus\fg_{-1},
$
we can identify $\fg_{-1}$ with $U$. Since
$
[\fg_{-1},\fg_{-1}]\subset \fg,
$
the map
\[
R:\wedge^2U\longrightarrow \fg,
\qquad
R(X,Y)=[X,Y],
\]
defines an element of \(\BK(\fg)\). Moreover, in both cases \(R\neq 0\), since \(\fg_{-1}\) is not abelian. 
Hence
$
\dim\BK(\hfg)\ge1.
$
On the other hand, the inequality
$
\dim\BK(\hfg)\le1
$
is proved in
\cite[Proof of Theorem~6.10, Types $C_n$(ii) and $F_4$(iv)]{ML99}
for $Gr_{\omega}(2,6)$ and $F_4/P_4$, respectively.
\end{proof}
We obtain the following result, analogous to~\cite[Theorem~8.9]{LH24}. 

\begin{cor}\label{cor_vmrt}
	Let \(Z\subset \BP U\) be one of the VMRTs listed in Table \ref{Table3}, and assume that $Z$ is not isomorphic to either
	\(v_4(\BP^1)\subset \BP(S^4\BC^2)\) or $\BP^1 \times v_{3}(\BP^1)$. Then every $Z$-isotrivial cone structure admitting a characteristic conic connection is locally symmetric, and hence locally homogeneous.
	
	Moreover let \(X\) be a smooth projective variety of dimension \(\dim U\) equipped with a family \(\CK\) of minimal rational curves such that, over some open subset \(M\subset X\), the restricted VMRT-structure
	\(\CC|_M\)
	is \(Z\)-isotrivial. Then:
	\begin{itemize}
		\item[(1)]
		The VMRT-structure \(\CC|_M\) is locally symmetric. In particular, it is locally homogeneous.
		
		\item[(2)]
		A neighborhood of a general member of \(\CK\) is biholomorphic to a neighborhood of a general member of the family of minimal rational curves given in Examples 6.2 or 6.3.
		
		\item[(3)]
		If \(b_2(X)=1\), then \(X\) is quasi-homogeneous, i.e. \(\Aut(X)\) has an open orbit.
	\end{itemize}
\end{cor}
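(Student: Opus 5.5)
The first claim will follow by feeding the Hwang--Li criterion into Theorem~\ref{appp_homoge}(1). The excluded cases $v_4(\BP^1)$ and $\BP^1\times v_3(\BP^1)$ are exactly the rows $X_4,X_5$. The remaining VMRTs in Table~\ref{Table3} are $F_4/P_4$, $Gr_\omega(2,6)$, and the adjoint varieties of types $A_2$ and $G_2$. Each of these appears in Theorem~\ref{main_thm}: in family (4) for the first two, and in family (3) for the adjoint ones, since $A_2$ is allowed there and $G_2$ is not of type $A$. So condition (iii) of Theorem~\ref{tors_prin} holds. Conditions (i) and (ii) hold for all flag varieties by Proposition~\ref{surj_Psi_}. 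Hence every characteristic conic connection induces a torsion-free principal connection on $\CP$. None of these four varieties lies in the two exceptional families excluded in \cite[Theorem~4.6]{LH24}, which are the IHSS VMRTs and the subadjoint varieties. Theorem~\ref{appp_homoge}(1) therefore gives local symmetry, and hence local homogeneity.

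For (1), apply Proposition~\ref{vmrtischar}. Since $Z$ is smooth, the VMRT-structure $\CC|_M$ carries a characteristic conic connection, so the first part applies directly.

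For (2), split into two cases according to whether $\CC|_M$ is locally flat.
\begin{itemize}
\item If $\CC|_M$ is locally flat, it is locally equivalent to the flat model of Example~\ref{flatmodel}.
\item If not, Proposition~\ref{form_symm_one} gives $\dim\BK(\hfg)=1$. Theorem~\ref{appp_homoge}(2) then shows that $\CC|_M$ is locally equivalent to the non-flat structure on the open orbit $\widetilde G/G$ of the corresponding symmetric variety in Example~\ref{sym_example}. That structure is genuinely non-flat by the preceding proposition.
\end{itemize}
In either case we have a local biholomorphism of open sets matching the VMRT-structures. The Cartan--Fubini extension, Theorem~\ref{HM01}, then upgrades it to a biholomorphism between neighbourhoods of general members. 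This needs the VMRT to be smooth, irreducible and linearly non-degenerate, which holds for flag varieties in their minimal embeddings.

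For (3), suppose $b_2(X)=1$. In the non-flat case $X$ has Picard number one and so does the model symmetric variety. The last part of Theorem~\ref{HM01} then gives a biregular isomorphism $X\cong X_i$, which is quasi-homogeneous. In the flat case, the local equivalence with the model $X_Z$ of Example~\ref{flatmodel} shows that the infinitesimal automorphisms of the flat structure act locally with an open orbit. These automorphisms have the form $\BC^n\rtimes\aut(\hat Z)$, since the prolongation vanishes. I expect this step to be the main obstacle. The intended route is the Hwang--Mok extension of local vector fields preserving the VMRT-structure to global ones on Picard-number-one $X$ (as used in \cite[Theorem~8.9]{LH24} and \cite{FH12}); the resulting global fields give $\Aut(X)$ an open orbit. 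If this extension step needs extra hypotheses, I would instead cite the argument of \cite[Theorem~8.9]{LH24} verbatim, since its proof only uses local homogeneity together with non-degeneracy of the VMRT.
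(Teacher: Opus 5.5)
Your proposal is correct. The first assertion, (1) and (2) follow the paper's argument exactly:
\begin{itemize}
\item conditions (i)--(ii) come from Proposition~\ref{surj_Psi_} and (iii) from Theorem~\ref{main_thm};
\item the exceptional families of \cite[Theorem~4.6]{LH24} are excluded;
\item the rest follows from Theorem~\ref{appp_homoge}, Proposition~\ref{form_symm_one}, Proposition~\ref{vmrtischar}, and Theorem~\ref{HM01} applied with the two models.
\end{itemize}

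The only divergence is in (3). The paper does not split into flat and non-flat cases. It combines the local homogeneity from (1) with the last clause of Theorem~\ref{HM01}, applied with $\widetilde X=X$ and $\widetilde{\CK}=\CK$. For $x,x'\in M$, the local equivalence $f\colon O\to O'$ sending $x$ to $x'$ extends to a global automorphism of $X$; Picard number one follows from $b_2(X)=1$. Hence $\Aut(X)$ has an open orbit.

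This is precisely the extension step you flagged as the main obstacle in the flat case. It needs no hypotheses beyond those you already checked (smooth, irreducible, linearly non-degenerate VMRT). It also covers the non-flat case, so the detour through $\BC^n\rtimes\aut(\hat Z)$ and the vanishing prolongation is unnecessary.

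Your non-flat argument, comparing $X$ with the model $X_i$ via Theorem~\ref{HM01}, is also valid. It even gives the stronger conclusion $X\cong X_i$ in that case, which the uniform argument does not record.

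One small point to make explicit in (2): on connected $M$, the flat/non-flat dichotomy is well defined because $\CC|_M$ is locally homogeneous by (1).
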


\begin{proof}
		The variety $Z\subset\BP U$ satisfies conditions~\textup{(i)--(iii)}
		of Theorem~\ref{tors_prin}: condition~\textup{(i)} and~\textup{(ii)} from
		Proposition~\ref{surj_Psi_}, and condition~\textup{(iii)} from
		Theorem~\ref{main_thm}. Moreover, $Z$ is not among the exceptional
		varieties in Theorem~\ref{appp_homoge}: these are listed in  \cite[Propositions 4.3 and 4.5]{LH24}, which are precisely the varieties listed in Theorem \ref{class_tang}(1)  and Proposition~\ref{partial0_surj_classify}\textup{(2)}. 
		Hence every $Z$-isotrivial cone structure admitting a characteristic
		conic connection is locally symmetric by
		Theorem~\ref{appp_homoge}\textup{(1)}.
		
		Now let $\CC|_M$ be the VMRT-structure associated with $\CK$.
		By Proposition~\ref{vmrtischar}, it admits a characteristic conic
		connection, so \textup{(1)} follows from the preceding
		paragraph. (2) follows from  Theorem~\ref{appp_homoge}\textup{(2)}, Proposition \ref{form_symm_one}, Theorem~\ref{HM01} and Examples 6.2 and 6.3. 
		Finally, \textup{(3)} follows from \textup{(1)}
		and Theorem~\ref{HM01}.
\end{proof}
Next we recall an example of isotrivial VMRT-structure of highest weight type which is non-locally symmetric. By an argument
similar to~\cite[Example~8.8]{LH24}, it gives an alternative proof
for the failure of the Spencer condition for
$
Z=\mathrm{Gr}_{\omega}(2,2m)$ with $m\ge 4,$
as established in Lemma~5.23.
\begin{example}[{\cite{BFM20}}]\label{nonhomospin}
	Let
	$
	\BS_n\subset \BP\Delta_+
	$
	be the spinor variety associated with $Spin_{2n}$, where \(n\ge 5\), and let
	\(X\subset \BP V\) be a general smooth hyperplane section. Then \(X\) is a Fano manifold of Picard number one covered by lines in $\BP V$, and its VMRT at a general point is isomorphic to the symplectic Grassmannian
	$
	Z=Gr_{\omega}(2,n)
	$
    embedded in its minimal embedding.
	In particular, \(Z\) is a flag variety when \(n=2m\) is even, in which case the associated VMRT-structure on $X$ is $Z$-isotrivial over a suitable open subset.
	By~\cite[Theorem~1.2]{BFM20}, \(X\) is quasi-homogeneous if and only if \(n\le 7\). Hence, for \(n=2m\ge 8\), its VMRT-structure cannot be locally symmetric by Theorem~\ref{HM01}. Since $Gr_{\omega}(2,2m)$ satisfies conditions~(i) and~(ii) of Theorem~\ref{tors_prin} and does not belong to the list excluded in Theorem~\ref{appp_homoge}, the Spencer condition must fail.
\end{example}

\appendix \label{appendix}
\section{}
We record the SageMath computation used in Lemma~3.31, carried out with
SageMath~10.5. We use the Bourbaki numbering of the Dynkin diagram of type
\(E_6\), and write \(U_{\lambda_i}\) for the irreducible \(E_6\)-module of
highest weight \(\lambda_i\). The following SageMath session computes the
irreducible decomposition of \(\wedge^2 U_{\lambda_3}\).

\begin{verbatim}
	sage: E6 = WeylCharacterRing("E6", style="coroots")
	sage: E63 = E6(0,0,1,0,0,0)
	sage: E63.exterior_power(2)
	E6(0,0,1,0,0,1) + E6(1,1,0,0,0,0)+ E6(1,0,0,1,0,0) + E6(0,0,0,0,1,0)
\end{verbatim}

Thus
\[
\wedge^2 U_{\lambda_3}
\simeq
U_{\lambda_3+\lambda_6}
\oplus
U_{\lambda_1+\lambda_2}
\oplus
U_{\lambda_1+\lambda_4}
\oplus
U_{\lambda_5}.
\]
Dualizing this decomposition gives the first decomposition in
Lemma~3.31.
\bibliographystyle{amsalpha-abbrv}
\bibliography{product}
\end{document}